\setlist[enumerate]{topsep=0pt,itemsep=-1ex,partopsep=1ex,parsep=1ex}
\setlist[itemize]{topsep=0pt,itemsep=-1ex,partopsep=1ex,parsep=1ex}
\renewcommand{\paragraph}{%
  \@startsection{paragraph}{4}%
  {\z@}{1.0ex \@plus .5ex \@minus .2ex}{-.7em}%
  {\normalfont\normalsize\bfseries}%
}
\numberwithin{equation}{section}
\numberwithin{figure}{section}
\numberwithin{table}{section}
\newcommand*{\arXiv}[1]{\bgroup\color{blue}\href{https://arxiv.org/abs/#1}{arXiv:#1}\egroup}
\newcommand*{\BigO}{\mathcal{O}}
\newcommand*{\defeq}{\coloneqq}
\newcommand*{\Reals}{\mathbb{R}}
\newcommand*{\Naturals}{\mathbb{N}}
\DeclareMathOperator{\logdet}{log\,det}
\newcommand*{\one}{\mathbf{1}}
\newcommand*{\dx}{\,\mathrm{d}x}
\newcommand*{\dz}{\delta_{z}}
\DeclareMathOperator{\supp}{supp}
\newcommand*{\norm}[1]{\Vert #1 \Vert}
\newcommand*{\bignorm}[1]{\bigl\Vert #1 \bigr\Vert}
\newcommand*{\dualprod}[2]{[ #1 , #2 ]}
\newcommand*{\bigdualprod}[2]{\bigl[ #1 , #2 \bigr]}
\newcommand*{\innerprod}[2]{\langle #1 , #2 \rangle}
\newcommand*{\biginnerprod}[2]{\bigl\langle #1 , #2 \bigr\rangle}
\DeclareMathOperator{\dist}{dist}
\DeclareMathOperator{\disttt}{\mathtt{dist}}
\newcommand*{\absval}[1]{\vert #1 \vert}
\newcommand*{\Absval}[1]{\left\vert #1 \right\vert}
\newcommand*{\bigabsval}[1]{\bigl\vert #1 \bigr\vert}
\newcommand*{\quark}{\setbox0\hbox{$x$}\hbox to\wd0{\hss$\cdot$\hss}}
\newcommand*{\argmax}{\mathop{\textup{arg\,max}}}
\newcommand*{\argmin}{\mathop{\textup{arg\,min}}}
\newcommand*{\Expect}{\mathbb{E}}
\newcommand*{\Cov}{\operatorname{Cov}}
\DeclareMathOperator{\spn}{span}
\newcommand*{\Kernel}{\mathop{\textup{Ker}}}
\newcommand*{\Image}{\mathop{\textup{Im}}}
\newcommand*{\GMat}{G^{\text{Mat{\'e}rn}}}
\newcommand*{\GCauchy}{G^{\text{Cauchy}}}
\DeclareMathOperator{\diam}{diam}
\newcommand*{\poly}{\operatorname{poly}}
\DeclareMathOperator{\cond}{cond}
\DeclareMathOperator{\loc}{loc}
\renewcommand*{\H}{\mathcal{H}}
\newcommand*{\Id}{\textup{Id}}
\newcommand*{\K}{G}
\newcommand*{\IK}{\mathcal{L}}
\newcommand*{\IKM}{A}
\newcommand*{\IKMC}{B}
\newcommand*{\KM}{\Theta}
\newcommand*{\tKM}{\tilde{\Theta}}
\newcommand*{\I}{I}
\newcommand*{\J}{J}
\newcommand*{\B}{\mathcal{B}}
\newcommand*{\N}{\mathcal{N}}
\newcommand*{\ICH}{\texttt{ICHOL(0)}}
\newcommand*{\FRO}{\operatorname{Fro}}
\newcommand*{\V}{\mathfrak{V}}
\newcommand*{\W}{\mathfrak{W}}
\newcommand*{\unif}{\mathop{\textup{UNIF}}}
\newcommand*{\ti}{\tilde{i}}
\newcommand*{\tI}{\tilde{\I}}
\newcommand*{\rP}{P^{\updownarrow}}
\DeclareMathOperator{\chol}{chol}
\newcommand*{\tj}{\tilde{j}}
\newcommand*{\tJ}{\tilde{\J}}
\newcommand*{\tS}{\tilde{S}}
\newcommand*{\x}{\alpha}
\newcommand*{\y}{\beta}
\newcommand*{\rD}{\mathrm{D}}
\newcommand*{\bI}{\bar{\I}}
\newcommand*{\bJ}{\bar{\J}}
\newcommand*{\nnz}{\mathop{\textup{nnz}}}
\newcommand*{\rank}{\mathop{\textup{rank}}}
\newcommand*{\tsos}{t_{\texttt{SortSparse}}}
\newcommand*{\tent}{t_{\texttt{Entries}}}
\newcommand*{\tich}{t_{\texttt{ICHOL(0)}}}
\newcommand*{\intd}{\tilde{d}}
\newcommand*{\xloc}{x}
\newcommand*{\cS}{\mathbb{S}}
\newcommand*{\tsucc}{\mathop{\tilde{\succ}}}
\newcommand*{\tk}{\tilde{k}}
\newcommand*{\tE}{\tilde{E}}
\newcommand*{\tL}{\tilde{L}}
\crefname{hypothesis}{Hypothesis}{Hypotheses}
\crefname{example}{Example}{Examples}
\crefname{condition}{Condition}{Conditions}
\crefname{construction}{Construction}{Constructions}
\Crefname{ALC@unique}{Line}{Lines}
\title{Compression, inversion, and approximate PCA of dense kernel matrices at near-linear computational complexity 
}
\author{%
  Florian\ Sch{\"a}fer\thanks{California Institute of Technology, MC 305-16, 1200 East California Boulevard, Pasadena, CA~91125, USA, \newline \email{florian.schaefer@caltech.edu}, Phone: (626) 395-3531,  Fax: (626) 578-0124, \newline Corresponding Author} 
  \and
  T. J. Sullivan\thanks{Mathematics Institute and School of Engineering, The University of Warwick, Coventry, CV4~7AL, UK, \email{t.j.sullivan@warwick.ac.uk}; and Zuse Institute Berlin, Takustra{\ss}e 7, 14195 Berlin, Germany, \email{sullivan@zib.de}}
  \and
  Houman Owhadi\thanks{California Institute of Technology}
}
\begin{document}

\maketitle

\begin{abstract}
	Dense kernel matrices $\KM \in \Reals^{N \times N}$ obtained from point evaluations of a covariance function $\K$ at locations $\{ x_{i} \}_{1 \leq i \leq N} \subset \Reals^{d}$ arise in statistics, machine learning, and numerical analysis.
	For covariance functions that are Green's functions of elliptic boundary value problems and homogeneously-distributed sampling points, we show how to identify a subset $S \subset \{ 1 , \dots , N \}^2$, with $\# S = \BigO ( N \log (N) \log^{d} ( N  /\epsilon ) )$, such that the zero fill-in incomplete Cholesky factorisation of the sparse matrix $\KM_{ij} \one_{( i, j ) \in S}$ is an $\epsilon$-approximation of $\KM$.
	This factorisation can provably be obtained in complexity $\BigO ( N \log( N ) \log^{d}( N /\epsilon) )$ in space and $\BigO ( N \log^{2}( N ) \log^{2d}( N /\epsilon) )$ in time, improving upon the state of the art for general elliptic operators;
	we further present numerical evidence that $d$ can be taken to be the intrinsic dimension of the data set rather than that of the ambient space.
	The algorithm only needs to know the spatial configuration of the $x_{i}$ and does not require an analytic representation of $\K$.
	Furthermore, this factorization straightforwardly provides an approximate sparse PCA with optimal rate of convergence in the operator norm.
	Hence, by using only subsampling and the incomplete Cholesky factorization, we obtain, at nearly linear complexity, the compression, inversion and approximate PCA of a large class of covariance matrices.
	By inverting the order of the Cholesky factorization we also obtain a solver for elliptic PDE with complexity $\BigO ( N \log^{d}( N /\epsilon) )$ in space and $\BigO ( N  \log^{2d}( N /\epsilon) )$ in time, improving upon the state of the art for general elliptic operators.
\end{abstract}

\begin{keywords}
  Cholesky factorization, covariance function, gamblet transform, kernel matrix, sparsity, principal component analysis
\end{keywords}

\begin{AMS}
  65F30, 
	42C40, 
	65F50, 
	65N55, 
	65N75, 
	60G42, 
	68Q25, 
	68W40 
\end{AMS}

\section{Introduction}
\label{sec:introduction}
\subsection{Dense kernel matrices and the \texorpdfstring{$N^3$-}{cubic }bottleneck}
\label{ssec-cubic-bottleneck}

Kernel matrices, i.e.\ square matrices $\KM$ of the form
\begin{equation}
	\label{eq-kernel-matrix}
	\KM_{ij} \defeq \K ( x_i, x_j ) ,
\end{equation}
obtained from pointwise evaluation of a symmetric positive-definite kernel $\K$ at a collection of points $\{ x_i \}_{i \in \I}$ in a domain $\Omega \subset \Reals^d$, play an important role in statistics, machine learning, and scientific computing.
In statistics, they are used as covariance matrices of Gaussian process priors.
In machine learning, they equip the feature space with a meaningful inner product via the \emph{kernel trick} \cite{hofmann2008kernel}.
In scientific computing, they appear as Green's functions (i.e.\ fundamental solutions) of linear elliptic partial differential equations (PDEs).

For all these applications, it is usually necessary to perform some or all of the following
tasks:
\begin{enumerate}[label=(\arabic*)]
	\item compute $v \mapsto \KM v$, given $v \in \Reals^{\I}$;
	\item compute $v \mapsto \KM^{-1}v$, given $v \in \Reals^{\I}$;
	\item compute $\logdet \KM$;
	\item sample from the normal/Gaussian distribution $\N(0,\KM)$;
	\item approximate eigenspaces corresponding to the leading eigenvalues of $\KM$.
\end{enumerate}

The first four of these tasks can be performed by computing the Cholesky factorization of $\KM$ (i.e.\ the decomposition $\KM = L L^T$ where $L$ is lower triangular).
For many popular covariance functions, most notably those of smooth random processes, the matrices $\KM$ will be \emph{dense}.
For large $N \defeq \# \I$ this results in a computational complexity of
$\BigO(N^3)$ for the Cholesky factorization and a complexity of $\BigO(N^2)$ to even store the
matrix.
When $\KM$ is \emph{sparse}, i.e.\ has relatively few non-zero entries, better complexity can be achieved --- the obvious limiting case being $\BigO(N)$ (i.e.\ linear) complexity if $\KM$ is diagonal.
However, for practical problems, the cubic scaling restricts dense Cholesky factorization to problems with $N \lessapprox 10^5$.
The breadth of kernel matrices' uses means that there is correspondingly high interest in achieving approximate Cholesky factorization of $\KM$ at linear or near-linear cost.

\subsection{Existing approaches}

Many fast methods are available for approximating dense kernel matrices and their applicability depends on specific assumptions made on $\KM$.
If the precision matrix $\KM^{-1}$ is sparse and can be approximated directly (e.g.\ by discretizing a PDE), then sparse linear solvers can be used.
These include multigrid solvers \cite{fedorenko1961relaxation,brandt1977multi,hackbusch1978fast,hackbusch2013multi} and sparse Cholesky factorization methods with nested dissection ordering \cite{george1989evolution,george1973nested,lipton1979generalized,gilbert1987analysis}.
This approach has been proposed for problems arising in spatial statistics \cite{lindgren2011explicit,roininen2011correlation,roininen2013constructing,roininen2014whittle}.
In other situations, available methods directly approximate the covariance matrix based on low-rank approximations, sparsity, and hierarchy.
Low-rank techniques such as the Nystr{\"o}m approximation \cite{williams2001using,smola2001sparse,fowlkes2004spectral} or rank-revealing Cholesky factorization \cite{bach2002kernel,fine2001efficient} seek to approximate $\KM$ by low-rank matrices whereas sparsity-based methods like
\emph{covariance tapering} \cite{furrer2012covariance} seek to approximate $\KM$ with a sparse matrix by setting entries corresponding to long-range interactions to zero.
These two approximations can also be combined to obtain \emph{sparse low-rank}
approximations \cite{sang2012full,quinonero2005unifying,schwaighofer2002transductive,
banerjee2008gaussian,snelson2005sparse}, which can be interpreted as imposing a particular graphical structure on the Gaussian process.
When $\KM$ is neither sufficiently sparse nor of sufficiently low rank, these approaches can be implemented in a hierarchical manner.
For low-rank methods, this leads to \emph{hierarchical} ($\mathcal{H}$- and $\mathcal{H}^2$-) matrices \cite{hackbusch2000sparse,hackbusch1999sparse,hackbusch2002data}, \emph{hierarchical off-diagonal low rank} (HODLR) matrices \cite{ambikasaran2013mathcal,ambikasaran2014fast}, and hierarchically semiseparable (HSS) matrices \cite{chandrasekaran2004fast,xia2010fast,li2012new}
that rely on computing low-rank approximations of sub-blocks of $\KM$ corresponding to far-field interactions on different scales. 
The interpolative factorization developed by \cite{ho2016hierarchical} combines hierarchical low-rank structure with the sparsity obtained from an elimination ordering of nested-dissection type.
Hierarchical low-rank structure was originally developed as an algebraic abstraction of the fast multipole method of \cite{greengard1987fast}.
In order to construct hierarchical low-rank approximations from entries of the kernel matrix efficiently, both deterministic and randomized algorithms have been proposed \cite{bebendorf2003adaptive,martinsson2016compressing}.
For many popular covariance functions, including Green's functions of elliptic PDEs \cite{bebendorf2003existence}, 
hierarchical matrices allow for (near-)linear-in-$N$ complexity algorithms for the inversion and approximation of $\KM$, at exponential accuracy.
Wavelet-based methods \cite{beylkin1991fast,gines1998lu}, using the separation and truncation of  interactions on different scales, can be seen as a hierarchical application of sparse approximation approaches.
The resulting algorithms have near-linear computational complexity and rigorous error bounds for asymptotically smooth covariance functions.
\cite{feischl2018sparse} use operator-adapted wavelets to compress the expected solution operators of random elliptic PDEs.
In \cite{katzfuss2016multi}, although no rigorous accuracy estimates are provided, the authors establish the near-linear computational complexity of algorithms resulting from the multi-scale generalization of probabilistically motivated sparse and low-rank approximations \cite{sang2012full,quinonero2005unifying,schwaighofer2002transductive,banerjee2008gaussian,snelson2005sparse}.

\subsection{Our main result and and overview of the paper}

Our main result is to show that a small modification of the Cholesky factorization algorithm is both accurate and scalable, when applied to kernel matrices obtained from kernels $\K$ identified as Green's functions of elliptic PDEs and a (roughly) homogeneously distributed cloud of points.
Such kernels are oftentimes used as covariance functions of smooth Gaussian processes (to enforce a smoothness prior on the function to be recovered/interpolated) and
therefore a large class of popular kernels fall into this category.
The cheap, accurate, approximate Cholesky factors provided by our method thereby serve tasks (1--4) from \cref{ssec-cubic-bottleneck}.
We furthermore show that by reversing the elimination order we obtain a fast direct solver for elliptic PDEs.

Contrary to the present belief that fast solvers for elliptic integral operators require the use of hierarchical low-rank structure or wavelets with a high order of vanishing moments, we show that state-of-the-art performance can be obtained just by zero fill-in Cholesky factorization (which just amounts to skipping some steps in the Cholesky factorization algorithm --- wavelets are only used in the detailed rigorous analysis of the algorithm). 
While there is a huge literature on the sparse Cholesky factorization of \emph{sparse} matrices, we are not aware of any prior literature on the sparse Cholesky factorization of \emph{dense} matrices.

For elliptic PDEs with arbitrary $L^{\infty}$-coefficients, $\mathcal{H}$-matrices can be used to compute $\epsilon$-approximate Cholesky factors of both differential and integral operators in computational complexity $\mathcal{O} \left(N \log^2\left(N\right) \log^{2d+2}\left(\epsilon^{-1} \right) \right)$ \cite{bebendorf2003existence,hackbusch1999sparse,hackbusch2000sparse,bebendorf2008hierarchical}.
$\mathcal{H}^2$-matrices can improve these complexities to $\mathcal{O} \left(N \log\left(N\right) \log^{2d + 2}\left(\epsilon^{-1} \right) \right)$ \cite{hackbusch2002data,borm2010approximation,borm2010efficient}.
The ``fast gamblet transform'' of \cite{owhadi2015multigrid,owhadi2017universal} can invert stiffness matrices of arbitrary elliptic operators in computational complexity $\mathcal{O}\left(\log^{2d + 1}\left(\epsilon^{-1} \right) \right)$.
Our computational complexities of $\mathcal{O} \left(N \log^{2d}\left(N/\epsilon \right) \right)$ for the Cholesky factorization of differential operators and $\mathcal{O} \left(N \log^{2}(N) \log^{2d}\left(N/\epsilon \right) \right)$ for the Cholesky factorization of integral operators improve upon the state of the art while using a much simpler algorithm.

Our method relies upon a cleverly-constructed elimination ordering and sparsity pattern, which we use in the incomplete Cholesky factorization of the matrix $\KM$.
Simplified versions of these constructions are given in \cref{ssec-simpleAlg};
\Cref{ssec-justification} gives a overview, without detailed proof, of why the method yields the desired results.
In particular, \cref{ssec-sparse-approximate-PCA} shows how the method provides a sparse approximate principal component analysis (PCA), thereby serving task (5).

\Cref{sec:numerics} presents detailed numerical experiments that illustrate the power of our method, and \cref{sec:analysis} gives the mathematical proofs of correctness and accuracy vs.\ complexity.
\Cref{sec:conclusions} contains concluding remarks, and some technical results are deferred to an Appendix.

\section{Overview of the algorithm and its setting}
\label{ssec-simpleAlg}

In this introductory section we give a brief overview of the setting in which our theoretical results apply (the class of kernels associated to elliptic operators) and highlight its main features.
All detailed numerical experiments and analysis will be deferred to \cref{sec:numerics,sec:analysis} respectively.

\subsection{The class of elliptic operators}
\label{sssec-classElliptic}

In order to establish rigorous, a priori, complexity-vs.-accuracy estimates in \cref{sec:analysis} we will assume that $\K$ is the Green's function of an elliptic operator $\IK$ of order $2s>d$ ($s,d \in \Naturals$), defined on a bounded domain $\Omega \subset \Reals^d$ with Lipschitz boundary, and acting on $H^s_0(\Omega)$, the Sobolev space of (zero boundary value) functions having derivatives of order $s$ in $L^2(\Omega)$.
More precisely, writing $H^{-s}(\Omega)$ for the dual space of $H^s_0(\Omega)$ with respect to the $L^2(\Omega)$ scalar product, our rigorous estimates will be stated for an arbitrary linear bijection
\begin{equation}
	\label{eq-op-between-sobolev}
	\IK \colon H_0^s( \Omega ) \to H^{-s}( \Omega )
\end{equation}
that is \emph{symmetric} (i.e.\ $\int_{\Omega} u \IK v \dx = \int_{\Omega} v \IK u \dx$), \emph{positive} (i.e.\ $\int_{\Omega} u \IK u \dx \geq 0$), and \emph{local} in the sense that
\begin{equation}
	\int_{\Omega} u \IK v \dx = 0 \text{ for all $u,v \in H_0^s( \Omega )$ such that $\supp u \cap \supp v = \emptyset$.}
\end{equation}
Let $\norm{ \IK } \defeq \sup_{u\in H_0^s} \norm{ \IK u }_{H^{-s}}/ \norm{ u }_{H^s_0}$ and $\norm{ \IK^{-1} } \defeq \sup_{f\in H^{-s}} \norm{ \IK^{-1} f }_{H^{s}_0}/ \norm{ f }_{H^{-s}}$ denote the operator norms of $\IK$ and $\IK^{-1}$.
The complexity and accuracy estimates for our algorithm will depend on (and only on) $d,s,\Omega, \norm{ \IK }$, $\norm{ \IK^{-1} }$, and the parameter
\begin{equation}
	\label{eq-homogeneity-param}
	\delta \defeq
	\frac{\min_{i \neq j \in \I}
	\dist\left(x_i, \{x_j\} \cup \partial \Omega \right)}
	{\max_{x \in \Omega} \dist\left(x, \{ x_i \}_{i \in \I}
	\cup \partial \Omega \right)}
\end{equation}
which is a measure of the homogeneity of the distribution of the cloud of points $x_i$.

Since our algorithm only requires the locations of the points $x_i$ and is oblivious to the exact knowledge of $\K$, for our numerical experiments in \cref{sec:numerics} we will consider \eqref{eq-op-between-sobolev}, general elliptic operators with or without boundary conditions (these include Mat\'ern kernels with fractional values of $s$) and exponential kernels.

\subsection{Zero fill-in incomplete Cholesky factorization (\ICH)}

\begin{figure}[t]
	\begin{minipage}[t]{0.40\textwidth}
		\vspace{0pt}
		\begin{algorithm}[H]
			\textbf{Input:} $A \in \Reals^{N \times N}$ symmetric\\
			\textbf{Output:} $L\in \Reals^{N \times N}$ lower triang. \\
			\begin{algorithmic}[1]
			\setcounter{ALC@unique}{0}
			\FOR{$i \in \{ 1, \dots , N \}$}
				\STATE $L_{:i} \gets A_{:i} / \sqrt{A_{ii}}$
				\FOR{$j \in \{ i + 1, \dots, N \}$}
					\FOR{$ k \in \{ j, \dots, N \}$}
						\STATE $A_{kj} \gets A_{kj} - \frac{A_{ki} A_{ji}}{A_{ii}}$
					\ENDFOR
				\ENDFOR
			\ENDFOR
			\RETURN $L$\;
			\end{algorithmic}
			\caption{Standard dense Cholesky factorization.}
			\label{alg-Cholesky}
		\end{algorithm}
	\end{minipage}
	\begin{minipage}[t]{0.55\textwidth}
		\vspace{0pt}
		\begin{algorithm}[H]
		\setcounter{ALC@unique}{0}
			\textbf{Input: }$A \in \Reals^{N \times N}$ symmetric, \textcolor{red}{$ \operatorname{nz}(A) \subset S$}\\
			\textbf{Output:} $L\in \Reals^{N \times N}$ lower triang.\ \textcolor{red}{$ \operatorname{nz}(L) \subset S$}\\
			\begin{algorithmic}[1]
				{\color{red}
				\FOR{$(i,j) \notin S$}
					\STATE $ A_{ij} \gets 0$
				\ENDFOR}
				\FOR{$i \in \{ 1, \dots, N \}$}
				\STATE $L_{:i} \gets A_{:i} / \sqrt{A_{ii}}$
					\FOR{$j \in \{ i + 1, \dots, N \}$ \textcolor{red}{: $(i, j) \in S$}}
						\FOR{$ k \in \{ j, \dots, N \}$ \textcolor{red}{: $( k, i ), ( k, j ) \in S$}}
							\STATE $A_{kj} \gets A_{kj} - \frac{A_{ki} A_{ji}}{A_{ii}}$
						\ENDFOR
					\ENDFOR
				\ENDFOR
				\RETURN $L$\;
			\end{algorithmic}
			\caption{Incomplete Cholesky factorization with sparsity pattern $S$.}
			\label{alg-ICholesky}
	  \end{algorithm}
	\end{minipage}
	\caption{Comparison of ordinary and incomplete Cholesky factorization.
	Here, for a matrix $A$, $\operatorname{nz}(A) \defeq \{ (i, j) \mid A_{ij} \neq 0 \}$ denotes the index set of the non-zero entries of $A$.}
	\label{fig-cholesky}
\end{figure}

A simple approach to decreasing the computational complexity of Cholesky factorization is the \emph{zero fill-in incomplete Cholesky factorization} \cite{meijerink1977iterative} (\ICH).
When performing Gaussian elimination using \ICH, we treat all entries of both the input matrix and the output factors outside a prescribed \emph{sparsity pattern} $S \subset \I \times \I$ as zero and correspondingly ignore all operations in which they are involved.
\Cref{fig-cholesky} shows a comparison of ordinary Cholesky factorization and \ICH.
Our approach to kernel matrices consists of applying \cref{alg-ICholesky} with an elimination ordering $\prec$ and a sparsity pattern $S$ that are chosen based on the locations of the $x_i$;
\Cref{const-superMulti} gives the details of this elimination ordering and sparsity pattern.

Write $\norm{ \quark }_{\FRO}$ for the Frobenius matrix norm and $C$ for a constant depending only on $d$, $\Omega$, $s$, $\norm{ \IK }$, $\|\IK^{-1}\|$, and $\delta$.
To simplify notation, the asymptotic bounds in this paper are stated in the case where the logarithmic factors are at least one.
Our main result is the following:

\begin{theorem}
	\label{thm-decayApproxIChol}
	Let $\IK$ and $\delta$ be defined as in \eqref{eq-op-between-sobolev} and \eqref{eq-homogeneity-param}.
	For $\rho \geq C \log(N/\epsilon)$, the sparse Cholesky factor $L^\rho$, obtained from \cref{alg-ICholesky} with the elimination ordering $\prec_\rho$ and sparsity pattern $\tilde{S}_{\rho} \subset \I \times \I$ described in \cref{const-superMulti}, satisfies
	\begin{equation}
		\label{eq-decayApproxIChol}
		\bignorm{ \KM - L^{\rho} L^{\rho, \top} }_{\FRO} \leq \epsilon.
	\end{equation}
	The selection of the ordering and sparsity pattern, as well as \cref{alg-ICholesky}, can be performed in computational complexity $C \rho^{2d} N \log^2 N$ in time and $C \rho^{d} N \log N$ in space.
	In particular, we can obtain an $\epsilon$-accurate approximation in Frobenius norm in complexity $C N \log^2(N) \log(N/\epsilon)^{2d}$ in time and $C N \log(N) \log(N/\epsilon)^{d}$ in space.
\end{theorem}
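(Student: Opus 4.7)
The plan is to interpret the Cholesky factorization probabilistically and then exploit a screening/decay estimate for Green's functions of elliptic operators. Treating $\KM$ as the covariance matrix of a centred Gaussian vector $Y \in \Reals^{\I}$, the columns of the exact Cholesky factor $L$ (in any ordering $\prec$) encode the conditional law of each $Y_{i}$ given the variables $\{ Y_{j} : j \prec i \}$: the diagonal entry is the conditional standard deviation, and the off-diagonal entries in column $i$ are the regression coefficients of the later variables on $Y_{i}$ after marginalising out the earlier ones. The incomplete Cholesky factor $L^{\rho}$ produced by Algorithm~\ref{alg-ICholesky} with pattern $\tilde{S}_{\rho}$ admits an analogous interpretation in which conditioning and prediction are restricted to the indices allowed by $\tilde{S}_{\rho}$. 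Thus, controlling $\KM - L^{\rho}L^{\rho,\top}$ amounts to controlling how much information is discarded when one uses only $\tilde{S}_{\rho}$.

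The ordering $\prec_{\rho}$ from Construction~\ref{const-superMulti} is a reverse maximin / coarse-to-fine ordering that induces a multiresolution hierarchy of length $\BigO(\log N)$, with points at level $\ell$ separated by a characteristic length $h_{\ell} \sim 2^{-\ell}$, and the pattern $\tilde{S}_{\rho}$ keeps interactions only within distance $\rho \, h_{\ell}$. The first main technical step is to establish that, in this ordering, the exact Cholesky factor $L$ exhibits exponential decay: columns corresponding to level-$\ell$ points have entries that decay like $\exp(-c \, \dist(x_{i},x_{j})/h_{\ell})$ in the reverse-maximin-induced distance. This is the operator-adapted wavelet / gamblet screening estimate for Green's functions of elliptic operators of order $s > d/2$, and it follows from the variational characterization of gamblets together with an exponential-decay Caccioppoli-type argument using the locality of $\IK$ and the boundedness of $\IK,\IK^{-1}$ on the Sobolev pair $H^{s}_{0} \leftrightarrow H^{-s}$. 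At this stage the constants depend exactly on $d,s,\Omega,\norm{\IK},\norm{\IK^{-1}},\delta$, as required.

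The second technical step is to pass from this decay for the \emph{exact} Cholesky factor to the \emph{incomplete} one. One shows that $L^{\rho}$ coincides with the exact Cholesky factor of a restricted Gaussian problem in which, at each elimination step, the conditional mean is replaced by the best linear predictor using only variables allowed by $\tilde{S}_{\rho}$. A perturbation estimate --- using the exponential decay from the previous paragraph to bound the discarded regression coefficients --- then gives a columnwise bound of the form $\bignorm{ L_{:i} - L^{\rho}_{:i} } \lesssim e^{-c\rho}$, together with a bound on the tail entries of the exact factor outside $\tilde{S}_{\rho}$ of the same order. Summing over the $N$ columns and using $\# \tilde{S}_{\rho} = \BigO(\rho^{d} N \log N)$ yields
\begin{equation}
	\bignorm{ \KM - L^{\rho} L^{\rho,\top} }_{\FRO}
	\leq C N \cdot e^{-c\rho},
\end{equation}
so that the choice $\rho \geq C\log(N/\epsilon)$ produces the desired $\epsilon$-accuracy.

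Finally, the complexity bounds follow from a careful cost accounting for Construction~\ref{const-superMulti} (a variant of a $k$-d-tree / space-filling hierarchy constructed in $\BigO(N \log^{2} N)$ time), together with the observation that each column of $L^{\rho}$ has at most $\BigO(\rho^{d} \log N)$ non-zero entries, and each update inside the triple loop of Algorithm~\ref{alg-ICholesky} touches at most $\BigO(\rho^{d})$ other indices. I expect the main obstacle to be the proof of the exponential decay of the exact Cholesky factor in the gamblet-adapted ordering, which is where the elliptic regularity / screening heart of the argument lives; the perturbation step from exact to incomplete factor and the complexity bookkeeping are more routine once the decay estimate is in hand.
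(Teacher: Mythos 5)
Your outline of the first step (exponential decay of the exact Cholesky factor in the hierarchical ordering via gamblet/screening estimates, with constants depending on $d,s,\Omega,\norm{\IK},\norm{\IK^{-1}},\delta$) matches the paper's route through Conditions~\ref{cond-spatloc} and \ref{cond-specloc} and Theorem~\ref{thm-decayAbstractCholesky}, and your complexity accounting is essentially Theorem~\ref{thm-complexityCholesky}. The gap is in your third paragraph, where you declare the passage from the exact factor to the incomplete factor to be a "routine" perturbation estimate yielding $\bignorm{L_{:i}-L^{\rho}_{:i}} \lesssim e^{-c\rho}$ per column. This is precisely the step the paper identifies as the hard one: truncating entries at step $i$ perturbs the Schur complement used at every later step, and in a column-by-column elimination these perturbations compound over up to $N$ stages, so a naive recursion gives an amplification factor exponential in $N$, not a uniform $e^{-c\rho}$ per column. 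The authors state explicitly that they were unable to prove stability of \ICH{} for the plain maximin ordering and pattern $S_\rho$, and that this is the entire reason Construction~\ref{const-superMulti} (the supernodal \emph{multicolor} ordering that the theorem actually invokes) exists. Your proposal treats that construction as just a coarse-to-fine ordering and never uses the supernode/coloring structure, so it cannot close this step.

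The paper's actual argument is: (i) the truncated exact factor $L^{S}$ is the \emph{exact} output of Algorithm~\ref{alg-ICholesky} applied to the perturbed input $\KM - E$ with $\norm{E}_{\FRO}\leq\epsilon$ (last part of Theorem~\ref{thm-decayCholesky}); (ii) a stability analysis (Lemmas~\ref{lem-pertSchur} and \ref{lem-stabICholesky}) shows that the outputs on inputs $\KM$ and $\KM-E$ differ by at most $q^{2}(C\kappa)^{2qp}\epsilon$. The multicolor supernodal ordering reduces the number of block-elimination stages from $N$ to $qp = \BigO(\log N)$, so the compounding factor $(C\kappa)^{2qp}$ is only polynomial in $N$ and can be absorbed by enlarging the constant in $\rho \geq C\log(N/\epsilon)$. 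To repair your proof you would either have to supply this stability argument (which forces you to actually use the supernodal/coloring structure of $\prec_\rho$ and $\tilde S_\rho$), or prove stability of \ICH{} under the plain maximin ordering, which the paper leaves open.
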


\begin{remark}
	For problems arising in Gaussian process regression, there will typically be no domain $\Omega$ on the boundary of which the process is conditioned to be zero;
	equivalently, $\Omega$ will be all of $\Reals^d$.
	This introduces an additional error, but we still observe good approximation of the covariances even of points close to the boundary (see \cref{ssec-noBoundary} for a detailed discussion).
\end{remark}

We will now present a simplified version of the elimination ordering and sparsity pattern (compared to the one mentioned in \cref{thm-decayApproxIChol}).
Although the proof of \cref{thm-decayApproxIChol} does not cover the stability of {\ICH} under this simplified version (rather, it covers the one described in \cref{const-superMulti}), extensive numerical experiments suggest that {\ICH} remains stable under this simplified version, and since it is also \emph{user-friendly} we recommend this as the ``go-to'' version for a simple, practical implementation.\footnote{Although more complex, the ordering used in \cref{thm-decayApproxIChol} has more potential for optimization by exploiting parallelism and dense linear algebra operations.}

\begin{figure}[t]
	\centering
	\includegraphics[width=0.24\textwidth]{./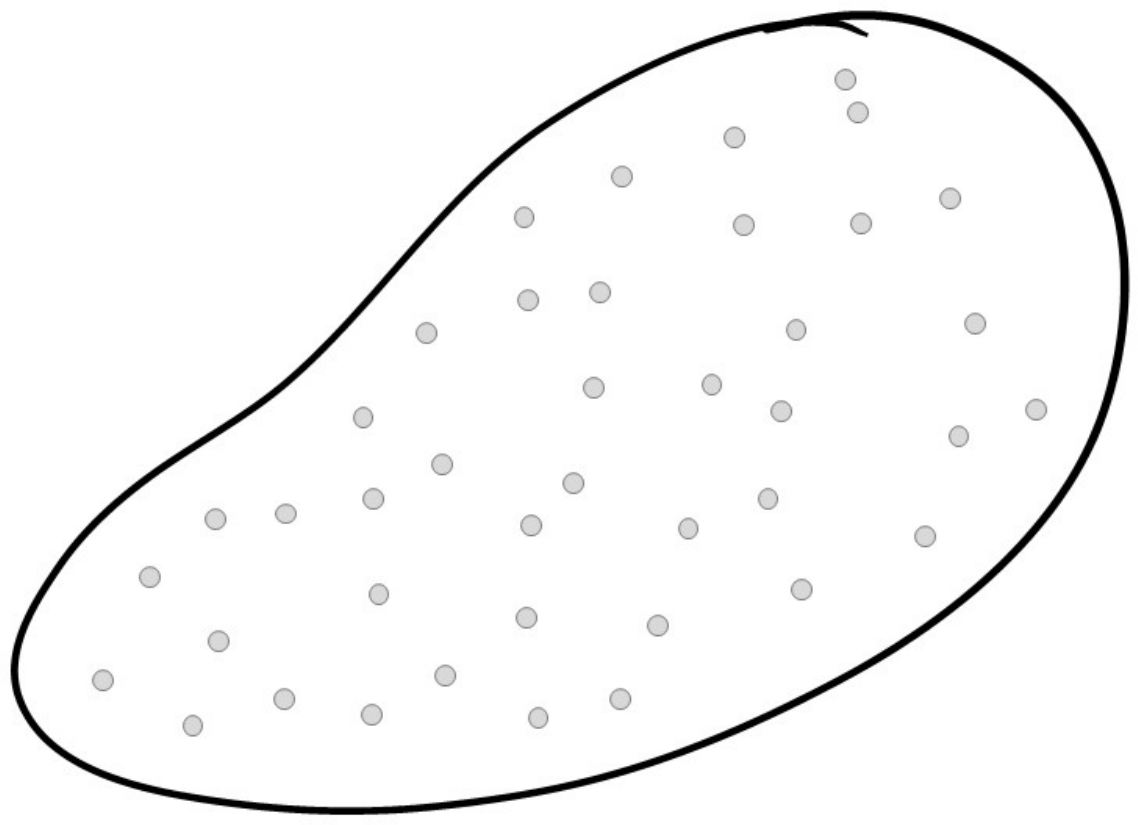}
	\includegraphics[width=0.24\textwidth]{./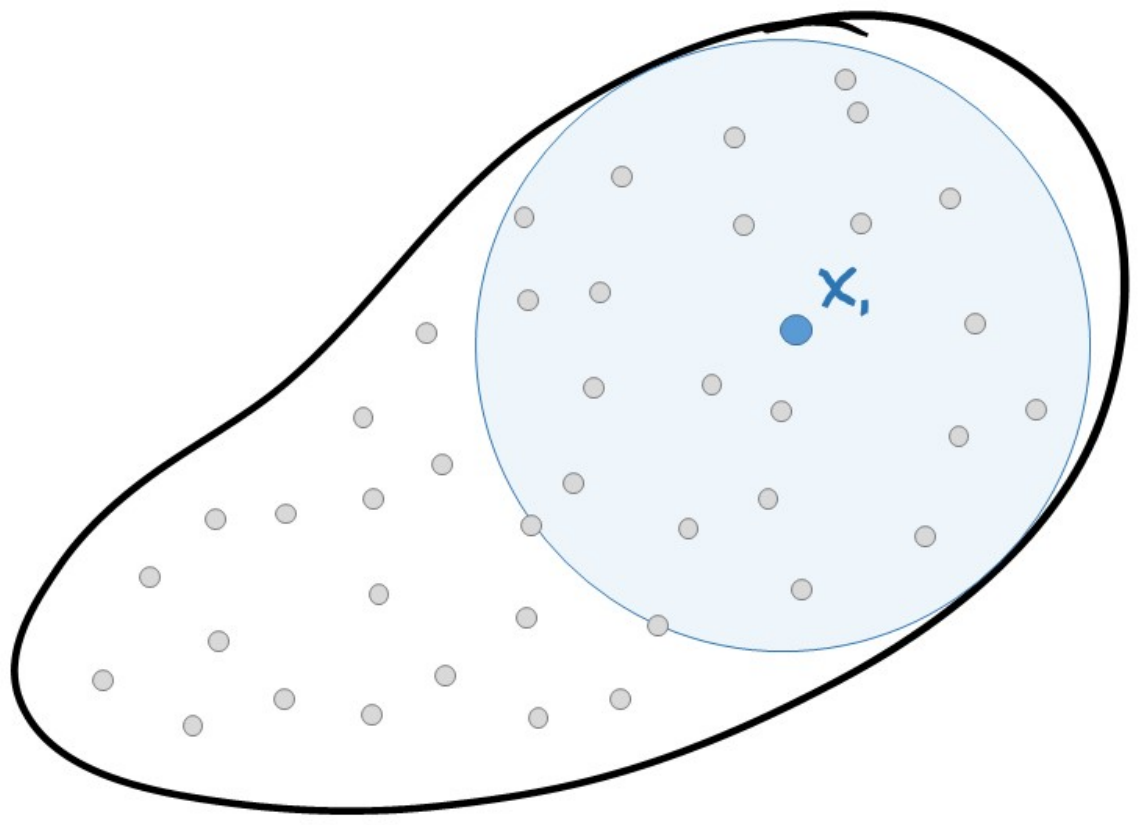}
	\includegraphics[width=0.24\textwidth]{./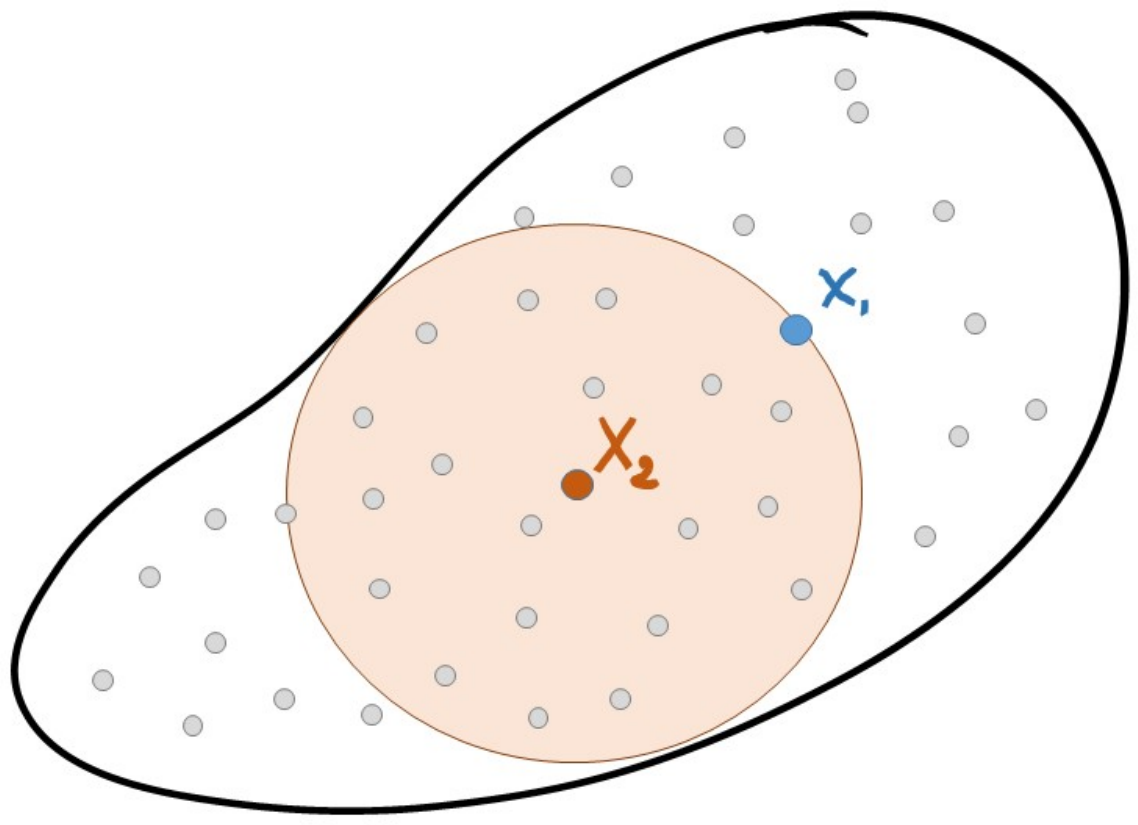}
	\includegraphics[width=0.24\textwidth]{./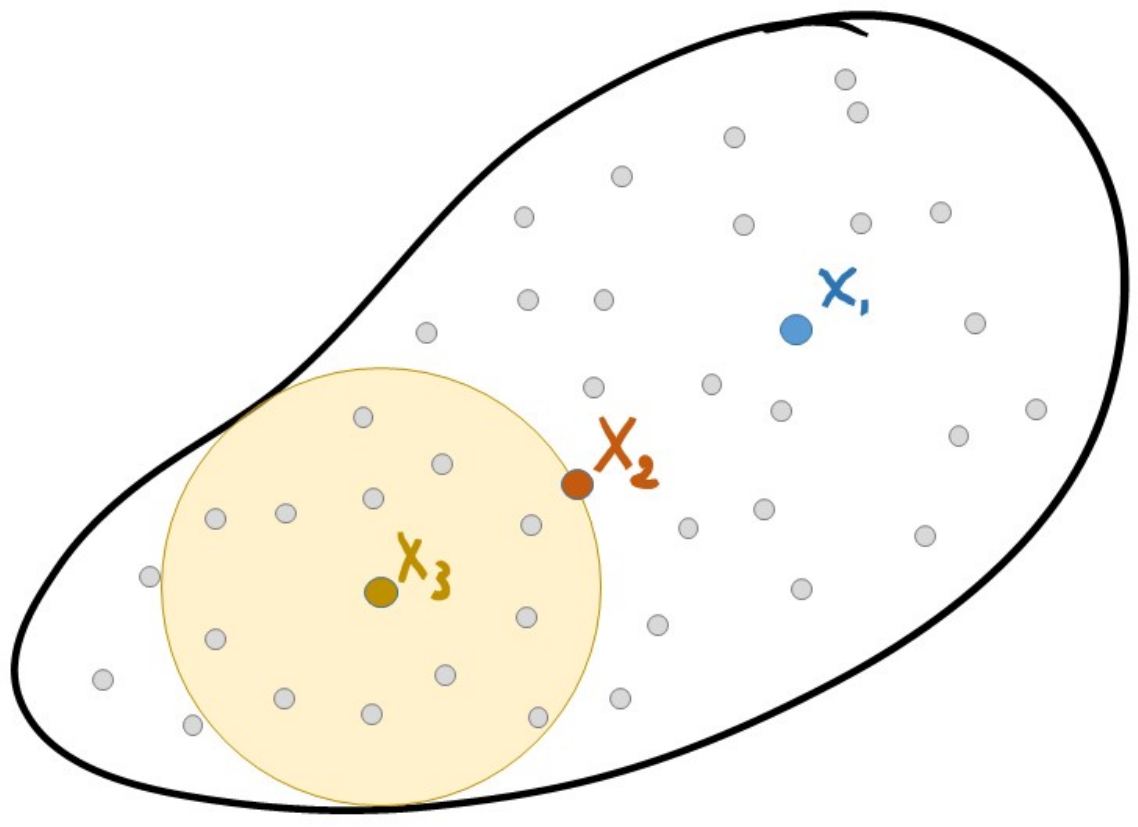}
	\includegraphics[width=0.24\textwidth]{./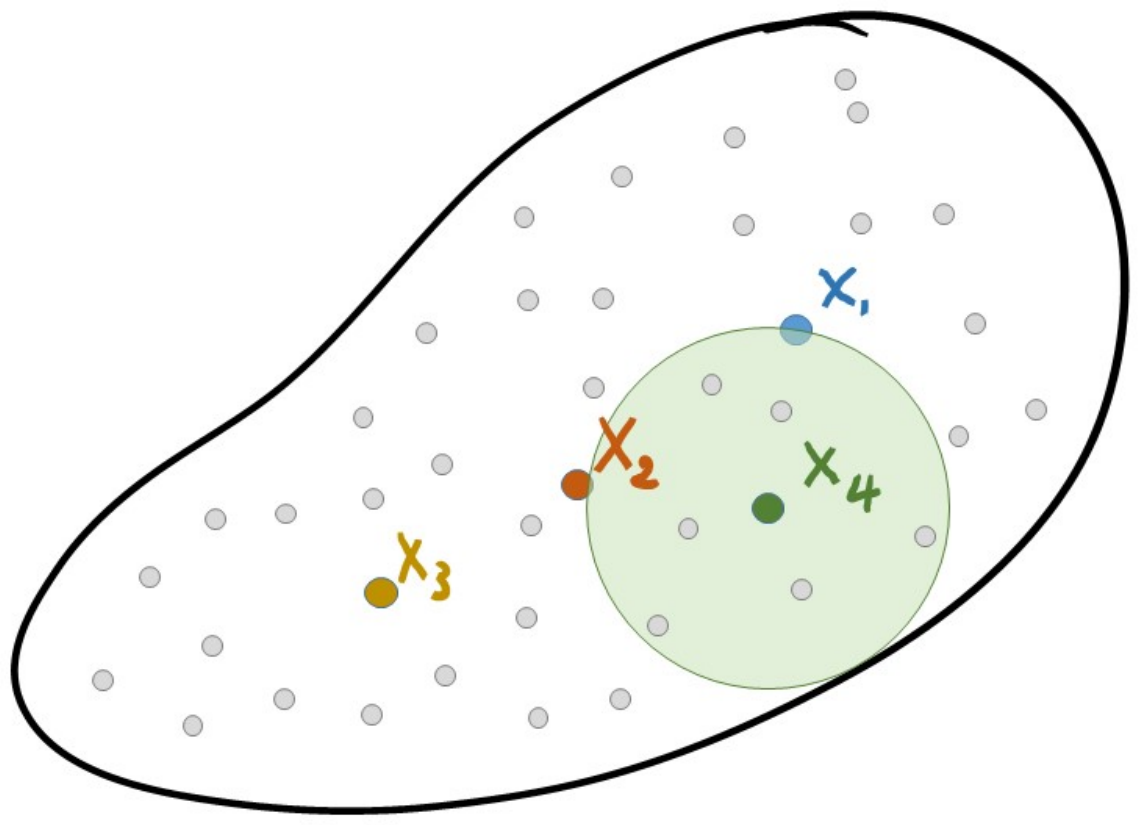}
	\includegraphics[width=0.24\textwidth]{./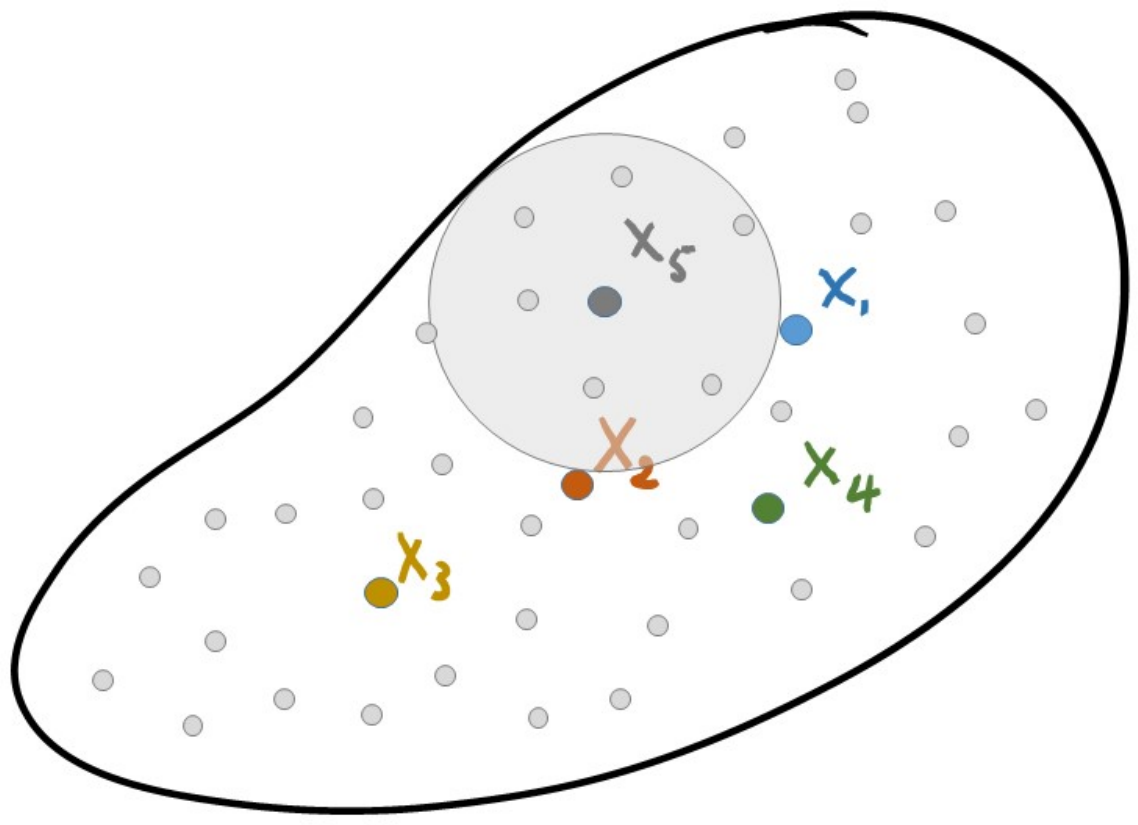}
	\includegraphics[width=0.24\textwidth]{./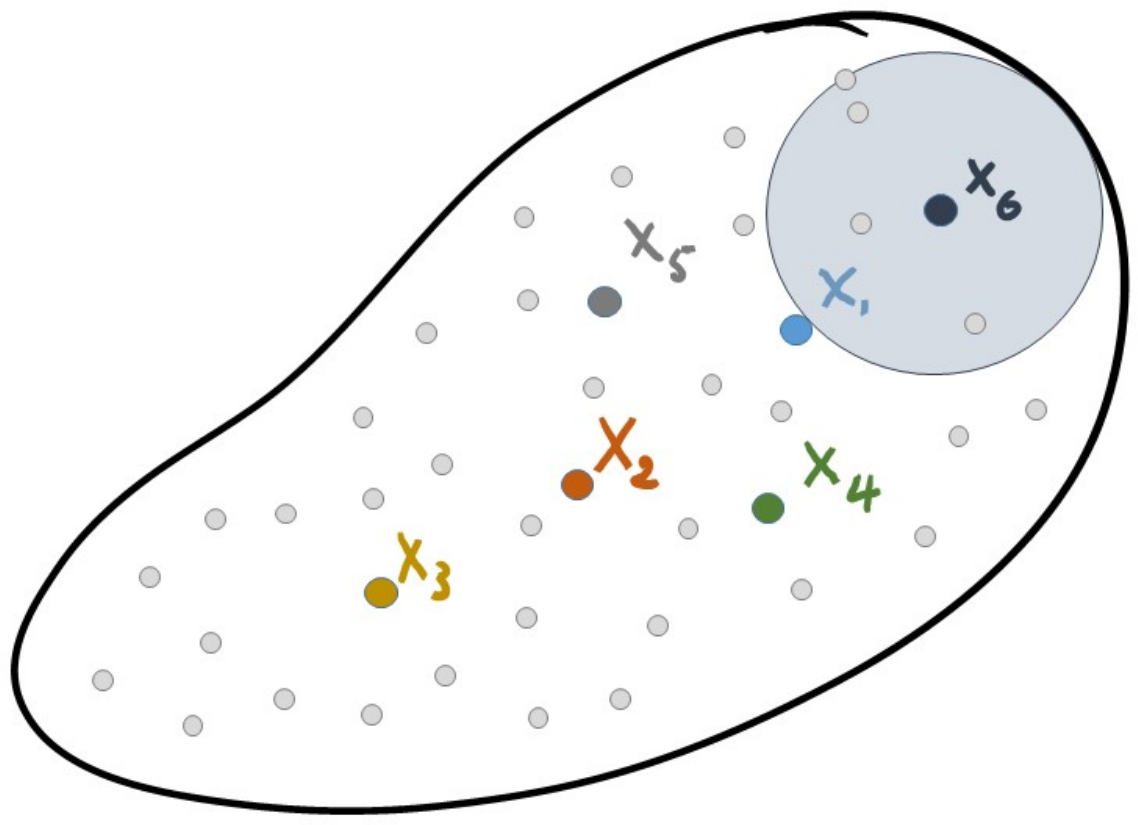}
	\includegraphics[width=0.24\textwidth]{./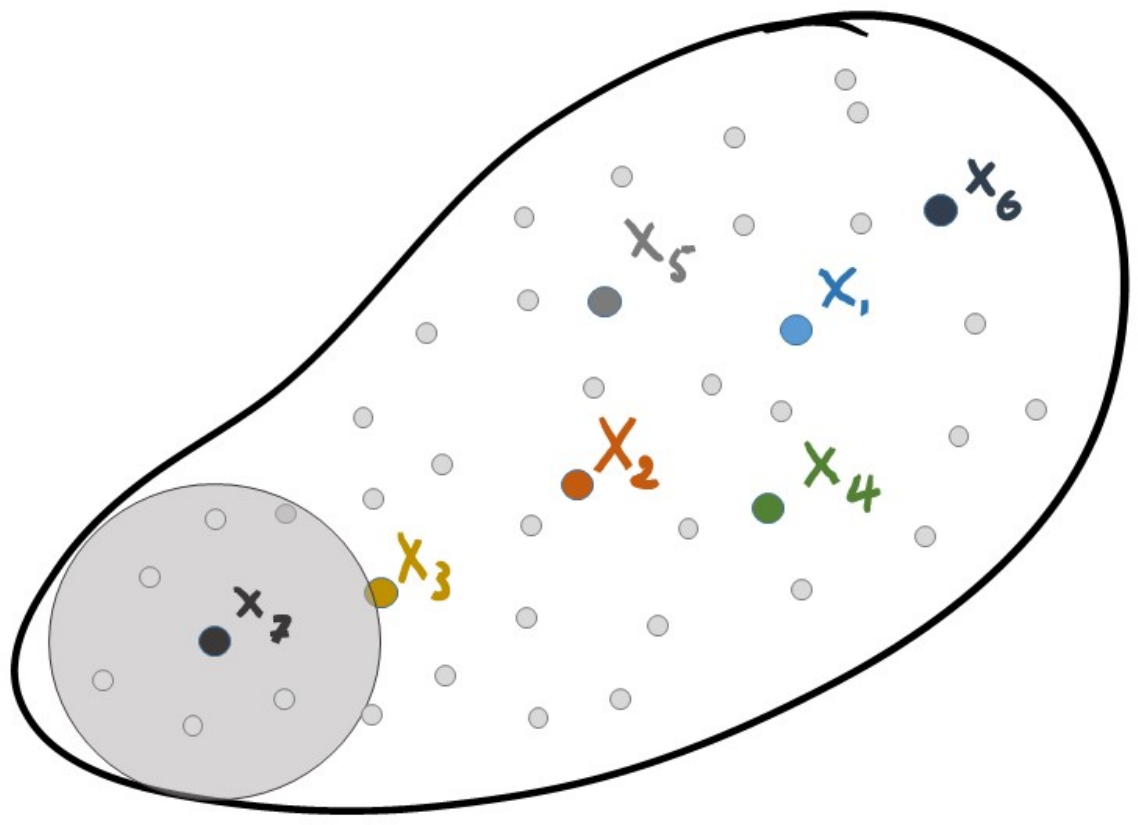}
	\caption{The maximin order successively adds the point that is furthest away from both $\partial \Omega$ and the set of points already added.
	The radius of the shaded circle is $l[i]$.}
	\label{fig-maximinOrder}
\end{figure}

\subsection{The elimination ordering and sparsity pattern}
\label{sssec-chooseOrder}

We use a \emph{maximum-minimum distance ordering} (maximin ordering) \cite{guinness2016permutation} as the elimination ordering.
This ordering is obtained by successively picking the point $x_{i}$ that is furthest away from $\partial \Omega$ and the points that were already picked.
If $\partial \Omega = \emptyset$, then we select an arbitrary $i \in \I$ as first index to eliminate;
otherwise, we choose the first index as
\begin{equation}
	i_1 \defeq \argmax_{i \in \I} \dist( x_i, \partial \Omega).
\end{equation}
Then, for the first $k$ indices of the ordering already chosen, we choose
\begin{equation}
	i_{k+1} \defeq \argmax_{i \in \I \setminus \{i_1, \dots, i_k \}} \dist( x_i, \{x_{i_1} , \dots, x_{i_k}\} \cup \partial \Omega).
\end{equation}
until we have ordered all the $N$ points (see \cref{fig-maximinOrder}).

\begin{figure}[t]
	\centering
	\includegraphics[width=0.24\textwidth]{./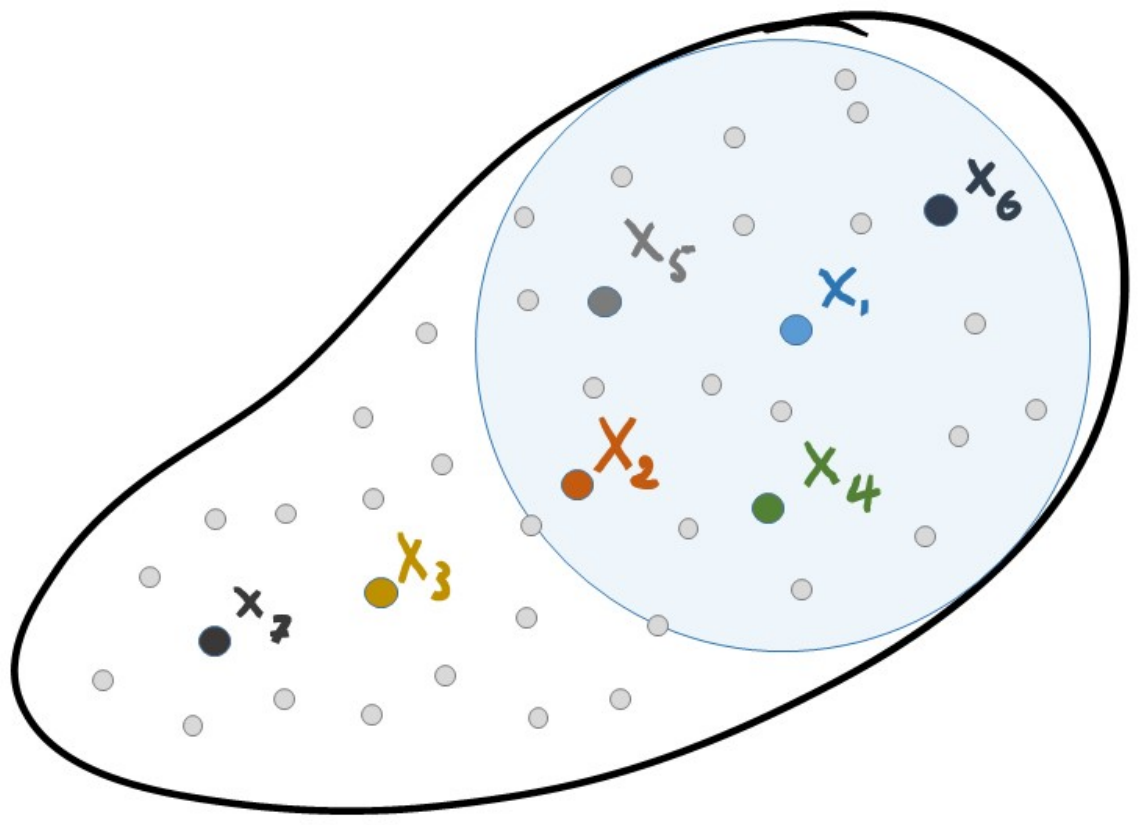}
	\includegraphics[width=0.24\textwidth]{./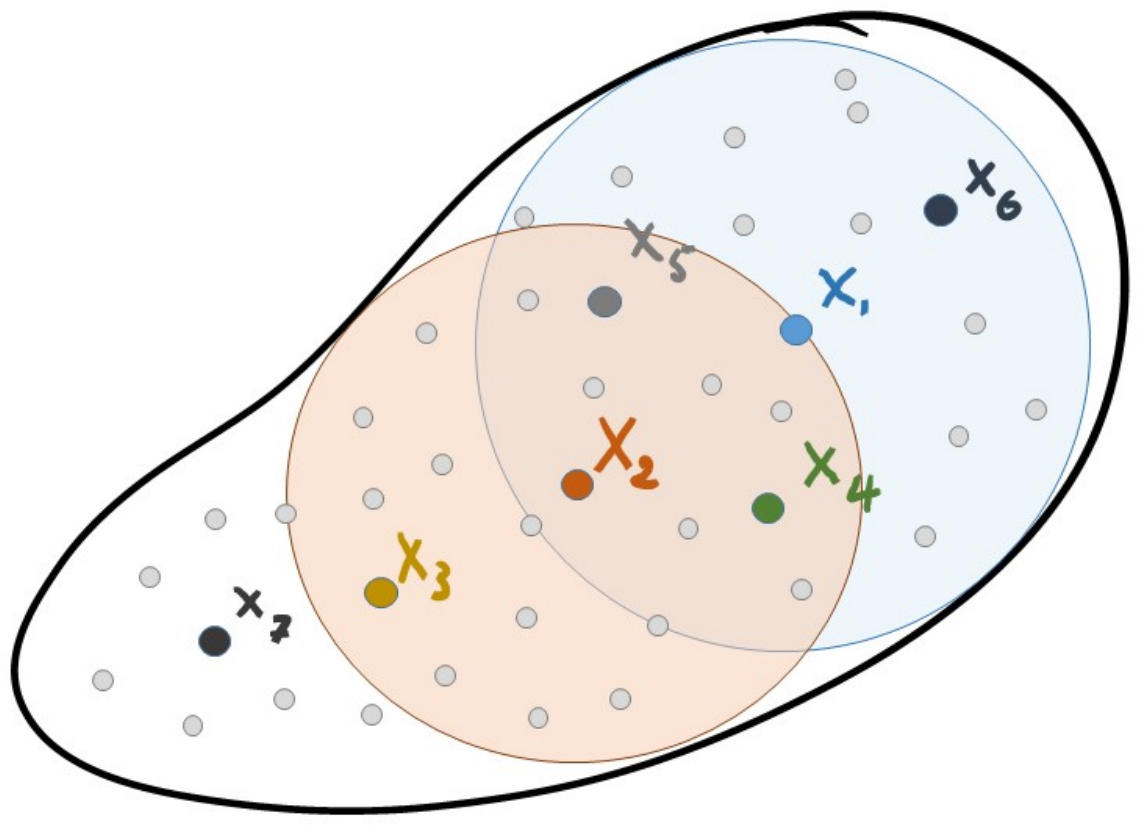}
	\includegraphics[width=0.24\textwidth]{./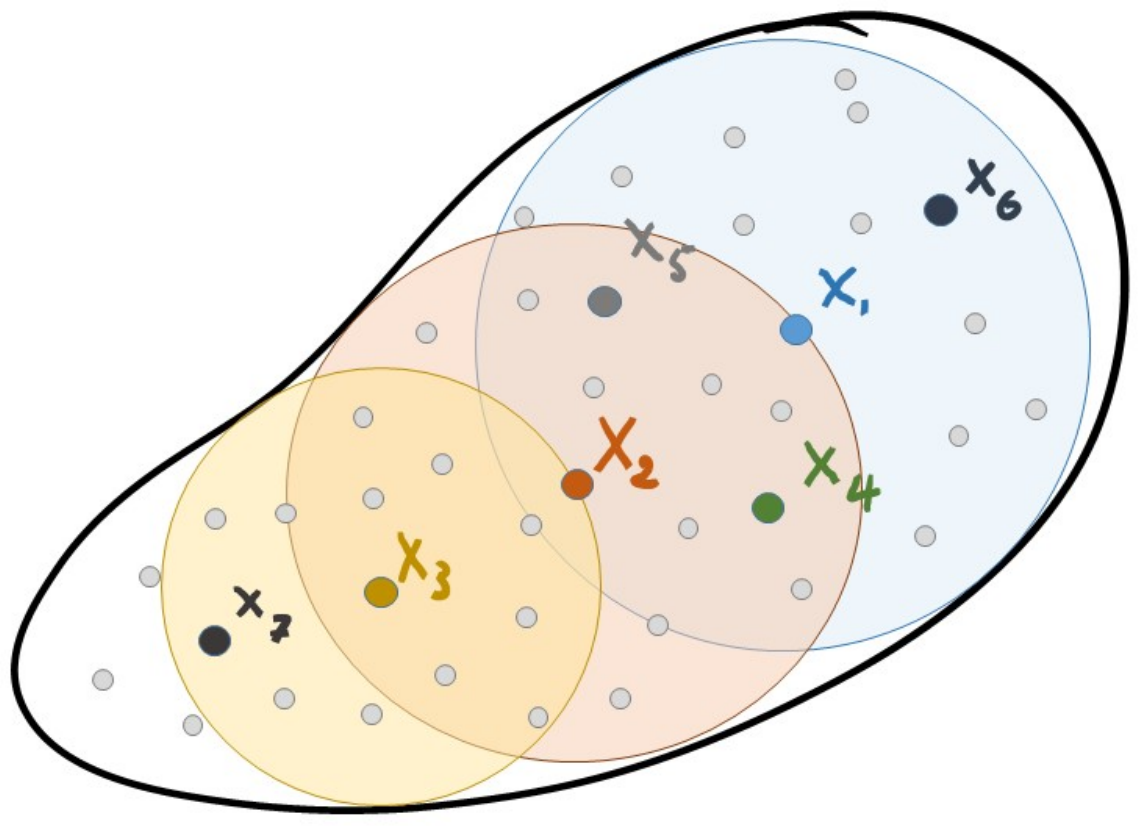}
	\includegraphics[width=0.24\textwidth]{./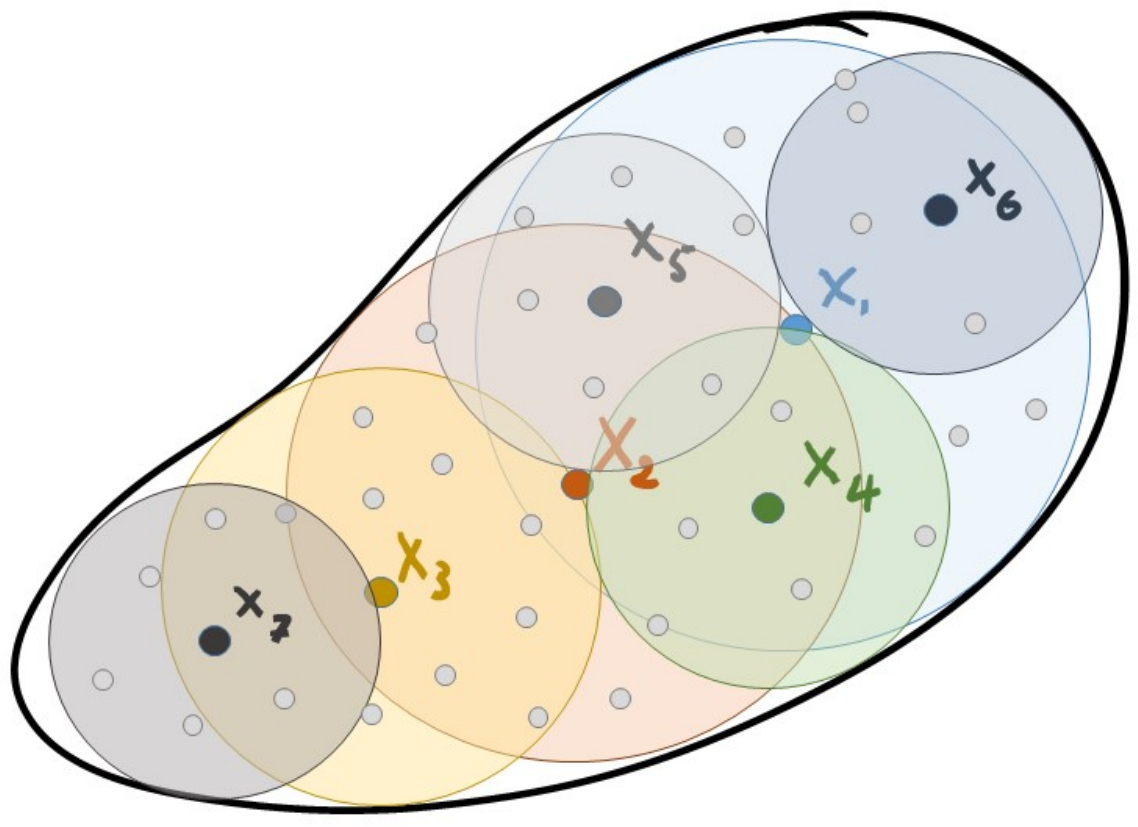}
	\includegraphics[width=0.24\textwidth]{./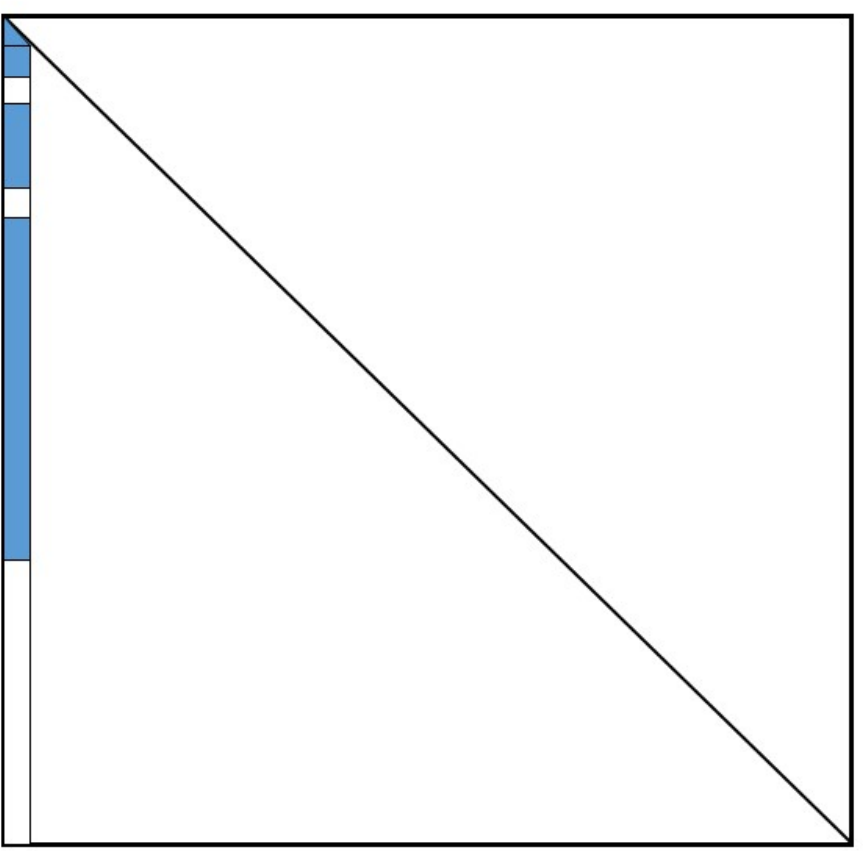}
	\includegraphics[width=0.24\textwidth]{./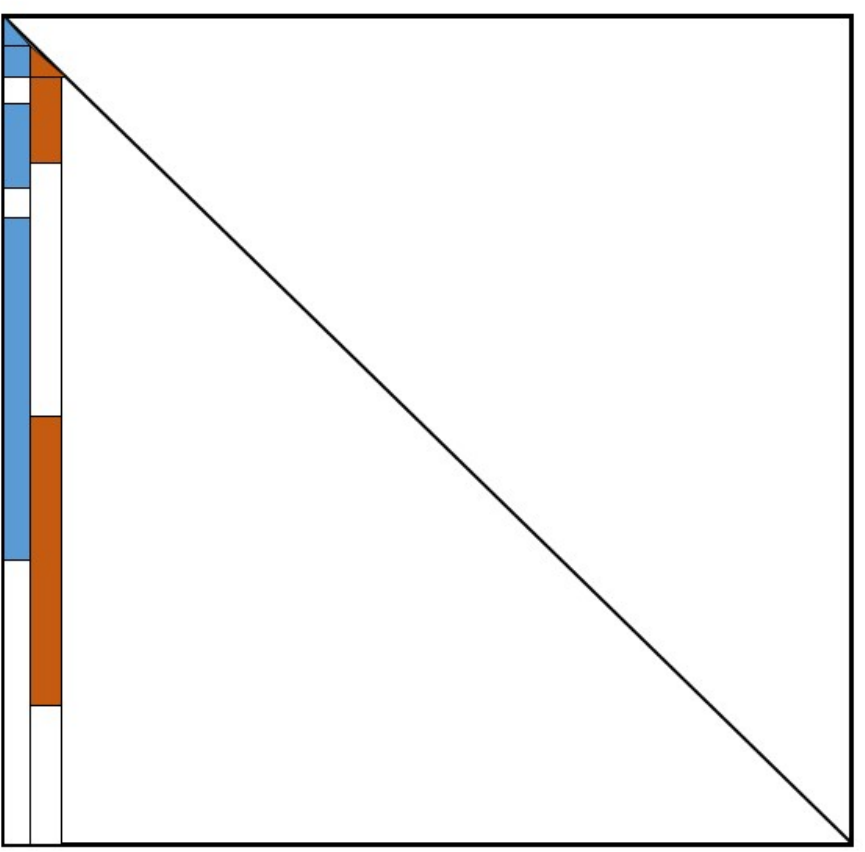}
	\includegraphics[width=0.24\textwidth]{./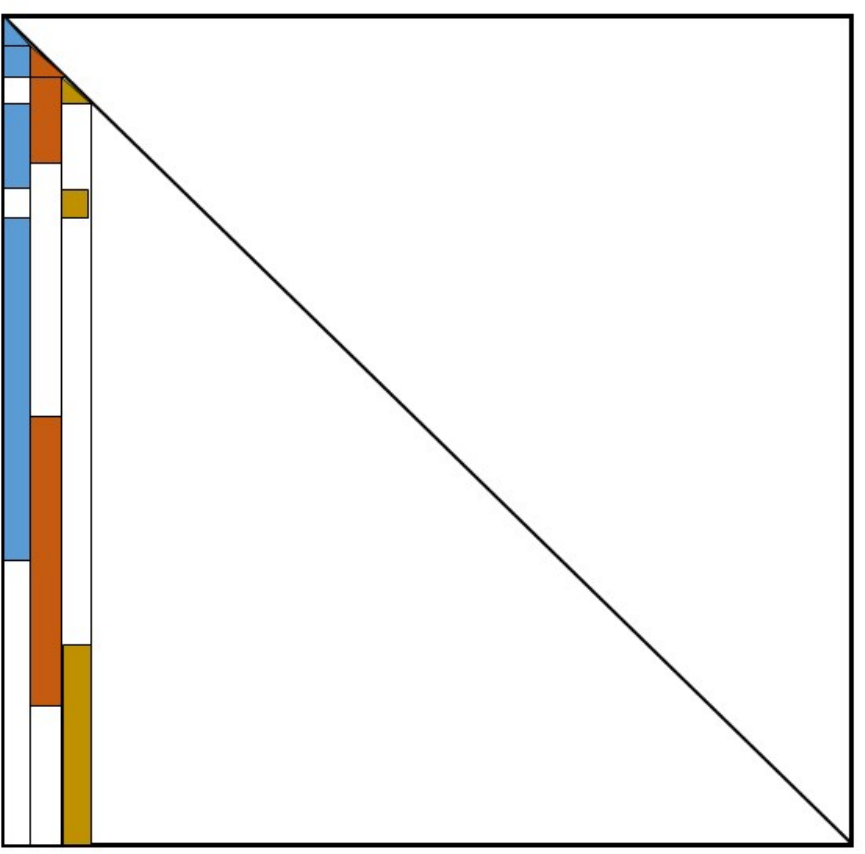}
	\includegraphics[width=0.24\textwidth]{./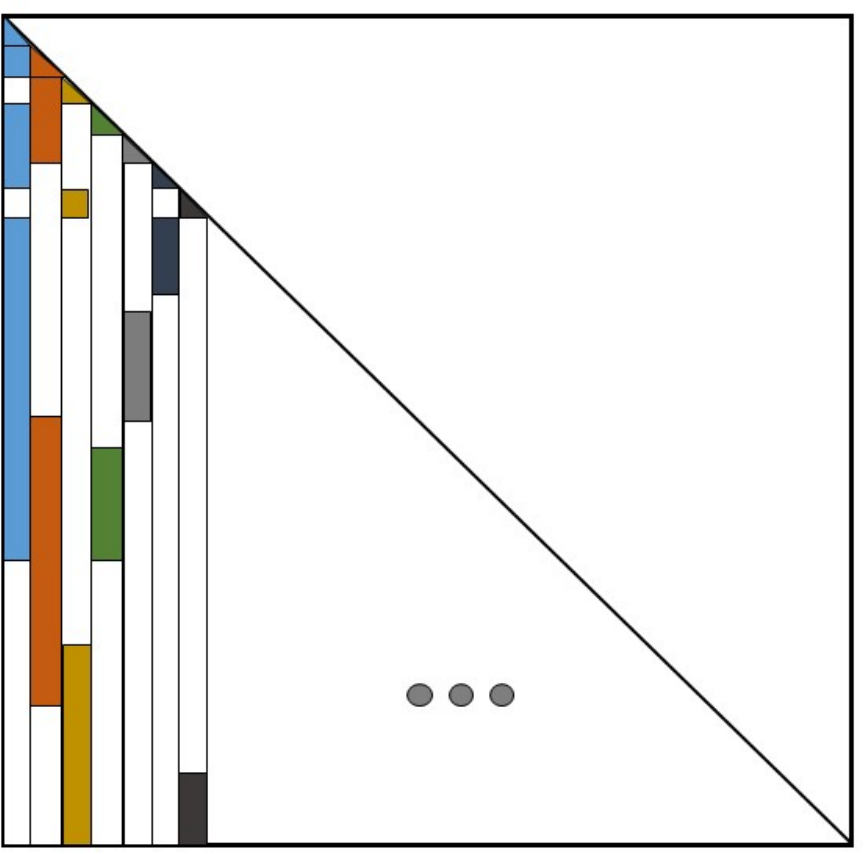}
	\caption{Upper row:  radii of interaction of the different degrees of freedom, for $\rho = 1$.
	Lower row: corresponding columns of the sparsity pattern.
	While the first columns are relatively dense, subsequent columns become more and more sparse.}
	\label{fig-maximinPattern}
\end{figure}

Let
\begin{equation}
	\label{eq-dist-point-to-boundary}
	l[i_{k}] \defeq \dist( x_{i_k}, \{x_{i_1} , \dots, x_{i_{k-1}}\} \cup \partial \Omega),
\end{equation}
be the distance between $x_{i_k}$ and $\partial \Omega$ and the earlier points in the ordering.
For $\rho > 0$, let $S_{\rho} \subset \I \times \I$ be the sparsity pattern defined by
\begin{equation}
	S_{\rho} \defeq \{ (i,j) \in \I \times \I \mid \dist(x_i,x_j) \leq \rho \max(l[i], l[j]) \}.
\end{equation}
Here, $\rho$ parameterizes a trade-off between computational efficiency and accuracy.
For a given $\rho$, the sparsity pattern will have $C \rho^d N \log N$ entries and the Cholesky factorization will require $C \rho^{2d} N \log^2 N$ floating-point operations.
\Cref{fig-maximinPattern} shows the sparsity pattern for $\rho = 1$.
While a na{\"\i}ve implementation requires $\BigO(N^2)$ distance evaluations, \cref{thm-compSortSparse} shows that \cref{alg-sortSparse} delivers this sparsity pattern at computational complexity $C \rho^d N \log^2 N$.

\begin{figure}[t]
	\centering
	\includegraphics[width=0.4\textwidth]{./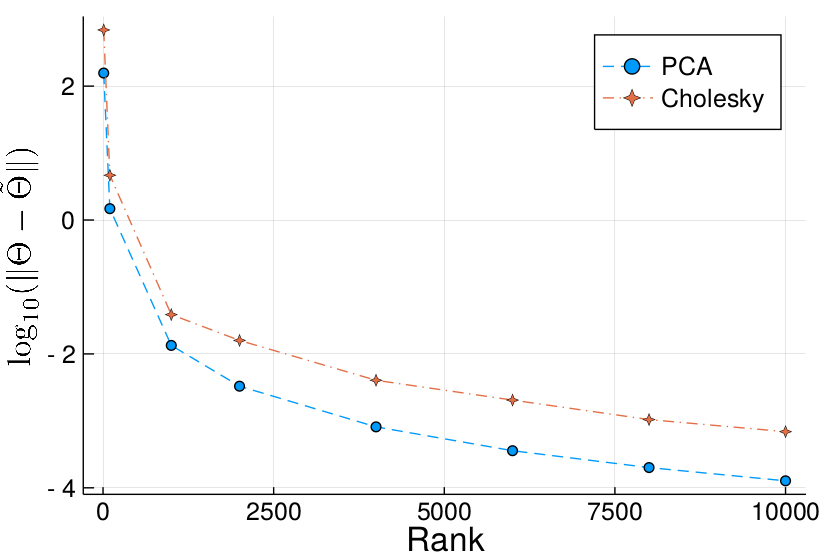}
	\includegraphics[width=0.4\textwidth]{./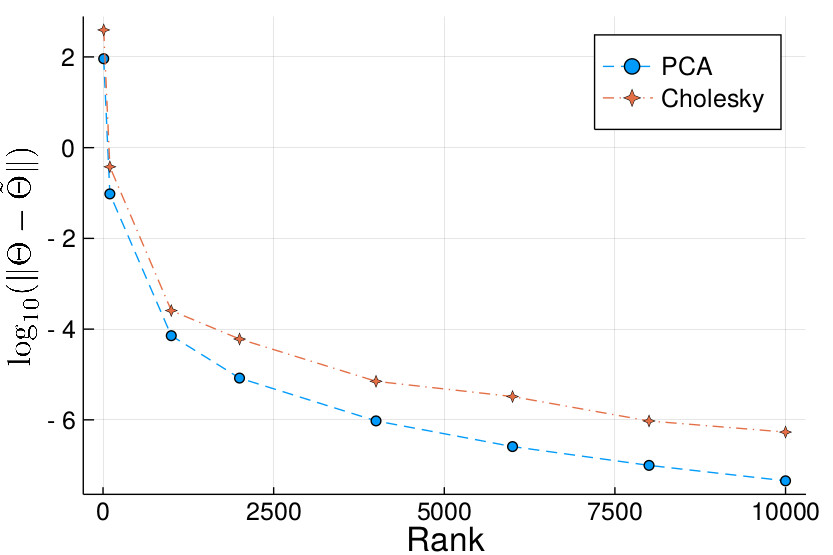}
	\caption{{Near-optimal sparse PCA:} Approximation errors comparisons between low-rank Cholesky ($\rho = \infty)$ and PCA for a Mat{\'e}rn kernel with smoothness parameters $\nu = 1$ (left) and $\nu = 2$ (right).}
	\label{fig-PCA}
\end{figure}

\subsection{Sparse approximate PCA}
\label{ssec-sparse-approximate-PCA}

The sparse Cholesky factorization described in \cref{ssec-simpleAlg} is also \emph{rank revealing} in the sense that the low-rank approximation obtained by using only the first $k$ columns of the Cholesky factorization achieves an accuracy within a constant factor of optimal rank-$k$ approximation (measured in operator norm).
This is illustrated by \cref{fig-PCA} and the following theorem:

\begin{theorem}
	\label{thm-rank-k-approximation}
	In the setting of \cref{thm-decayApproxIChol}, let $L^{(k)}$ be the rank-$k$ matrix defined by the first $k$ columns of the (dense) Cholesky factor $L$ of $\KM$.
	Then
	\begin{equation}
		\label{eq-rank-k-approximation}
		\bigl\| \KM - L^{(k)} L^{(k),\top} \bigr\| \leq C \| \KM \| k^{-\frac{2s}{d}}\,,
	\end{equation}
	where $\|\KM \|$ is the operator norm of $\KM $ and $C > 0$ depends only on $d$, $\Omega$, $s$, $\norm{ \IK }$, $\|\IK^{-1}\|$, and $\delta$.
\end{theorem}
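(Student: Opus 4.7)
The plan is to reduce the operator-norm bound to a Galerkin approximation estimate in the energy inner product associated to $\IK$, and then to invoke the gamblet framework of Section~\ref{sssec-choleskyGamblets} to obtain the rate $k^{-2s/d}$.

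\medskip

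First I would set up the feature-space identity. Writing $\phi_i \defeq \IK^{-1}\delta_{x_i} \in H_0^s(\Omega)$, one has $\KM_{ij} = \innerprod{\phi_i}{\IK\phi_j}_{L^2} =: \innerprod{\phi_i}{\phi_j}_{\IK}$, so $\KM$ is the Gram matrix of $\{\phi_i\}_{i\in\I}$ in the energy inner product. If $\{i_1,\ldots,i_N\}$ denotes the maximin ordering and $V^{(k)} \defeq \spn\{\phi_{i_1},\ldots,\phi_{i_k}\}$ with $P_k$ the energy-orthogonal (Galerkin) projection onto $V^{(k)}$, then the column structure of the full Cholesky factor $L$ implements a Gram--Schmidt procedure in this inner product. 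Consequently,
\begin{equation}
	\bigl(\KM - L^{(k)}L^{(k),\top}\bigr)_{ij} = \innerprod{(I-P_k)\phi_i}{(I-P_k)\phi_j}_{\IK}.
\end{equation}

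\medskip

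Next I would translate the operator-norm bound into an approximation-theoretic statement. Setting $u_v \defeq \sum_i v_i\phi_i$ and using the identity $\norm{u_v}_{\IK}^2 = v^\top \KM v \leq \norm{\KM} |v|^2$, the Gram structure gives
\begin{equation}
	\bignorm{\KM - L^{(k)}L^{(k),\top}} = \sup_{v \neq 0} \frac{\norm{(I-P_k)u_v}_{\IK}^2}{|v|^2} \leq \norm{\KM}\sup_{u\in\spn\{\phi_j\}}\frac{\norm{(I-P_k)u}_{\IK}^2}{\norm{u}_{\IK}^2}.
\end{equation}
Thus everything reduces to proving the spectral/Galerkin estimate $\norm{(I-P_k)u}_{\IK}^2 \leq C k^{-2s/d}\norm{u}_{\IK}^2$ for every $u$ in the span of the point-source functions.

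\medskip

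Finally I would combine two ingredients. The geometric one is that the maximin construction of Section~\ref{sssec-chooseOrder}, together with the homogeneity parameter $\delta$ in \eqref{eq-homogeneity-param}, forces the fill distance $h_k \defeq \sup_{x\in\Omega}\dist(x,\{x_{i_l}\}_{l\leq k}\cup \partial\Omega)$ of the first $k$ points to satisfy $h_k \leq C k^{-1/d}$. The analytic one is that $V^{(k)}$ is exactly the pre-wavelet/optimal-recovery space of the gamblet framework with measurements $\{\delta_{x_{i_l}}\}_{l\leq k}$, so by the scale-orthogonality, uniform Riesz stability, and exponential decay of the gamblet multi-resolution decomposition (Section~\ref{sssec-choleskyGamblets}), one obtains the near-optimal spectral approximation
\begin{equation}
	\norm{(I-P_k)u}_{\IK}^2 \leq C h_k^{2s}\norm{u}_{\IK}^2.
\end{equation}
Plugging $h_k \leq C k^{-1/d}$ into the reduction above yields \eqref{eq-rank-k-approximation}.

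\medskip

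The main obstacle is the second ingredient: the energy-norm approximation rate $h_k^{2s}$ for $V^{(k)}$. The difficulty is that the functions $\phi_j = \IK^{-1}\delta_{x_j}$ have only $H^s$ regularity (because point sources lie in $H^{-s}$ but not $L^2$ for $s>d/2$), so the classical $H^{2s}$-to-$H^s$ duality argument of finite elements does not apply. One must instead use the gamblet decomposition into nested scales $V^{(k)} = \V^{(1)}\oplus \W^{(2)}\oplus\cdots$ implicit in Figure~\ref{fig-ordering}, apply the exponential decay of $\chi_i^{(k)}$ and the uniform condition number bounds on the $B^{(k)}$ proved in Section~\ref{sssec-boundCond}, and sum the tail contributions scale-by-scale; the geometric series in $h^{2s}$ then yields the desired estimate with a constant depending only on the quantities $d,\Omega,s,\norm{\IK},\norm{\IK^{-1}},\delta$ listed in the theorem.
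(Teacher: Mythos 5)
Your opening identity is sound and coincides with the paper's starting point: with your $\phi_i=\K\boldsymbol{\delta}_{x_i}$ and $P_k$ the energy-orthogonal projection onto $V^{(k)}$, the matrix $\KM-L^{(k)}L^{(k),\top}$ is the Gram matrix of the vectors $(I-P_k)\phi_i$, which is exactly the Schur complement $\KM_{2,2}-\KM_{2,1}\KM_{1,1}^{-1}\KM_{1,2}$ that the paper uses via Lemma~\ref{lem-blockChol2Scale} and \eqref{eqkwhdhkjhd}. The gap is in your reduction step. The quantity $\sup_{u\in\spn\{\phi_j\}_{j\in\I}}\norm{(I-P_k)u}^2/\norm{u}^2$ equals $1$ for every $k<N$: taking $u=(I-P_k)\phi_j$ for any $j\notin\{i_1,\ldots,i_k\}$ gives a nonzero element of the span (the $\phi_j$ are linearly independent because $\KM$ is positive definite) on which the projection removes nothing. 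Hence your displayed inequality, though true, can never yield more than $\norm{\KM-L^{(k)}L^{(k),\top}}\leq\norm{\KM}$, and the ``spectral/Galerkin estimate'' $\norm{(I-P_k)u}^2\leq Ck^{-2s/d}\norm{u}^2$ you reduce to --- as well as the intermediate claim $\norm{(I-P_k)u}^2\leq Ch_k^{2s}\norm{u}^2$ on the whole span --- is simply false. The machinery you invoke in the last paragraph (scale decomposition, exponential decay of gamblets, Riesz stability) cannot repair this, because the target statement itself is wrong: the smallness cannot be measured relative to the energy norm of $u$ over the span of all point sources.

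What makes the theorem true is that the error is small relative to the Euclidean norm of the coefficient vector, not relative to $\norm{u_v}$. This is the content of the variational identity \eqref{eqkwhdhkjhd} combined with the Poincar\'e-type inequality \eqref{eqn-speclocup} of Condition~\ref{cond-specloc}: for $\phi_v=\sum_j v_j\boldsymbol{\delta}_{x_j}$ one has $v^{\top}\bigl(\KM_{2,2}-\KM_{2,1}\KM_{1,1}^{-1}\KM_{1,2}\bigr)v=\min_{\varphi\in\spn\{\boldsymbol{\delta}_{x_{i_l}}\}_{l\leq k}}\norm{\phi_v-\varphi}_{\ast}^2$, and the paper bounds this by a constant times $l[i_{k+1}]^{2s-d}\absval{v}^2$ in Theorem~\ref{thm-PCA} by (i) using the monotonicity of the Schur complement to replace the first $k$ maximin points by a full coarse level $\I^{(p)}$ of the implicit hierarchy with $h=1/2$, and (ii) invoking Theorem~\ref{thm-specloc}, whose proof constructs explicit polynomial-reproducing weights (Lemmas~\ref{lem-condForW} and \ref{lem-geomLemma}) and applies the Bramble--Hilbert lemma; the stated form with $\norm{\KM}k^{-2s/d}$ then follows using the homogeneity of the point cloud, for which your geometric estimate $l[i_{k+1}]\lesssim k^{-1/d}$ is indeed the right ingredient. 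To fix your argument you must keep the $\absval{v}^2$ normalization throughout and prove the dual-norm approximation bound against it, rather than passing to a relative error bound over the span.
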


The rank-$k$ approximation estimate \eqref{eq-rank-k-approximation} is a numerical homogenization accuracy estimate similar those obtained in \cite{malqvist2014localization,owhadi2014polyharmonic, owhadi2015multigrid,owhadi2017universal,hou2017sparse}.
Numerical homogenization basis functions can be identified by the \emph{last} $k$ rows of the lower triangular Cholesky factor of $\IKM \defeq \KM^{-1}$, obtained with the reverse elimination ordering described in \cref{sssec-spFacIKM}.

\section{Why it works --- justification of the method}
\label{ssec-justification}

The method described in \cref{ssec-simpleAlg} combines two crude approximations.
First, it discards all but $\BigO(\rho^d N \log N)$ entries of the \emph{dense} $N \times N$ matrix $\KM$.
Second, it skips all but $\BigO(\rho^{2d} N \log^{2} N)$ operations of the Cholesky factorization of $\KM$ (which has complexity $\BigO(N^3)$).
The obvious question is:
why is the resulting approximation of $\KM$ accurate for $\rho \gtrsim \log N$?

\subsection{Sparse Cholesky factors of dense matrices}

The first part of the answer is that the Cholesky factors of
$\KM$ decay exponentially quickly away from the sparsity pattern $S_{\rho}$ when the maximin ordering is used as the elimination ordering.
This decay is illustrated in \cref{fig-sparsechol} and by the following \cref{thm-decayCholeskyIntro}.
Write $C$ for a constant depending only on $d$, $\Omega$, $s$, $\norm{ \IK }$, $\|\IK^{-1}\|$, and $\delta$.

\begin{theorem}
	\label{thm-decayCholeskyIntro}
	In the setting of \cref{thm-decayApproxIChol}, let $L$ be the full Cholesky factor of $\KM$ in the maximin ordering of \cref{ssec-simpleAlg}.
	Then, for $\rho \geq C \log(N/\epsilon)$, $S_{\rho}$ as defined in \cref{ssec-simpleAlg}, and
	\begin{equation}
		L^{S_\rho}_{ij} \defeq L_{ij} \one_{(i, j) \in S_{\rho}} =
		\begin{cases}
			L_{ij}, & \text{ for } (i,j) \in S_{\rho} ,\\
			0, & \text{ else,}
		\end{cases}
	\end{equation}
	the inequality $\left\|\KM - L^{S_\rho} L^{S_\rho, \top}\right\|_{\FRO} \leq \epsilon$ holds.
\end{theorem}

\Cref{alg-ICholesky} computes the exact Cholesky factorization under the assumption that the entries of $L$ lying outside $S_{\rho}$ are zero.
\Cref{thm-decayCholeskyIntro} shows that this assumption holds true up to an approximation error that decays exponentially in $\rho$, which supports the claim of accuracy of \cref{alg-ICholesky} for $\rho \gtrsim \log N$.
We will now explain the exponential decay of $L$ based on a probabilistic interpretation of Gaussian elimination.

\begin{figure}[t]
	\centering
	\includegraphics[width=0.24\textwidth]{./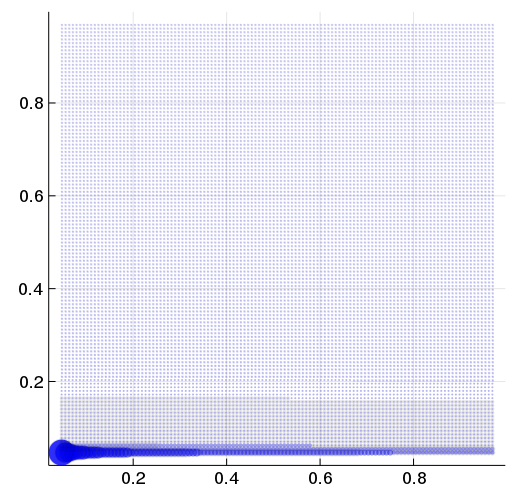}
	\includegraphics[width=0.24\textwidth]{./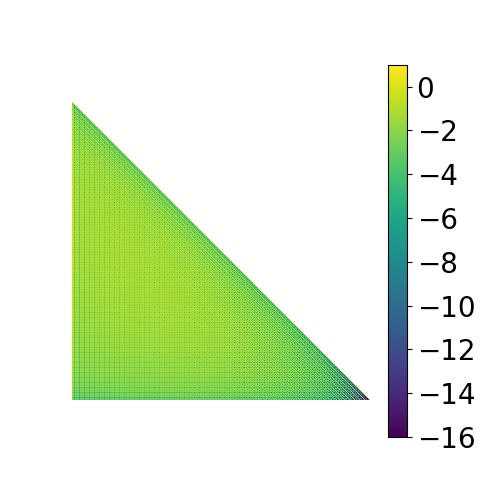}
	\includegraphics[width=0.24\textwidth]{./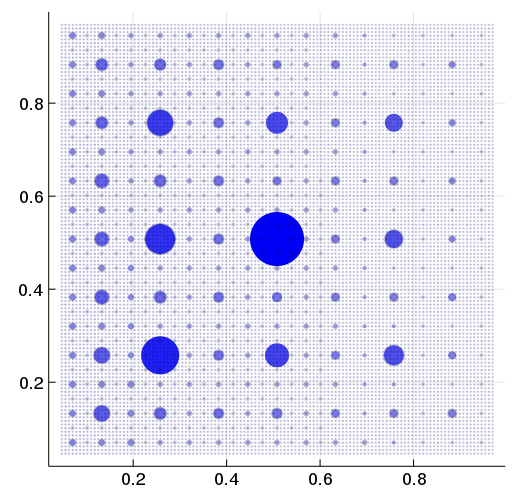}
	\includegraphics[width=0.24\textwidth]{./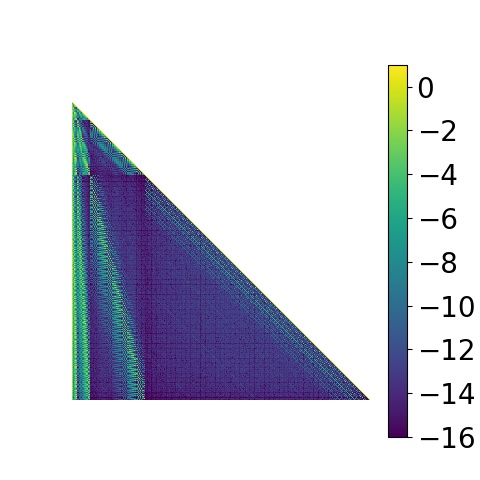}
	\caption{ The lexicographic--(left) and maximin (right) ordering of points in $\Omega \defeq (0, 1 )^2$ with larger and darker nodes corresponding to earlier elements of the ordering, together with the corresponding Cholesky factors of $\KM$ with entries plotted on a $\log_{10}$-scale.}
	\label{fig-sparsechol}
\end{figure}

\begin{figure}[t]
	\centering
	\includegraphics[scale=0.25]{./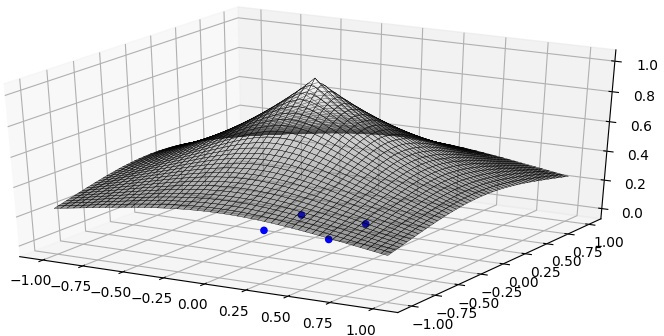}
	\includegraphics[scale=0.25]{./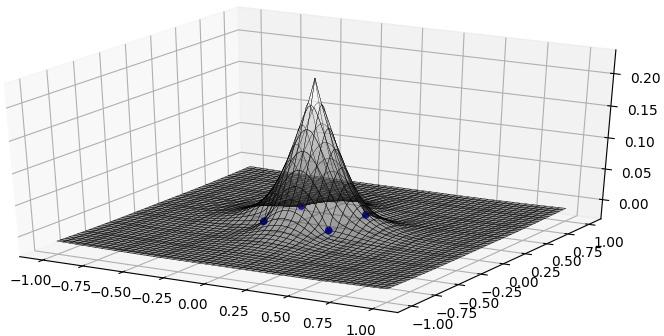}
	\caption{Left: covariance between a single site of a Mat{\'e}rn field with the	values at the remaining sites.
	Right: conditional covariance given the	values at the sites marked in blue.
	The conditional covariance decays significantly faster.}
	\label{fig-materndecorr}
\end{figure}

\subsection{Gaussian elimination, conditioning of Gaussian random variables,
and the screening effect}

The dense (block-)Cholesky factorization of a matrix $\KM$ can be seen
as the recursive application of the matrix identity
{\footnotesize
\begin{equation}
	\label{eqn-blockChol}
	\begin{pmatrix}
		\KM_{1,1} & \KM_{1,2} \\
		\KM_{2,1} & \KM_{2,2}
	\end{pmatrix}
	=
	\begin{pmatrix}
		\Id     & 0\\
		\KM_{2,1} ( \KM_{1,1} )^{-1} & \Id
	\end{pmatrix}
	\begin{pmatrix}
		\KM_{1,1} & 0 \\
		0         & \KM_{2,2} - \KM_{2,1} ( \KM_{1,1} )^{-1} \KM_{1,2}
	\end{pmatrix}
	\begin{pmatrix}
		\Id &  (\KM_{1,1} )^{-1} \KM_{1,2}\\
		0  & \Id
	\end{pmatrix},
\end{equation}}
where, at each step of the outermost loop, the above identity is applied to the Schur complement $\KM_{2,2} - \KM_{2,1} \left( \KM_{1,1} \right)^{-1} \KM_{1,2}$ obtained at the previous step.
If the Schur complements appearing during the factorization are sparse, then the final Cholesky factorization will also be sparse.

For $X = (X_1, X_2) \sim \N ( 0, \KM )$, the well-known identities
\begin{align}
	\Expect [ X_{2} \mid X_{1} = a ] &= \KM_{2,1} ( \KM_{1,1} )^{-1} a, \\
	\Cov [ X_{2} \mid X_{1} ] &= \KM_{2,2} - \KM_{2,1} ( \KM_{1,1} )^{-1} \KM_{1,2}\,,
\end{align}
imply that the sparsity of Cholesky factors of $\KM$ is equivalent to conditional independence of Gaussian vectors with covariance matrix $\KM$.
In the spatial statistics literature, it is well known that many smooth Gaussian processes are subject to the \emph{screening effect} \cite{stein2011when}.
This effect, illustrated in \cref{fig-materndecorr}, means that the value of the process at a given site, conditioned on the values at nearby sites, is only weakly dependent on the values at distant sites.

Consider now the $k$\textsuperscript{th} step of Cholesky factorization in the ordering described in \cref{ssec-simpleAlg}.
Any pair $x_i,x_j$ with $\dist\left( x_i, x_j \right) \gtrapprox l[k]$ will have points between them that have already been eliminated, as illustrated in \cref{fig-screening}.
Thus, the screening effect suggests that their correlation will be weak,
which supports choosing $\rho l[k]$ as a truncation radius.

\begin{figure}[t]
	\centering
	\includegraphics[scale=0.3]{./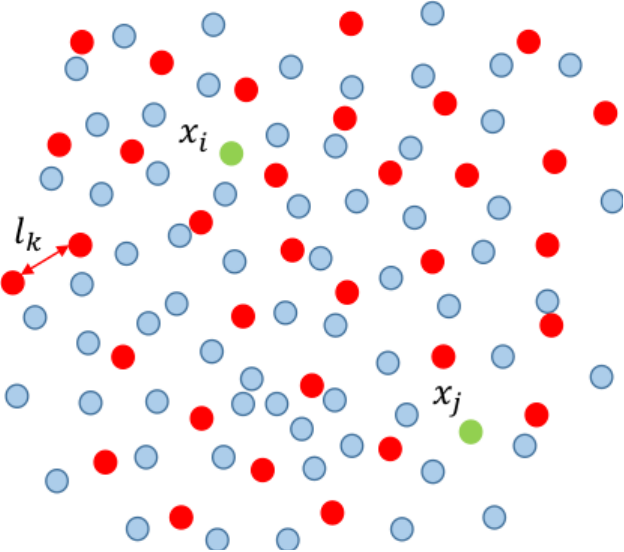}
	\caption{Step $k$ of the Cholesky factorization in the ordering described in \cref{ssec-simpleAlg}.
	The red points have already been eliminated and form a covering of radius $l[k]$.
	The separation of the green points $x_i$ and $x_j$ by points that have already been eliminated implies the weak correlation between $X_{x_i}$ and $X_{x_j}$ conditional on $\{ X_{x_i} \}_{i \preceq k}$.}
	\label{fig-screening}
\end{figure}

\subsection{Cholesky factorization and operator-adapted wavelets}
\label{sssec-choleskyGamblets}

Cholesky factorization in the maximin ordering is intimately related to computing operator-adapted wavelets.
In \cref{sec:analysis} we will use this connection to prove the accuracy of our approximation.

\paragraph{Operator-adapted wavelets.}
\cite{owhadi2015multigrid} and \cite{owhadi2017universal} introduced a novel class of operator-adapted wavelets called \emph{gamblets} (see also \cite{OwhScobook2018}).
For an operator $\IK$ defined as in \eqref{eq-op-between-sobolev}, gamblets can be identified as conditional expectations of the Gaussian process $\xi \sim \N\left(0,\IK^{-1}\right)$.
To construct the gamblets up to level $q \in \Naturals$ we start with a hierarchy of \emph{measurement functions} $\{\phi^{(k)}_i \}_{1 \leq k \leq q, i \in \I^{(k)}} \subset H^{-s}( \Omega )$;
heuristically, $k$ labels a scale, and $i$ a location at that scale.
These measurement functions are linearly nested in the sense that, for $k < l$,
\begin{equation}
	\label{eq-phi-i-k-nesting}
	\phi^{(k)}_{i} = \sum_{ j \in \I^{(l)}} \pi^{(k,l)}_{i,j} \phi^{(l)}_j.
\end{equation}
for some rank-$|\I^{(k)}|$ matrices $\pi^{(k,l)} \in \Reals^{\I^{(k)} \times \I^{(l)}}$.
Writing $\dualprod{ \quark }{ \quark }$ for the duality product between $H^{-s}( \Omega )$ and $H_0^s( \Omega )$, the conditional expectations
\begin{equation}
	\label{eq-phi-i-k-expectation}
	\psi^{(k)}_i \defeq \Expect\left[ \xi \,\middle|\, \dualprod{ \phi_j^{(k)} }{ \xi } = \delta_{ij} \text{ for all } j \in \I^{(k)} \right] \quad \text{for $i\in \I^{(k)}$}
\end{equation}
act as $\IK$-adapted pre-wavelets.
These pre-wavelets can be identified as optimal recovery splines in the sense of \cite{micchelli1977survey} through the representation formula
\begin{equation}
	\label{eq-pre-wavelet-representation}
	\psi^{(k)}_i = \sum_{j\in \I^{(k)}} \Theta^{(k),-1}_{i,j} \IK^{-1} \phi_j^{(k)} \quad \text{for $i\in \I^{(k)}$,}
\end{equation}
where $\Theta^{(k),-1}_{i,j}$ is the $(i,j)$\textsuperscript{th} entry of the inverse $\Theta^{(k),-1}$ of the matrix $\Theta^{(k)} \in \Reals^{\I^{(k)}\times \I^{(k)}}$ with entries $\Theta^{(k)}_{i,j} \defeq \int_{\Omega} \phi_i^{(k)} \IK^{-1}\phi_j^{(k)} \dx$.
The linear nesting of the $\phi_i^{(k)}$ across scales implies that the linear spaces $\V^{(k)} \defeq \spn\{\psi_i^{(k)}\mid i\in \I^{(k)}\}$ are nested (i.e.\ $\V^{(k-1)}\subset \V^{(k)}$).
The multi-resolution decomposition $\V^{(q)} \defeq \V^{(1)}\oplus \W^{(2)}\oplus \cdots \oplus \W^{(q)}$ is then obtained by defining $\W^{(k)}$ as the orthogonal complement $\W^{(k)}$ of $\V^{(k-1)}$ in $\V^{(k)}$ with respect to the energy scalar product $\innerprod{ u }{ v } \defeq \int_{\Omega} u \IK v \dx$.
Basis functions for $\W^{(k)}$ are identified (for $2\leq k\leq q$) by
\begin{equation}
	\label{eq-Wk-basis}
	\chi^{(k)}_i \defeq \sum_j W^{(k)}_{ij} \psi^{(k)}_j \quad \text{for $i\in \J^{(k)}$,}
\end{equation}
or, equivalently, by
\begin{equation}
	\chi^{(k)}_i \defeq \Expect\left[ \xi \,\middle|\, \left[\phi_j^{(k),W},\xi \right] = \delta_{ij} \delta_{kl} \text{ for all } 1\leq l \leq k, j \in \J^{(l)} \right] \quad \text{for $i\in \J^{(k)}$,}
\end{equation}
with $\phi^{(k),W}_i \defeq \sum_{j \in \I^{(k)}} W_{i,j}^{(k)} \phi^{(k)}_j$,
where $\J^{(k)} \cong \bigl( \I^{(k)} \setminus \I^{(k-1)} \bigr)$ and $W^{(k)}$ is a $\J^{(k)} \times \I^{(k)}$ matrix such that $\Image W^{(k),\top} = \Kernel \pi^{(k-1,k)}$ (writing $W^{(k),\top}$ for the transpose of $W^{(k)}$).
See \cref{fig-phipsichi} for an illustration.

\begin{figure}[t]
	\centering
	\includegraphics[width=0.24\textwidth]{./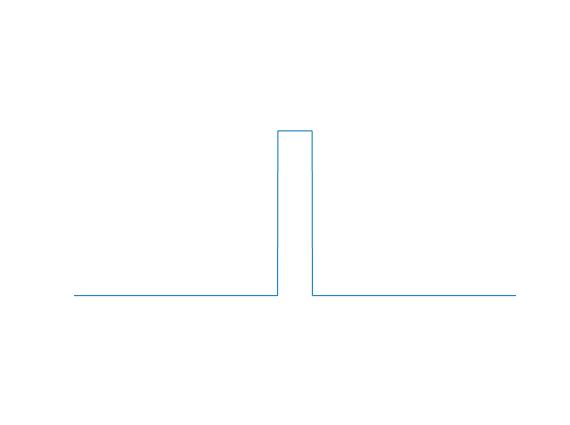}
	\includegraphics[width=0.24\textwidth]{./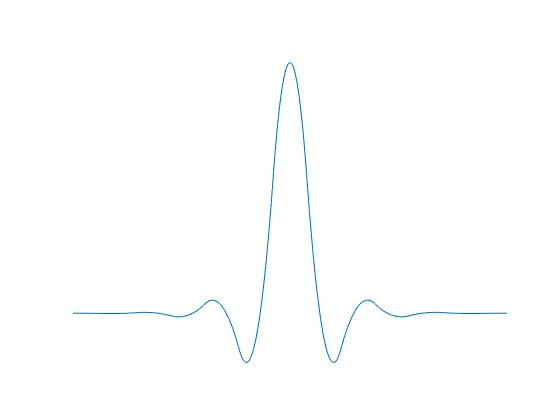}
	\includegraphics[width=0.24\textwidth]{./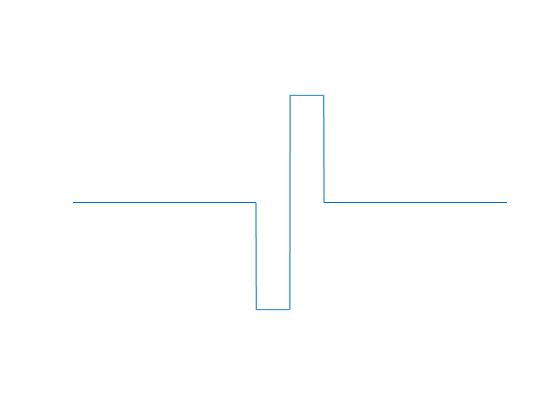}
	\includegraphics[width=0.24\textwidth]{./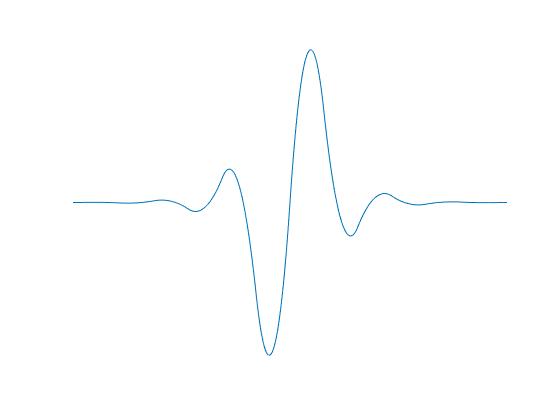}
	\caption{From left to right: An exemplary $\phi^{(k)}_i$, the corresponding
	$\psi^{(k)}_i$, a $\phi^{(k),W}_j$,	and the corresponding $\chi^{(k)}_j$, all in the setting of $d = 1$.}
	\label{fig-phipsichi}
\end{figure}

For simplicity we write $\J^{(1)} \defeq \I^{(1)}$ and $\chi_i^{(1)} \defeq \psi_i^{(1)}$.
Write $B^{(k)}$ for the $\J^{(k)}\times \J^{(k)}$ stiffness matrices $B^{(k)} \defeq \biginnerprod{ \chi_i^{(k)} }{ \chi_j^{(k)} }$.
The gamblets $\chi_i^{(k)}$ are $\IK$-adapted wavelets in the sense that, under sufficient conditions on the $\phi_i^{(k)}$, they satisfy the following three properties:
\begin{itemize}
	\item \textit{Scale orthogonality in the energy scalar product}, i.e.
	\begin{equation}
		\label{eq-scale-orthogonality}
		\biginnerprod{ \chi_i^{(k)} }{ \chi_j^{(l)} } = 0 \text{ for } l\not=k \text{ and }(i,j)\in \J^{(k)}\times \J^{(l)}\,.
	\end{equation}
	This leads to the block-diagonalization of the operator (with the $B^{(k)}$ as diagonal blocks).
	\item \textit{Uniform Riesz stability} in the energy norm:
	the condition numbers of the blocks $B^{(k)}$ are uniformly	bounded in $k$.
	\item \textit{Exponential decay}, which leads to sparse blocks $B^{(k)}$: the gamblets $\chi^{(k)}_i$ exhibit exponential decay on the scale associated with $k$.
\end{itemize}

Although the scale-orthogonality property \eqref{eq-scale-orthogonality} is always satisfied, the two others (exponential decay and uniform Riesz stability) depend on the properties of $\IK$ and the $\phi_i^{(k)}$.
In the setting of the localization of numerical homogenization basis functions (where $\IK$ is an elliptic PDE and the measurements $\phi_i^{(k)}$ are local and possibly not explicitly introduced), rigorous exponential decay estimates were pioneered in \cite{malqvist2014localization} and generalized in \cite{kornhuber2016analysis, owhadi2015multigrid, hou2017sparse, owhadi2017universal};
see \cref{sssec-decayHomog} for detailed comparisons.
For $\phi_i^{(k)}$ spanning the space of local polynomials of order up to $s-1$, bounded condition numbers are shown by \cite{owhadi2015multigrid,owhadi2017universal}.
The homogenization results obtained in the special case $q=2$ \cite{malqvist2014localization, owhadi2014polyharmonic, hou2017sparse} are closely related to the lower bound on the spectrum of $B^{(2)}$ (see \cref{sssec-boundCond}).


\begin{figure}[t]
	\centering
	\includegraphics[scale=0.20]{./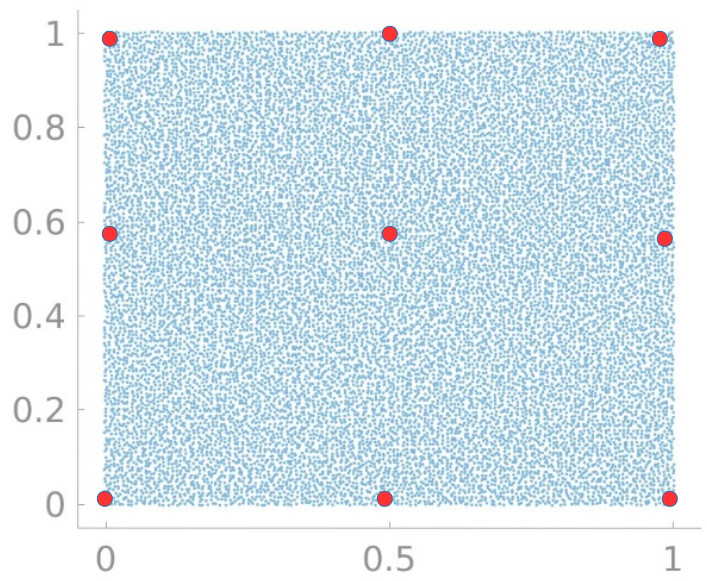}
	\includegraphics[scale=0.20]{./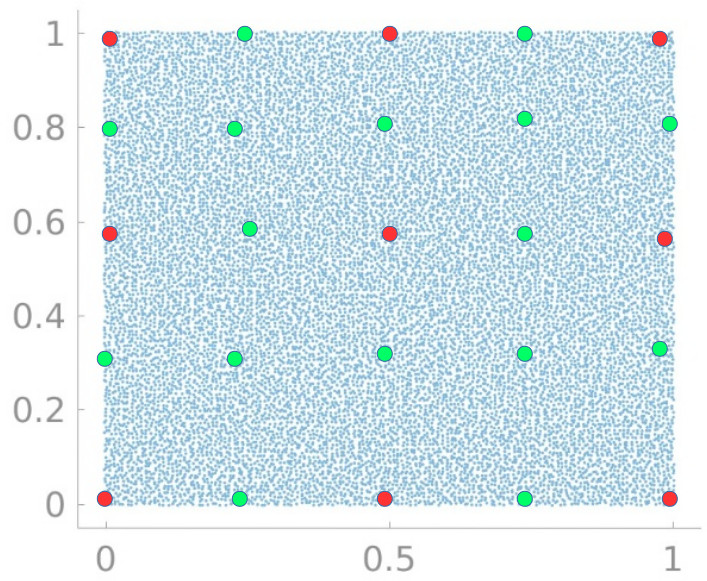}
	\includegraphics[scale=0.20]{./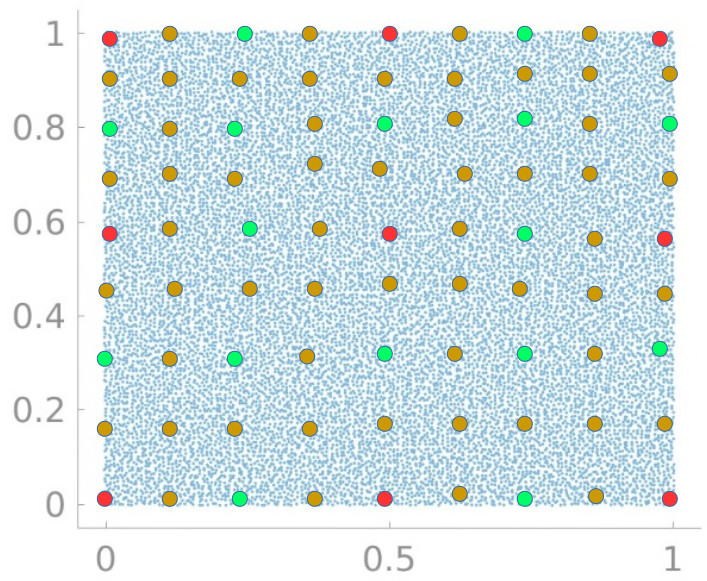}
	\caption{{The implicit hierarchy of the maximin ordering}.
	The maximin ordering has a hidden hierarchical structure, which can be discovered by picking a scale factor $h \in (0,1)$ and defining $\J^{(k)} \defeq \left\{ j \in \I \,\middle|\, h^{k} \leq l[i]/l[1] < h^{k - 1} \right\}$ for $1 \leq k \leq q \defeq \left \lceil \log_{h}\left( l[k]/l[1] \right) \right \rceil$.
	In the figure, we see $\J^{(1)}$ in red, $\J^{( 2 )}$ in green, and $\J^{( 3 )}$ in brown, for $h = 1/2$.}
	\label{fig-ordering}
\end{figure}

\paragraph{Relation to Cholesky factorization.}
To explain the connection between gamblets and Cholesky factorization, let $\J \defeq \J^{(1)}\cup \cdots \cup \J^{(q)}$, let $W^{(1)}$ be the $\I^{(1)}\times \I^{(1)}$ identity matrix, let $\pi^{(k,k)}$ be the $\I^{(k)}\times \I^{(k)}$ identity matrix, and let $\bar{\Theta}$ be the $\J\times \J$ symmetric matrix with $\J^{(k)}\times \J^{(l)}$ block defined for $k\leq l$ by
\begin{equation}
	\bar{\Theta}_{k,l} \defeq W^{(k)}\Theta^{(k)} \pi^{(k,l)}W^{(l),T}\,,
\end{equation}
or equivalently by
\begin{equation}
	\bigl(\bar{\Theta}_{k,l}\bigr)_{ij} \defeq \bigdualprod{ \phi^{(k),W}_i }{ \IK^{-1} \phi^{(l),W}_j }.
\end{equation}
Then, the block-Cholesky factorization of $\bar{\Theta}$ satisfies the identity
\begin{equation}
	\label{eqn-gambletFact}
	\bar{\Theta} = \bar{L} D \bar{L}^{\top},
\end{equation}
where $D$ is a block-diagonal matrix with the $\J^{(k)}\times \J^{(k)}$ diagonal block equal to $B^{(k),-1}$ and
\begin{equation}
	\bar{L}_{i,j}
	\defeq
	\begin{cases}
		\delta_{i,j} , & \text{if $i, j\in \J^{(k)}$,} \\
		0 , & \text{if $i\in \J^{(k)}, j\in \J^{(k')}$, and $k' > k$,} \\
		[\phi_i^{(k)},\chi_j^{(k')}] , & \text{if $i\in \J^{(k)}, j\in \J^{(k')}$, and $k' < k$.}
	\end{cases}
\end{equation}
Therefore, computing gamblets associated to the operator $\IK$ and measurement functions $\phi_i$ is equivalent to computing a block-Cholesky factorization of $\KM$ in the multiresolution basis given by the $\phi_i^{(k),W}$.

The Cholesky decomposition of $\KM$ \eqref{eq-kernel-matrix} belongs to this setting.
Indeed, although the maximin ordering of \cref{ssec-simpleAlg} has no explicit multiscale structure, this structure can be introduced, as described in \cref{fig-ordering}, by decomposing $x_1,\ldots,x_N$ into a nested hierarchy $\{x_i\}_{i\in \I^{(1)}}\subset \{x_i\}_{i\in \I^{(2)}}\subset \cdots \subset \{x_i\}_{i\in \I^{(q)}}$, and choosing $\phi_i^{(k)}=\boldsymbol{\delta}(\quark-x_i)$ for $i\in \I^{(k)}$ and $k\in \{1,\ldots,q\}$, where $\boldsymbol{\delta}$ denotes the unit (unscaled) Dirac delta function.
Under this choice, $\pi^{(k,k+1)}_{i,j}=1$ for $j \in \I^{(k)}$ and $\pi^{(k,k+1)}_{i,j}=0$ for $j\not\in \I^{(k)}$.
Letting $\J^{(k)}$ label the indices in $\I^{(k)}/\I^{(k-1)}$ and choosing $W^{(k)}_{i,j}=1$ for $j\in \I^{(k)}/\I^{(k-1)}$ and $W^{(k)}_{i,j}=0$ for $j\in \I^{(k-1)}$ implies $\KM=\bar{\Theta}$.
The exponential decay of $\bar{L}$ and $D^{-1}$ follows from known results \cite{owhadi2017universal} on exponential decay of the $\chi_j^{(k)}$.
The uniform bound on the condition number of the $\IKMC^{(k)}$ is proved in \cref{sssec-boundCond}.
The exponential decay and uniform bound on the condition numbers of the blocks $B^{(k)}$ imply the exponential decay of the Cholesky factors $\hat{L}$ of $D$ and hence of $L = \bar{L} \hat{L}$.
The approximation error estimate \eqref{eq-decayApproxIChol} is then obtained by matching the sparsity set $S$ with the near-sparse structure of $L$.

\section{Implementation and numerical results}
\label{sec:numerics}
\begin{algorithm}[!ht]\footnotesize
	\textbf{Input:} Real $\rho \geq 2$ and Oracles $\disttt( \quark, \quark ), \disttt_{\partial \Omega}( \quark )$ such that
	  $\disttt ( i, j ) = \dist\left(x_i, x_j\right)$ and
	$\disttt_{\partial \Omega} \left(i\right) =
	\dist\left( x_i, \partial \Omega \right)$ \\
	\textbf{Output:} An array $l[:]$ of distances, an array $P$ encoding the multiresolution
	  ordering, and an array of index pairs $S$ containing the sparsity pattern. \\
	\begin{algorithmic}[1]
	\setcounter{ALC@unique}{0}
		\STATE $P = \emptyset$
		\FOR{$i \in \{1,  \dots , N\}$}
		  \STATE $l[i] \leftarrow \disttt_{\partial \Omega}(i)$
		  \STATE $p[i] \leftarrow \emptyset$
		  \STATE $c[i] \leftarrow \emptyset$
		\ENDFOR
		\STATE \COMMENT{Creates a mutable binary heap, containing pairs of indices and
		distances as elements:}
		\STATE $H \leftarrow \mathtt{MutableMaximalBinaryHeap}\left( \{ (i, l[i] ) \}_{i \in \{1, \dots ,N\}} \right)$\;
		\STATE \COMMENT{Instates the Heap property, with a pair with maximal distance occupying the
		root of the heap:}
		\STATE $\mathtt{heapSort}!( H )$

		\STATE \COMMENT{Processing the first index:}
		\STATE \COMMENT{Get the root of the heap, remove it, and restore the heap property:}
		\STATE $(i, l) = \mathtt{pop}(H)$
		\STATE \COMMENT{Add the index as the next element of the ordering}
		$\mathtt{push}\left(P, i\right)$
		\FOR{$j \in \{1,  \dots , N\}$ }
		  \STATE $\mathtt{push}( c[i], j )$
		  \STATE $\mathtt{push}( p[j], i )$
		  \STATE $\mathtt{sort!}\left( c[i], \disttt( \quark, i ) \right)$
		  \STATE $\mathtt{decrease!}\left(H, j, \disttt( i, j ) \right)$
		\ENDFOR
		\STATE \COMMENT{Processing remaining indices:}
		\WHILE{$H \neq \emptyset$}
		  \label{line-whileSortSparse}
		  \STATE \COMMENT{Get the root of the heap, remove it, and restore the heap property:}
		  $(i, l) = \mathtt{pop}(H)$\;
		  $l[i] \leftarrow l$\;
		  	\STATE \COMMENT{Select the parent node that has all possible children of $i$ amongst its
		    children, and is closest to $i$:}
		    \STATE $k = \argmin_{j \in p[i]: \disttt( i, j ) + \rho l[i] \leq \rho l[j]}
		  	\label{line-kArgminSortSparse}
			\disttt\left( i, j \right)$\;
		  	\STATE \COMMENT{Loop through those children of $k$ that are close enough to $k$ to possibly
		  	be children of $i$:}
		  	\FOR{$j \in c[k]: \disttt( j, k ) \leq \disttt( i, k ) +
		  	  \rho l[i] $}
		  	  \label{line-forSortSparse}
		  	  \STATE $\mathtt{decrease!}\left(H, j, \disttt( i, j ) \right)$
		  	  \IF{ $\disttt( i, j ) \leq \rho l[i]  $ }
		  	    \STATE $\mathtt{push}( c[i], j )$
		  	    \STATE $\mathtt{push}( p[j], i )$
		  	  \ENDIF
		  	\ENDFOR
		  	\STATE \COMMENT{Add the index as the next element of the ordering}
		  
		 	\STATE  $\mathtt{push}\left(P, i\right)$
			\STATE \COMMENT{Sort the children according to distance to the parent node, so that the closest
			children can be found more easily}
			$\mathtt{sort!}\left( c[i], \disttt( \quark, i ) \right)$\;
			\label{line-sort!SortSparse}
		\ENDWHILE
		\STATE \COMMENT{Aggregating the lists of children into the sparsity pattern:}
		\FOR{$i \in \{1, \dots ,N\}$}
		  \FOR{ $j \in c[i]$}
		    \STATE $\mathtt{push!}\left(S, ( i, j ) \right) $
		    \STATE $\mathtt{push!}\left(S, ( j, j ) \right) $
		  \ENDFOR
		\ENDFOR
	\end{algorithmic}
	\caption{Ordering and sparsity pattern algorithm.}
	\label{alg-sortSparse}
\end{algorithm}

\begin{figure}
	\centering
	\includegraphics[scale=0.20]{./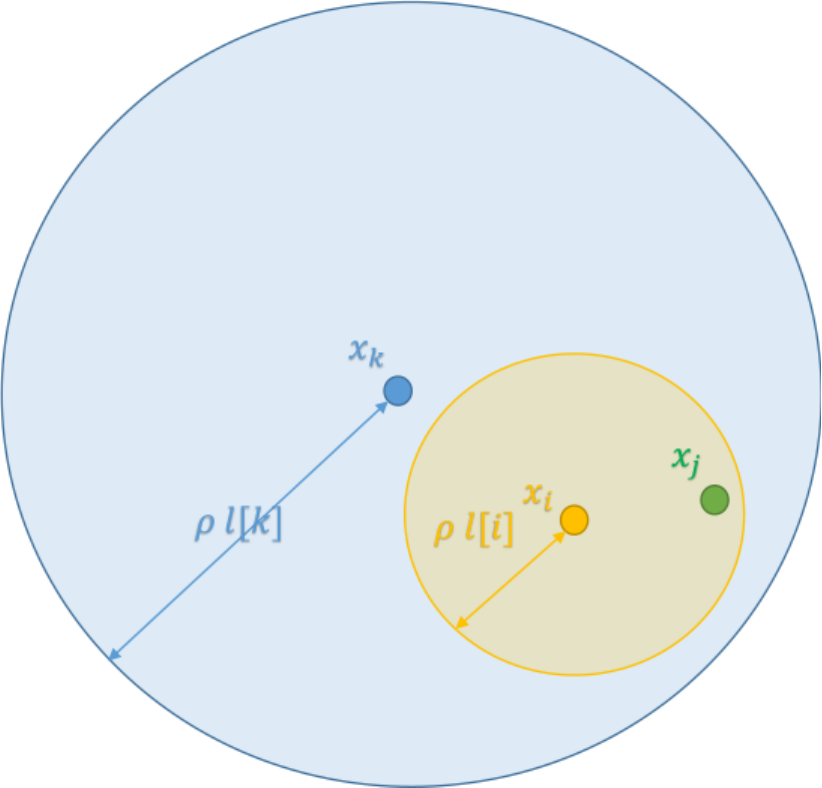}
	\caption{Localization of computation in \cref{alg-sortSparse} based on hierarchy.
	When adding $i$ to the ordering, only consider indices $j$ such that $\dist(x_i, x_j) \leq \rho l[i]$.
	Those indices are a subset of the \emph{children} of the coarse-level index $k$ if $\rho l[k] \geq \dist(x_i, x_j) + \rho l[i]]$.
	Thus, the search for candidates $j$ can be restricted to those children of $k$.}
	\label{fig-sortSparse}
\end{figure}

\subsection{Selection of the sparsity pattern and ordering}

This section introduces an $\BigO( \rho^d N \log^2 N)$-complexity algorithm (\cref{alg-sortSparse}) for selecting the sparsity pattern and ordering used as inputs in \cref{alg-ICholesky}.
This algorithm does not explicitly query the position of the $\{ x_i \}_{i \in \I}$ and only uses pairwise distances by processing points one by one by updating a mutable binary heap, keeping track of the point to be processed at each step.
With this approach, our proposed algorithm is oblivious to the dimension $d$ of the ambient space and, in particular, can automatically exploit low-dimensional structure in the point cloud $\{ x_i \}_{i \in \I}$.
In order to avoid computing all $\BigO(N^2)$ pairwise distances, as illustrated in \cref{fig-sortSparse}, \cref{alg-sortSparse} uses the sparsity pattern obtained on the coarser scales to restrict computation at the finer scales to local neighborhoods.

\begin{theorem}
	\label{thm-compSortSparse}
	The output of \cref{alg-sortSparse} is the ordering and sparsity pattern described in \cref{ssec-simpleAlg}.
	Furthermore, in the setting of \cref{thm-decayCholeskyIntro}, if the oracles $\disttt(\quark, \quark)$ and $\disttt_{\partial \Omega}(\quark)$ can be queried in complexity $\BigO(1)$, then the complexity of \cref{alg-sortSparse} is bounded by $C \rho^d N \log^2 N$, where $C$ is a constant depending only on $d$, $\Omega$ and $\delta$.
\end{theorem}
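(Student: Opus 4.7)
The proof has two components: correctness of the output (it is exactly the maximin ordering and sparsity pattern $S_\rho$ of Section~\ref{ssec-simpleAlg}) and the $C \rho^{d} N \log^{2} N$ time bound. My plan is to establish a loop invariant for the while-loop at line~\ref{line-whileSortSparse} that implies both, by simultaneous induction on the processing step.

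\textbf{Correctness.} Let $P$ denote the current prefix of the ordering and $l,p,c$ the current arrays. I would maintain the invariants (I1) for every $j$ still in $H$, its key equals $\dist(x_{j},\{x_{i}:i\in P\}\cup\partial\Omega)$; (I2) for each processed $i\in P$, $c[i]$ equals $\{j\notin P \text{ at time of processing } i : \dist(x_{i},x_{j})\le \rho\,l[i]\}$, sorted by distance to $x_{i}$; and (I3) for each $j\notin P$, $p[j]=\{i\in P:\dist(x_{i},x_{j})\le \rho\,l[i]\}$. Together (I1) forces each pop to return the next maximin index and (I2) makes the final double loop assemble exactly $S_{\rho}$. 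The base case is the explicit special handling of $i_{1}$ (lines~8--15), which installs $c[i_{1}]=\I$ and $p[j]\supseteq\{i_{1}\}$ for every $j$. The inductive step, when a new index $i$ is popped, reduces to two subclaims. The first is existence of a parent $k\in p[i]$ meeting the admissibility test $\dist(x_{i},x_{k})+\rho\,l[i]\le\rho\,l[k]$ of line~\ref{line-kArgminSortSparse}. Starting from the nearest predecessor $k_{0}$ of $x_{i}$ (which lies in $p[i]$ because $\dist(x_{i},x_{k_{0}})=l[i]\le l[k_{0}]\le\rho\,l[k_{0}]$), I would walk up the chain of nearest predecessors $k_{0},k_{1},k_{2},\dots$; the maximin property gives $l[k_{m+1}]\ge l[k_{m}]$ while the triangle inequality gives $\dist(x_{i},x_{k_{m+1}})\le\dist(x_{i},x_{k_{m}})+l[k_{m}]$, and the hypothesis $\rho\ge 2$ then forces termination in finitely many steps, with each intermediate $k_{m}$ remaining in $p[i]$ by a downward invocation of (I2) applied to the previous scale. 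The second subclaim is that, with such $k$ chosen, the filter $\dist(x_{j},x_{k})\le\dist(x_{i},x_{k})+\rho\,l[i]$ inside the for-loop at line~\ref{line-forSortSparse} captures every new child of $i$: any $j\notin P$ with $\dist(x_{i},x_{j})\le\rho\,l[i]$ satisfies, by the triangle inequality, $\dist(x_{j},x_{k})\le\dist(x_{i},x_{k})+\rho\,l[i]\le\rho\,l[k]$ and hence was placed in $c[k]$ (by (I2) applied at step $k$) and passes the filter. The remaining decrease-key, push, and sort operations then restore (I1)--(I3) directly.

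\textbf{Complexity.} Each heap primitive costs $O(\log N)$ on a binary heap of size at most $N$, so the $N$ pops and $N$ initial pushes contribute $O(N\log N)$. Every decrease-key call corresponds to a candidate entry of some $c[i]$; since by the estimate underlying Theorem~\ref{thm-decayApproxIChol} the output pattern satisfies $|S_{\rho}|=O(\rho^{d}N\log N)$, the total decrease-key work is $O(\rho^{d}N\log^{2}N)$. The sort calls at line~\ref{line-sort!SortSparse} contribute $\sum_{i}|c[i]|\log|c[i]|=O(\rho^{d}N\log^{2}N)$ as well. The remaining work is the sweep of $c[k]$ itself. For each processed $i$, the number of $j\in c[k]$ with $\dist(x_{j},x_{k})\le\dist(x_{i},x_{k})+\rho\,l[i]$ is controlled by a packing argument using the lower bound $\dist(x_{j_{1}},x_{j_{2}})\ge\min(l[j_{1}],l[j_{2}])$ (an immediate consequence of the maximin definition of $l$) together with the homogeneity parameter $\delta$: the points in $c[k]$ at a fixed dyadic scale of $l[j]$ are $\Omega(l[j])$-separated, yielding at most $C\rho^{d}$ per scale and $C\rho^{d}\log N$ in total per processed $i$. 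Summing over $i$ and multiplying by the $O(\log N)$ heap cost per visit gives the announced $C\rho^{d}N\log^{2}N$.

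\textbf{Main obstacle.} The delicate ingredient is the existence-of-parent step: one must simultaneously show that the ancestor walk terminates and that each ancestor visited has actually been recorded in $p[i]$ by earlier iterations. This mixes (I2) and (I3) across scales and uses $\rho\ge 2$ tightly, since dropping it would break the geometric-series estimate used to bound $\dist(x_{i},x_{k_{m}})$ relative to $\rho\,l[k_{m}]$. Once this is settled, the rest of correctness is bookkeeping, and the complexity analysis reduces to the same packing estimate that already furnishes $|S_{\rho}|=O(\rho^{d}N\log N)$.
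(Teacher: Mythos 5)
Your overall architecture (loop invariants for correctness, packing plus heap costs for complexity) matches the paper's Appendix~\ref{apsec-compSortSparse}, but the step you yourself flag as the main obstacle --- existence of an admissible parent at Line~\ref{line-kArgminSortSparse} --- is handled by an argument that fails in the stated setting. In the setting of Theorem~\ref{thm-decayCholeskyIntro}, $\Omega$ is a bounded Lipschitz domain and $l[i]=\dist(x_i,\{x_{i_1},\dots\}\cup\partial\Omega)$ \emph{includes the boundary term}. Hence your starting identity $\dist(x_i,x_{k_0})=l[i]$ for the nearest predecessor $k_0$ is false in general (for a point near $\partial\Omega$, $l[i]$ is tiny while the nearest predecessor can be far away), and the inductive triangle bound $\dist(x_i,x_{k_{m+1}})\le\dist(x_i,x_{k_m})+l[k_m]$ fails for the same reason: the nearest predecessor of $k_m$ is at distance \emph{at least} $l[k_m]$, and possibly much more when the boundary determines $l[k_m]$. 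Consequently $\rho\ge 2$ alone does not yield a parent within $O(\rho\,l[i])$ of $x_i$. The paper resolves exactly this point geometrically: using the interior cone condition of the Lipschitz boundary it shows that, for all but a bounded number of initial indices, there is a predecessor $k$ with $l[k]\ge 2l[i]$ and $\dist(x_i,x_k)\le C(d,\Omega,\delta)\,l[i]$, and the admissibility test is then met once $\rho$ exceeds a constant depending on $d$, $\Omega$, $\delta$ (your chain-of-predecessors argument with $\rho\ge2$ is essentially the whole-space case, Theorem~\ref{thmjhgfhgfty}, where there is no boundary term). Since both your correctness step (completeness of $c[k]$ for the chosen parent) and your complexity step (radius of the swept ball) hinge on this parent being close, the gap propagates through the whole proof.

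There is a second, independent error in the complexity count: the claim that the sweep at Line~\ref{line-forSortSparse} touches ``at most $C\rho^d$ points per dyadic scale of $l[j]$, hence $C\rho^d\log N$ per processed $i$'' is false. The lists $c[k]$ contain points of \emph{all} scales, and a ball of radius $\approx\rho\,l[i]$ contains, at scales finer than $l[i]$, on the order of $\rho^d(l[i]/2^{-t})^d$ points --- a geometric series dominated by the finest scale, i.e.\ up to $\sim \rho^d\,l[i]^d N$ points (using the $\delta$-controlled minimal spacing), not $\rho^d\log N$. The correct accounting, as in the paper, bounds the sweep for the $m$\textsuperscript{th} maximin point by $C(N/m)\rho^d$ via the homogeneity parameter $\delta$ (which relates $l[i_m]$ to $m^{-1/d}$ and the minimal spacing to $N^{-1/d}$), and then sums the harmonic series over $m$ to get $C\rho^d N\log N$ total sweep steps, each costing $O(\log N)$ for the \texttt{decrease!} operation. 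Your final figure $C\rho^d N\log^2 N$ is right, but it is reached through an intermediate per-$i$ bound that does not hold; similarly, bounding the number of \texttt{decrease!} calls by $\#S_\rho$ is only legitimate once you know the parent lies within $O(\rho\,l[i])$, which again requires the cone-condition argument above.
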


\Cref{thm-compSortSparse} is proved in \cref{apsec-compSortSparse}.
As discussed therein, in the case $\Omega = \Reals^d$, \cref{alg-sortSparse} has the advantage that its computational complexity depends only on the intrinsic dimension of the dataset, which can be much smaller than $d$.

\subsection{The case of the whole space \texorpdfstring{($\Omega = \Reals^d$)}{}}
\label{ssec-noBoundary}

Many applications in Gaussian process statistics and machine learning are in the  $\Omega = \Reals^d$ setting.
In that setting, the Mat{\'e}rn family of kernels \eqref{eqn-matern} is a popular choice that is equivalent to using the whole-space Green's function of an elliptic PDE as covariance function \cite{whittle1954stationary, whittle1963stochastic}.
Let $\bar{\Omega}$ be a bounded domain containing the $\{ x_i \}_{i \in \I}$.
The case $\Omega=\Reals^d$ is not covered in \cref{thm-decayCholeskyIntro} because in this case the screening effect is weakened near the boundary of $\bar{\Omega}$ by the absence of measurements points outside of $\bar{\Omega}$.
Therefore, distant points close to the boundary of $\bar{\Omega}$ will have stronger conditional correlations than similarly distant points in the interior of $\bar{\Omega}$ (see \cref{fig-boundaryEffects}).
As observed by \cite{roininen2014whittle} and \cite{daon2016mitigating},
Markov random field (MRF) approaches that use a discretization of the underlying PDE face similar challenges at the boundary.
While the weakening of the exponential decay at the boundary worsens the accuracy of our method, the numerical results of \cref{ssec-numres} (which are all obtained without imposing boundary conditions) suggest that its overall impact is limited.
In particular, as shown in \cref{fig-boundaryEffects}, it does not cause significant artifacts in the quality of the approximation near the boundary.
This differs from the significant boundary artifacts of MRF methods, which have to be mitigated against by a careful calibration of boundary conditions \cite{roininen2014whittle,daon2016mitigating}.
Although the numerical results presented in this section are mostly obtained with $x_i \sim \unif ([0,1]^d )$,
in many practical applications, the density of measurement points will slowly (rather than abruptly) decrease towards zero near the boundary of the sampled domain, which drastically decreases the boundary errors shown above.
Accuracy can also be enhanced by adding \emph{artificial} points $\{ x_i \}_{i \in \tI}$ at the boundary.
By applying the Cholesky factorization to $\{ x_i \}_{i \in \I \cup \tI}$, and then restricting the resulting matrix to $\I \times \I$, we can obtain a very accurate approximate matrix-vector multiplication.
Although not in the form of a Cholesky factorization, this approximation can be efficiently inverted using iterative methods such as conjugate gradient \cite{shewchuk1994introduction} preconditioned with the Cholesky factorization obtained from the original set of points.

\begin{figure}
	\centering
	\includegraphics[width=0.32\textwidth]{./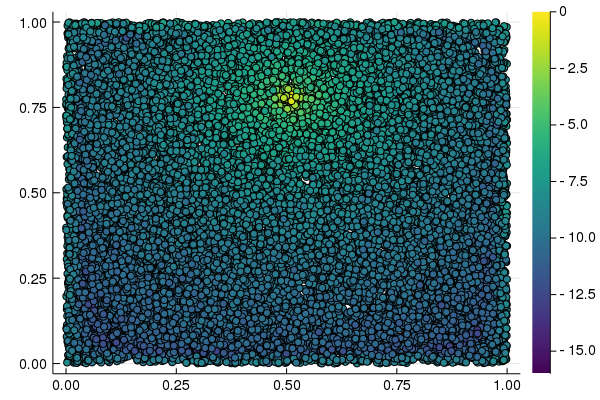}
	\includegraphics[width=0.32\textwidth]{./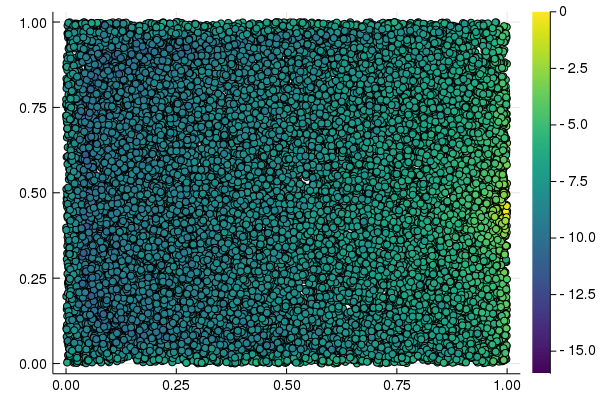}
	\includegraphics[width=0.32\textwidth]{./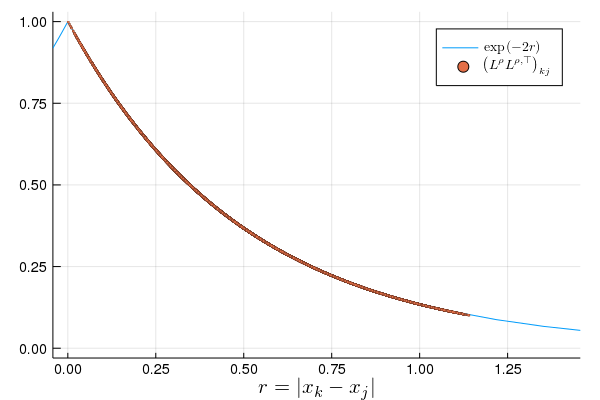}
	\caption{{Weaker screening between boundary points.}
	Left and center: $i$\textsuperscript{th} (left) and $j$\textsuperscript{th} (center) column of the Cholesky factor $L$ (normalized to unit diagonal) of $\KM$ in maximin ordering, where $x_i$ is an interior point and $x_j$ is near the boundary.
	Although $l[i]$ is of the order of $l[j]$, the exponential decay of $L_{:,j}$ near the boundary is significantly weakened by the absence of Dirichlet boundary conditions. Right: approximate correlations $\bigl\{ (L^{\rho} L^{\rho, \top} )_{kj} \bigr\}_{k \in \I}$ (with $\rho = 3.0$) and true covariance function $\exp(-2r)$ with $r = |x_k - x_j|$.
	Correlations between $x_j$ and  remaining points are captured accurately, despite the weakened exponential decay near the boundary.}
	\label{fig-boundaryEffects}
\end{figure}

\subsection{Nuggets and measurement errors}
\label{ssec-nugget}
In the Gaussian process regression setting it is common to to model measurement error by adding a \emph{nugget} $\sigma^2 \Id$ to the covariance matrix:
\begin{equation}
	\tilde{\KM} = \KM + \sigma^2 \Id .
\end{equation}
The addition of a diagonal matrix diminishes the screening effect and thus the accuracy of \cref{alg-ICholesky}.
This problem can be avoided by rewriting the modified covariance matrix $\tilde{\KM}$ as
\begin{equation}
	\tilde{\KM} = \KM ( \sigma^2 \IKM + \Id ),
\end{equation}
where $\IKM \defeq \KM^{-1}$.
As noted in \cref{sssec-spFacIKM}, $\IKM$ can be interpreted as a discretized partial differential operator and has near-sparse Cholesky factors in the reverse elimination ordering.
Adding a multiple of the identity to $\IKM$ amounts to adding a zeroth-order term to the underlying PDE and thus preserves the sparsity of the Cholesky factors.
This leads to the sparse decomposition
\begin{equation}
	\tilde{\KM} = L L^{\top} \rP \tilde{L} \tilde{L}^{\top} \rP,
\end{equation}
where $\rP$ is the order-reversing permutation and $\tilde{L}$ is the
Cholesky factor of $\rP ( \sigma^2 \IKM + \Id ) \rP$.
\cref{fig-cholVarySigma} shows that the exponential decay of these Cholesky factors is robust with respect $\sigma$.

\begin{figure}
	\centering
	\includegraphics[width=0.24\textwidth]{./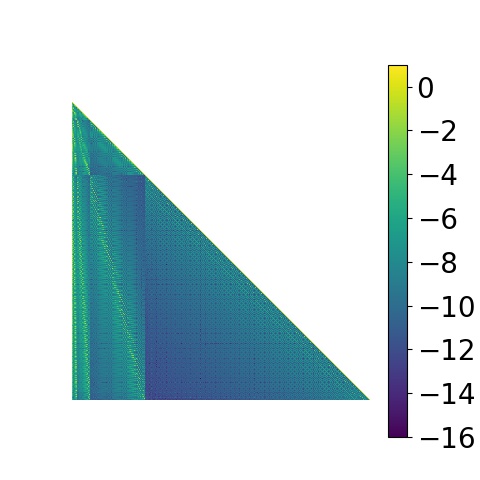}
	\includegraphics[width=0.24\textwidth]{./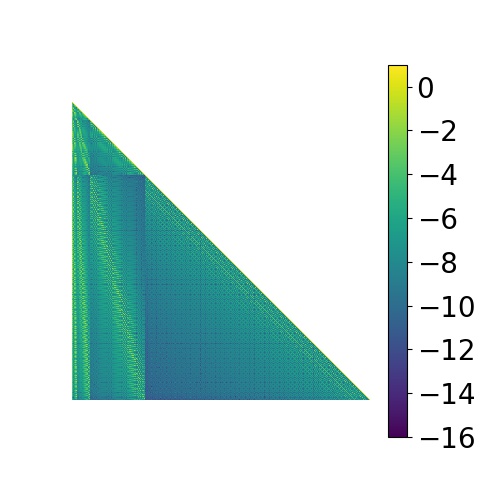}
	\includegraphics[width=0.24\textwidth]{./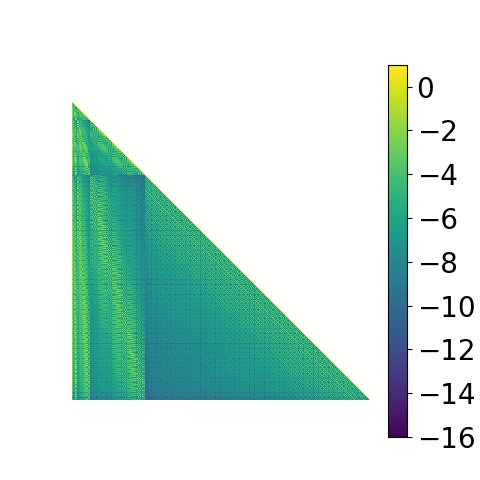}
	\includegraphics[width=0.24\textwidth]{./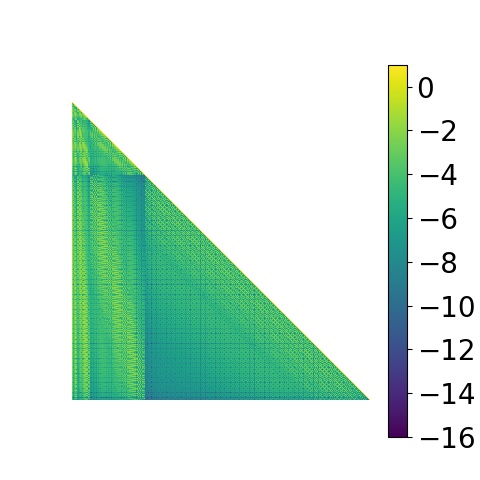}
	\includegraphics[width=0.24\textwidth]{./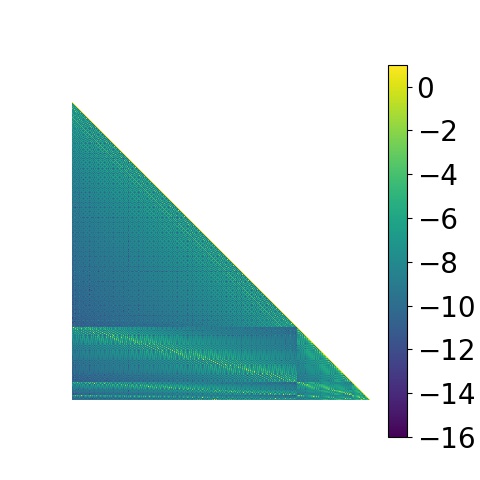}
	\includegraphics[width=0.24\textwidth]{./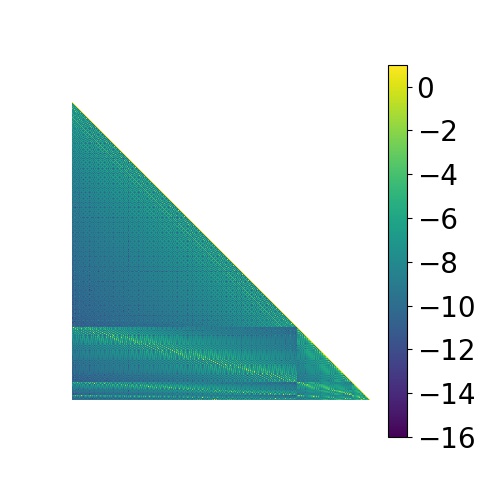}
	\includegraphics[width=0.24\textwidth]{./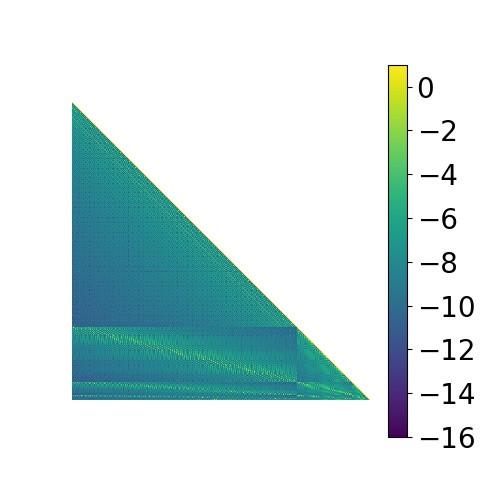}
	\includegraphics[width=0.24\textwidth]{./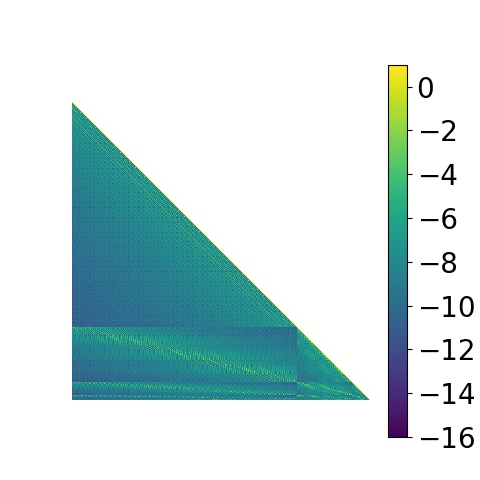}
	\caption{(Lack of) robustness to varying size of the nugget:
	We plot the $\log_{10}$ of the magnitude of the Cholesky factors of $\KM + \sigma^2 \Id$ in maximin ordering (first row) and of $\IKM + \sigma^2$ in reverse maximin ordering (second row).
	As we increase $\sigma^2 \in [0.0, 0.1, 1.0, 10.0]$ from left to right the decay of the Cholesky factors of $\KM + \sigma^2 \Id$ deteriorates, and that of the factors of $\IKM + \sigma^2 \Id $ is preserved.}
	\label{fig-cholVarySigma}
\end{figure}

This idea can be turned into an algorithm by first approximately computing $L$ using \cref{alg-ICholesky};
then using $L$ to approximate $\IKM$, which can be done in near-linear complexity by exploiting sparsity;
and then approximating $\tilde{L}$, again using \cref{alg-ICholesky}.
While this algorithm is asymptotically efficient, our preliminary results suggest that the additional inversion step significantly increases the constants featured in the approximation accuracy.
Therefore, when low accuracy is sufficient, we instead recommend simply applying \cref{alg-ICholesky} to the matrix $\KM$.
This preserves the original approximation accuracy and the matrix inversion
can then efficiently be performed using iterative methods such as conjugate gradient (CG) \cite{shewchuk1994introduction} by taking advantage of the fast matrix-vector multiplication obtained from the sparse factorization.
For small values of $\sigma$ (which would lead to slow convergence of CG) we can directly apply \cref{alg-ICholesky} to $\tilde{\KM}$.
For large values of $\sigma$, $\tilde{\KM}$ will be well conditioned and the convergence of $CG$ is fast.
For intermediate values of $\sigma$, we can apply \cref{alg-ICholesky} to $\tilde{\KM}$ and use the resulting factors as a preconditioner for CG.
Sampling from $\N(0,\tilde{\KM})$ can be done by adding independent samples from $\N(0,\KM)$ and  $\N(0, \sigma^2 \Id)$.
Approximations of the log-determinant could be obtained either by applying \cref{alg-ICholesky} directly to $\tilde{\KM}$ (with some loss of accuracy) or by combining iterative methods \cite{saibaba2017randomized,fitzsimons2017entropic} with the fast matrix-vector multiplication obtained from the sparse factorization of $\KM$.
Just like CG, these methods benefit from the fact that we can work with well-conditioned matrices for small and large $\sigma$.
A detailed investigation of the efficiency of the above mentioned strategies for computing with nuggets is beyond the scope of this work.

\subsection{Numerical results}
\label{ssec-numres}

\begin{figure}
	\centering
	\includegraphics[scale=0.30]{./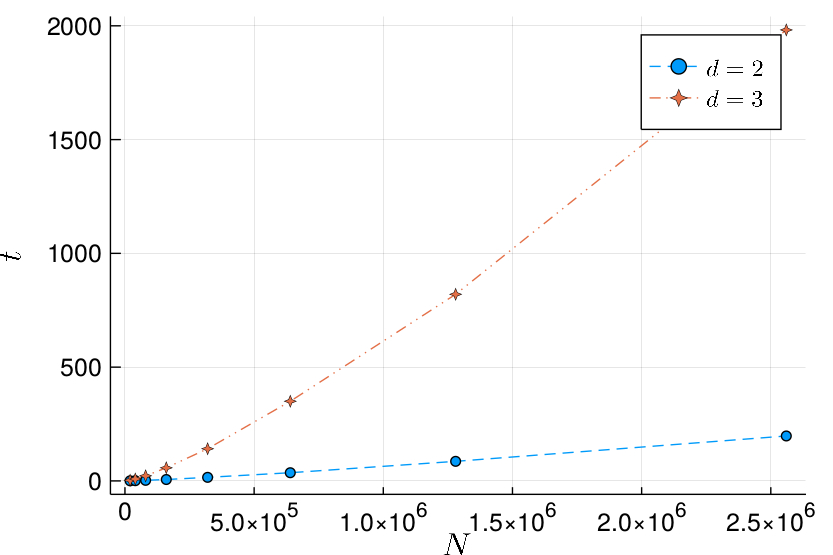}
	\includegraphics[scale=0.30]{./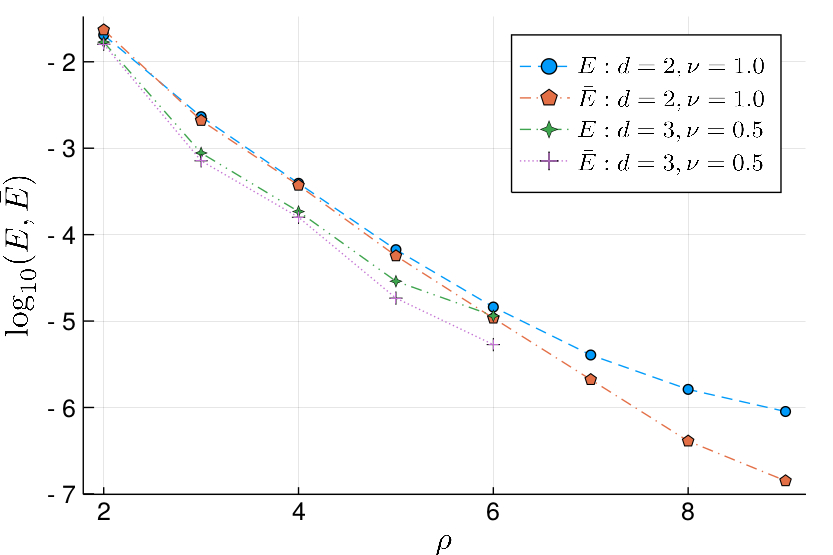}
	\includegraphics[scale=0.30]{./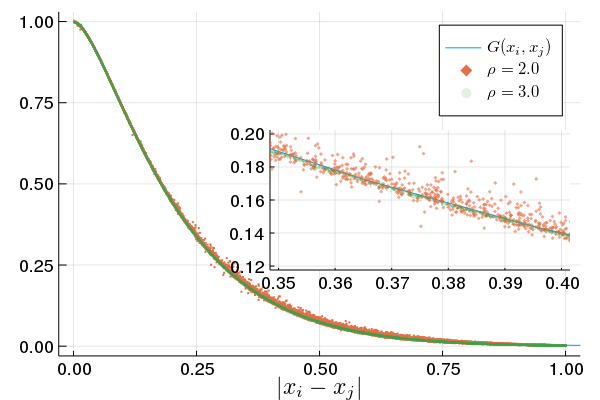}
	\includegraphics[scale=0.30]{./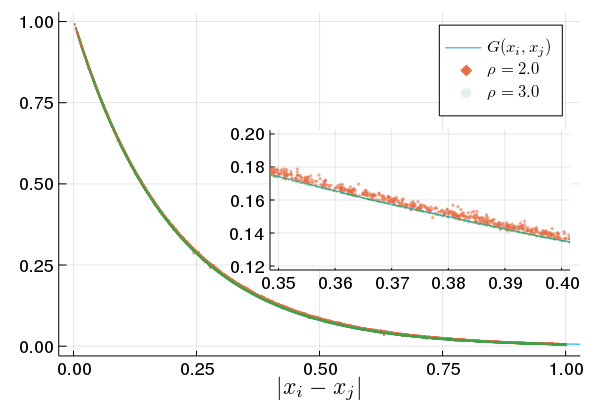}
	\caption{First panel:
	the increase in computational time taken by the Cholesky factorization, as $N$ increases (for $\rho = 3.0$).
	Second panel:
	the exponential decay of the relative error in Frobenius norm, as $\rho$ is increased.
	In the third ($d=2$) and fourth panel ($d=3$), we see the comparison of the approximate and true covariance for $\rho = 2.0$ and $\rho = 3.0$.}
	\label{fig-errRho}
\end{figure}

We will now present numerical evidence in support of our results.
All experiments reported below were run on a workstation using an Intel\textsuperscript{\textregistered}Core\texttrademark i7-6400 CPU with 4.00GHz and 64\,GB of RAM.
The time-critical parts of the code are run on a single thread, leaving the exploration of parallelism to future work.
The Julia scripts implementing the experiments can be found online under \url{https://github.com/f-t-s/nearLinKernel}.
In the following, $\nnz(L)$ denotes the number of nonzero entries of the lower-triangular factor $L$;
$\tsos$ denotes the time taken by \cref{alg-sortSparse} to compute the maximin ordering $\prec$ and sparsity pattern $S_{\rho}$;
$\tent$ denotes the time taken to compute the entries of $\KM$ on $S_{\rho}$;
and $\tich$ denotes the time taken to perform \cref{alg-ICholesky} (\ICH), all measured in seconds.
The relative error in Frobenius norm is approximated by
\begin{equation}
	E
	\defeq \frac{\norm{ LL^{\top} - \KM }_{\FRO}}{\norm{ \KM }_{\FRO}}
	\approx \frac{\sqrt{ \sum_{k=1}^m \bignorm{ \bigl( L L^{\top} - \KM \bigr)_{i_k j_k} }^2}} {\sqrt{\sum_{k=1}^m \norm{ \KM_{i_k j_k} }^2}} ,
\end{equation}
where the $m = 500000$ pairs of indices $i_k, j_k \sim \unif(\I)$ are independently and uniformly distributed in $\I$.
This experiment is repeated 50 times and the resulting mean and standard deviation (in brackets) are reported.
For measurements in $[0,1]^d$, in order to isolate the boundary effects, we also consider the quantity $\bar{E}$ which is defined as $E$, but with only those sample $i_k,j_k$ for which $x_{i_k}, x_{j_k} \in [0.05, 0.95]^d$.
Most of our experiments will use the Mat{\'e}rn class of covariance functions \cite{matern1960spatial}, defined by
\begin{equation}\label{eqn-matern}
  	\GMat_{l, \nu} (x, y) \defeq \frac{2^{1-\nu}}{\Gamma(\nu)} \left( \frac{\sqrt{2\nu} |x - y |}{l} \right)^{\nu} K_{\nu} \left( \frac{\sqrt{2\nu} | x - y |} {l} \right),
\end{equation}
where $K_{\nu}$ is the modified Bessel function of second kind \cite[Section 9.6]{abramowitz1964handbook} and $\nu$, $l$ are parameters describing the degree of smoothness, and the length-scale of interactions, respectively \cite{rasmussen2006gaussian}.
In \cref{fig-MaternComp}, the Mat{\'e}rn kernel is plotted for different degrees of smoothness.
The Mat{\'e}rn covariance function is used in many branches of statistics and machine learning to model random fields with finite order of smoothness \cite{guttorp2006studies, rasmussen2006gaussian}.

\begin{figure}[t]
	\centering
	\includegraphics[width=0.4\textwidth]{./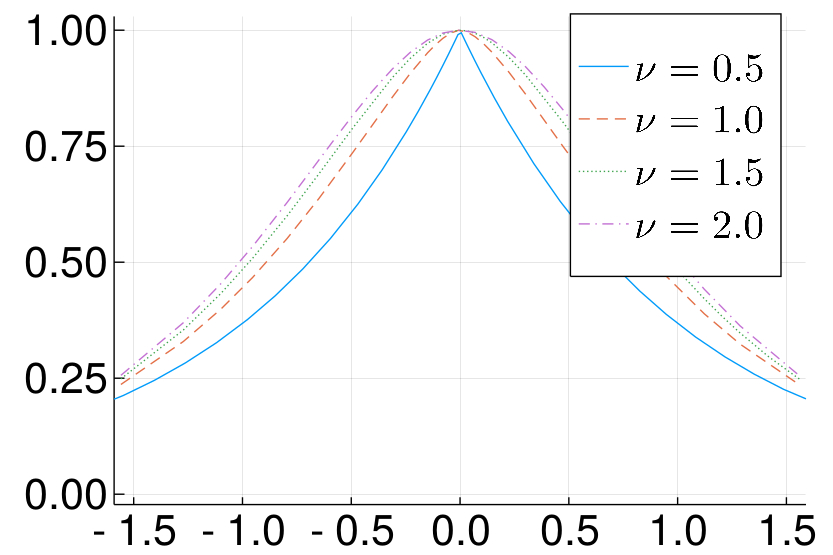}
	\includegraphics[width=0.4\textwidth]{./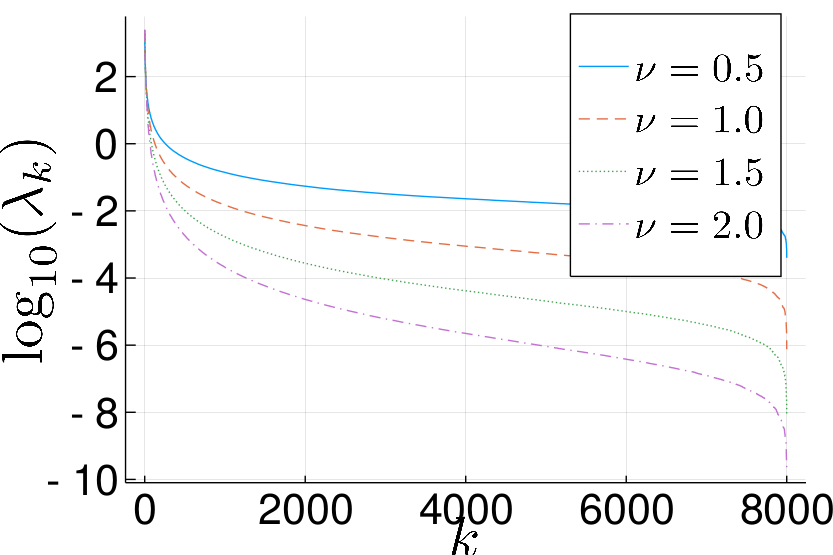}
	\caption{Mat{\'e}rn kernels for different values of $\nu$ (left), and the spectrum of $\KM$, for $2000$ points $x_i \in [0,1]^2$ (right).
	Smaller values of $\nu$ correspond to stronger singularities at zero and hence lower degrees of smoothness of the associated Gaussian process.}
	\label{fig-MaternComp}
\end{figure}

As observed by \cite{whittle1954stationary, whittle1963stochastic}, the Mat{\'e}rn kernel is the Green's function of an elliptic PDE of possibly fractional order $2 ( \nu + d/2 )$ in the whole space.
Therefore, for $2 ( \nu + d/2 ) \in \Naturals$, the Mat{\'e}rn kernel falls into the framework of our theoretical results, up to the behavior at the boundary discussed in \cref{ssec-noBoundary}.
Since the locations of our points will be chosen at random, some of the points will be very close to each other, resulting in an almost singular matrix $\KM$ that can become nonpositive under the approximation introduced by {\ICH}.
If \cref{alg-ICholesky} encounters a nonpositive pivot $A_{ii}$, then we set the corresponding column of $L$ to zero, resulting in a low-rank approximation of the original covariance matrix.
We report the rank of $L$ in our experiments and note that we obtain a full-rank approximation for moderate values of $\rho$.

We begin by investigating the scaling of our algorithm as $N$ increases.
To this end, we consider $\nu = 0.5$ (the exponential kernel), $l = 0.2$ and choose $N$ randomly distributed points in $[0,1]^d$ for $d \in \{2,3\}$.
The results are summarized in \cref{tab-maternN2d} and \cref{tab-matern3d}, and in \cref{fig-errRho}, and confirm the near-linear computational complexity of our algorithm.

\begin{table}
	\caption{$\GMat_{\nu,l}$, with $\nu = 0.5$, $l = 0.2$, $\rho = 3.0$, and $d = 2$.}
	\label{tab-maternN2d}
	\tiny
	\centering
	\begin{tabular}{r|ccccccc}
\toprule
     $N$       & $\nnz(L)/N^2$ & $\rank(L)$ & $\tsos$ & $\tent$ & $\tich$ & $ E $               & $ \bar{E} $         \\ \hline
  $20000$ &     5.26e-03  &     20000  &   0.71  &   0.81  &   0.42  & 1.25e-03 (3.68e-06) & 1.11e-03 (3.01e-06) \\
  $20000$ &     5.26e-03  &     20000  &   0.71  &   0.81  &   0.42  & 1.25e-03 (3.68e-06) & 1.11e-03 (3.01e-06) \\
  $40000$ &     2.94e-03  &     40000  &   1.21  &   1.19  &   1.00  & 1.27e-03 (3.32e-06) & 1.12e-03 (3.56e-06) \\
  $80000$ &     1.62e-03  &     80000  &   2.72  &   2.82  &   2.55  & 1.30e-03 (3.20e-06) & 1.21e-03 (3.29e-06) \\
 $160000$ &     8.91e-04  &    160000  &   6.86  &   6.03  &   6.11  & 1.28e-03 (3.57e-06) & 1.16e-03 (3.32e-06) \\
 $320000$ &     4.84e-04  &    320000  &  17.22  &  13.79  &  15.66  & 1.23e-03 (3.19e-06) & 1.11e-03 (2.40e-06) \\
 $640000$ &     2.63e-04  &    640000  &  41.40  &  31.02  &  36.02  & 1.24e-03 (2.58e-06) & 1.09e-03 (3.02e-06) \\
$1280000$ &     1.41e-04  &   1280000  &  98.34  &  65.96  &  85.99  & 1.23e-03 (3.72e-06) & 1.10e-03 (3.74e-06) \\
$2560000$ &     7.55e-05  &   2560000  & 233.92  & 148.43  & 197.52  & 1.16e-03 (2.82e-06) & 1.04e-03 (3.36e-06) \\
\bottomrule
\end{tabular}
\end{table}

\begin{table}
	\caption{$\GMat_{\nu,l}$, with $\nu = 0.5$, $l = 0.2$, $\rho = 3.0$, and $d = 3$.}
	\label{tab-maternN3d}
	\tiny
	\centering
	\begin{tabular}{r|ccccccc}
\toprule
       $N$       & $\nnz(L)/N^2$ & $\rank(L)$ & $\tsos$ & $\tent$ & $\tich$  & $ E $               & $ \bar{E} $         \\ \hline
  $20000$ &     1.30e-02  &     20000  &   1.61  &   1.44  &    2.94  & 1.49e-03 (5.00e-06) & 1.20e-03 (5.09e-06) \\
  $40000$ &     7.60e-03  &     40000  &   3.26  &   3.32  &    8.33  & 1.21e-03 (4.29e-06) & 9.91e-04 (3.72e-06) \\
  $80000$ &     4.35e-03  &     80000  &   7.46  &   7.64  &   22.46  & 1.06e-03 (3.74e-06) & 8.51e-04 (2.93e-06) \\
 $160000$ &     2.45e-03  &    160000  &  20.95  &  18.42  &   57.64  & 9.81e-04 (2.33e-06) & 7.88e-04 (3.23e-06) \\
 $320000$ &     1.37e-03  &    320000  &  53.58  &  40.72  &  141.46  & 9.27e-04 (2.26e-06) & 7.53e-04 (2.72e-06) \\
 $640000$ &     7.61e-04  &    640000  & 133.55  &  96.67  &  350.10  & 8.98e-04 (3.25e-06) & 7.25e-04 (3.02e-06) \\
$1280000$ &     4.19e-04  &   1280000  & 312.43  & 212.57  &  820.07  & 8.59e-04 (2.79e-06) & 7.00e-04 (2.87e-06) \\
$2560000$ &     2.29e-04  &   2560000  & 795.68  & 480.17  & 1981.92  & 8.96e-04 (2.76e-06) & 7.73e-04 (4.28e-06) \\
\bottomrule
\end{tabular}
\end{table}

\begin{table}
	\caption{$\GMat_{\nu,l}$, with $\nu = 1.0$, $l = 0.2$, $N = 10^6$, and $d = 2$.}
	\label{tab-matern2d}
	\tiny
	\centering
	\begin{tabular}{r|ccccccc}
\toprule
            & $\nnz(L)/N^2$ & $\rank(L)$ & $\tsos$ & $\tent$ & $\tich$  & $ E $               & $ \bar{E} $         \\ \hline
$\rho= 2.0$ &     8.78e-05  &    254666  &  38.06  &  33.72  &   17.54  & 2.04e-02 (1.73e-02) & 2.34e-02 (2.75e-02) \\
$\rho= 3.0$ &     1.76e-04  &    964858  &  71.07  &  67.85  &   61.35  & 2.32e-03 (6.02e-06) & 2.09e-03 (7.50e-06) \\
$\rho= 4.0$ &     2.90e-04  &    999810  & 115.07  & 112.56  &  152.93  & 3.92e-04 (1.44e-06) & 3.72e-04 (2.32e-06) \\
$\rho= 5.0$ &     4.26e-04  &    999999  & 165.91  & 166.60  &  312.19  & 6.70e-05 (2.98e-07) & 5.68e-05 (2.55e-07) \\
$\rho= 6.0$ &     5.83e-04  &   1000000  & 227.62  & 229.76  &  566.94  & 1.45e-05 (6.69e-08) & 1.08e-05 (5.01e-08) \\
$\rho= 7.0$ &     7.59e-04  &   1000000  & 292.52  & 300.65  &  944.33  & 4.05e-06 (4.96e-08) & 2.10e-06 (1.69e-08) \\
$\rho= 8.0$ &     9.53e-04  &   1000000  & 363.90  & 380.07  & 1476.71  & 1.62e-06 (2.30e-08) & 4.08e-07 (9.47e-09) \\
$\rho= 9.0$ &     1.16e-03  &   1000000  & 447.47  & 467.07  & 2200.32  & 8.98e-07 (1.44e-08) & 1.42e-07 (5.14e-09) \\
\bottomrule
\end{tabular}
\end{table}

\begin{table}
	\caption{$\GMat_{\nu,l}$, with $\nu = 0.5$, $l = 0.2$, $N = 10^6$, and $d = 3$.}
	\label{tab-matern3d}
	\tiny
	\centering
	\begin{tabular}{r|ccccccc}
\toprule
            & $\nnz(L)/N^2$ & $\rank(L)$ & $\tsos$  & $\tent$ & $\tich$   & $ E $               & $ \bar{E} $         \\ \hline
$\rho= 2.0$ &     1.87e-04  &    998046  &   87.83  &  56.44  &    85.20  & 1.69e-02 (6.89e-04) & 1.60e-02 (3.36e-04) \\
$\rho= 3.0$ &     5.17e-04  &   1000000  &  226.84  & 158.42  &   599.86  & 8.81e-04 (3.21e-06) & 7.15e-04 (2.99e-06) \\
$\rho= 4.0$ &     1.05e-03  &   1000000  &  446.52  & 326.27  &  2434.52  & 1.85e-04 (5.37e-07) & 1.59e-04 (5.30e-07) \\
$\rho= 5.0$ &     1.82e-03  &   1000000  &  747.65  & 567.06  &  7227.45  & 2.89e-05 (1.94e-07) & 1.84e-05 (1.15e-07) \\
$\rho= 6.0$ &     2.82e-03  &   1000000  & 1344.59  & 928.27  & 17640.58  & 1.15e-05 (1.06e-07) & 5.34e-06 (5.34e-08) \\
\bottomrule
\end{tabular}
\end{table}

\begin{table}
	\caption{We tabulate the approximation rank and error for $\rho = 5.0$ and $N = 10^6$ points uniformly distributed in $[0,1]^3$.
	The covariance function is $\GMat_{\nu,0.2}$ for $\nu$ ranging around $\nu = 0.5$ and $\nu = 1.5$.
	Even though the intermediate values of $\nu$ correspond to a fractional order elliptic PDE, the behavior of the approximation stays the same.}
	\label{tab-maternFrac3d}
	\tiny
	\centering
	\begin{tabular}{r|cccccccc}
\toprule
            & $\nu = 0.3$ & $\nu = 0.5$ & $\nu = 0.7$ & $\nu = 0.9$ & $\nu = 1.1$ & $\nu = 1.3$ & $\nu = 1.5$ & $\nu = 1.7$ \\ \hline
 $\rank(L)$ &     1000000 &     1000000 &     1000000 &     1000000 &     1000000 &     1000000 &     1000000 &      999893 \\
      $ E $ &    7.04e-05 &    2.89e-05 &    2.49e-05 &    3.58e-05 &    6.03e-05 &    8.77e-05 &    1.18e-04 &    1.46e-04 \\
            &  (3.98e-07) &  (1.79e-07) &  (1.11e-07) &  (1.19e-07) &  (2.37e-07) &  (3.06e-07) &  (4.52e-07) &  (5.39e-07) \\
$ \bar{E} $ &    5.19e-05 &    1.85e-05 &    1.77e-05 &    2.82e-05 &    4.88e-05 &    6.87e-05 &    9.06e-05 &    1.13e-04 \\
            &  (2.26e-07) &  (1.18e-07) &  (8.11e-08) &  (1.30e-07) &  (2.37e-07) &  (3.50e-07) &  (5.14e-07) &  (5.45e-07) \\
\bottomrule
\end{tabular}
\end{table}

\begin{table}
	\caption{$\GCauchy_{l,\alpha,\beta}$ for $(l,\alpha,\beta) = (0.4,0.5,0.025)$ (first table) and $(l,\alpha,\beta) = (0.2,1.0,0.20)$ (second table), for $N = 10^6$ and $d = 2$.}
	\label{tab-cauchy}
	\tiny
	\centering
	\begin{tabular}{r|cccccccc}
\toprule
            & $\rho = 2.0$ & $\rho = 3.0$ & $\rho = 4.0$ & $\rho = 5.0$ & $\rho = 6.0$ & $\rho = 7.0$ & $\rho = 8.0$ & $\rho = 9.0$ \\ \hline
 $\rank(L)$ &       999923 &      1000000 &      1000000 &      1000000 &      1000000 &      1000000 &      1000000 &      1000000 \\
      $ E $ &     4.65e-04 &     5.98e-05 &     2.36e-05 &     1.19e-05 &     4.84e-06 &     4.17e-06 &     2.25e-06 &     1.42e-06 \\
            &   (4.23e-07) &   (1.56e-07) &   (9.53e-08) &   (6.32e-08) &   (4.14e-08) &   (4.99e-08) &   (1.86e-08) &   (1.64e-08) \\
$ \bar{E} $ &     3.81e-04 &     3.49e-05 &     9.83e-06 &     4.65e-06 &     1.47e-06 &     8.49e-07 &     4.25e-07 &     2.12e-07 \\
            &   (4.98e-07) &   (1.59e-07) &   (5.56e-08) &   (2.63e-08) &   (7.73e-09) &   (1.04e-08) &   (4.81e-09) &   (3.24e-09) \\
\bottomrule
\end{tabular} \\
	\begin{tabular}{r|cccccccc}
\toprule
            & $\rho = 2.0$ & $\rho = 3.0$ & $\rho = 4.0$ & $\rho = 5.0$ & $\rho = 6.0$ & $\rho = 7.0$ & $\rho = 8.0$ & $\rho = 9.0$ \\ \hline
 $\rank(L)$ &       999547 &      1000000 &      1000000 &      1000000 &      1000000 &      1000000 &      1000000 &      1000000 \\
      $ E $ &     1.08e-03 &     1.36e-04 &     2.89e-05 &     2.35e-05 &     5.33e-06 &     3.25e-06 &     2.53e-06 &     1.68e-06 \\
            &   (5.02e-06) &   (6.27e-07) &   (2.63e-07) &   (3.01e-07) &   (6.15e-08) &   (5.74e-08) &   (4.84e-08) &   (4.25e-08) \\
$ \bar{E} $ &     7.23e-04 &     8.96e-05 &     1.17e-05 &     5.65e-06 &     1.09e-06 &     5.84e-07 &     4.03e-07 &     2.40e-07 \\
            &   (4.07e-06) &   (2.63e-07) &   (7.10e-08) &   (1.47e-07) &   (7.71e-09) &   (5.48e-09) &   (3.44e-09) &   (2.23e-09) \\
\bottomrule
\end{tabular}
\end{table}

\begin{table}
	\caption{$\GMat_{\nu,l}$ for $\nu = 0.5$, $l = 0.2$, and $\rho = 3.0$ with $N = 10^6$ points chosen as in \cref{fig-dzpoints}.}
	\label{tab-materndz}
	\tiny
	\centering
	\begin{tabular}{r|cccccccc}
\toprule
              & $\delta_{z} = 0.0$ & $\delta_{z} = 0.1$ & $\delta_{z} = 0.2$ & $\delta_{z} = 0.3$ & $\delta_{z} = 0.4$ & $\delta_{z} = 0.5$ & $\delta_{z} = 0.6$ \\
$\frac{\nnz(L)}{N^2}$ &           1.76e-04 &           1.77e-04 &           1.78e-04 &           1.80e-04 &           1.82e-04 &           1.84e-04 &           1.85e-04 \\
    $ \tich $ &              61.92 &              62.15 &              62.81 &              64.27 &              64.87 &              65.50 &              66.12 \\
   $\rank(L)$ &            1000000 &            1000000 &            1000000 &            1000000 &            1000000 &            1000000 &            1000000 \\
        $ E $ &           1.17e-03 &           1.11e-03 &           1.28e-03 &           1.60e-03 &           1.72e-03 &           1.89e-03 &           2.11e-03 \\
              &         (2.74e-06) &         (3.00e-06) &         (2.73e-06) &         (4.28e-06) &         (3.95e-06) &         (5.11e-06) &         (5.07e-06) \\
\bottomrule
\end{tabular}
\end{table}

Next, we investigate the trade-off between computational efficiency and accuracy of the approximation.
To this end, we choose $d = 2$, $\nu = 1.0$ and $d = 3$, $\nu = 0.5$, corresponding to fourth-order equations in two and three dimensions.
We choose $N = 10^6$ data points $x_i \sim \unif([0,1]^d)$ and apply our method with different values of $\rho$.
The results of these experiments are tabulated in \cref{tab-matern2d,tab-matern3d} and the impact of $\rho$ on the approximation error is visualized in \cref{fig-errRho}.

While our theoretical results only cover integer-order elliptic PDEs, we observe no practical difference between the numerical results for Mat{\'e}rn kernels corresponding to integer- and fractional-order smoothness.
As an illustration, for the case $d = 3$, we provide approximation results for $\nu$ ranging around $\nu =0.5$ (corresponding to a fourth-order elliptic PDE) and $\nu = 1.5$ (corresponding to a sixth-order elliptic PDE).
As seen in \cref{tab-maternFrac3d}, the results vary continuously as $\nu$ changes, with no qualitative differences between the behavior for integer- and fractional-order PDEs.
To further illustrate the robustness of our method, we consider the Cauchy class of covariance functions introduced in \cite{gneiting2004stochastic}
\begin{equation}
  	\GCauchy_{l,\alpha,\beta}(x,y) \defeq \left( 1 + \left( \frac{\left|x-y\right|}{l}\right)^{\alpha} \right)^{-\frac{\beta}{\alpha}}.
\end{equation}
As far as we are aware, the Cauchy class has not been associated to an elliptic PDE.
Furthermore, it does not have exponential decay in the limit $|x-y| \to \infty$, which allows us to emphasize the point that the exponential decay of the error is \emph{not} due to the exponential decay of the covariance function itself.
\Cref{tab-cauchy} gives the results for $(l,\alpha,\beta) = (0.4, 0.5, 0.025)$ and $(l,\alpha,\beta) = (0.2, 1.0, 0.2)$.

In Gaussian process regression, the ambient dimension $d$ is typically too large to ensure computational efficiency of our algorithm.
However, since our algorithm only requires access to pairwise distances between points, it can take advantage of low intrinsic dimension
of the dataset.
We might be concerned that in this case, interaction through the higher dimensional ambient space will disable the screening effect.
As a first demonstration that this is not the case, we will draw $N = 10^6$ points in $[0,1]^2$ and equip them with a third component according to $x_{i}^{(3)} \defeq - \dz \sin ( 6 x_i^{(1)} ) \cos ( 2  ( 1 - x_i^{(2)} ) ) + \xi_i 10^{-3}$, for $\xi_i$ i.i.d.\ standard Gaussian.
\Cref{fig-dzpoints} shows the resulting point sets for different values of $\dz$, and \cref{tab-materndz} shows that the approximation is robust to increasing values of $\dz$.

\begin{figure}
	\centering
	\includegraphics[width=0.32\textwidth]{./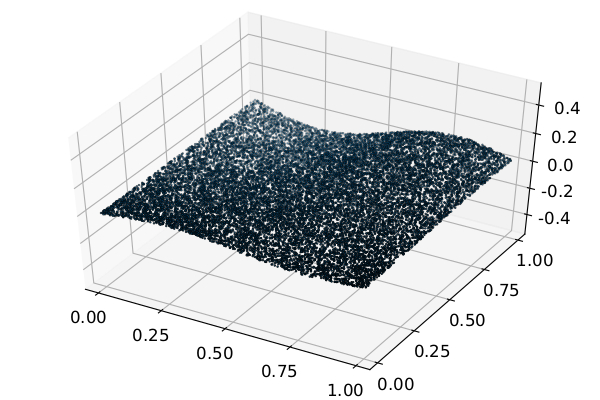}
	\includegraphics[width=0.32\textwidth]{./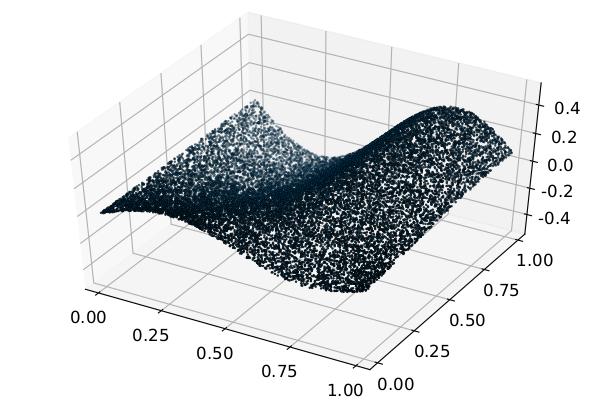}
	\includegraphics[width=0.32\textwidth]{./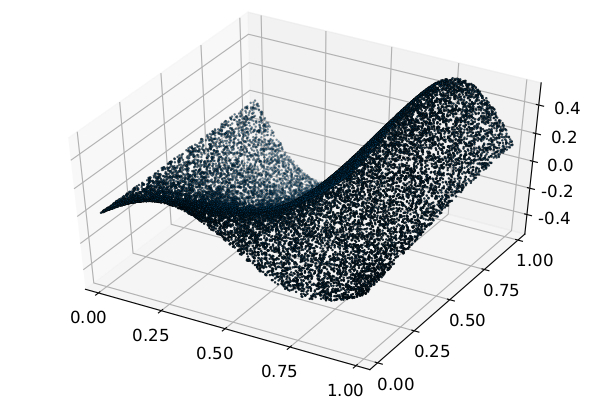}
	\caption{A two-dimensional point cloud deformed into a two-dimensional submanifold of $\Reals^3$, with $\dz \in \{ 0.1, 0.3, 0.5\}$.}
	\label{fig-dzpoints}
\end{figure}

An appealing feature of our method is that it can be formulated in terms of the pairwise distances alone.
This means that the algorithm will automatically exploit any low-dimensional structure in the dataset.
In order to illustrate this feature, we artificially construct a dataset with low-dimensional structure by randomly rotating four low-dimensional structures into a $20$-dimensional ambient space (see \cref{fig-spiralEllipse}).
\Cref{tab-expHighDim} shows that the resulting approximation is even better than the one obtained in dimension $3$, illustrating that our algorithm did indeed exploit the low intrinsic dimension of the dataset.

\begin{figure}
	\centering
	\includegraphics[width=0.24\textwidth]{./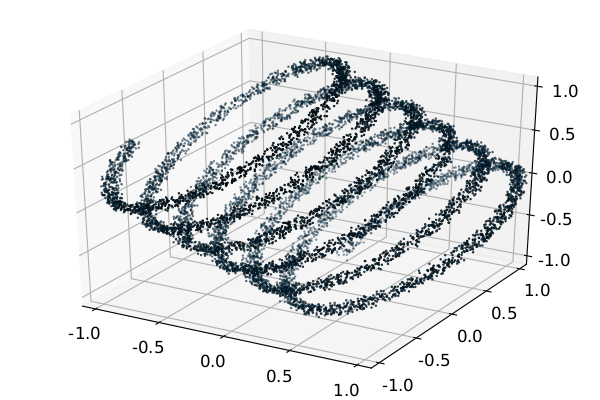}
	\includegraphics[width=0.24\textwidth]{./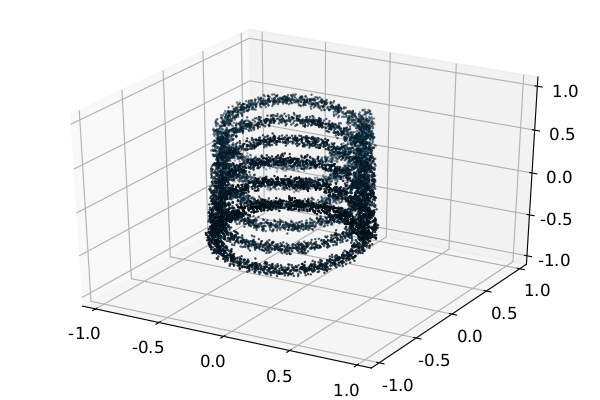}
	\includegraphics[width=0.24\textwidth]{./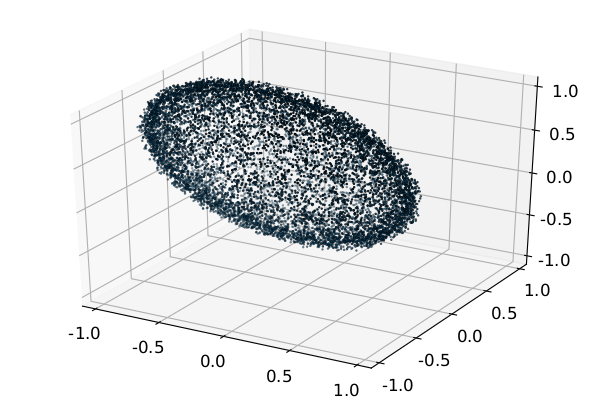}
	\includegraphics[width=0.24\textwidth]{./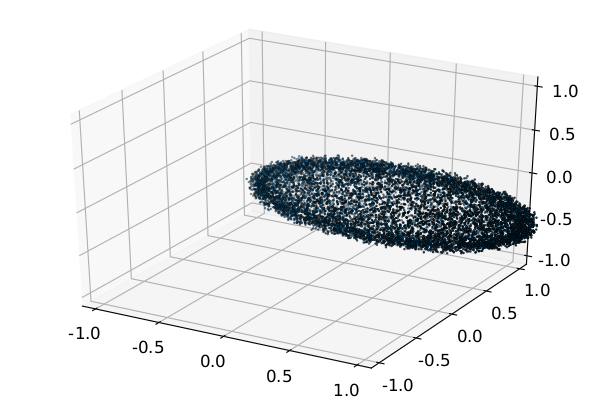}
	\caption{We construct a high-dimensional dataset with low-dimensional structure by rotating the above structures at random into a $20$-dimensional ambient space.}
	\label{fig-spiralEllipse}
\end{figure}

\begin{table}
	\caption{$\GMat_{\nu,l}$ for $\nu = 0.5$, $l = 0.5$, and $N = 10^6$ points as in \cref{fig-spiralEllipse}.}
	\label{tab-expHighDim}
	\scriptsize
	\centering
	\begin{tabular}{r|cccccc}
\toprule
            & $\nnz(L)/N^2$ & $\rank(L)$ & $\tsos$  & $\tent$ & $\tich$   & $ E $               \\ \hline
$\rho= 2.0$ &     1.62e-04  &    997635  &   80.60  &  57.11  &    52.49  & 1.57e-02 (1.13e-03) \\
$\rho= 3.0$ &     3.76e-04  &   1000000  &  173.86  & 135.61  &   248.78  & 2.88e-03 (1.14e-05) \\
$\rho= 4.0$ &     6.76e-04  &   1000000  &  302.98  & 247.74  &   748.62  & 8.80e-04 (4.97e-06) \\
$\rho= 5.0$ &     1.05e-03  &   1000000  &  462.98  & 397.42  &  1802.44  & 3.44e-04 (2.54e-06) \\
$\rho= 6.0$ &     1.49e-03  &   1000000  &  645.56  & 556.72  &  3696.31  & 1.44e-04 (8.76e-07) \\
$\rho= 7.0$ &     2.02e-03  &   1000000  &  891.08  & 758.88  &  6855.23  & 7.61e-05 (5.66e-07) \\
$\rho= 8.0$ &     2.62e-03  &   1000000  & 1248.90  & 990.86  & 11598.66  & 4.57e-05 (4.36e-07) \\
\bottomrule
\end{tabular}
\end{table}

\section{Analysis of the algorithm}
\label{sec:analysis}
\subsection{General Setting}
\label{secgenset}

We will start the analysis in a more general setting than that of Section \cref{sssec-classElliptic}.
Let $\B$ be a separable Banach space with dual space $\B^{\ast}$, and write $\dualprod{ \quark }{ \quark }$ for the duality product between $\B^{\ast}$ and $\B$.
Let $\IK \colon \B \to \B^{\ast}$ be a linear bijection and let $\K \defeq \IK^{-1}$.
Assume $\IK$ to be symmetric and positive (i.e.\ $\dualprod{ \IK u }{ v } = \dualprod{ \IK v }{ u }$ and $\dualprod{ \IK u }{ u }\geq 0$ for $u,v\in \B$).
Let $\norm{ \quark }$ be the quadratic (energy) norm defined by $\norm{ u }^2 \defeq \dualprod{ \IK u }{ u }$ for $u\in \B$ and let $\norm{ \quark }_{\ast}$ be its dual norm defined by
\begin{equation}
 	\label{eq-dual-norm}
	\norm{ \phi }_{\ast} \defeq \sup_{0 \neq u\in \B} \frac{\dualprod{ \phi }{ u }}{\norm{ u }}=[\phi,\K \phi]\text{ for }\phi\in \B^{\ast}.
\end{equation}
Let $\{ \phi_i \}_{i \in \I}$ be linearly independent elements of $\B^{\ast}$ (known as \emph{measurement functions}) and let $\KM \in \Reals^{\I \times \I}$ be the symmetric positive-definite matrix defined by
\begin{equation}
	\KM_{ij} \defeq \dualprod{ \phi_i }{ \K \phi_j } \quad \text{for $i,j \in \I$.}
\end{equation}
We assume that we are given $q\in \mathbb{N}$ and a partition $\I = \bigcup_{1 \leq k \leq q} \J^{(k)}$ of $\I$.
We represent $I\times I$ matrices as $q \times q$ block matrices according to this partition.
Given an $I\times I$ matrix $M$ we write $M_{k,l}$ for the $(k,l)$\textsuperscript{th} block of $M$ and $M_{k_1:k_2,l_1:l_2}$ for the sub-matrix of $M$ defined by blocks ranging from $k_1$ to $k_2$ and
$l_1$ to $l_2$.
Unless specified otherwise we write $L$ for the lower-triangular Cholesky factor of $\KM$ and define
\begin{align}
	\label{eqjehdhgdjh}
	\KM^{(k)} & \defeq \KM_{1:k,1:k}, &
	\IKM^{(k)} & \defeq \KM^{(k),-1}, &
	\IKMC^{(k)} & \defeq \IKM^{(k)}_{k,k} &
	\text{for $1 \leq k \leq q$.}
\end{align}
We interpret the $\{ \J^{(k)} \}_{1 \leq k \leq q}$ as labelling a hierarchy of scales with $\J^{(1)}$ representing the coarsest and $\J^{(q)}$ the finest.
We write $\I^{(k)}$ for $\bigcup_{1 \leq k' \leq k} \J^{(k')}$.

Throughout this section we assume that the ordering of the set $I$ of indices is compatible with the partition $I=\bigcup_{k=1^q} \J^{(k)}$, i.e.\ $k<l$, $i\in \J^{(k)}$ and $j\in \J^{(l)}$ together imply $i\prec j$.
We will write $L$ or $\chol(\KM)$ for the Cholesky factor of $\KM$ in that ordering.

\subsection{Main examples}

We will prove the main results of this section in the setting where $\IK$ is defined as in \cref{sssec-classElliptic} and the $\phi_{i}$ are chosen as in \cref{examp-subsamp,examp-average}.
We will assume (without loss of generality after rescaling) that $\diam(\Omega)\leq 1$.
As described in \cref{fig-ordering}, successive points of the maximin ordering can be gathered into levels, so that, after appropriate rescaling of the measurements, the Cholesky factorization in the maximin ordering falls in the setting of \cref{examp-subsamp}.

\begin{example}
	\label{examp-subsamp}
	Let $s > d/2$.
	For $h, \delta \in (0,1)$ let $\{x_i\}_{i\in \I^{(1)}}\subset \{x_i\}_{i\in \I^{(2)}}\subset \cdots \subset \{x_i\}_{i\in \I^{(q)}}$ be a nested hierarchy of points in
	$\Omega$ that are homogeneously distributed at each scale in the sense of the following three inequalities:
	\begin{enumerate}[label=(\arabic*)]
		\item $\sup_{x\in \Omega} \min_{i\in \I^{(k)}} |x-x_i|\leq h^k$,
		\item $\min_{i\in \I^{(k)}}\inf_{x \in \partial \Omega} |x-x_i|\geq \delta h^k$, and
		\item $ \min_{i,j\in \I^{(k)}: i \neq j}|x_i-x_j|\geq \delta h^k$.
	\end{enumerate}
	Let $\J^{(1)} \defeq \I^{(1)}$ and $\J^{(k)} \defeq \I^{(k)}\setminus \I^{(k-1)}$ for $k\in \{2,\ldots,q\}$.
	Let $\boldsymbol{\delta}$ denote the unit Dirac delta function and choose
	\begin{equation}
		\phi_i \defeq h^{\frac{kd}{2}} \boldsymbol{\delta}(x-x_i) \text{ for }i\in \J^{(k)}\text{ and }k\in \{1,\ldots,q\} .
	\end{equation}
\end{example}

Given subsets $\tI, \tJ \subset \I$ we extend a matrix $M \in \Reals^{\tI \times \tJ}$ to an element of $\Reals^{\I \times \J}$ by padding it with zeros.

\begin{example}
	\label{examp-average}
	(See \cref{fig-averageExample}.)
	For $h, \delta \in (0,1)$, let $(\tau_i^{(k)})_{i\in \I^{(k)}}$ be uniformly Lipschitz convex sets forming a regular nested partition of $\Omega$ in the following sense.
	For $k \in \{1,\ldots, q\}$, $\Omega=\bigcup_{i\in \I^{(k)}}\tau_i^{(k)}$ is a disjoint union except for the boundaries.
	$\I^{(k)}$ is a nested set of indices, i.e.\ $\I^{(k)}\subset \I^{(k+1)}$ for $k\in \{1,\ldots,q-1\}$.
	For $k \in \{2,\ldots, q\}$ and $i\in \I^{(k-1)}$, there exists a subset $ c_i \subset \I^{(k)}$ such that $i\in c_i$ and $\tau_i^{(k-1)} = \bigcup_{j \in c_{i}} \tau_j^{(k)}$.
	Assume that each $\tau_i^{(k)}$ contains a ball $B_{\delta h^{k}}(x_{i}^{(k)})$ of center $x_i^{(k)}$ and radius $\delta h^k$, and is contained in the ball $B_{h^{k}}(x_{i}^{(k)})$.
	For $k \in \{2,\ldots, q\}$ and $i \in \I^{(k-1)}$, let the submatrices $\mathfrak{w}^{(k),i} \in \Reals^{( c_i \setminus \{i\} ) \times c_{i}}$ satisfy $\sum_{j \in c_i} \mathfrak{w}^{(k),i}_{m,j} \mathfrak{w}^{(k),i}_{n,j} |\tau_j^{(k)}| = \delta_{mn} $ and $\sum_{j \in c_i} \mathfrak{w}^{(k),i}_{l,j} |\tau_j^{(k)}| = 0$ for each $l \in c_i\setminus\{i\}$,	where $|\tau_i^{(k)}|$ denotes the volume of $\tau_i^{(k)}$.
	Let $\J^{(1)} \defeq \I^{(1)}$ and $\J^{(k)} \defeq \I^{(k)} \setminus \I^{(k-1)}$ for $k\in \{2,\ldots,q\}$.
	Let $W^{(1)}$ be the $\J^{(1)}\times \I^{(1)}$ matrix defined by $W^{(1)}_{ij} \defeq \delta_{ij}$.
	Let $W^{(k)}$ be the $\J^{(k)}\times \I^{(k)}$ matrix defined by $W^{(k)} \defeq \sum_{i \in \I^{(k-1)}} \mathfrak{w}^{(k),i}$ for $k>2$, we set
	\begin{equation}
		\phi_{i} \defeq h^{-kd/2} \sum_{j \in \I^{(k)}} W^{(k)}_{i,j} \one_{\tau_j^{(k)}} \quad \text{for each $i \in \J^{(k)}$}
	\end{equation}
	and define $\dualprod{ \phi_i }{ u } \defeq \int_{\Omega} \phi_i u \dx$.
	In order to keep track of the distance between the different $\phi_i$ of
	\cref{examp-average}, we choose an arbitrary set of points
	$\{ x_i \}_{i \in \I} \subset \Omega$ with the property that
	$x_i \in \supp ( \phi_i )$ for each $i \in \I$.
\end{example}

\begin{figure}
	\centering
	\includegraphics[scale = 0.45]{./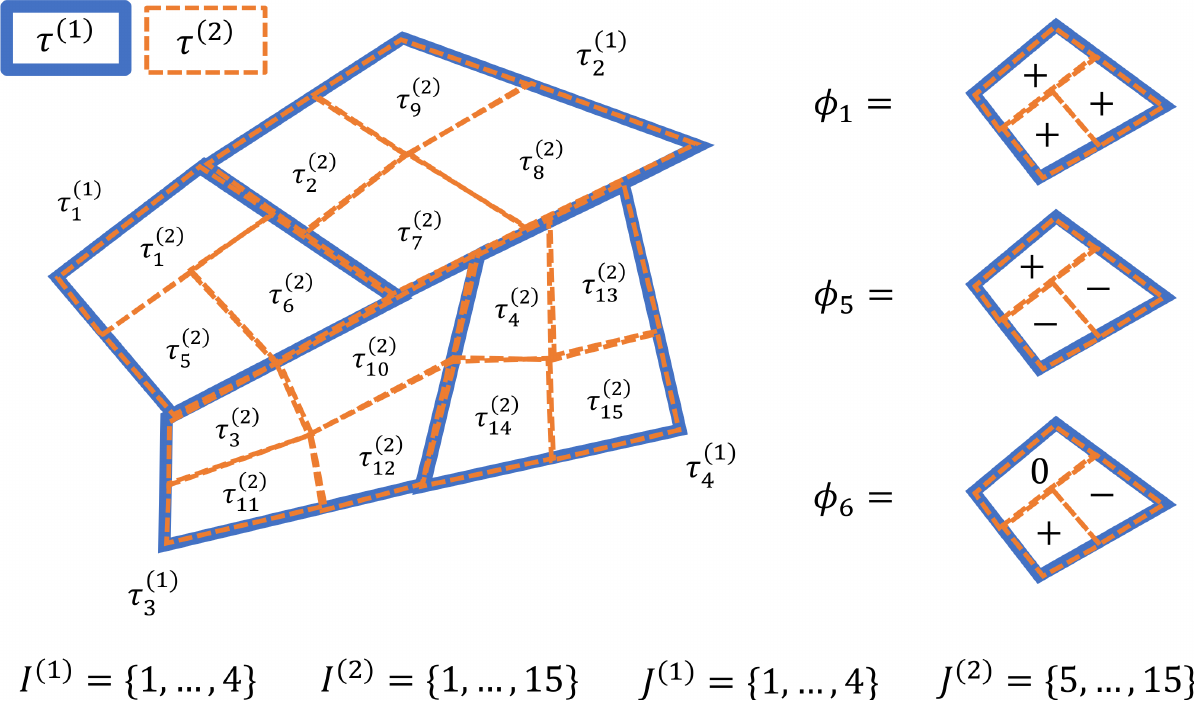}
	\caption{We illustrate the construction described in \cref{examp-average} in the case $q = 2$.
	On the left we see the nested partition of the domain, and on the right we see (the signs of) a possible choice for $\phi_1$, $\phi_5$, and $\phi_6$.}
	\label{fig-averageExample}
\end{figure}

\subsection{Exponential decay of Cholesky factors}

Our bound on the {\ICH} approximation error will be based on the following exponential decay estimate on the entries of the Cholesky factor $L$ of $\KM$:
\begin{equation}
	\label{eq-exp-decay-estimate-L}
	\absval{ L_{ij} } \leq \poly(N) \exp (-\gamma d( i, j ) ),
\end{equation}
for a constant $\gamma > 0$ and a suitable distance measure $d(\quark, \quark) \colon \I \times \I \to \Reals$.

\subsubsection{Algebraic Identities and roadmap}

The following block-Cholesky decomposition of $\Theta$ will be used to obtain the estimate \eqref{eq-exp-decay-estimate-L}.

\begin{lemma}
	\label{lem-BlockCholesky}
	We have $\KM = \bar{L} D \bar{L}^{T}$, with $\bar{L}$ and $D$ defined by
	\begin{footnotesize}
		\begin{equation}
			\label{eqjhgjhgyug}
			D \defeq
			\begin{pmatrix}
				\IKMC^{(1),-1} & 0 & \dots  & 0\\
				0 & \IKMC^{(2),-1} & \ddots & \vdots\\
				\vdots & \ddots & \ddots &  \vdots \\
				0 & 0 & \dots &  \IKMC^{(q),-1}
			\end{pmatrix},
			\bar{L} \defeq
			\begin{pmatrix}
				\Id &  \dots & \dots & 0\\
				\IKMC^{( 2 ),-1} \IKM^{( 2 )}_{2,1} &  \ddots & 0 &  \vdots\\
				\vdots &  \ddots & \ddots& \vdots \\
				\IKMC^{( q ),-1} \IKM^{(q)}_{q,1} &  \dots &\IKMC^{( q ),-1} \IKM^{( q )}_{q,q-1}&\Id
			\end{pmatrix}^{-1} .
		\end{equation}
	\end{footnotesize}
	In particular, if $\tilde{L}$ is the lower-triangular Cholesky factor of $D$, then the lower-triangular Cholesky factor $L$ of $\KM$ is given by $L = \bar{L}\tilde{L}$.
\end{lemma}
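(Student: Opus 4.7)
The plan is to verify $\KM = \bar{L} D \bar{L}^{T}$ by induction on $q$, and then to deduce $L = \bar{L}\tilde{L}$ from uniqueness of the Cholesky factorization. The whole argument is just the standard correspondence between recursive Schur complements and block LDL factorization, translated into the notation of the lemma.

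First I would establish two Schur complement identities by applying the block matrix inversion formula to $\KM^{(k)}$ partitioned as $\left(\begin{smallmatrix}\KM^{(k-1)} & a \\ a^{T} & b\end{smallmatrix}\right)$ with $a \defeq \KM^{(k)}_{1:k-1,k}$ and $b \defeq \KM_{k,k}$: namely
\begin{equation}
	\IKMC^{(k),-1} = b - a^{T} \IKM^{(k-1)} a, \qquad \IKMC^{(k),-1} \IKM^{(k)}_{k,1:k-1} = -a^{T} \IKM^{(k-1)} .
\end{equation}
Thus $\IKMC^{(k),-1}$ is exactly the Schur complement of $\KM^{(k-1)}$ in $\KM^{(k)}$, which is precisely the $(k,k)$ block produced by one step of block LDL elimination, and the second identity matches the off-diagonal block of the claimed $\bar{L}^{-1}$ to the corresponding off-diagonal block produced by elimination.

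Next I would induct on $q$. The base case $q = 1$ is immediate, since $\bar{L} = \Id$ and $D = \IKMC^{(1),-1} = \KM$. For the inductive step, combining the identity \eqref{eqn-blockChol} with the first Schur identity above yields
\begin{equation}
	\KM = \begin{pmatrix}\Id & 0 \\ a^{T}\IKM^{(q-1)} & \Id\end{pmatrix}\begin{pmatrix}\KM^{(q-1)} & 0 \\ 0 & \IKMC^{(q),-1}\end{pmatrix}\begin{pmatrix}\Id & \IKM^{(q-1)}a \\ 0 & \Id\end{pmatrix} .
\end{equation}
Applying the inductive hypothesis to $\KM^{(q-1)} = \bar{L}^{(q-1)} D^{(q-1)} \bar{L}^{(q-1),T}$ and collecting factors gives $\KM = \bar{L} D \bar{L}^{T}$ with $D$ as in \eqref{eqjhgjhgyug}, and with $\bar{L}^{-1}$ having its last block-row equal to $(-a^{T}\IKM^{(q-1)}, \Id)$; by the second Schur identity this coincides with $(\IKMC^{(q),-1}\IKM^{(q)}_{q,1:q-1}, \Id)$, while the remaining block-rows match the formula in the lemma by the inductive hypothesis.

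For the final statement, $D$ is block-diagonal with symmetric positive-definite blocks $\IKMC^{(k),-1}$, so its lower-triangular Cholesky factor $\tilde{L}$ exists, is lower triangular, and has strictly positive diagonal. Then $\bar{L}\tilde{L}$ is lower triangular (as a product of lower-triangular matrices) with strictly positive diagonal (since $\bar{L}$ has identity diagonal blocks), and $(\bar{L}\tilde{L})(\bar{L}\tilde{L})^{T} = \bar{L} D \bar{L}^{T} = \KM$, so $L = \bar{L}\tilde{L}$ by uniqueness of the Cholesky factorization. The only real obstacle in writing this out is bookkeeping the block indices carefully; conceptually there is nothing beyond the standard one-step elimination recursion and the block inverse formula.
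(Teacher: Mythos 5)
Your proposal is correct and follows essentially the same route as the paper: both rest on the two-block Schur-complement identities of Lemma~\ref{lem-blockChol2Scale} applied recursively to the trailing block, the only difference being that you package the recursion as an induction with direct verification of the block-rows of $\bar{L}^{-1}$, while the paper unrolls it into an explicit product of elementary block-triangular factors and then collapses that product. The sign bookkeeping ($a^{\top}\IKM^{(q-1)} = -\IKMC^{(q),-1}\IKM^{(q)}_{q,1:q-1}$) and the uniqueness argument for $L=\bar{L}\tilde{L}$ are both handled correctly.
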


\begin{proof}
	To obtain \cref{lem-BlockCholesky} we successively apply \cref{lem-blockChol2Scale} to $\KM$ (see \cref{apsec-abstractDecay} for details).
	\cref{lem-blockChol2Scale} summarizes classical identities satisfied by Schur complements.
\end{proof}

\begin{lemma}[{\cite[Chapter 1.1]{zhang2005schur}}]
	\label{lem-blockChol2Scale}
	Let $\KM =
	\left(
	\begin{smallmatrix}
		\KM_{1,1} & \KM_{1,2} \\
		\KM_{2,1} & \KM_{2,2}
	\end{smallmatrix}
	\right)$ be symmetric positive definite and $\IKM =
	\left(
	\begin{smallmatrix}
		\IKM_{1,1} & \IKM_{1,2} \\
		\IKM_{2,1} & \IKM_{2,2}
	\end{smallmatrix}
	\right)$ its inverse.
	Then
	\begin{align}
		\label{eqn-blockCholKM}
		\KM &=
		\begin{pmatrix}
			\Id     & 0\\
			L_{2,1} & \Id
		\end{pmatrix}
		\begin{pmatrix}
			D_{1,1} & 0 \\
			0       & D_{2,2}
		\end{pmatrix}
		\begin{pmatrix}
			\Id & L_{2,1}^\top\\
			0  & \Id
		\end{pmatrix} \\
		\label{eqn-blockCholIKM}
		\IKM & =
		\begin{pmatrix}
			\Id & -L_{2,1}^\top\\
			0  & \Id
		\end{pmatrix}
		\begin{pmatrix}
			D_{1,1}^{-1} & 0 \\
			0       & D_{2,2}^{-1}
		\end{pmatrix}
		\begin{pmatrix}
			\Id     & 0\\
			-L_{2,1} & \Id
		\end{pmatrix}
	\end{align}
	where
	\begin{align}
		L_{2,1} &= \KM_{2,1} \KM_{1,1}^{-1} = - \IKM_{2,2}^{-1} \IKM_{2,1} \\
		D_{1,1} &= \KM_{1,1} = \left( \IKM_{1,1} -
		\IKM_{1,2} \IKM_{2,2}^{-1} \IKM_{2,1} \right)^{-1} \\
		D_{2,2} &= \KM_{2,2} - \KM_{2,1} \KM_{1,1}^{-1} \KM_{1,2} = \IKM_{2,2}^{-1} .
	\end{align}
\end{lemma}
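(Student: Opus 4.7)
The plan is to prove the lemma by direct block-matrix multiplication, combined with reading off entry-by-entry identifications from $\KM \IKM = \Id$. The statement combines two factorizations, one for $\KM$ and one for $\IKM$, plus the three expressions for $L_{2,1}$, $D_{1,1}$, $D_{2,2}$ in terms of the blocks of $\KM$ and $\IKM$; these are all classical Schur-complement identities and none of the steps is genuinely hard.

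First I would verify \eqref{eqn-blockCholKM} by substituting $L_{2,1} = \KM_{2,1} \KM_{1,1}^{-1}$, $D_{1,1} = \KM_{1,1}$, and $D_{2,2} = \KM_{2,2} - \KM_{2,1} \KM_{1,1}^{-1} \KM_{1,2}$, and multiplying the three block factors. The $(1,1)$, $(1,2)$ and $(2,1)$ blocks of the product reduce immediately to $\KM_{1,1}$, $\KM_{1,2}$ and $\KM_{2,1}$, while the $(2,2)$ block gives $\KM_{2,1}\KM_{1,1}^{-1}\KM_{1,2} + D_{2,2} = \KM_{2,2}$. Here I use that positive-definiteness of $\KM$ implies $\KM_{1,1}$ is invertible, so $L_{2,1}$ and $D_{2,2}$ are well defined and $D_{2,2}$ is itself positive definite (hence invertible).

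Second, I would obtain \eqref{eqn-blockCholIKM} simply by inverting \eqref{eqn-blockCholKM}: the two triangular factors have explicit inverses
\begin{equation}
\begin{pmatrix} \Id & 0 \\ L_{2,1} & \Id \end{pmatrix}^{-1} = \begin{pmatrix} \Id & 0 \\ -L_{2,1} & \Id \end{pmatrix}, \qquad \begin{pmatrix} \Id & L_{2,1}^\top \\ 0 & \Id \end{pmatrix}^{-1} = \begin{pmatrix} \Id & -L_{2,1}^\top \\ 0 & \Id \end{pmatrix},
\end{equation}
and the middle factor inverts blockwise. Third, I would read off the blocks of $\IKM$ from \eqref{eqn-blockCholIKM}:
\begin{equation}
\IKM_{2,2} = D_{2,2}^{-1}, \quad \IKM_{2,1} = -D_{2,2}^{-1} L_{2,1}, \quad \IKM_{1,1} = D_{1,1}^{-1} + L_{2,1}^\top D_{2,2}^{-1} L_{2,1}.
\end{equation}
The first two identifications immediately yield $D_{2,2} = \IKM_{2,2}^{-1}$ and $L_{2,1} = -\IKM_{2,2}^{-1}\IKM_{2,1}$. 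For the formula for $D_{1,1}$ in terms of $\IKM$, I would substitute these into the third identification and obtain
\begin{equation}
\IKM_{1,1} - \IKM_{1,2}\IKM_{2,2}^{-1}\IKM_{2,1} = D_{1,1}^{-1} + L_{2,1}^\top D_{2,2}^{-1} L_{2,1} - L_{2,1}^\top D_{2,2}^{-1} L_{2,1} = D_{1,1}^{-1},
\end{equation}
using $\IKM_{1,2} = \IKM_{2,1}^\top = -L_{2,1}^\top D_{2,2}^{-1}$.

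There is no genuine obstacle: every step is a finite block-matrix computation. The only care points are (i) invoking positive-definiteness to justify that each block and Schur complement that must be inverted is actually invertible, and (ii) being consistent about which of the two representations (via $\KM$-blocks or via $\IKM$-blocks) is being used for each of $L_{2,1}$, $D_{1,1}$, $D_{2,2}$, since the lemma asserts the equality of both representations. Given how routine this is, I would likely present the proof as "direct computation" and give only the verification of \eqref{eqn-blockCholKM} and the read-off that produces $D_{2,2} = \IKM_{2,2}^{-1}$, leaving the remaining identifications to the reader or to a citation of \cite[Chapter 1.1]{zhang2005schur}.
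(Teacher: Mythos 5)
Your proposal is correct, and it fills in exactly the computation that the paper leaves implicit: the paper gives no proof of this lemma at all, simply citing \citet[Chapter 1.1]{zhang2005schur} and describing it as a summary of classical Schur-complement identities. Your direct verification of \eqref{eqn-blockCholKM} by block multiplication, inversion of the triangular and block-diagonal factors to get \eqref{eqn-blockCholIKM}, and the read-off of $D_{2,2}=\IKM_{2,2}^{-1}$, $L_{2,1}=-\IKM_{2,2}^{-1}\IKM_{2,1}$, and $D_{1,1}^{-1}=\IKM_{1,1}-\IKM_{1,2}\IKM_{2,2}^{-1}\IKM_{2,1}$ is precisely the standard argument behind that citation, with the invertibility of $\KM_{1,1}$ and of the Schur complement correctly justified by positive definiteness.
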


Based on \cref{lem-BlockCholesky}, \eqref{eq-exp-decay-estimate-L} can be established by ensuring that:
\begin{enumerate}[label=(\arabic*)]
	\item the matrices $\IKM^{(k)}$ (and hence also $\IKMC^{(k)}$) decay exponentially according to $d(\quark, \quark)$;
	\item the matrices $\IKMC^{(k)}$ have uniformly bounded condition numbers;
	\item the products of exponentially decaying matrices decay exponentially;
	\item the inverses of well-conditioned exponentially decaying matrices decay exponentially;
	\item the Cholesky factors of the inverses of well-conditioned exponentially decaying matrices decay exponentially;
	and
	\item if a $q \times q$ block lower-triangular matrix $\bar{L}$ with unit block-diagonal decays exponentially, then so does its inverse.
\end{enumerate}
We will carry out this program in the setting of \cref{examp-subsamp,examp-average} and prove that \eqref{eq-exp-decay-estimate-L} holds with
\begin{equation}
	\label{eq-exp-decay-estimate-L2}
	d( i, j ) \defeq h^{-\min(k,l) } \dist( x_i, x_j ), \quad \text{for each $i \in \J^{(k)}$, $j \in \J^{(l)}$.}
\end{equation}
To prove (1), the matrices $\KM^{(k)}$, $\IKM^{(k)}$ (interpreted as coarse-grained versions of $\K$ and $\IK$), and $\IKMC^{(1)}$ will be identified as stiffness matrices of the $\IK$-adapted wavelets described in \cref{sssec-choleskyGamblets}.
This identification is established on the general identities $\Theta_{i,j}^{(k)}=[\phi_i, \K \phi_j]$ for $i,j\in \I^{(k)}$, $\IKM^{(k)}=(\KM^{(k)})^{-1}$, $\IKM^{(k)}_{i,j}=[\IK \psi_i^{(k)},\psi_j^{(k)}]$ and $\IKMC^{(k)}_{i,j}=[\IK \chi_i^{(k)},\chi_j^{(k)}]$ where the $\psi_i^{(k)}$ and $\chi_i^{(k)}$ are defined as in \eqref{eq-pre-wavelet-representation} and \eqref{eq-Wk-basis}.

\subsubsection{Exponential decay of \texorpdfstring{$\IKM^{(k)}$}.}
\label{sssec-decayHomog}

Our proof of the exponential decay of $L$ will be based on that of $\IKM^{(k)}$ as expressed in the following condition:

\begin{condition}
	\label{cond-spatloc}
	Let $\gamma, C_\gamma \in \Reals_+$ be constants such that for $1 \leq k \leq q$ and $i,j\in \I^{(k)}$,
	\begin{equation}
		\bigabsval{ \IKM^{(k)}_{ij} } \leq C_\gamma \sqrt{\IKM^{(k)}_{ii} \IKM^{(k)}_{jj}}\exp (-\gamma d( i, j ) ) .
	\end{equation}
\end{condition}

The matrices $\IKM^{(k)}$ are coarse-grained versions of the local operator $\IK$ and as such inherit some of its locality in the form of exponential decay.
Such exponential localization results were first obtained by \cite{malqvist2014localization} for the coarse-grained operators obtained from local orthogonal decomposition (LOD) applied to second-order elliptic PDEs with rough coefficients.
\cite{owhadi2015multigrid} gives similar results for measurement functions chosen as in \cref{examp-average}.
\cite{hou2017sparse} extend the results on exponential decay to higher-order operators satisfying a strong ellipticity condition.
These results were obtained using similar \emph{mass chasing} techniques that are difficult to extend to general higher-order operators.
\cite{kornhuber2016analysis} present a simpler proof of the exponential decay of the LOD basis functions of \cite{malqvist2014localization} based on the exponential convergence of subspace iteration methods.
\cite{owhadi2017universal} extend this technique (by presenting necessary and
sufficient conditions expressed as frame inequalities in dual spaces) to elliptic PDEs of arbitrary
(integer) order and new classes of (possibly non-conforming) measurements, including those of \cref{examp-subsamp} and \cref{examp-average}.
More recently, \cite{brown2018numerical} show localization results for the fractional partial differential operators by using the Caffarelli--Silvestre extension.
The results of \cite{owhadi2017universal} are sufficient to show that \cref{cond-spatloc} holds true in the setting of \cref{examp-subsamp} and \cref{examp-average}.

\begin{theorem}[\cite{owhadi2017universal}]
	\label{thm-spatloc}
	In \cref{examp-subsamp}, the matrices
	$\IKM^{(k)}$ satisfy
	\begin{equation}
		\left| \IKM^{(k)}_{ij} \right| \leq C_\gamma \sqrt{\IKM^{(k)}_{ii}\IKM^{(k)}_{jj}} \exp\left(- \frac{\gamma}{h^k} \dist ( x_i, x_j ) \right) \leq C_\gamma \sqrt{\IKM^{(k)}_{ii}\IKM^{(k)}_{jj}} \exp (- \gamma d(i,j) )
	\end{equation}
	and in \cref{examp-average} they satisfy
	\begin{equation}
		\bigabsval{ \IKM^{(k)}_{ij} } \leq C_\gamma \exp\left(\frac{\gamma}{h}\right) \sqrt{\IKM^{(k)}_{ii}\IKM^{(k)}_{jj}} \exp (- \gamma d( i, j ) ) ,
	\end{equation}
	with the constants $C_\gamma$ and $\gamma$ depending only on $\|\IK\|$, $\norm{ \IK^{-1} }$,	$s$, $d$, $\Omega$, and $\delta$.
	In particular, they satisfy \cref{cond-spatloc} with the constants described above.
\end{theorem}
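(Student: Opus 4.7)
The plan is to reduce the exponential decay of $\IKM^{(k)}$ to exponential decay of the pre-wavelets $\psi_i^{(k)}$ defined in \eqref{eq-pre-wavelet-representation}, and then to invoke the gamblet localization theory of \cite{owhadi2017universal}.

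First, I would use the algebraic identification already noted after \eqref{eqjhgjhgyug}: for each scale $k$ and $i,j \in \I^{(k)}$,
\begin{equation}
\IKM^{(k)}_{i,j} = \innerprod{\psi_i^{(k)}}{\psi_j^{(k)}} = \int_\Omega \psi_i^{(k)} \IK \psi_j^{(k)} \dx ,
\end{equation}
which follows from $\dualprod{\phi_i^{(k)}}{\psi_j^{(k)}} = \delta_{ij}$, the representation formula \eqref{eq-pre-wavelet-representation}, and symmetry of $\IK$. In particular $\IKM^{(k)}_{ii} = \norm{\psi_i^{(k)}}^2$, so the asserted bound is equivalent to
\begin{equation}
\bigabsval{\innerprod{\psi_i^{(k)}}{\psi_j^{(k)}}} \leq C_\gamma \norm{\psi_i^{(k)}} \norm{\psi_j^{(k)}} \exp\mleft(-\frac{\gamma}{h^k}\dist(x_i,x_j)\mright) ,
\end{equation}
with an additional factor $\exp(\gamma/h)$ in the Example~\ref{examp-average} case.

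Next, the core geometric content is exponential decay of the pre-wavelets themselves in the energy norm: for a ball $B$ of radius $r \gtrsim h^k$ centered at $x_i$,
\begin{equation}
\bignorm{\psi_i^{(k)}|_{\Omega \setminus B}} \leq C_\gamma \norm{\psi_i^{(k)}} \exp(-\gamma r/h^k) .
\end{equation}
Granted this, the matrix entry bound follows by separating $x_i$ and $x_j$ with a hyperplane equidistant between them and applying Cauchy--Schwarz in the energy inner product together with the local character of $\IK$, so that
\begin{equation}
\bigabsval{\innerprod{\psi_i^{(k)}}{\psi_j^{(k)}}} \leq \norm{\psi_j^{(k)}}\, \bignorm{\psi_i^{(k)}|_{B(x_j,r/2)^c}} + \norm{\psi_i^{(k)}}\, \bignorm{\psi_j^{(k)}|_{B(x_i,r/2)^c}}
\end{equation}
with $r = \dist(x_i,x_j)$; applying the decay estimate to both remainders and absorbing a factor of two into $C_\gamma$ yields the claim.

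The hard part, and the reason the theorem is attributed to \cite{owhadi2017universal}, is the exponential decay of the pre-wavelets themselves. The argument there uses the variational characterization of $\psi_i^{(k)}$ as the minimizer of $\norm{u}^2$ subject to the constraints $\dualprod{\phi_j^{(k)}}{u} = \delta_{ij}$ for all $j \in \I^{(k)}$, together with a frame inequality controlling, uniformly in $k$, the energy norm of the unique minimizer in terms of the $\ell^2$ norm of the measurement vector. This frame inequality holds for the measurements of Example~\ref{examp-subsamp} (using $s > d/2$ so that Dirac deltas sit in $H^{-s}$) and Example~\ref{examp-average} (piecewise polynomial averages), with constants depending only on $\|\IK\|$, $\norm{\IK^{-1}}$, $s$, $d$, $\Omega$, and $\delta$. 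A subspace-iteration (or mass-chasing) argument then trades each additional annular shell of width $\sim h^k$ for a contraction factor strictly less than one, yielding the exponential rate. The extra $\exp(\gamma/h)$ in Example~\ref{examp-average} reflects the fact that the $\phi_i^{(k)}$ have supports of diameter $\sim h^k$ rather than being point-concentrated, so the geometric separation used in the localization loses one unit of $h^{-k}$-scaled distance. Since these estimates are already fully worked out in \cite{owhadi2017universal}, I would cite the relevant localization theorems to conclude, and then observe that the passage from the $(\gamma/h^k)\dist(x_i,x_j)$-form to the $\gamma\, d(i,j)$-form is immediate since $i,j \in \I^{(k)}$ forces $\min(k,l)=k$ in the definition \eqref{eq-exp-decay-estimate-L2}.
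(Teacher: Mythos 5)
Your proposal is correct and follows essentially the same route as the paper: identify $\IKM^{(k)}_{ij}$ with the energy inner product $\dualprod{\IK\psi_i^{(k)}}{\psi_j^{(k)}}$ of the gamblets and then invoke the exponential-decay estimates of \citet{owhadi2017universal} (the paper cites its Theorems~2.25, 2.26, and 3.8 for exactly this). The only difference is that you spell out the transfer from decay of individual $\psi_i^{(k)}$ to decay of the inner products via a Cauchy--Schwarz/locality splitting, a step the paper leaves implicit in the citation.
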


\begin{proof}
	Our \cref{examp-subsamp} is equivalent to Example 2.29 of \cite{owhadi2017universal}.
	In \cite[Theorem~2.25 and Theorem~2.26]{owhadi2017universal} it is shown that in the gamblets	$\{ \psi_i^{(k)} \}_{i \in \I^{(k)}}$ computed in this setting decay exponentially on the length-scale $h^k$, with respect to the energy norm.
	By \cite[Theorem 3.8]{owhadi2017universal} we have $\IKM^{(k)}_{ij} = \dualprod{ \psi_i^{(k)} }{ \IK \psi_j^{(k)} }$ and, therefore, the exponential decay of gamblets implies the exponential decay of the $\IKM^{(k)}$.
	
	We further note that \cref{examp-average} is equivalent to Example 2.27 in \cite{owhadi2017universal}.
	Therefore, by the same theorems, as above, the results of \cite{owhadi2017universal} imply exponential decay of the $\IKM^{(k)}$ in this setting\footnote{We point out that the block $\IKM^{(k)}_{m,l}$ in our notation is	$W^{(m)} \pi^{(m,k)} \IKM^{(k)} \pi^{(k,l)} W^{(l),\top}$ in the notation of \cite{owhadi2017universal}.}.

See also \cite[Theorem~15.45]{OwhScobook2018} for a detailed proof and \cite[Theorem~15.43]{OwhScobook2018} for required sufficient lower bounds on $\IKM^{(k)}_{ii}$.
\end{proof}

\subsubsection{Bounded condition numbers}
\label{sssec-boundCond}

In this section, we will bound the condition numbers of $\IKMC^{(k)}$ based on the following condition, which we will show to be satisfied for \cref{examp-subsamp,examp-average}.

\begin{condition}
	\label{cond-specloc}
	Let $H \in (0,1), C_{\Phi} \geq 1$ be constants such that for $1 \leq k < l \leq q$,
	\begin{align}
		\label{eqn-specloclow}
		\lambda_{\min} \bigl( \KM^{(k)} \bigr) & \geq \frac{1}{C_{\Phi}} H^{2k} , \\
		\label{eqn-speclocup}
		\lambda_{\max} \bigl( \KM^{(q)}_{l, l} -
		\KM^{(q)}_{l,1:k} \KM^{(q),-1}_{1:k,1:k} \KM^{(q)}_{1:k,l} \bigr) & \leq C_{\Phi} H^{2k} \,.
	\end{align}
\end{condition}

\begin{theorem}
	\label{thm-condphi}
	\Cref{cond-specloc} implies that, for all $1 \leq k \leq q$,
	\begin{equation}
		\label{eqjheddhhe}
		C_{\Phi}^{-1}H^{-2\left(k-1\right)} \Id \prec \IKMC^{(k)} \prec C_{\Phi} H^{-2k} \Id,
		\end{equation}
	and, for $\kappa \defeq H^{-2} C_{\Phi}^2$,
	\begin{equation}
		\cond \bigl( \IKMC^{(k)} \bigr) \leq \kappa \,.
	\end{equation}
\end{theorem}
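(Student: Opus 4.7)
The plan is to identify $\IKMC^{(k)}$ as the inverse of a Schur complement inside $\KM^{(k)}$ and then sandwich that Schur complement's spectrum using the two estimates in Condition~\ref{cond-specloc}. Decomposing $\KM^{(k)}$ as a $2\times 2$ block matrix whose top-left block is $\KM^{(k-1)}$ and whose bottom-right block is $\KM_{k,k}$, and applying the block-inversion identity of Lemma~\ref{lem-blockChol2Scale} (reading off the $(2,2)$ block of the inverse), I would obtain
\begin{equation}
S^{(k)} \defeq \IKMC^{(k),-1} = \KM_{k,k} - \KM_{k,1:k-1}\, \KM^{(k-1),-1}\, \KM_{1:k-1,k}.
\end{equation}
Because the rows and columns of $\KM$ that appear here are the same whether one works inside $\KM^{(k)}$ or inside $\KM^{(q)}$, this $S^{(k)}$ coincides with the matrix whose largest eigenvalue is controlled by \eqref{eqn-speclocup} (after taking $l = k$ and relabelling the ``$k$'' of the condition as $k-1$).

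Given this identification, the upper bound on $\lambda_{\max}(S^{(k)})$ is immediate from \eqref{eqn-speclocup} and translates into the lower bound $\lambda_{\min}(\IKMC^{(k)}) \geq C_\Phi^{-1} H^{-2(k-1)}$. For the complementary direction I would invoke the elementary interlacing fact that any principal block $N$ of $M^{-1}$, with $M$ symmetric positive definite, satisfies $\lambda_{\max}(N) \leq \lambda_{\max}(M^{-1}) = \lambda_{\min}(M)^{-1}$; this is immediate from comparing Rayleigh quotients after padding test vectors with zeros. Applied with $M = \KM^{(k)}$ and $N = \IKMC^{(k)}$, this rearranges to $\lambda_{\min}(S^{(k)}) \geq \lambda_{\min}(\KM^{(k)})$, which is in turn bounded below by $C_\Phi^{-1} H^{2k}$ via \eqref{eqn-specloclow}, hence $\lambda_{\max}(\IKMC^{(k)}) \leq C_\Phi H^{-2k}$.

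Combining the two one-sided estimates is exactly \eqref{eqjheddhhe}, and the condition-number bound $\cond(\IKMC^{(k)}) \leq C_\Phi^2 H^{-2} = \kappa$ follows by a single division. I do not anticipate any serious obstacle: the whole argument is a routine Schur-complement-plus-interlacing manipulation. The only point that requires brief care is the edge case $k = 1$, where the Schur complement degenerates to $\KM_{1,1}$ itself and both inequalities of the theorem reduce to direct spectral bounds on $\KM_{1,1}$ that are read off from Condition~\ref{cond-specloc} with the natural convention that the correction term is empty.
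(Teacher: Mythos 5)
Your proof is correct and takes essentially the same route as the paper's: you identify $\IKMC^{(k)}$ both as the inverse of the Schur complement of $\KM^{(k-1)}$ in $\KM^{(k)}$, so that \eqref{eqn-speclocup} gives the lower bound, and as a principal block of $\bigl(\KM^{(k)}\bigr)^{-1}$, so that \eqref{eqn-specloclow} plus the Rayleigh-quotient (interlacing) bound gives the upper bound, and then divide to get $\cond\bigl(\IKMC^{(k)}\bigr) \leq \kappa$. The only difference is that you make explicit the interlacing step and the $k=1$ convention, which the paper leaves implicit.
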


\begin{proof}
	The lower bound in \eqref{eqjheddhhe} follows from \eqref{eqn-speclocup} and
	\begin{equation}
		\IKMC^{(k)} = \bigl( \KM_{k, k}^{(q)} - \KM^{(q)}_{k,1:(k-1)} \KM^{(q),-1}_{1:k,1:k} \KM^{(q)}_{1:(k-1),k} \bigr)^{-1}\,.
	\end{equation}
	The upper bound in \eqref{eqjheddhhe} follows from \eqref{eqn-specloclow} and $\IKMC^{(k)} = \bigl( \bigl( \KM^{(k)} \bigr)^{-1} \bigr)_{k,k}$.
\end{proof}

The following theorem shows that \eqref{eqn-speclocup} is a Poincar\'{e} inequality closely related to the accuracy of numerical homogenization basis functions \cite{malqvist2014localization, owhadi2014polyharmonic, hou2017sparse} and \eqref{eqn-specloclow} is an inverse Sobolev inequality related to the regularity of the discretization of $\IK$:

\begin{theorem}
	\label{thm-condNew}
	\Cref{cond-specloc} holds true if the constants $C_\Phi \geq 1$ and $H \in (0,1)$ satisfy
	\begin{enumerate}[label=(\arabic*)]
		\item $\frac{1}{C_\Phi}H^{2k} \leq \frac{\norm{ \phi }_{\ast}^2}{\absval{ \x }^2}$, for $\x \in \Reals^{\I^{(k)}}$
		and $\phi = \sum_{i\in \I^{(k)}} \x_i \phi_i$; and
		\item $\min_{\varphi \in \spn (\phi_i )_{i \in \I^{(k-1)}} }\frac{\| \phi - \varphi \|_{\ast}^2}{|\x|^2} \leq C_{\Phi} H^{2(k-1)}$, for $\x \in \Reals^{\J^{(l)}}$,
		$k<l \leq q$, and $\phi = \sum_{i\in \J^{(l)}} \x_i \phi_i$.
	\end{enumerate}
\end{theorem}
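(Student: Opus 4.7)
The plan is to interpret $\KM$ as the Gram matrix of the measurement functionals $\{\phi_i\}_{i \in \I}$ in the inner product $(\phi, \psi) \mapsto \dualprod{\phi}{\K \psi}$ on $\B^{\ast}$; this is a bona fide inner product by symmetry and positivity of $\K = \IK^{-1}$, and the corresponding norm is precisely $\norm{\cdot}_{\ast}$ via \eqref{eq-dual-norm}. Under this identification, hypotheses (1) and (2) become norm bounds on explicit elements of $\B^{\ast}$, while the two inequalities of Condition~\ref{cond-specloc} are spectral bounds on a principal submatrix and a Schur complement of $\KM$. The whole task is to translate between these two viewpoints.

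For the lower bound \eqref{eqn-specloclow}, I would fix $\x \in \Reals^{\I^{(k)}}$ and set $\phi \defeq \sum_{i \in \I^{(k)}} \x_i \phi_i$. Bilinearity of the inner product immediately yields $\x^{\top} \KM^{(k)} \x = \norm{\phi}_{\ast}^{2}$, so hypothesis (1) gives $\x^{\top} \KM^{(k)} \x \geq C_{\Phi}^{-1} H^{2k} \absval{\x}^{2}$, which is exactly \eqref{eqn-specloclow}.

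For the upper bound \eqref{eqn-speclocup}, the key step is to recognise the Schur complement as a squared distance in the Hilbert space $(\B^{\ast}, \norm{\cdot}_{\ast})$. Fix $\x \in \Reals^{\J^{(l)}}$, set $\phi \defeq \sum_{i \in \J^{(l)}} \x_i \phi_i$, and for $\y \in \Reals^{\I^{(k)}}$ let $\varphi \defeq \sum_{j \in \I^{(k)}} \y_j \phi_j$. Expanding the squared norm gives
\begin{equation}
	\norm{\phi - \varphi}_{\ast}^{2} = \x^{\top} \KM^{(q)}_{l,l} \x - 2 \x^{\top} \KM^{(q)}_{l,1:k} \y + \y^{\top} \KM^{(k)} \y,
\end{equation}
and minimising in $\y$ (the unique minimiser being $\y = \KM^{(k),-1} \KM^{(q)}_{1:k,l} \x$, by positive-definiteness of $\KM^{(k)}$) produces
\begin{equation}
	\min_{\varphi \in \spn (\phi_j)_{j \in \I^{(k)}}} \norm{\phi - \varphi}_{\ast}^{2} = \x^{\top} \bigl( \KM^{(q)}_{l,l} - \KM^{(q)}_{l,1:k} \KM^{(q),-1}_{1:k,1:k} \KM^{(q)}_{1:k,l} \bigr) \x .
\end{equation}
Applying hypothesis (2) with its parameter $k$ shifted to $k+1$ (permitted whenever $l \geq k+2$) bounds the right-hand side by $C_{\Phi} H^{2k} \absval{\x}^{2}$, giving \eqref{eqn-speclocup}. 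For the remaining boundary case $l = k+1$, hypothesis (2) applies with its original $k$ and yields the bound $C_{\Phi} H^{2(k-1)} \absval{\x}^{2}$ over the smaller subspace $\spn(\phi_i)_{i \in \I^{(k-1)}} \subset \spn(\phi_i)_{i \in \I^{(k)}}$; subspace monotonicity then absorbs the additional $H^{-2}$ factor into the constant.

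The substantive content is the Schur-complement-as-projection identity; once that is in place, both inequalities of Condition~\ref{cond-specloc} reduce to direct applications of hypotheses (1) and (2), and the only point that deserves a little care is the index bookkeeping at the boundary $l = k+1$, where the constant $C_{\Phi}$ may have to be enlarged by the dimensionless factor $H^{-2}$.
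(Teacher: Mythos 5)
Your proof is correct and follows essentially the same route as the paper's: the lower bound is read off directly from hypothesis (1) via $\x^{\top}\KM^{(k)}\x = \norm{\phi}_{\ast}^{2}$, and the upper bound comes from the variational (projection) characterisation of the Schur complement as $\min_{\varphi}\norm{\phi-\varphi}_{\ast}^{2}$ followed by hypothesis (2). If anything you are more careful than the paper, whose one-line argument glosses over the boundary case $l=k+1$, where the index shift in hypothesis (2) forces the extra factor $H^{-2}$ that you correctly absorb into the constant.
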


\begin{proof}
	Inequality \eqref{eqn-specloclow} is a direct consequence of the first assumption of the theorem, whereas \eqref{eqn-speclocup} follows from the variational property 	\cite[Theorem 5.1]{zhang2005schur} of the Schur complement:
	\begin{align}
		\label{eqkwhdhkjhd}
		\x^{\top} \left(\KM_{l, l} - \KM^{(q)}_{l,1:k} \KM^{(q),-1}_{1:k,1:k} \KM^{(q)}_{1:k,l} \right)\x
		& = \inf_{\y \in \Reals^{\I^{(k)}}} \left(\x - \y \right)^{\top} \KM^{(q)} ( \x - \y )\\
		& = \min_{\varphi \in \spn \{ \phi_i \mid i \in \I^{(k)} \} }
		\norm{ \phi - \varphi }_{\ast}^2 \leq C_{\Phi}H^{2k} |\x|^2.
	\end{align}
\end{proof}

We will now show that \cref{examp-subsamp,examp-average} satisfy the conditions of \cref{thm-condNew}.
For simplicity, for $\tilde{\Omega} \subset \Omega$ and $\phi \in H^{-s}( \Omega )$ we still write $\phi$ for the unique element $\tilde{\phi} \in H^{-s} ( \tilde{\Omega} ) $ such that $\dualprod{ \tilde{\phi} }{ u } = \dualprod{ \phi }{ u }$ for $u\in H_0^{s} ( \tilde{\Omega} )$.
The following Fenchel conjugate identity \cite[Ex.~3.27, p.~93]{boyd2004convex} will be useful throughout this section.
\begin{equation}
	\label{eqn-duality}
	\norm{ \phi }_{H^{-s}( \Omega )}^2 = \sup_{v \in H_0^{s}( \Omega )} 2 \dualprod{ \phi }{ v } - \norm{ v }_{v \in H_0^{s}( \Omega )}^2.
\end{equation}
The first condition can be verified similarly as is done in \cite{owhadi2017universal}.

\begin{lemma}
	\label{lem-lowerBound}
	Let $\KM$ be given as in \cref{examp-subsamp,examp-average}.
	Then there exists a constant $C$ depending only on $\delta$, $s$, and $d$, such that
	\begin{equation}
		\frac{1}{C_\Phi} h^{2sk} \leq \frac{\norm{ \phi }_{\ast}^2}{\absval{ \x }^2},
	\end{equation}
	for $C_{\Phi} = \|\IK\| C$, $\x \in \Reals^{\I^{(k)}}$, and $\phi = \sum_i \x_i \phi_i$.
\end{lemma}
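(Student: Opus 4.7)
The plan is first to reduce the energy dual norm $\norm{\quark}_{\ast}$ to the standard $H^{-s}(\Omega)$ dual norm, and then to exhibit an explicit test function achieving the claimed lower bound by a scaling argument. Since $\norm{u}^{2} = \dualprod{\IK u}{u} \leq \norm{\IK} \cdot \norm{u}_{H_{0}^{s}(\Omega)}^{2}$ for every $u \in H_{0}^{s}(\Omega)$, the variational definition \eqref{eq-dual-norm} immediately gives $\norm{\phi}_{\ast}^{2} \geq \norm{\IK}^{-1} \norm{\phi}_{H^{-s}(\Omega)}^{2}$, so it suffices to find a constant $C$ depending only on $\delta, s, d$ with $\norm{\phi}_{H^{-s}(\Omega)}^{2} \geq C^{-1} h^{2sk} \absval{\x}^{2}$; the lemma then follows with $C_{\Phi} = \norm{\IK} C$.

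For Example~\ref{examp-subsamp}, I would fix once and for all a bump $\omega \in C_{c}^{\infty}(B_{1}(0))$ with $\omega(0) = 1$ and, for each $i \in \I^{(k)}$, set $v_{i}(x) \defeq \omega(2(x - x_{i})/(\delta h^{k}))$. Conditions (ii) and (iii) of Example~\ref{examp-subsamp} ensure that $v_{i} \in H_{0}^{s}(\Omega)$, that $v_{i}(x_{j}) = \delta_{ij}$ for $i, j \in \I^{(k)}$, and that the supports $\{\supp v_{i}\}_{i \in \I^{(k)}}$ are pairwise disjoint; a standard change-of-variables computation yields $\norm{v_{i}}_{H_{0}^{s}(\Omega)}^{2} \leq C h^{(d-2s)k}$ with $C$ depending only on $\delta, s, d, \omega$. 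Testing $\phi$ against $v \defeq \sum_{i \in \I^{(k)}} c_{i} v_{i}$ for auxiliary $c \in \Reals^{\I^{(k)}}$ gives $\dualprod{\phi}{v} = \sum_{i} \x_{i} h^{k_{i} d / 2} c_{i}$, where $k_{i}$ denotes the level of $i$, and $\norm{v}_{H_{0}^{s}(\Omega)}^{2} \leq C h^{(d-2s)k} \absval{c}^{2}$ by disjointness. Optimizing over $c$ by Cauchy--Schwarz and using $h^{k_{i} d} \geq h^{kd}$ for $k_{i} \leq k$ and $h \leq 1$, the variational characterization of the dual norm produces
\begin{equation}
    \norm{\phi}_{H^{-s}(\Omega)}^{2} \geq \sup_{c \neq 0} \frac{\dualprod{\phi}{v}^{2}}{\norm{v}_{H_{0}^{s}(\Omega)}^{2}} \geq C^{-1} h^{(2s-d)k} \sum_{i \in \I^{(k)}} \x_{i}^{2} h^{k_{i} d} \geq C^{-1} h^{2sk} \absval{\x}^{2}.
\end{equation}

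For Example~\ref{examp-average}, the same two-step pattern applies with point-evaluation bumps replaced by integral bumps adapted to the partition $\{\tau_{j}^{(k)}\}$. Writing $\phi = h^{-kd/2} \sum_{j \in \I^{(k)}} \y_{j} \one_{\tau_{j}^{(k)}}$ with $\y \defeq W^{(k), \top} \x$, the orthogonality conditions on the weight matrices $\mathfrak{w}^{(k),i}$ combined with the volume bounds $\delta^{d} h^{kd} \leq |\tau_{j}^{(k)}| \leq C h^{kd}$ give a two-sided equivalence between $\absval{\x}$ and an appropriately weighted norm of $\y$. Constructing, for each $j \in \I^{(k)}$, a bump $v_{j} \in H_{0}^{s}(\Omega)$ supported in a $\delta h^{k}/2$ neighborhood of $\tau_{j}^{(k)}$ with $\int_{\tau_{j}^{(k)}} v_{j} \dx = |\tau_{j}^{(k)}|$ and $\norm{v_{j}}_{H_{0}^{s}(\Omega)}^{2} \leq C h^{(d-2s)k}$, then testing $\phi$ against a suitable linear combination of the $v_{j}$ and invoking the variational formula again yields the desired lower bound $\norm{\phi}_{H^{-s}(\Omega)}^{2} \geq C^{-1} h^{2sk} \absval{\x}^{2}$.

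The main obstacle in both cases is the careful scaling computation producing the factor $h^{(d-2s)k}$ in $\norm{v_{i}}_{H_{0}^{s}(\Omega)}^{2}$ together with the disjointness (or bounded overlap) of the supports, so that no extra factor depending on $\absval{\I^{(k)}}$ enters the final bound. A secondary but genuine difficulty in Example~\ref{examp-subsamp} is absorbing the level-dependent normalization $h^{k_{i} d / 2}$ of the measurements $\phi_{i}$ into the single scale $h^{k}$ chosen for the test bumps, which is precisely where $h \leq 1$ and $k_{i} \leq k$ are used; in Example~\ref{examp-average}, the parallel difficulty is translating the natural lower bound on $\y$ back into a lower bound on $\x$ using the algebraic structure of the weights $\mathfrak{w}^{(k),i}$ and the geometric regularity of the nested partition.
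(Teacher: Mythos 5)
Your proposal is correct and follows essentially the same route as the paper's proof: reduce $\norm{\phi}_{\ast}$ to $\norm{\phi}_{H^{-s}(\Omega)}$ via $\norm{\IK}$, localize to disjoint balls of radius $\sim \delta h^{k}$ around the $x_{i}$ (using conditions (2) and (3) of Example~\ref{examp-subsamp}), and extract the factor $h^{2sk}$ by scaling, absorbing the level-dependent weights $h^{k_{i}d/2}$ using $k_{i}\leq k$. The only cosmetic difference is that the paper evaluates the localized suprema exactly via the Fenchel identity \eqref{eqn-duality} and the scaling of $\norm{\boldsymbol{\delta}}_{H^{-s}(B_{1}(0))}$, whereas you lower-bound them with explicit bump test functions and a Rayleigh-quotient optimization over the coefficients $c$ — both yield the same constant structure $C_{\Phi}=\norm{\IK}C(\delta,s,d)$.
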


\begin{proof}
	The proof can be found in \cref{apsec-abstractDecay}.
\end{proof}

In order to verify the second condition in \cref{thm-condNew}, we will construct a $\varphi$ such that $\phi - \varphi$ integrates to zero against polynomials of order at most $s-1$ on domains of size $h^k$.
Then an application of the Bramble--Hilbert lemma \cite{dekel2004bramble} will yield the desired factor $h^{ks}$.
To avoid scaling issues we define, for $1 \leq k \leq q$ and $i \in \I^{(k)}$,
\begin{equation}
	\phi^{(k)}_i \defeq
	\begin{cases}
		\boldsymbol{\delta}_{x_i} , & \text{in \cref{examp-subsamp},} \\
		\one_{\tau^{(k)}_i}/|\tau_i^{(k)}| , & \text{in \cref{examp-average},}
	\end{cases}
\end{equation}
noting that $\spn \{ \phi^{(k)}_i \mid i \in \I^{(k)} \} = \spn \{ \phi_i \mid i \in \I^{(k)} \}$.
To obtain estimates independent of the regularity of $\Omega$, for the simplicity of the proof and without loss of generality, we will partially work in the extended space $\Reals^d$ (rather than on $\Omega$).
We write $v$ for the zero extension of $v\in H_0^s( \Omega )$ to $H^s (\Reals^d )$ and $\phi_i^{(k)}$ for the extension of $\phi_i^{(k)}\in H^{-s}(\Omega)$ to an element of the dual space of $H_{\loc}^s (\Reals^d)$.
We introduce new measurement functions in the complement of $\Omega$ as follows.
For $1 \leq k \leq q$ we consider countably infinite index sets $\tilde{\I}^{(k)} \supset\I^{(k)}$.
We choose points $( x_i )_{i \in \tilde{\I}^{(q)} \setminus \I^{(q)}}$ satisfying
\begin{align}
	\sup_{x \in \Reals^d \setminus \Omega} \min_{i \in \tilde{\I}^{(k)}}
	\dist\left(x_i, x\right) & \leq \delta^{-1} h^k , &
	\min_{i \neq j \in \tilde{\I}^{(k)} \setminus \I^{(k)}}
	\dist( x_i, x_j \cup \partial \Omega ) & \geq \delta h^k .
\end{align}
We then define, for $1 \leq k \leq q$ and $i \in \tilde{\I}^{(k)}$,
$\phi_i^{(k)} \defeq \delta_{x_i}$ for \cref{examp-subsamp},
and $\phi_i^{(k)} \defeq \frac{ \one_{B_{\delta h^k}( x_i )} }{|B_{\delta h^k}( x_i )|}$ for \cref{examp-average}.
Let $\mathcal{P}^{s-1}$ denote the linear space of polynomials of degree at most $s-1$ (on $\Reals^d$).

\begin{lemma}
	\label{lem-condForW}
	Let $\KM$ be as in \cref{examp-subsamp} or \cref{examp-average}.
	Given $\rho \in (2,\infty)$ and
	$1 \leq k < l \leq q$ let $w \in \Reals^{\J^{(l)} \times \tilde{\I}^{(k)}}$
	be such that
	\begin{equation}
		\label{eqjkgfkhkjhgh}
		\int_{B_{\rho h^k}( x_i )} \left( \phi_i - \sum_{j\in \tilde{ \I}^{(k)}} w_{ij} \phi_j^{(k)} \right)( x ) p( x ) \dx = 0, \quad \text{for all $p \in \mathcal{P}^{s-1}$ and $i \in \J^{(l)}$}
	\end{equation}
	and $w_{ij} \neq 0 \Rightarrow \supp\left( \phi_j^{(k)}\right) \subset B_{\rho h^k}(x_i)$.
	Then, for $\x\in \Reals^{\J^{(l)}}$, $\phi \defeq \sum_{i \in \J^{(l)}} \x_i \phi_i$ and $\varphi \defeq \sum_{ i \in \J^{(l)}, j \in \I^{(k)}} \x_i w_{ij} \phi_{j}^{(k)}$ satisfy
	\begin{equation}
		\norm{ \phi - \varphi }_{\ast}^2 \leq \norm{ \IK^{-1} } C(d,s) \frac{\rho^{d+2s}}{\delta^d} \left( 1 + h^{- l d} \omega_{l,k}^2 \right) h^{2sk} \absval{ \x }^2 ,
	\end{equation}
	with $\omega_{l,k} \defeq \sup_{i \in \J^{(l)}} \sum_{j \in \tilde{\I}^{(k)}} \absval{ w_{ij} }$ and $\norm{ \phi }_{\ast} \defeq \sup_{u\in H^s_0(\Omega)}\dualprod{ \phi }{ u }/[\IK u,u]^\frac{1}{2}$ as in \eqref{eq-dual-norm}.
\end{lemma}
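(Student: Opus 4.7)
The plan is to reduce the estimate to a bound on the $H^{-s}(\Omega)$ norm of $\phi-\varphi$ via $\norm{\psi}_{\ast}^2 = \dualprod{\psi}{\K \psi}\leq \norm{\IK^{-1}}\,\norm{\psi}_{H^{-s}(\Omega)}^2$, and then to exploit the polynomial annihilation in \eqref{eqjkgfkhkjhgh} through a Bramble--Hilbert estimate combined with Sobolev embedding. Set $R_i \defeq \phi_i - \sum_{j\in \tilde{\I}^{(k)}} w_{ij}\phi_j^{(k)}$, so that $\phi - \varphi = \sum_{i\in\J^{(l)}}\x_i R_i$. The support condition $\supp(\phi_j^{(k)})\subset B_{\rho h^k}(x_i)$ whenever $w_{ij}\neq 0$ implies that each $R_i$ is supported in $B_i \defeq B_{\rho h^k}(x_i)$, and \eqref{eqjkgfkhkjhgh} says that $R_i$ annihilates every polynomial of degree at most $s-1$ on $B_i$.

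For $u\in H_0^s(\Omega)$, extended by zero to $\Reals^d$, I would invoke the Bramble--Hilbert lemma on $B_i$ to pick $p_i\in\mathcal{P}^{s-1}$ with $\norm{u-p_i}_{H^j(B_i)} \leq C(d,s)(\rho h^k)^{s-j}\absval{u}_{H^s(B_i)}$ for $0\leq j\leq s$. In Example~\ref{examp-subsamp}, the scaled Sobolev embedding $H^s(B_r)\hookrightarrow L^\infty(B_r)$ (valid since $s>d/2$) yields $\norm{u-p_i}_{L^\infty(B_i)}\leq C(d,s)(\rho h^k)^{s-d/2}\absval{u}_{H^s(B_i)}$, and since the total mass of $R_i = h^{ld/2}\boldsymbol{\delta}_{x_i} - \sum_j w_{ij}\boldsymbol{\delta}_{x_j}$ is at most $h^{ld/2}+\omega_{l,k}$, the polynomial-annihilation identity $\dualprod{R_i}{u} = \dualprod{R_i}{u-p_i}$ gives
\begin{equation}
\absval{\dualprod{R_i}{u}} \leq (h^{ld/2}+\omega_{l,k})\,C(d,s)(\rho h^k)^{s-d/2}\absval{u}_{H^s(B_i)}.
\end{equation}
In Example~\ref{examp-average} the analogous bound follows from an $L^2$ pairing, where the weighted orthogonality $\sum_{j\in c_i}\mathfrak{w}^{(k),i}_{m,j}\mathfrak{w}^{(k),i}_{n,j}\absval{\tau_j^{(k)}} = \delta_{mn}$ is used to control $\norm{R_i}_{L^2(B_i)}$ and the Bramble--Hilbert estimate at $j=0$ is used on the $u-p_i$ factor.

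Next, I apply Cauchy--Schwarz in $i$ to $\dualprod{\phi-\varphi}{u}=\sum_i\x_i\dualprod{R_i}{u}$, giving $\absval{\dualprod{\phi-\varphi}{u}}^2 \leq \absval{\x}^2\sum_i\absval{\dualprod{R_i}{u}}^2$. The $\delta h^l$-separation of the centres $(x_i)_{i\in\J^{(l)}}$ yields the overlap bound $\sum_{i\in\J^{(l)}}\one_{B_i}(x)\leq C_d(\rho h^k/(\delta h^l))^d$, and integrating against $\absval{\nabla^s u}^2$ gives $\sum_i\absval{u}_{H^s(B_i)}^2\leq C_d(\rho/\delta)^d h^{(k-l)d}\absval{u}_{H^s(\Reals^d)}^2$. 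Combining with $(h^{ld/2}+\omega_{l,k})^2\leq 2h^{ld}(1+h^{-ld}\omega_{l,k}^2)$ and telescoping the exponents of $h$ (namely $(2s-d)k+(k-l)d+ld = 2sk$) and of $\rho$ produces the desired inequality after supremising over $u$ with $\norm{u}_{H_0^s}\leq 1$ and applying $\norm{\cdot}_{\ast}^2\leq \norm{\IK^{-1}}\norm{\cdot}_{H^{-s}}^2$.

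The main obstacle I anticipate is Example~\ref{examp-average}: getting the right mass estimate for $R_i$ when the residual is no longer a signed measure but a combination of indicator-type functions at the two different scales $l$ and $k$, so that the weighted orthogonality of the $\mathfrak{w}^{(k),i}$ must be combined with sharp estimates $\norm{\phi_j^{(k)}}_{L^2}\sim h^{-kd/2}$ and $\norm{\phi_i}_{L^2}\sim h^{-ld/2}$ to land on the same $h^{ld/2}+\omega_{l,k}$ as in Example~\ref{examp-subsamp}. A secondary subtlety is that the auxiliary measurements $\phi_j^{(k)}$ for $j\in \tilde{\I}^{(k)}\setminus\I^{(k)}$ live outside $\Omega$, so the argument must be carried out in $\Reals^d$ using the zero extension of $u\in H_0^s(\Omega)$ together with a locally finite packing of balls whose centres may lie outside $\Omega$.
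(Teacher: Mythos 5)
Your proposal is correct and follows essentially the same route as the paper's proof: reduction to the $H^{-s}(\Omega)$ norm via $\norm{\cdot}_{\ast}^2\leq\norm{\IK^{-1}}\norm{\cdot}_{H^{-s}}^2$, a Bramble--Hilbert plus scaled Sobolev-embedding bound on each local residual $R_i$ yielding the factor $(h^{ld/2}+\omega_{l,k})(\rho h^k)^{s-d/2}\absval{u}_{H^s(B_i)}$, and a ball-overlap/packing argument in $i$ to telescope the exponents to $h^{2sk}$. The only differences are cosmetic (Cauchy--Schwarz in $i$ instead of the paper's Fenchel-duality identity with the termwise $2ax-bx^2\leq a^2/b$ optimization, and the polynomial-subtraction form of Bramble--Hilbert rather than the sublinear-functional form), and you correctly flag the two genuine subtleties the paper also handles: the $L^2$-orthogonality-based mass estimate in Example~\ref{examp-average} and the zero extension to $\Reals^d$ for the exterior measurement functions.
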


We proceed by proving \cref{lem-condForW} in the setting of \cref{examp-subsamp}.
The proof in the setting of \cref{examp-average} can be found in \cref{apsec-abstractDecay}.
For $u\in H^s(\Omega)$ write $\rD^0 u \defeq u$ and for $1 \leq k \leq s$ write $\rD^k u$ for the vector of partial derivatives of $u$ of order $k$, i.e.\ $\rD^k u \defeq \Bigl(\frac{\partial^k u}{\partial_{i_1}\cdots \partial_{i_k} }\Bigr)_{i_1,\ldots,i_k=1,\ldots, d}$.
The proof of \cref{lem-condForW} will use the following version of the Bramble--Hilbert lemma:

\begin{lemma}[\cite{dekel2004bramble}]
	\label{lem-brambleHilbert}
	Let $\Omega \subset \Reals^d$ be convex and let $\phi$ be a sublinear functional on $H^s( \Omega )$ for $s \in \mathbb{N}$ such that
	\begin{enumerate}[label=(\arabic*)]
		\item there exists a constant $\tilde{C}$ such that, for all
		$u \in H^s( \Omega )$,
		\begin{equation}
			\absval{ \phi(u) } \leq \tilde{C} \sum_{k =0}^s \diam( \Omega )^k \norm{ \rD^k u }_{L^2( \Omega )} ;
		\end{equation}
		\item and $\phi (p) = 0$ for all $p \in \mathcal{P}^{s-1}$.
	\end{enumerate}
	Then, for all $u \in H^s( \Omega )$,
	\begin{equation}
		\absval{ \phi (u) } \leq \tilde{C} C ( d, s ) \diam( \Omega )^s \norm{ \rD^s u }_{L^2( \Omega )} .
	\end{equation}
\end{lemma}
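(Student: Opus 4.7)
The plan is to couple the two hypotheses via a shape-invariant polynomial approximation result on convex bounded domains. First I would perform an algebraic reduction using sublinearity: for any $p \in \mathcal{P}^{s-1}$, sublinearity and $\phi|_{\mathcal{P}^{s-1}} = 0$ give
\begin{equation}
\phi(u) = \phi((u-p)+p) \leq \phi(u-p) + \phi(p) = \phi(u-p),
\end{equation}
and likewise $\phi(-u) \leq \phi(-(u-p))$. Combining with $0 = \phi(0) \leq \phi(u) + \phi(-u)$, which yields $-\phi(-u) \leq \phi(u)$, gives $|\phi(u)| \leq \max\{\phi(u-p), \phi(-(u-p))\}$. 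Now apply hypothesis~(1) to $\pm(u-p)$, noting that its right-hand side is invariant under $v \mapsto -v$:
\begin{equation}
|\phi(u)| \leq \tilde{C} \sum_{k=0}^{s} \diam(\Omega)^{k} \|\rD^{k}(u-p)\|_{L^{2}(\Omega)}.
\end{equation}
This reduces the lemma to choosing $p \in \mathcal{P}^{s-1}$ that makes every seminorm of $u - p$ small.

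The main technical input is a Deny--Lions-type approximation estimate with a constant independent of the shape of $\Omega$: for convex bounded $\Omega \subset \Reals^{d}$ and $u \in H^{s}(\Omega)$, there exists $p = Qu \in \mathcal{P}^{s-1}$ such that, for all $0 \leq k \leq s$,
\begin{equation}
\|\rD^{k}(u - p)\|_{L^{2}(\Omega)} \leq C(d,s)\, \diam(\Omega)^{s-k}\, \|\rD^{s} u\|_{L^{2}(\Omega)},
\end{equation}
with $C(d,s)$ depending only on $d$ and $s$ (the case $k = s$ is trivial since $\rD^{s} p = 0$). Substituting this into the previous display collapses every term in the sum to a multiple of $\diam(\Omega)^{s}\|\rD^{s}u\|_{L^{2}(\Omega)}$; absorbing the factor $s+1$ into $C(d,s)$ yields the conclusion.

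The main obstacle is proving the shape-invariance of $C(d,s)$: for a generic bounded Lipschitz domain the analogous constant depends on a chunkiness or John parameter, and the point of the Dekel--Leviatan statement is that this dependence disappears on convex domains. The standard route is the averaged Taylor polynomial $p(x) = \int T^{s-1}_{y} u(x)\, \omega(y)\, \rd y$ for a smooth weight $\omega$ supported on a ball inscribed in $\Omega$ (or on a John ellipsoid), combined with the integral remainder formula for $u - T^{s-1}_{y} u$ along the segment from $y$ to $x$. Convexity is essential here: it guarantees that every such segment lies inside $\Omega$, which allows one to bound the remainder by an iterated Hardy--Littlewood maximal-type integral of $\rD^{s}u$ over $\Omega$ itself, with constants that depend only on $d$ and $s$. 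Iterating this for the derivatives $\rD^{k}(u - p)$ with $0 \leq k < s$ gives the required scale-invariant Poincaré-type bounds, completing the proof.
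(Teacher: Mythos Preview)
The paper does not give its own proof of this lemma: it is stated as a citation of \cite{dekel2004bramble} and used as a black box in the subsequent Lemma~\ref{lem-brambleHilbertCaps}. Your argument is correct and is exactly the standard Bramble--Hilbert reduction: use sublinearity together with $\phi|_{\mathcal{P}^{s-1}}=0$ to replace $u$ by $u-p$, then invoke a shape-invariant polynomial approximation on convex domains to control all lower-order seminorms by $\diam(\Omega)^{s-k}\|\rD^{s}u\|_{L^{2}(\Omega)}$. The only nontrivial step is the last one, and you have correctly identified both the mechanism (averaged Taylor polynomial with integral remainder along segments, which stay in $\Omega$ by convexity) and the point of the Dekel--Leviatan result (the constant depends only on $d,s$, not on a chunkiness parameter). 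There is nothing to add relative to the paper, which simply defers to the cited reference.
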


The following lemma is obtained from \cref{lem-brambleHilbert}:

\begin{lemma}
	\label{lem-brambleHilbertCaps}
	For $1 \leq k < l \leq q$ and $i \in \J^{(l)}$, let $\phi_i, w_{ij}$ be as in \cref{lem-condForW} and \cref{examp-average} and define $\varphi_i \defeq \sum_{j \in \I^{(k)}} w_{ij} \phi_j^{(k)}$.
	Then there exists a constant $C( d, s )$ such that, for all $v \in H_0^s( \Omega )$,
	{\scriptsize
	\begin{equation}
		\left| \int_{B_{\rho h^k}( x_i )} ( \phi_i - \varphi_i )( x ) v( x ) \dx \right| \leq C ( d, s ) \rho^{s-d/2} h^{(s-d/2)k} \left( h^{ld/2} + \sum_{j \in \tilde{\I}^{(k)}} | w_{ij} | \right) \| \rD^s v \|_{L^2 \left( B_{\rho h^k}( x_i )\right)} .
	\end{equation}}
\end{lemma}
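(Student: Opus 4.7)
My plan is to apply the Bramble--Hilbert Lemma~\ref{lem-brambleHilbert} to the linear functional $\phi(v) \defeq \int_{B}(\phi_i - \varphi_i)(x) v(x) \dx$ on $H^s(B)$, where $B \defeq B_{\rho h^k}(x_i)$. Item~(2) of Lemma~\ref{lem-brambleHilbert} is immediate: the defining orthogonality~\eqref{eqjkgfkhkjhgh} of the weights $w_{ij}$ says precisely that $\phi$ vanishes on $\mathcal{P}^{s-1}$. All the real work therefore goes into verifying the boundedness hypothesis~(1) with a constant $\tilde C$ of the correct size.

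For (1), I plan to first bound $|\phi(v)| \leq M \|v\|_{L^\infty(B)}$ by H\"older, with $M \defeq \|\phi_i - \varphi_i\|_{L^1(B)}$, and then trade the $L^\infty$-norm for the required sum of Sobolev seminorms using the embedding $H^s \hookrightarrow L^\infty$ (available since $s > d/2$). In its scaled form on a ball of radius $r = \rho h^k$ this reads
\begin{equation}
    \|v\|_{L^\infty(B)} \leq C(d,s)\, r^{-d/2} \sum_{m=0}^s r^m \|\rD^m v\|_{L^2(B)},
\end{equation}
so hypothesis~(1) holds with $\tilde C = C(d,s)\, M\, r^{-d/2}$. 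The Bramble--Hilbert conclusion is then $|\phi(v)| \leq C(d,s)\, M\, r^{s-d/2} \|\rD^s v\|_{L^2(B)}$, which delivers the target prefactor $\rho^{s-d/2} h^{(s-d/2)k}$ regardless of $M$.

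What remains is to bound $M \leq C(d,s)\bigl(h^{ld/2} + \sum_j |w_{ij}|\bigr)$. By the triangle inequality I split $M \leq \|\phi_i\|_{L^1(B)} + \|\varphi_i\|_{L^1(B)}$. The second term is straightforward, since each $\phi_j^{(k)}$ in Example~\ref{examp-average} is a probability density, giving $\|\varphi_i\|_{L^1(B)} \leq \sum_j |w_{ij}|$. For the first term, $\phi_i = h^{-ld/2}\sum_j W^{(l)}_{ij}\one_{\tau_j^{(l)}}$ is supported in the union of the $O(1)$ child cells $\tau_j^{(l)}$ of its parent, a set of measure $\lesssim h^{ld}$; combining Cauchy--Schwarz with the normalization $\sum_j (W^{(l)}_{ij})^2 |\tau_j^{(l)}| = 1$ will extract the claimed factor $h^{ld/2}$, reflecting the wavelet-type scaling of $\phi_i$ at level $l$ (consistent with the mean-zero property $\int \phi_i = 0$).

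The main delicate point I expect to wrestle with is that the orthogonality~\eqref{eqjkgfkhkjhgh} applies only to the combined functional $\phi_i - \varphi_i$, and not to $\phi_i$ alone, which annihilates only constants and not the full space $\mathcal{P}^{s-1}$. This forces the polynomial subtraction implicit in Bramble--Hilbert to be performed once and globally on $B$ with a common $p \in \mathcal{P}^{s-1}(B)$, rather than separately on the small set $\supp(\phi_i)$ and on $B$; hence the $(\rho h^k)^{s-d/2}$ scale governed by the large ball cannot be sharpened, and the additive two-scale split $h^{ld/2} + \sum_j |w_{ij}|$ is the best one can hope for without invoking additional cancellations of $\phi_i$ against higher-order polynomials.
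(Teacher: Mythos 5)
Your proposal follows essentially the same route as the paper's proof: apply Bramble--Hilbert to $v\mapsto\int_{B_{\rho h^k}(x_i)}(\phi_i-\varphi_i)v\dx$, get item (2) for free from the vanishing-moment property \eqref{eqjkgfkhkjhgh}, and get item (1) from a (total mass of $\phi_i-\varphi_i$)\,$\times\,\|v\|_{L^\infty}$ bound combined with the rescaled Sobolev embedding $H^s\hookrightarrow C_b$ on the ball of radius $\rho h^k$, which produces the prefactor $\tilde C = C(d,s)\rho^{-d/2}h^{-kd/2}(\text{mass})$ and hence the stated estimate. The only substantive difference is which example you instantiate the mass bound for: the paper executes it for the subsampled Diracs of Example~\ref{examp-subsamp}, where the total mass of $\phi_i$ is exactly $h^{ld/2}$ by the scaling $\phi_i=h^{ld/2}\boldsymbol{\delta}(\quark-x_i)$, and declares the averaging case analogous; you instead work out Example~\ref{examp-average}, where your Cauchy--Schwarz step with the normalization $\sum_j (W^{(l)}_{ij})^2|\tau_j^{(l)}|=1$ actually yields $\|\phi_i\|_{L^1}\leq h^{-ld/2}\bigl|\tau^{(l-1)}\bigr|^{1/2}\simeq C(d)h^{-d/2}$ rather than $h^{ld/2}$ as literally written --- a discrepancy that traces back to the paper's own scaling conventions for that example (the downstream use of this lemma is in any case only carried out for the Dirac example) rather than to a flaw in the structure of your argument.
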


\begin{proof}
	We apply \cref{lem-brambleHilbert} to the linear functional $u \mapsto \int_{B_{\rho h^k}} (\phi_i - \varphi_i)u$.
	Since the second requirement of \cref{lem-brambleHilbert} is fulfilled by definition, it remains to bound $\tilde{C}$.
	We only execute the proof for \cref{examp-subsamp};
	the proof for \cref{examp-average} is analogous.
	We first note that while the sum in the definition of $\varphi_i$ only ranges over 	$j \in \I^{(k)}$, we can increase it to run over all of $j \in \tI^{(k)}$,	since for $j \in \tilde{\I}^{(k)} \setminus \I^{(k)}$, the support of $\phi^{(k)}_j$ is disjoint from that of $v\in H^s_0(\Omega)$.
	Let $u\in H^s(\Omega)$.
	Writing $C( d, s )$ for the continuity constant of the embedding of $H^s (B_1(0) )$ into $C_b (B_1(0))$, the inequalities
	{\scriptsize
	\[
		\max_{B_{\rho h^k}( x_i )} | u( \quark ) | = \max_{x \in B_{1}(0)} \left|u\left( \rho h^k \left(x - x_i\right) \right) \right| \leq C ( d, s ) \sum_{m=0}^s (\rho h^k )^{m} \left\| [ \rD^m u ] \bigl( \rho h^k ( \quark - x_i ) \bigr) \right\|_{L^2 (B_1(0) )}
	\]}
	and
	\[
		\left\| [ \rD^m u ] \bigl( \rho h^k ( \quark - x_i ) \bigr)	\right\|_{L^2\left(B_1(0)\right)} = (\rho h^k )^{-d/2} \norm{ \rD^m u }_{L^2 (B_{\rho h^k}( x_i ) )}
	\]
	imply that
	{\small
	\begin{align}
		\left| \phi_i\left( u \right) - \varphi_i \left( u \right) \right|
		&\leq \left( h^{ld/2} + \sum_{j \in \tilde{\I}^{(k)}} \absval{ w_{ij} } \right)	\max_{x \in B_{\rho h^k}( x_i )} \left| u( x ) \right|\\
		&\leq C ( d, s ) \rho^{-d/2} h^{-kd/2} \left( h^{ld/2} + \sum_{j \in \tilde{\I}^{(k)}} \absval{ w_{ij} } \right) \sum_{m=0}^s ( \rho h^k )^{m} \norm{ \rD^m u }_{L^2 (B_{\rho h^k}( x_i ) )}
	\end{align}}
	Therefore the first condition of \cref{lem-brambleHilbert}
	holds with
	\begin{equation}
		\tilde{C} = C ( d, s ) \rho^{-d/2} h^{-kd/2} \left( h^{ld/2} + \sum_{j \in \tilde{\I}^{(k)}} \absval{ w_{ij} } \right)\,,
	\end{equation}
	and we conclude the proof by writing $C(d,s)$ for any constant depending only on $d$ and $s$.
\end{proof}

We can now conclude the proof of \cref{lem-condForW}.

\begin{proof}[Proof of \cref{lem-condForW}]
	Write $\varphi \defeq \sum_{i\in \J^{(l)}} \x_i \varphi_i$ and $\varphi_i \defeq \sum_{j\in \I^{(k)}} w_{ij} \phi_j^{(k)}$.
	Equation~\eqref{eqn-duality} implies that
	\begin{equation}
		\norm{ \phi - \varphi }_{H^{-s}(\Omega)}^2
		= \sup_{v \in H_0^s( \Omega )} \left( \sum_{ i \in \J^{(l)}} 2 \x_i \int_{B_{\rho h^k}( x_i )} \left(\phi_i - \varphi_i\right)( x ) v(x) \dx \right) - \|v\|_{H_0^s( \Omega )}^2 \,.
	\end{equation}
	The packing inequality $\sum_{i \in \J^{(l)}} \norm{ \rD^s v }_{L^2\left( B_{\rho h^k}( x_i )\right)}^2 \leq C(d) \left( h^{k-l}\rho / \delta \right)^d \norm{ v }_{H_0^s( \Omega )}^2$ together with \cref{lem-brambleHilbertCaps} yields
	{\scriptsize
	\begin{align}
		\norm{ \phi - \varphi }_{H^{-s}(\Omega)}^2
		\leq \sup_{v \in H_0^s( \Omega )}
		\sum_{i \in \J^{(l)}} \Bigg[ & 2|\alpha_i| C( d, s )\rho^{s - \frac{d}{2}} h^{(s - \frac{d}{2}) k} \left( h^{\frac{l d}{2}} + \sum_{j \in \I^{(k)}} \absval{ w_{ij} } \right)	\norm{ \rD^s v }_{L^2 ( B_{\rho h^k}( x_i ) )} \\
		& - (C(d))^{-1} \left( h^{k-l}\rho / \delta \right)^{-d} \norm{ \rD^s v }_{L^2\left( B_{\rho h^k}( x_i )\right)}^2 \Bigg]\,.
	\end{align}}
	Applying the inequality $2ax-bx^2 \leq a^2/b$ to each summand yields
	{\scriptsize
	\begin{align}
		\norm{ \phi - \varphi }_{H^{-s}(\Omega)}^2
		&\leq C(d) \left(h^{k-l}\rho / \delta\right)^d \sum_{i \in \J^{(l)}} \left(\x_j C( d, s )\rho^{s-\frac{d}{2}} h^{(s-\frac{d}{2})k} \left( h^{\frac{ld}{2}} + \sum_{j \in \J^{(k)}} \absval{ w_{ij} } \right) \right)^2\\
		&\leq C(d,s) \frac{\rho^{2s}}{\delta^d}
		\left( 1 + h^{- l d} \omega_{l,k}^2 \right)	h^{2sk}	\absval{ \x }^2\,.
	\end{align}}
	Since, for all $f\in H^{-s}(\Omega)$,
	\begin{equation}
		\norm{ f }_{\ast}^2=[f,\IK^{-1} f] \leq \norm{ f }_{H^{-s}(\Omega)} \|\IK^{-1} f\|_{H^s_0(\Omega)}\leq \norm{ \IK^{-1} }\norm{ f }_{H^{-s}(\Omega)}^2 ,
	\end{equation}
	we have $\|\phi - \varphi\|_{\ast}\leq \sqrt{\norm{ \IK^{-1} }}\|\phi - \varphi\|_{H^{-s}(\Omega)}$, and this completes the proof.
\end{proof}

The following geometric lemma shows that the assumption \eqref{eqjkgfkhkjhgh} of \cref{lem-condForW} can be satisfied with a uniform bound on the value of $\rho$ and the norm of weights $w_{i,j}$.

\begin{lemma}
	\label{lem-geomLemma}
	There exists constants $\rho(d,s)$ and $C(d,s,\delta)$ such that for all $1 \leq k < l \leq q$ there exists weights $w \in \Reals^{\J^{(l)} \times \tilde{\I}^{(k)}}$ satisfying \eqref{eqjkgfkhkjhgh} and (with $\omega_{l,k}$ defined as in \cref{lem-condForW})
	\begin{equation}
		\omega_{l,k}^2 \leq h^{ld} C(d,s,\delta)\,.
	\end{equation}
\end{lemma}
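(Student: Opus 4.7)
My plan is to solve the moment-matching condition \eqref{eqjkgfkhkjhgh} by a local polynomial reproduction scheme on a rescaled ball, using the quasi-uniformity of the point cloud to control the resulting weights. First I would center at $x_i$ and rescale space by $h^k$, which turns the problem into the following canonical one: find weights $\tilde w_j$ reproducing the $\mathcal{P}^{s-1}$-moments of a rescaled copy $\tilde\phi_i$ of $\phi_i$ by a linear combination of rescaled measurements $\tilde\phi_j^{(k)}$ whose supports lie in $B_\rho(0)$, where the admissible centers $\tilde y_j=(x_j-x_i)/h^k$ satisfy separation $\geq\delta$ and covering $\leq\delta^{-1}$ (inherited from the extended index set $\tilde\I^{(k)}$). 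This is $M\defeq \binom{s-1+d}{d}$ linear equations in the unknowns $\tilde w_j$.

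Second, I would establish a good \emph{unisolvent subset} inside $B_{\rho-1}(0)$ for $\rho=\rho(d,s)$ chosen large enough that the greedy construction runs to completion: starting from an arbitrary $\tilde y_{j_1}$, inductively pick $\tilde y_{j_{n+1}}$ among the quasi-uniform candidates in $B_{\rho-1}(0)$ off the zero set of a nonzero polynomial of degree $\leq s-1$ that vanishes on $\tilde y_{j_1},\dots,\tilde y_{j_n}$. A volumetric counting argument using the $\delta$-separation shows that the zero set of such a polynomial contains only a bounded fraction of the available candidates once $\rho$ is a function of $d,s$ alone, so the process terminates after $M$ steps with a unisolvent $M$-tuple. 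The resulting Vandermonde-type matrix $V$ has $\|V\|$ and $\|V^{-1}\|$ bounded by constants depending only on $d,s,\delta$, by a compactness argument on the (bounded, closed) space of $\delta$-separated $M$-tuples in $B_\rho(0)$ modulo configurations at which $\det V$ vanishes.

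Third, set $\tilde w_j=0$ outside the unisolvent subset and solve $V\tilde w = b$, where $b_\alpha$ is the moment $\int \tilde\phi_i e_\alpha\,dx$ against a fixed monomial basis $\{e_\alpha\}_{|\alpha|\leq s-1}$. In Example~\ref{examp-subsamp} one has $b_\alpha = h^{ld/2}e_\alpha(0)$, giving $|b|\leq C(d,s)h^{ld/2}$. In Example~\ref{examp-average} the identity $\sum_n (W^{(l)}_{in})^2 |\tau_n^{(l)}|=1$ combined with $|\tau_n^{(l)}|\asymp(\delta h^l)^d$ yields $|W^{(l)}_{in}|\lesssim(\delta h^l)^{-d/2}$, and together with $\int\tilde\phi_i e_\alpha = h^{-ld/2}\sum_n W^{(l)}_{in}\int_{\tau_n^{(l)}}e_\alpha$ this gives the same order $|b|\leq C(d,s,\delta)h^{ld/2}$. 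Undoing the rescaling delivers $\sum_j|w_{ij}|\leq \sqrt M\,\|V^{-1}\|\,|b|\leq C(d,s,\delta)^{1/2}h^{ld/2}$, hence $\omega_{l,k}^2\leq C(d,s,\delta)h^{ld}$. The support constraint $\supp\phi_j^{(k)}\subset B_{\rho h^k}(x_i)$ is automatic because the cells/balls at scale $k$ have radius at most $h^k$ and the unisolvent points lie in $B_{(\rho-1)h^k}(x_i)$.

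The main obstacle is the quantitative bound on $\|V^{-1}\|$ uniformly in the geometric configuration: one must rule out that a $\delta$-separated $M$-tuple in a bounded ball can make the Vandermonde determinant arbitrarily small. I would handle this by the compactness/continuity argument above, or alternatively by constructing the unisolvent points with enough geometric spread (e.g.\ $M$ points lying on a suitable grid inside $B_\rho(0)$ at mutual distance $\gtrsim\delta$, whose Vandermonde matrix can be analyzed explicitly via a Newton-form tensor-product argument). A secondary, but purely bookkeeping, obstacle is the treatment of Example~\ref{examp-average}, which requires tracking the scaling factors carried by $W^{(l)}$ and by the averaging operators $1_{\tau_j^{(k)}}/|\tau_j^{(k)}|$; both are dealt with using the volume bounds implied by $B_{\delta h^k}(x_j^{(k)})\subset\tau_j^{(k)}\subset B_{h^k}(x_j^{(k)})$.
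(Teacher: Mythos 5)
Your overall reduction is the same as the paper's: center at $x_i$, rescale, turn \eqref{eqjkgfkhkjhgh} into a square polynomial-reproduction (Vandermonde-type) system, and read off $\omega_{l,k}\lesssim h^{ld/2}$ from the size of the data vector times $\|V^{-1}\|$. The genuine gap is exactly at the point you flag as ``the main obstacle'': your primary argument for a uniform bound on $\|V^{-1}\|$ does not work. The set of $\delta$-separated $M$-tuples in $\overline{B_\rho(0)}$ is indeed compact, but $\delta$-separation does not exclude degenerate configurations: all $M$ points may lie on the zero set of a nonzero polynomial of degree $\leq s-1$ (e.g.\ on a hyperplane), so $\det V$ vanishes at interior portions of this compact set. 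Removing those configurations (``modulo configurations at which $\det V$ vanishes'') destroys compactness, and the infimum of $\sigma_{\min}(V)$ over the remaining unisolvent configurations is $0$; likewise, a greedy choice of a merely unisolvent subset gives no quantitative control, since the selected points can be arbitrarily close to an algebraic hypersurface. In addition, your volumetric-counting claim that the greedy selection succeeds with $\rho=\rho(d,s)$ is not justified: the number of available candidates in $B_{\rho-1}(0)$ and the number of $\delta$-separated points that a degree-$(s-1)$ hypersurface can carry both scale with powers of $\delta$, so that route would at best give $\rho$ depending on $\delta$ as well, whereas the statement requires $\rho(d,s)$.

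What actually closes this gap (and is what the paper does) is the quantitative near-grid construction that you only gesture at in your fallback, and with the wrong scale: one does not place points ``on a grid at mutual distance $\gtrsim\delta$'', since the points are given; instead one uses the covering property of the extended set $\tilde{\I}^{(k)}$ and chooses the rescaling $\lambda=h^k/\alpha^{\ast}$ so that, after rescaling, every node of the unit tensor grid $\{\mathbf{1}+n\}_{n\in\{0,\dots,s-1\}^d}$ has a data point within distance $\alpha^{\ast}$. For the exact grid the moment matrix factors as a Kronecker product of univariate Vandermonde matrices, so $\sigma_{\min}(\bar{\mathbb{V}}^0)=(\sigma_{\min}(V))^d>0$ with a bound depending only on $d,s$, and continuity of $\sigma_{\min}$ in the matrix entries gives a perturbation threshold $\alpha^{\ast}(d,s)$ below which $\sigma_{\min}\geq\sigma^{\ast}(d,s)>0$ uniformly. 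This simultaneously yields $\rho\leq 1+\sqrt{d}\,s/\alpha^{\ast}$ depending only on $(d,s)$ and $\sum_j|w_{ij}|\leq C(d,s)h^{ld/2}$, hence $\omega_{l,k}^2\leq C h^{ld}$. Your fallback, with grid spacing tied to $\delta$ and no perturbation analysis of $\sigma_{\min}$, does not deliver this uniform bound, so as written the proposal is missing the key quantitative ingredient; the scaling bookkeeping for the data vector $b$ (including the Example~\ref{examp-average} variant, where the constant picks up $\delta$-dependence) is otherwise in line with the paper.
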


\begin{proof}
	For \cref{examp-subsamp}, \eqref{eqjkgfkhkjhgh} is equivalent to
	\begin{equation}
		\label{eqn-polyEqual}
		h^{ld/2} p(x_i) = \sum_{j \in \tilde{\I}^{(k)}_\rho} w_{ij} p(x_j), \forall p \in \mathcal{P}^{s-1},
	\end{equation}
	where $\tilde{\I}^{(k)}_\rho \defeq \{j\in \tilde{\I}^{(k)}\mid x_j \in B(x_i,\rho h^k)\}$.

	Fix $i\in \J^{(l)}$, let $\lambda>0$ and write $x_j^\lambda \defeq \frac{x_j-x_i}{\lambda}$.
	Write $\mathbf{0} \defeq (0,\ldots,0)\in \Reals^d$.
	Since the function $p(\quark)\mapsto p(\frac{\quark-x_i}{\lambda})$ is surjective on
	$\mathcal{P}^{s-1}$, \eqref{eqn-polyEqual} is satisfied if
	\begin{equation}
		\label{eqn-polyEqual2b}
		h^{ld/2} p(\mathbf{0}) = \sum_{j \in \tilde{\I}^{(k)}_\rho} w_{ij} p(x_j^\lambda), \forall p \in \mathcal{P}^{s-1}.
	\end{equation}
	For a multiindex $n=(n_1,\ldots,n_d) \in \Naturals^d$ and a point $z=(z_1,\ldots,z_d) \in \Reals^d$, write 	$z^n \defeq \prod_{m = 1}^d z_m^{n_m}$.
	Use the convention $\mathbf{0}^n=0$ if $n\not=\mathbf{0}$ and $\mathbf{0}^\mathbf{0}=1$.
	To satisfy \eqref{eqn-polyEqual2b} it is sufficient to identify a subset $\sigma$ of $\tilde{\I}^{(k)}_\rho$ and $w_{i,\quark}\in \Reals^{ \tilde{\I}^{(k)}}$ such that $\# \sigma = s^d$, $w_{i,j}=0$ for $j\not \in \sigma$, and
	\begin{equation}
		\label{eqn-polyEqualSigma}
		h^{ld/2}\mathbf{0}^n = \sum_{j \in \sigma} w_{ij} (x_j^\lambda)^n, \forall n \in \{0,\ldots,s-1\}^d\,.
	\end{equation}
	Let $\mathbb{V}^\lambda \in \Reals^{\{0,1,\dots ,s-1\}^d \times \sigma }$ be the $s^d\times s^d$ matrix defined by
	\begin{equation}
		\mathbb{V}_{n,j}^\lambda \defeq \left(x_j^\lambda\right)^n\,.
	\end{equation}
	for a multiindex $n \in \Naturals^d$ and a point $x \in \Reals^d$
	$x^n \defeq \prod_{m = 1}^d x^{n_m}$.
	Let $\mathbf{w}\in \Reals^{\sigma }$ be defined by $\mathbf{w}_j \defeq w_{i,j}$ for $j\in \sigma$.
	Equation~\eqref{eqn-polyEqualSigma} is then equivalent to
	\begin{equation}
		\label{eqjedgjhd22ggdj}
		h^{ld/2} \mathbf{e} = \mathbb{V}^\lambda \mathbf{w},
	\end{equation}
	where $\mathbf{e} \in \Reals^{\{0,1,\dots ,s-1\}^d}$ is defined by $\mathbf{e}_n \defeq \mathbf{0}^n$ for $n\in \{0,1,\dots ,s-1\}^d$.
	We will now identify $\mathbf{w}$ by inverting \eqref{eqjedgjhd22ggdj}.
	To achieve this while keeping the norm of $\mathbf{w}$ under control we will seek to identify the subset $\sigma$ and $\lambda>0$ such that $\sigma_{\min}(\mathbb{V}^\lambda)$ (the minimal
	singular value of $\mathbb{V}^\lambda$) is bounded from below by a constant depending only on $s$ and $d$.

	For $\alpha\geq 0$ let $(\epsilon_j)_{j \in \{0 , 1 , \dots , s-1 \}^d}$ be elements of $\Reals^d$ satisfying $|\epsilon_j|\leq \alpha$ for all $j \in \{0 , 1 , \dots , s-1 \}^d$.
	Let $\mathbf{1} \defeq (1,\ldots,1)\in \Reals^d$ and, for $j \in \{0 , 1 , \dots , s-1 \}^d$, let $z_j \defeq \mathbf{1} + j+\epsilon_j$.
	Observe that for $\alpha=0$ the points $z_j$ are on a regular grid.
	Let $\bar{\mathbb{V}}^\alpha \in \Reals^{\{0,1,\dots ,s-1\}^d \times \{0,1,\dots ,s-1\}^d }$ be the $s^d\times s^d$ matrix defined by
	$ \bar{\mathbb{V}}^\alpha_{n,j} \defeq \left(z_j\right)^n$.
	Let $V$ be the $s\times s$ Vandermonde matrix defined by
	$V_{i,j}=i^j$.
	Writing $\sigma_{\min}(V)$ for the minimal singular value of $V$ we have, for $\alpha = 0$, by \cite[Theorem 4.2.12]{horn1994topics},
	\begin{equation}
		\sigma_{\min}\left( \bar{\mathbb{V}}^0 \right) = \left( \sigma_{\min}(V)\right)^d.
	\end{equation}
	Since univariate polynomial interpolation on $s$ points with polynomials of degree $s-1$ 	is uniquely solvable, we have $\sigma_{\min}\left( V \right) > 0$ and $\sigma_{\min}(\bar{\mathbb{V}}^0)> C(d,s)>0 $.
	Therefore, the continuity of the minimal singular value
	with respect the entries of $\bar{\mathbb{V}}^\alpha$ implies that there exists $\alpha^{\ast}, \sigma^{\ast}>0$ depending only on $s,d$ such that $\alpha\leq \alpha^{\ast}$ implies $\sigma_{\min}(\bar{\mathbb{V}}^\alpha)> \sigma^{\ast}$.
	Since (by construction) the $( x_i )_{i \in \tilde{\I}^{(k)}}$ form a covering of $\Reals^d$ of radius $h^k$, the $(x_i^\lambda)_{i \in \tilde{\I}^{(k)}}$ form a covering of $\Reals^d$ of radius $h^k/\lambda$ and for each 	$n \in \{0 , 1 , \dots , s-1 \}^d$ there exists an $x_{j_n}^\lambda$ that is at distance at most $h^k/\lambda$ from $n$.
	Let $\sigma \defeq \{j_n \mid n\in \{0 , 1 , \dots , s-1 \}^d\}\subset \tilde{\I}^{(k)}$ be the collection of corresponding labels.
	It follows from $|x_{j_n}^\lambda|\leq \sqrt{ d} s+ h^k/\lambda$ that $|x_{j_n}-x_i|\leq \lambda \sqrt{ d} s+ h^k$, and $\sigma \subset \tilde{\I}^{(k)}_{\rho}$ for $\rho > 1+ \lambda \sqrt{ d} s/h^k$.
	Selecting $\lambda =h^k/\alpha^{\ast}$ implies that $\sigma_{\min}(\mathbb{V}^\lambda)> \sigma^{\ast}$ and $\sigma \subset \tilde{\I}^{(k)}_{\rho}$ for $\rho > 1+ \sqrt{ d} s/\alpha^{\ast}$.
	Defining
	\begin{equation}
		w_{ij} \defeq
		\begin{cases}
		\left( (\mathbb{V}^\lambda)^{-1} h^{ld/2} \mathbf{e} \right)_n , & \text{if $j = j_n \in \sigma$,} \\
		0, & \text{otherwise,}
		\end{cases}
	\end{equation}
	the weights $w_{ij}$ satisfy $\omega_{kl} \leq C(s,d) h^{ld/2}$ and \eqref{eqjkgfkhkjhgh} with a $\rho$ depending only on $s$ and $d$.
	This concludes the proof for \cref{examp-subsamp}.
	The proof is similar for \cref{examp-average} with minor changes (the bound on $\omega$ also depends on $\delta$).
\end{proof}

The following lemma concerns the satisfaction of the second condition of \cref{thm-condNew}:

\begin{lemma}
	\label{lem-upperBound}
	In the setting of \cref{examp-subsamp,examp-average}, there exists some constant $ C(d,s,\delta)>0$ such that, for $2\leq k<l \leq q$, $\x \in \Reals^{\J^{(l)}}$ and $\phi = \sum_i \x_i \phi_i$,
	\begin{equation}
		\min_{\varphi \in \spn\left(\phi_i \right)_{i \in \I^{(k-1)}} } \frac{\| \phi - \varphi \|_{\ast}^2}{|\x|^2} \leq C(d,s,\delta) \norm{ \IK^{-1} } h^{2s(k-1)}\,.
	\end{equation}
\end{lemma}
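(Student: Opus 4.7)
The plan is to combine Lemma~\ref{lem-condForW} and Lemma~\ref{lem-geomLemma} applied at the scale $k-1$ rather than $k$. Since the hypothesis $2 \leq k < l \leq q$ gives $1 \leq k-1 < l \leq q$, the index configuration of both lemmas is satisfied. The strategy is to exhibit a single $\varphi \in \spn(\phi_i)_{i \in \I^{(k-1)}}$ that realizes the asserted bound; the minimum over that subspace is then no larger.

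First I would invoke Lemma~\ref{lem-geomLemma} with $k$ replaced by $k-1$ to obtain weights $w \in \Reals^{\J^{(l)} \times \tilde{\I}^{(k-1)}}$ satisfying the polynomial-moment condition \eqref{eqjkgfkhkjhgh} (at scale $k-1$) for some $\rho = \rho(d,s)$ depending only on $d$ and $s$, together with the key bound $\omega_{l,k-1}^2 \leq h^{ld}\, C(d,s,\delta)$. This is the step that controls the size of the reconstruction weights and pins down $\rho$ to a universal constant.

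Next I would feed these weights into Lemma~\ref{lem-condForW} at scale $k-1$, setting $\varphi \defeq \sum_{i\in \J^{(l)},\, j \in \I^{(k-1)}} \x_i w_{ij} \phi_j^{(k-1)}$. Substituting the bound on $\omega_{l,k-1}$ into the conclusion of Lemma~\ref{lem-condForW} produces
\begin{equation}
\norm{ \phi - \varphi }_{\ast}^{2}
\leq \norm{ \IK^{-1} } C(d,s)\, \frac{\rho^{d+2s}}{\delta^{d}} \bigl( 1 + h^{-ld}\cdot h^{ld} C(d,s,\delta) \bigr)\, h^{2s(k-1)} \absval{\x}^{2},
\end{equation}
and, since $\rho$ depends only on $d,s$, all constants may be absorbed into a single $C(d,s,\delta)$, yielding exactly $C(d,s,\delta) \norm{ \IK^{-1} } h^{2s(k-1)} \absval{\x}^{2}$. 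The cancellation of the potentially large factor $h^{-ld}$ by the factor $h^{ld}$ built into the weight bound is the main quantitative ingredient; everything else is bookkeeping.

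The final point to verify is that the constructed $\varphi$ lies in $\spn(\phi_i)_{i \in \I^{(k-1)}}$. In Example~\ref{examp-subsamp} this is immediate: $\phi_j^{(k-1)} = \boldsymbol{\delta}_{x_j}$ for $j \in \I^{(k-1)}$ is, up to the scaling factor $h^{(k-1)d/2}$, exactly the corresponding $\phi_j$. In Example~\ref{examp-average}, the multiresolution construction of the $\phi_i$ via the matrices $W^{(m)}$ for $m\leq k-1$ is designed precisely so that $\spn\{\phi_i \mid i \in \I^{(k-1)}\}$ coincides with the piecewise-constant space $\spn\{\one_{\tau_j^{(k-1)}} \mid j \in \I^{(k-1)}\} = \spn\{\phi_j^{(k-1)} \mid j \in \I^{(k-1)}\}$; a dimension count ($|\I^{(k-1)}|$ on both sides, together with containment of one in the other) closes the argument. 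Observing that the weights supported on $\tilde{\I}^{(k-1)} \setminus \I^{(k-1)}$ (if any) contribute nothing when tested against $v \in H_0^{s}(\Omega)$ — since the associated $\phi_j^{(k-1)}$ have support in $\Reals^{d} \setminus \Omega$ — justifies discarding them, so no real obstacle arises here. The proof then reduces to the two displayed steps above.
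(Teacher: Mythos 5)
Your proposal is correct and follows essentially the same route as the paper: the paper's proof is exactly an application of Lemma~\ref{lem-condForW} with the bounds on $\rho$ and $\omega_{l,k}$ supplied by Lemma~\ref{lem-geomLemma} at scale $k-1$, with the $h^{-ld}$ factor cancelled by the $h^{ld}$ in the weight bound. Your additional verification that $\varphi \in \spn(\phi_i)_{i\in\I^{(k-1)}}$ is the same observation the paper makes just before Lemma~\ref{lem-condForW} (that $\spn\{\phi_i^{(k)}\} = \spn\{\phi_i\}$ over $\I^{(k)}$), so nothing is missing.
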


\begin{proof}
	Apply \cref{lem-condForW} with the bounds on $\rho$ and $\omega$ obtained in \cref{lem-geomLemma}.
\end{proof}

The following theorem is a direct consequence of \cref{thm-condNew}, \cref{lem-lowerBound} and \cref{lem-upperBound}.

\begin{theorem}
	\label{thm-specloc}
	In the setting of \cref{examp-subsamp,examp-average} there exists a constant $C(d,s,\delta)$ such that \cref{cond-specloc} is fulfilled with $C_{\Phi} \defeq \max ( \|\IK\|, \norm{ \IK^{-1} } ) C(d,s,\delta)$ and	$H\defeq h^s$.
\end{theorem}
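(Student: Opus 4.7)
The plan is to invoke Theorem~\ref{thm-condNew}, which reduces the verification of Condition~\ref{cond-specloc} to establishing two dual inequalities on spans of the measurement functions: an ``inverse-Sobolev'' lower bound on $\|\phi\|_{\ast}^2/|\x|^2$ at each scale $k$, and a ``Poincar\'e-type'' upper bound on the best approximation error $\min_{\varphi \in \spn(\phi_i)_{i\in \I^{(k-1)}}} \|\phi-\varphi\|_{\ast}^2/|\x|^2$ for $\phi$ in the span of finer-scale measurements. These two bounds have already been proved in the present section as Lemmas~\ref{lem-lowerBound} and~\ref{lem-upperBound}, respectively, for both Examples~\ref{examp-subsamp} and~\ref{examp-average}, so the remaining task is purely bookkeeping: I need to match the constants to those appearing in Condition~\ref{cond-specloc}.

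Concretely, I would set $H \defeq h^{s}$, so that $H^{2k} = h^{2sk}$, which is precisely the scaling appearing on the right-hand sides of Lemmas~\ref{lem-lowerBound} and~\ref{lem-upperBound}. Lemma~\ref{lem-lowerBound} then gives the first hypothesis of Theorem~\ref{thm-condNew} with constant $\|\IK\| \cdot C(d,s,\delta)$, while Lemma~\ref{lem-upperBound} gives the second with constant $\|\IK^{-1}\| \cdot C(d,s,\delta)$. Defining
\begin{equation}
	C_{\Phi} \defeq \max\bigl(\|\IK\|,\|\IK^{-1}\|\bigr) \cdot C(d,s,\delta),
\end{equation}
with the $C(d,s,\delta)$ taken to be the larger of the two constants from the lemmas (and further enlarged, if necessary, to ensure $C_\Phi \geq 1$ as required by Condition~\ref{cond-specloc}), simultaneously validates both conditions of Theorem~\ref{thm-condNew}. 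An application of Theorem~\ref{thm-condNew} then yields Condition~\ref{cond-specloc} with the stated $C_\Phi$ and $H$.

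There is no real obstacle here: the proof is a straightforward assembly of the three prior results. The only point requiring minor care is the use of two different $C(d,s,\delta)$ constants in the two lemmas and the observation that the hypothesis $H \in (0,1)$ of Condition~\ref{cond-specloc} indeed holds since $h \in (0,1)$ and $s \geq 1$, so $H = h^s \in (0,1)$.
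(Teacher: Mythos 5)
Your proposal is correct and is essentially identical to the paper's own proof, which likewise obtains Theorem~\ref{thm-specloc} as a direct consequence of Theorem~\ref{thm-condNew} combined with Lemma~\ref{lem-lowerBound} and Lemma~\ref{lem-upperBound}, with $H = h^s$ and $C_\Phi = \max(\|\IK\|,\|\IK^{-1}\|)\,C(d,s,\delta)$. The constant bookkeeping you describe is exactly what the paper leaves implicit.
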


\subsubsection{Propagation of exponential decay}
\label{sssec-abstractDecay}

We will now derive the exponential decay of the Cholesky factors $L$ by combining the algebraic identities of \cref{lem-BlockCholesky} with the bounds on the condition numbers of the $\IKMC^{(k)}$ (implied by \cref{cond-specloc}) and the exponential decay of the $\IKM^{(k)}$ (specified in \cref{cond-spatloc}).
The core of our proof is based on a combination/extension of the results of
\cite{demko1984decay,jaffard1990proprietes,benzi2015decay,benzi2016localisation,
krishtal2015localization,benzi2000orderings} on decay algebras.
The pseudodistance $d(\quark,\quark)$ appearing in \eqref{eq-exp-decay-estimate-L} is not a pseudometric because it does not satisfy
the triangle inequality.
However, to prove \eqref{eq-exp-decay-estimate-L} we we will only need the following weaker version of the triangle inequality:

\begin{definition}
	\label{def-hierarchical-pseudometric}
	A function $d \colon \I \times \I \longrightarrow \Reals_+$ is called a \emph{hierarchical pseudometric} if
	\begin{enumerate}[label=(\arabic*)]
		\item $d(i,i) = 0, \text{ for all } i \in \I$;
		\item $d(i,j) = d(j,i), \text{ for all } i,j \in \I$;
		\item for all $1 \leq k \leq q$, $d(\quark,\quark)$ restricted to
		$\J^{(k)} \times \J^{(k)}$ is a pseudometric;
		\item for all $1 \leq k \leq l \leq m \leq q$ and $i \in \J^{(k)}, s \in \J^{(l)},j \in \J^{(m)}$, we have $d(i,j) \leq d(i,s) + d(s,j)$.
	\end{enumerate}
\end{definition}

Note that the $d(\quark, \quark)$ specified in \eqref{eq-exp-decay-estimate-L2} for \cref{examp-subsamp,examp-average} is a hierarchical pseudometric.
For a hierarchical pseudometric $d(\quark, \quark)$ and $\gamma \in \Reals_+$, let
\begin{equation}
	c_d(\gamma) \defeq \sup_{1 \leq k \leq l \leq q} \sup_{j \in \J^{(l)}} \sum_{i \in \J^{(k)}} \exp ( - \gamma d( i, j ) ) .
\end{equation}

The following theorem states the main result of this section:

\begin{theorem}[Exponential decay of the Cholesky factors]
	\label{thm-decayAbstractCholesky}
	Assume that $\KM$ fulfils \cref{cond-spatloc,cond-specloc} with the	constants $\gamma, C_\gamma, H,C_{\Phi}$ and the
	hierarchical pseudometric $d(\quark,\quark)$.
	Then
	\begin{equation}
		\left| \left( \chol( \KM ) \right)_{ij} \right| \leq \frac{2 C_{\Phi} c_d\left(\tilde{\gamma}/8\right)^2}{(1-r)^2} \left(4 c_d\left(\tilde{\gamma}/4\right)\frac{C_{\Phi} C_{\gamma} \left(c_d\left(\tilde{\gamma}/2\right)\right)^2} {(1-r)^2} \right)^q \exp\left( -\frac{\tilde{\gamma}}{8}d( i, j ) \right),
	\end{equation}
	where $C_R \defeq \max \left\{ 1,\frac{2C_\gamma C_{\Phi}}{1 + \kappa}\right\}$, $r \defeq \frac{1- \kappa^{-1}}{1 + \kappa^{-1}}$, $\tilde{\gamma} \defeq \frac{-\log(r)} {1 + \log ( c_d ( \gamma/2 ) ) + \log (C_R) - \log ( r ) } \frac{\gamma}{2}$, and $\kappa = H^{-2} C_{\Phi}^2$ is defined as in \cref{thm-condphi}.
\end{theorem}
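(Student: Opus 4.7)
The plan is to realize the six-step roadmap stated just before the theorem, turning each step into a quantitative estimate for an ``off-diagonal decay algebra''. Call a matrix $M \in \Reals^{\I \times \I}$ \emph{$(\eta,C_M)$-decaying} if $\absval{M_{ij}} \leq C_M \exp(-\eta\, d(i,j))$ for all $i,j$. Using the hierarchical triangle inequality (item (4) of Definition~\ref{def-hierarchical-pseudometric}) to split $d(i,j) \leq d(i,s)+d(s,j)$ through an intermediate level-index $s$, together with the bound $c_d(\eta/2)$ on the resulting row/column sums, will give a \emph{multiplication lemma}: the product of an $(\eta,C_A)$- and an $(\eta,C_B)$-decaying matrix is $(\eta/2,c_d(\eta/2)C_AC_B)$-decaying. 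I will apply this systematically to the factorization $L = \bar{L}\tilde{L}$ of Lemma~\ref{lem-BlockCholesky}.

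Step (1) is immediate: restricting Condition~\ref{cond-spatloc} to diagonal blocks shows $\IKMC^{(k)}$ is $(\gamma,\poly)$-decaying (up to the diagonal factors $\sqrt{\IKMC^{(k)}_{ii}\IKMC^{(k)}_{jj}}$, which are bounded by Theorem~\ref{thm-condphi}). Step (2) is supplied verbatim by Theorem~\ref{thm-condphi}: $\cond(\IKMC^{(k)}) \leq \kappa$. The key analytical ingredient is a Demko--Moss--Smith-type estimate for step (4): if a symmetric positive definite $A$ is $(\gamma,C_A)$-decaying with $\cond(A) \leq \kappa$, then approximating $x \mapsto 1/x$ on the spectrum of $A$ by the degree-$n$ best polynomial yields $\bignorm{A^{-1}-p_n(A)} \leq C\kappa\, r^n$ with $r=(1-\kappa^{-1})/(1+\kappa^{-1})$; since $p_n(A)$ is itself $(\gamma/2, c_d^{n} C_A^n)$-decaying by iterating the multiplication lemma, optimizing $n$ as a linear function of $d(i,j)$ produces the rate $\tilde{\gamma}$ in the statement (this is exactly where the explicit formula $\tilde{\gamma}=\frac{-\log r}{1+\log c_d(\gamma/2)+\log C_R-\log r}\frac{\gamma}{2}$ originates). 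Step (5) is a consequence of step (4) applied to the block Cholesky of $\IKMC^{(k),-1}$, whose diagonal Schur complements are uniformly well-conditioned and exponentially decaying by the same argument. This controls the block-diagonal factor $\tilde{L}$.

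For the remaining factor $\bar{L}$: the off-diagonal blocks of $\bar{L}^{-1}$ have the explicit form $\IKMC^{(k),-1}\IKM^{(k)}_{k,l}$ for $l<k$, which is a product of two exponentially decaying factors (by step (4) and Condition~\ref{cond-spatloc}) and hence is $(\tilde{\gamma}/2,\cdot)$-decaying by the multiplication lemma. Writing $\bar{L}^{-1}=\Id-N$ with $N$ strictly block-lower-triangular, the nilpotency $N^q=0$ gives $\bar{L}=\sum_{j=0}^{q-1}N^j$; each summand is a product of at most $q$ $(\tilde{\gamma}/2,\cdot)$-decaying strictly block-lower-triangular matrices, and the multiplication lemma therefore yields an $(\tilde{\gamma}/4, C\cdot(\text{const})^q)$ bound. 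One final application of the multiplication lemma to $L=\bar{L}\tilde{L}$, together with $C_R$ and the bounds from Theorem~\ref{thm-condphi}, assembles the claimed estimate with rate $\tilde{\gamma}/8$ and the prefactor $(4c_d(\tilde{\gamma}/4)\cdot\ldots)^q$.

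\textbf{Main obstacle.} The delicate part is bookkeeping the rate degradation: each invocation of the multiplication lemma halves the exponential rate and multiplies the constant by $c_d$ of the current rate, while the Demko step couples the rate achievable for $A^{-1}$ to both $\kappa$ and $c_d(\gamma/2)$. The challenge is to choose $\tilde{\gamma}$ small enough that the Chebyshev approximation error $r^n$ dominates the combinatorial growth $c_d^n$, yet large enough that the final rate $\tilde{\gamma}/8$ is strictly positive; this is exactly the balance enforced by the denominator $1+\log c_d(\gamma/2)+\log C_R-\log r$ in the definition of $\tilde{\gamma}$. The $(\text{const})^q$ prefactor is harmless in the applications of this theorem because $q = \BigO(\log N)$ is absorbed into the $\poly(N)$ factor of \eqref{eq-exp-decay-estimate-L} and dominated by $\exp(-(\tilde{\gamma}/8)d(i,j))$ in the regime $\rho \gtrsim \log(N/\epsilon)$.
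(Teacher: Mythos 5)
Your proposal follows essentially the same route as the paper: it uses the block factorization $L = \bar{L}\tilde{L}$ of Lemma~\ref{lem-BlockCholesky}, a Jaffard-type multiplication lemma, a Demko--Jaffard estimate for inverses of well-conditioned decaying matrices, the $q$-term nilpotent Neumann series for $\bar{L}$, and a final product step, with the same successive rate halvings $\tilde{\gamma} \to \tilde{\gamma}/2 \to \tilde{\gamma}/4 \to \tilde{\gamma}/8$. The only cosmetic difference is that you invoke best polynomial approximation of $x \mapsto 1/x$, whereas the paper truncates the explicit Neumann series of $\Id - \frac{2}{\norm{A}+\norm{A^{-1}}^{-1}}A$, which is precisely what yields the stated rate $r = (1-\kappa^{-1})/(1+\kappa^{-1})$.
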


The remaining part of this section will present the proof of \cref{thm-decayAbstractCholesky}.
We will use the following lemma on the stability of exponential decay under matrix multiplication, the proof of which is a minor modification of that of \cite{jaffard1990proprietes}.

\begin{lemma}
	\label{lem-decayProduct}
	Let $\I$ be an index set that is partitioned as $\I = \J^{(1)} \cup \cdots \J^{(q)}$ and let $d \colon \I \times \I \to \Reals_{\geq 0}$ satisfy
	\begin{equation*}
		d( i_{1}, i_{n+1} ) \leq \sum_{k=1}^{n} d ( i_{k}, i_{k+1}) \quad \text{for all $1\leq n \leq q-1$ and $i_{k} \in \J^{(k)}$.}
	\end{equation*}
	Let $M^{(k)} \in \Reals^{J^{(k)} \times J^{(k+1)}}$ be such that $|M^{(k)}_{i,j}| \leq C \exp ( -\gamma d( i , j ) )$ for $1 \leq k \leq q-1$ and	let
	\begin{equation}
		c_d(\gamma/2) \defeq \sup_{1 \leq k \leq q-1} \sup_{j \in \J^{(k+1)}} \sum_{i \in \J^{(k)}} \exp\left( - \frac{\gamma}{2} d( i, j )\right)\text{ for $\gamma \in \Reals_+$}.
	\end{equation}
	Then, for $1\leq n \leq q-1$,
	\begin{equation*}
		\Absval{ \left( \prod_{k=1}^{n} M^{(k)}\right)_{i,j} } \leq \left( c_d\left( \gamma/2 \right) C \right)^n \exp\left(-\frac{\gamma}{2} d( i, j ) \right) .
	\end{equation*}
\end{lemma}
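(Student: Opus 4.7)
The plan is to prove this by induction on $n$, using the classical half-and-half splitting of the exponential that is standard in the theory of decay algebras (see \citealp{jaffard1990proprietes}). The base case $n = 1$ reduces to noting that the hypothesised bound $|M^{(1)}_{i,j}| \leq C \exp(-\gamma \, d(i,j))$ trivially implies $|M^{(1)}_{i,j}| \leq c_d(\gamma/2) C \exp(-\gamma/2 \, d(i,j))$, since $\exp(-\gamma \, d(i,j)) \leq \exp(-\gamma/2 \, d(i,j))$ and $c_d(\gamma/2) \geq 1$ in the relevant settings.

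For the inductive step, I would expand the product over an intermediate index $s \in \J^{(n)}$,
\begin{equation*}
\Absval{\left(\prod_{k=1}^{n} M^{(k)}\right)_{i,j}} \leq \sum_{s \in \J^{(n)}} \Absval{\left(\prod_{k=1}^{n-1} M^{(k)}\right)_{i,s}} \cdot \Absval{M^{(n)}_{s,j}},
\end{equation*}
apply the inductive hypothesis to the first factor and the hypothesised bound to the second, and crucially split $\exp(-\gamma \, d(s,j)) = \exp(-\gamma/2 \, d(s,j)) \cdot \exp(-\gamma/2 \, d(s,j))$. One copy combines with the inductive-hypothesis factor $\exp(-\gamma/2 \, d(i,s))$ via the triangle inequality $d(i,j) \leq d(i,s) + d(s,j)$ to yield a global factor $\exp(-\gamma/2 \, d(i,j))$ independent of $s$ that can be pulled outside the sum; the remaining copy, summed over $s \in \J^{(n)}$ for fixed $j \in \J^{(n+1)}$, is bounded directly by $c_d(\gamma/2)$ by its very definition. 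Collecting the constants produces the claimed $(c_d(\gamma/2) C)^n \exp(-\gamma/2 \, d(i,j))$.

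The main obstacle I foresee is the validity of the two-term triangle inequality $d(i,j) \leq d(i,s) + d(s,j)$ for the mixed triple $(i, s, j) \in \J^{(1)} \times \J^{(n)} \times \J^{(n+1)}$, since as stated the hypothesis supplies only the full $n$-term chain inequality visiting every level. In all applications within this paper, $d$ is a hierarchical pseudometric in the sense of Definition~\ref{def-hierarchical-pseudometric}, for which the required two-term monotone inequality is an axiom. A self-contained alternative that uses only the stated chain inequality is to abandon induction and instead expand the product fully over all intermediate indices $s_2, \ldots, s_n$ simultaneously, apply the chain inequality to the sum of exponents to extract the global decay factor $\exp(-\gamma/2 \, d(i,j))$ via a single half-and-half split, and then bound the residual multi-sum $\sum_{s_2, \ldots, s_n} \prod_{k=1}^n \exp(-\gamma/2 \, d(s_k, s_{k+1}))$ by iterating the one-step $c_d(\gamma/2)$ estimate along the chain.
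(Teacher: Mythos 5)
Your proposal is essentially correct, and your ``self-contained alternative'' is precisely the paper's proof of Lemma~\ref{lem-decayProduct}: the paper sets $i_1 \defeq i$, $i_{n+1} \defeq j$, expands the product over all intermediate indices $i_2,\dots,i_n$ at once, bounds each factor by $C\exp(-\gamma d(i_k,i_{k+1}))$, uses the chain hypothesis after a half-and-half split to pull $\exp(-\tfrac{\gamma}{2}d(i,j))$ out of the multi-sum, and bounds the residual sum by iterating the one-step $c_d(\gamma/2)$ estimate along the chain. Your primary route (induction, peeling off one factor over $s\in\J^{(n)}$) is a harmless reorganization of the same estimate, but, as you yourself observe, it needs the two-term inequality $d(i,j)\le d(i,s)+d(s,j)$ with $i\in\J^{(1)}$, $s\in\J^{(n)}$, $j\in\J^{(n+1)}$, which the lemma's stated hypothesis (a chain visiting \emph{every} intermediate level) does not supply for $n>2$; it is only available in the applications, either through property (4) of Definition~\ref{def-hierarchical-pseudometric} or because $d$ is an honest metric as in Lemmas~\ref{lem-decayInverse} and~\ref{lem-decayCholesky}. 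So the lemma as stated is proved by your fallback argument, not by the induction. One small repair to your base case: $c_d(\gamma/2)\ge 1$ is not guaranteed by the hypotheses, since the sum defining $c_d(\gamma/2)$ has no diagonal term ($i\in\J^{(k)}$ and $j\in\J^{(k+1)}$ lie in different blocks); the clean argument is to split $\exp(-\gamma d(i,j))=\exp(-\tfrac{\gamma}{2}d(i,j))\exp(-\tfrac{\gamma}{2}d(i,j))$ and absorb one factor into $c_d(\gamma/2)$, which dominates it because $\exp(-\tfrac{\gamma}{2}d(i,j))$ is itself one of the summands in the definition of $c_d(\gamma/2)$. The same observation is what lets the iterative bounding of the residual multi-sum (which produces only $n-1$ genuine summations, hence a priori $c_d(\gamma/2)^{\,n-1}$) be stated as $(c_d(\gamma/2)\,C)^n$ without assuming $c_d(\gamma/2)\ge 1$.
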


\begin{proof}
	Set $i_{1} \defeq i$, $i_{n+1}\defeq j$.
	Then
	\begin{align*}
		\Absval{ \left( \prod_{k=1}^{n} M^{(k)}\right)_{i,j} }
		&\leq C^n \sum_{i_{2} , \dots, i_{n} \in \J^{(2)},\ldots,\J^{(n)} } \exp \left( - \gamma \sum_{k=1}^{n} d\left( i_{k}, i_{k+1} \right) \right) \\
		&\leq C^n \exp\left(-\frac{\gamma}{2} d\left( i_{1} , i_{n+1} \right) \right) \sum_{i_{2} , \dots i_{n} \in I } \exp \left( - \frac{\gamma}{2} \sum_{k=1}^{n} d\left( i_{k}, i_{k+1} \right) \right) \\
		&\leq \left( c_d\left( \gamma/2 \right) C \right)^n \exp\left(-\frac{\gamma}{2} d( i, j )\right) .
	\end{align*}
\end{proof}

The proof of the following lemma (on the stability of exponential decay under matrix inversion for well conditioned matrices) is nearly identical to that of \cite{jaffard1990proprietes} (we only keep track of constants; see also \cite{demko1984decay} for a related result on the inverse of sparse matrices).

\begin{lemma}
	\label{lem-decayInverse}
	Let $A \in \Reals^{I\times I}$ be symmetric and positive definite with $| A_{i,j} | \leq C \exp ( -\gamma d( i , j ) )$ for some $C,\gamma >0$ and a metric $d(\quark, \quark)$ on $\I$.
	It holds true that
	{\scriptsize
	\begin{align}
		\left|( A^{-1} )_{i,j} \right|
		\leq \frac{4}{\left(\|A\| + \|A^{-1}\|^{-1}\right)( 1-r )^{2}} \exp \left( -\frac{\log( \frac{1}{r}) }{\left(1 + \log\left( c_{d}\left( \gamma/2 \right) \right) + \log( C_{R} ) \right) + \log( \frac{1}{r} )}\frac{\gamma}{2} d( i , j )\right)
	\end{align}}
	where $c_d(\gamma/2) \defeq \sup_{j \in \I} \sum_{i \in I} \exp\left( - \frac{\gamma}{2} d( i, j )\right)$, $C_{R} \defeq \max\left\{1 , \frac{2 C}{\|A\| + \| A^{-1}\|^{-1}} \right\} = \max\left\{1 , \frac{2 C \|A^{-1}\|}{ 1 + \kappa } \right\}$, $r\defeq \frac{1 - \frac{1}{\|A\|\|A^{-1}\|}}{1 + \frac{1}{\|A\|\|A^{-1}\|}} = \frac{1 - \kappa^{-1}}{1 + \kappa^{-1}}$, and $\kappa \defeq \norm{A} \norm{A^{-1}}$ is the condition number of $A$.
\end{lemma}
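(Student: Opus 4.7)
The plan is to follow the Neumann-series argument of \citet{jaffard1990proprietes}: approximate $A^{-1}$ in operator norm by a polynomial $p_n(A)$ whose entries inherit the exponential decay of $A$, and then pick the polynomial degree $n$ as a function of the pair $(i,j)$ so as to balance the operator-norm residual against the growth of $p_n$. Concretely, set $\alpha \defeq 2/(\norm{A} + \norm{A^{-1}}^{-1})$, the relaxation parameter that makes $\norm{I - \alpha A} \leq r$ with $r = (\kappa - 1)/(\kappa + 1)$. Then $A^{-1} = \alpha \sum_{k\geq 0}(I - \alpha A)^k$, and the truncation $p_n(A) \defeq \alpha \sum_{k=0}^{n}(I - \alpha A)^k$ has operator-norm residual
\begin{equation}
\bignorm{A^{-1} - p_n(A)} \leq \alpha \frac{r^{n+1}}{1-r}.
\end{equation}

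Next, I would control the entries of $M \defeq I - \alpha A$. On the diagonal, $|M_{ii}| = |1 - \alpha A_{ii}| \leq 1$, since $A_{ii} \in [0, \norm{A}]$ and $\alpha \norm{A} \leq 2$; off the diagonal, $|M_{ij}| = \alpha |A_{ij}| \leq \alpha C \exp(-\gamma d(i,j))$. Taking $C_R \defeq \max(1, 2C/(\norm{A} + \norm{A^{-1}}^{-1}))$ as in the statement gives the uniform bound $|M_{ij}| \leq C_R \exp(-\gamma d(i,j))$ (using $d(i,i) = 0$ on the diagonal). Since $d$ is a genuine metric on $\I$, iterating the chaining argument used in the proof of Lemma~\ref{lem-decayProduct}---splitting $\exp(-\gamma d)$ as $\exp(-\gamma d/2)\exp(-\gamma d/2)$, invoking the triangle inequality on one half to factor out the overall $\exp(-\gamma d(i,j)/2)$, and summing the remaining exponentials via $c_d(\gamma/2)$---yields $|(M^k)_{ij}| \leq (C_R c_d(\gamma/2))^k \exp(-\gamma d(i,j)/2)$ for every $k \geq 0$. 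Summing the geometric series gives, when $C_R c_d(\gamma/2) > 1$,
\begin{equation}
|(p_n(A))_{ij}| \leq \alpha \frac{(C_R c_d(\gamma/2))^{n+1}}{C_R c_d(\gamma/2) - 1} \exp(-\gamma d(i,j)/2),
\end{equation}
and a crude bound by $\alpha (n+1)\exp(-\gamma d(i,j)/2)$ in the degenerate regime.

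Combining the two estimates through $|(A^{-1})_{ij}| \leq |(p_n(A))_{ij}| + \bignorm{A^{-1} - p_n(A)}$ and optimising over $n$ will produce the stated bound. The first summand grows geometrically in $n$ at rate $C_R c_d(\gamma/2)$ while the second decays geometrically at rate $r$; setting them approximately equal forces $n + 1 \approx (\gamma d(i,j)/2)/(\log(C_R c_d(\gamma/2)) + \log(1/r))$ and yields a common decay of the form $\exp(-\alpha^{\ast} \gamma d(i,j)/2)$ with $\alpha^{\ast} = \log(1/r)/(\log(C_R c_d(\gamma/2)) + \log(1/r))$; the extra $+1$ in the denominator of the stated exponent is slack arising from rounding $n$ to an integer. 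The main technical obstacle is the careful bookkeeping of constants: handling the degenerate regime $C_R c_d(\gamma/2) \leq 1$ (where the geometric sum must be replaced by an arithmetic one), the integer rounding of the optimal $n$ (and the boundary case $d(i,j) = 0$), and the combined $(1-r)^{-2}$ prefactor, which emerges once $\alpha$ is rewritten in terms of $r$ via the identity $\alpha(\norm{A} + \norm{A^{-1}}^{-1}) = 2$ together with $r/(1-r) \leq 1/(1-r)$ and the corresponding factor from the $p_n(A)$ bound after reindexing the geometric sum.
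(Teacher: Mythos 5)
Your proposal follows essentially the same route as the paper's proof: the Neumann series $A^{-1}=\alpha\sum_{k\geq 0}(\Id-\alpha A)^{k}$ with the optimal relaxation $\alpha = 2/(\|A\|+\|A^{-1}\|^{-1})$ so that $\|\Id-\alpha A\|=r$, the entrywise bound $|(\Id-\alpha A)_{ij}|\leq C_R\exp(-\gamma d(i,j))$ (diagonal handled by positive definiteness), propagation of decay to powers via the product lemma, truncation of the series with the tail controlled by the operator norm, and balancing the truncation level $n$ against $d(i,j)$. The only cosmetic difference is the bookkeeping of the partial sum: the paper bounds $\sum_{k\leq n}$ by $(n+1)$ times the largest term and absorbs $(n+1)\leq e^{n}$, which is where the extra $+1$ in the exponent's denominator actually comes from (rather than from integer rounding), and using that same crude bound in place of your geometric-series estimate makes the stated constants come out exactly as in the paper.
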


\begin{proof}
	On a compact set not containing $0$, the function $x \mapsto x^{-1}$ can be accurately approximated by low-order polynomials in $x$.
	Then, the spread of the exponential decay can be controlled by \cref{lem-decayProduct}.
	See \cref{apsec-abstractDecay} for details.
\end{proof}

By representing Schur complements as matrix inverses, \cref{lem-decayInverse} can also be used to show that the Cholesky factors of well-conditioned exponentially-decaying matrices are exponentially decaying.
The following lemma appears in a similar form in \cite{benzi2000orderings} for banded matrices and in \cite{krishtal2015localization} without explicit constants.

\begin{lemma}
	\label{lem-decayCholesky}
	Let $B \in \Reals^{I\times I} \simeq \Reals^{N\times N}$ be symmetric and positive definite with condition number $\kappa$ and such that $\left|B_{i,j}\right| \leq C \exp( -\gamma d( i , j ) )$ for some constant $C>0$ and some metric $d$ on $I$.
	Let $L$ be the Cholesky factor (in an arbitrary order) of $B^{-1}$ ($B^{-1} = LL^{T}$).
	Then
	{\small
	\begin{align}
		\left|L_{i,j}\right|
		\leq \frac{ 4 \sqrt{\|B\|}}{\left(\|B\| + \|B^{-1}\|^{-1}\right)( 1-r )^{2}} \exp\left( \frac{\log( r ) }{1 + \log\left( c_{d}\left( \gamma/2 \right) \right) + \log( C_{R} ) - \log( r )}\frac{\gamma}{2} d( i , j )\right).
	\end{align}}
	where $c_d(\gamma/2) \defeq \sup_{j \in \I} \sum_{i \in I} \exp\left( - \frac{\gamma}{2} d( i, j )\right)$, $C_{R} \defeq \max\left\{1 , \frac{2 C\|B^{-1}\|}{1 + \kappa} \right\}$, and $r \defeq \frac{1 -\kappa^{-1}}{1 + \kappa^{-1}}$.
\end{lemma}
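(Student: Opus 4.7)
The plan is to reduce the claimed bound to a direct application of Lemma~\ref{lem-decayInverse} to a trailing principal submatrix of $B$. The pivotal identity is that, for any index $j$, the sub-column $L_{\succeq j, j}$ of the Cholesky factor of $B^{-1}$ admits the closed form
\[
L_{\succeq j, j} \;=\; \frac{\tilde{B}^{-1} e_j}{\sqrt{(\tilde{B}^{-1})_{j,j}}},
\]
where $\tilde{B}$ denotes the principal submatrix of $B$ on the indices $\succeq j$ in the elimination ordering and $e_j$ is the corresponding unit vector. This follows from \eqref{eqn-blockCholIKM} of Lemma~\ref{lem-blockChol2Scale} applied to $B^{-1}$ with the partition separating indices $\prec j$ from indices $\succeq j$: the trailing block of the block-diagonal part of $B^{-1}$'s block-Cholesky factorization is precisely $\tilde{B}^{-1}$. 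Recursing once more, or using directly that the first column of the Cholesky factor of an SPD matrix $A$ equals $A e_1 / \sqrt{A_{1,1}}$, yields the displayed formula. Consequently, for $i \succeq j$,
\[
|L_{i,j}| \;=\; \frac{|(\tilde{B}^{-1})_{i,j}|}{\sqrt{(\tilde{B}^{-1})_{j,j}}}.
\]

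With this representation in hand, I invoke Lemma~\ref{lem-decayInverse} on $\tilde{B}$. As a principal submatrix of the SPD matrix $B$, the matrix $\tilde{B}$ is itself SPD, inherits the entry-wise decay $|\tilde{B}_{k,l}| \leq C \exp(-\gamma d(k,l))$, and by Cauchy interlacing satisfies $\|\tilde{B}\| \leq \|B\|$ together with $\|\tilde{B}^{-1}\| \leq \|B^{-1}\|$, so that $\cond(\tilde{B}) \leq \kappa$. The constants $c_d(\gamma/2)$, $C_R$, and $r$ relevant to $\tilde{B}$ are all dominated by those stated in the lemma: the sum defining $c_d$ runs over a subset of $I$, while $C_R$ and $r$ depend monotonically on $\|\tilde{B}\|$ and $\|\tilde{B}^{-1}\|$. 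Lemma~\ref{lem-decayInverse} therefore yields
\[
|(\tilde{B}^{-1})_{i,j}| \;\leq\; \frac{4}{(\|B\| + \|B^{-1}\|^{-1})(1-r)^2} \exp\!\left(\frac{\log r}{1 + \log c_d(\gamma/2) + \log C_R - \log r}\,\frac{\gamma}{2}\, d(i,j)\right),
\]
with precisely the exponent claimed in the lemma statement.

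To close the argument, the denominator $\sqrt{(\tilde{B}^{-1})_{j,j}}$ is controlled by the crude bound $(\tilde{B}^{-1})_{j,j} \geq 1/\|\tilde{B}\| \geq 1/\|B\|$, which contributes exactly the factor $\sqrt{\|B\|}$ appearing in the prefactor. For $i \prec j$ we have $L_{i,j} = 0$ by lower-triangularity and the stated estimate is vacuous. The only nontrivial ingredient is the Schur-complement identification $L_{\succeq j, j} = \tilde{B}^{-1} e_j / \sqrt{(\tilde{B}^{-1})_{j,j}}$; once this representation is justified via Lemma~\ref{lem-blockChol2Scale}, the remaining work is a monotonicity check on the constants and a transparent invocation of Lemma~\ref{lem-decayInverse}. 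I anticipate this identification — essentially, recognising that eliminating the leading indices of $B^{-1}$ in Cholesky is the same as inverting the trailing block of $B$ — to be the main conceptual obstacle, with everything downstream being routine.
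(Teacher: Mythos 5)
Your proposal is correct and is essentially the paper's own argument: the paper likewise identifies the trailing part of column $j$ of $L$ with the normalized first column of the Schur complement of $B^{-1}$, which by Lemma~\ref{lem-blockChol2Scale} equals $\bigl( B_{\succeq j,\succeq j} \bigr)^{-1}$, then applies Lemma~\ref{lem-decayInverse} to this trailing principal submatrix using the spectral containment $[\lambda_{\min}(B),\lambda_{\max}(B)]$ and monotonicity of the constants, and finally bounds the diagonal normalization via $\bigl( \tilde{B}^{-1} \bigr)_{jj} \geq 1/\norm{B}$ to produce the factor $\sqrt{\norm{B}}$. No substantive difference from the paper's proof.
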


\begin{proof}
	\Cref{lem-blockChol2Scale} implies that the Schur complements of $B^{-1}$ can be expressed as inverses of sub-matrices of $B$.
	The result then follows from \cref{lem-decayInverse} (see \cref{prv-decayCholesky} for details).
\end{proof}

The last ingredient needed to prove the exponential decay of the Cholesky factors of $\KM$ is the following lemma showing the stability of exponential decay under inversion for block-lower-triangular matrices (this operation appears in
the definition of $\bar{L}$ in \eqref{eqjhgjhgyug}):

\begin{lemma}
	\label{lem-decayTriang}
	Let $\I$ be an index set that is partitioned as $\I = \J^{(1)} \cup \cdots \J^{(q)}$ and assume that the matrix $L \in \Reals^{\I \times \I}$ is block-lower triangular with respect to this partition, with identity matrices as diagonal blocks.
	If $d(\quark, \quark)$ is a hierarchical pseudometric such that $| L_{ij} | \leq C \exp\left( -\gamma d(i,j) \right)$ (for some $C\geq 1$ and $\gamma>0$) then it holds true that
	\begin{equation}
		\left| ( L^{-1} )_{ij} \right|
		\leq 2^q \left( c_d\left(\gamma/2\right) C \right)^{q}
		\exp\left( - \frac{\gamma}{2} d(i,j) \right).
	\end{equation}
	with $c_d(\gamma) \defeq \sup_{1 \leq k \leq l \leq q} \sup_{j \in \J^{(l)}}
	\sum_{i \in \J^{(k)}} \exp\left( - \gamma d( i, j )\right)$.
\end{lemma}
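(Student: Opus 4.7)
The plan is to exploit the fact that, since $L$ has identity diagonal blocks, $N\defeq I-L$ is strictly block-lower-triangular with respect to the partition $\I=\J^{(1)}\cup\cdots\cup\J^{(q)}$. This makes $N$ nilpotent with $N^{q}=0$, because any nonzero entry of a product $N^{k}$ requires a strictly decreasing chain $a_{0}>a_{1}>\cdots>a_{k}$ of level indices, and no such chain of length $q+1$ exists among $\{1,\dots,q\}$. Consequently the Neumann series terminates and
\begin{equation}
    L^{-1}=\sum_{k=0}^{q-1}N^{k}.
\end{equation}
The task then reduces to bounding each $|(N^{k})_{ij}|$ by an exponentially decaying quantity and summing.

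Next I would control $N^{k}$ by decomposing it according to level sequences. Writing $N^{(a,b)}$ for the block of $N$ in $\J^{(a)}\times\J^{(b)}$ (which is zero unless $a>b$) and using the assumed bound $|L_{ij}|\leq C\exp(-\gamma d(i,j))$ (which transfers verbatim to $N$ on off-diagonal blocks), one has
\begin{equation}
    N^{k}=\sum_{a_{0}>a_{1}>\cdots>a_{k}} N^{(a_{0},a_{1})}N^{(a_{1},a_{2})}\cdots N^{(a_{k-1},a_{k})}.
\end{equation}
For any fixed strictly decreasing level sequence the product is exactly of the form handled by Lemma~\ref{lem-decayProduct}, provided the pseudometric $d(\cdot,\cdot)$ obeys the chain triangle inequality along that sequence. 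This is the point at which property~(4) of a hierarchical pseudometric (Definition~\ref{def-hierarchical-pseudometric}) enters: iterating that property (together with symmetry) along a monotone chain of levels yields $d(i_{0},i_{k})\leq\sum_{r=0}^{k-1}d(i_{r},i_{r+1})$ whenever $i_{r}\in\J^{(a_{r})}$ with the $a_{r}$ monotone. Lemma~\ref{lem-decayProduct} then gives, for each admissible sequence,
\begin{equation}
    \bigabsval{\bigl(N^{(a_{0},a_{1})}\cdots N^{(a_{k-1},a_{k})}\bigr)_{ij}}\leq \bigl(c_{d}(\gamma/2)\,C\bigr)^{k}\exp\!\bigl(-\tfrac{\gamma}{2}d(i,j)\bigr).
\end{equation}

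Finally I would count level sequences and sum the geometric series. For $i\in\J^{(a)}$ and $j\in\J^{(b)}$ with $a>b$, the number of intermediate strictly decreasing chains of length $k$ is $\binom{a-b-1}{k-1}\leq\binom{q-2}{k-1}$, and $\sum_{k=0}^{q-1}\binom{q-2}{k-1}\leq 2^{q-1}$. Noting that $c_{d}(\gamma/2)C\geq 1$ (so that the largest power dominates), combining these counts with the per-chain bound yields
\begin{equation}
    \bigabsval{(L^{-1})_{ij}}\leq\Bigl[1+2^{q-2}\bigl(c_{d}(\gamma/2)C\bigr)^{q-1}\Bigr]\exp\!\bigl(-\tfrac{\gamma}{2}d(i,j)\bigr)\leq 2^{q}\bigl(c_{d}(\gamma/2)C\bigr)^{q}\exp\!\bigl(-\tfrac{\gamma}{2}d(i,j)\bigr),
\end{equation}
which is the claim. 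The main obstacle is conceptual rather than computational: one must recognise that, although $d$ is not a genuine pseudometric on all of $\I$, property~(4) is precisely the chain inequality needed by Lemma~\ref{lem-decayProduct} once the sum defining $N^{k}$ is refined to strictly monotone level sequences; the counting of such sequences and the choice to absorb it into the factor $2^{q}$ are then straightforward.
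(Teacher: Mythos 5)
Your proposal is correct and follows essentially the same route as the paper's proof: the terminating Neumann series $L^{-1}=\sum_k(\Id-L)^k$, expansion of each power over strictly monotone level chains so that property (4) of the hierarchical pseudometric supplies the chain inequality needed for Lemma~\ref{lem-decayProduct}, and a binomial count of chains absorbed into the factor $2^q(c_d(\gamma/2)C)^q$. No gaps.
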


\begin{proof}
	The Neumann series of a $q\times q$ block-lower-triangular matrix with identity matrices on the (block) diagonal can be written as
	\begin{equation}
		L^{-1} = \sum_{k =0}^q \left( \Id - L \right)^k\,.
	\end{equation}
	Since the sum terminates in $q$ steps, the thickening of the exponential decay can be bounded using \cref{lem-decayProduct}.
	See \cref{prv-decayTriang} for details.
\end{proof}

By applying the above results to the decomposition obtained in \cref{lem-BlockCholesky}, we conclude the proof of \cref{thm-decayAbstractCholesky}.
See \cref{prv-decayAbstractCholesky} for details.

\subsection{Complexity and error estimates}
\label{ssec-compErr}

The results of the previous sections allow us to prove the following theorem on the exponential decay of the Cholesky factors and the accuracy of their truncation:

\begin{theorem}
	\label{thm-decayCholesky}
	In the setting of \cref{examp-subsamp,examp-average} there exist constants $C,\gamma, \alpha > 0 $ depending only on $d$, $\Omega$, $s$, $\|\IK\|$, $\norm{ \IK^{-1} }$, $h$, and	$\delta$, such that the entries of the Cholesky factor $L$ of $\KM$ satisfy
	\begin{equation}
		| L_{ij} | \leq C N^\alpha \exp ( -\gamma d( i, j ) )\,,
	\end{equation}
	where $d \colon \I \times \I \to \Reals$ is the hierarchical pseudometric defined by
	\begin{equation}
		d( i, j ) \defeq h^{-\min( k, l )} \dist\left( \supp\left(\phi_i\right), \supp \left( \phi_j \right) \right) \quad \text{for all $i \in \J^{(k)}$, $j \in \J^{(l)}$.}
	\end{equation}
	As a consequence, writing
	\begin{equation}
		L^{S}_{ij} \defeq
		\begin{cases}
		L_{ij}, & \text{ for } (i,j) \in S \\
		0, & \text{ else,}
		\end{cases}
	\end{equation}
	with $S \supset S_{d,\rho} \defeq \{ (i,j) \mid d(i,j) \leq \rho \}$, we have $\bignorm{ \KM - L^{S} L^{S, \top} }_{\FRO} \leq \epsilon$ for $\rho \geq \tilde{C}(C,\gamma) \log(N/\epsilon)$.
	Furthermore, writing $E \defeq \KM - L^{S} L^{S, \top}$, using the $\epsilon$-perturbation $\KM - E$ of $\KM$ as the input to \cref{alg-ICholesky} returns $L^{S}$ as the output.
\end{theorem}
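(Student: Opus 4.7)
The plan is to combine the abstract Cholesky decay estimate of Theorem~\ref{thm-decayAbstractCholesky} with the verifications of its two hypotheses given earlier in this section. First I would invoke Theorem~\ref{thm-spatloc} to assert Condition~\ref{cond-spatloc} for $\KM$ in the setting of Examples~\ref{examp-subsamp}--\ref{examp-average}, and Theorem~\ref{thm-specloc} for Condition~\ref{cond-specloc}, both with constants depending only on the listed parameters. Feeding these into Theorem~\ref{thm-decayAbstractCholesky} produces $|L_{ij}|\leq C_{0} R^{q} \exp(-\tilde\gamma\, d(i,j))$, where $C_{0},R,\tilde\gamma$ are constants of the required type and $q$ is the number of scales. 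The key observation is that in Examples~\ref{examp-subsamp}--\ref{examp-average} the finest scale has spacing $\sim h^{q}$ and therefore carries $\sim h^{-qd}\sim N$ points, so $q\leq C'\log N$ and $R^{q}\leq N^{\alpha}$ with $\alpha\defeq C'\log R$, which yields the first claim.

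Next I would convert this per-entry bound into the Frobenius error estimate. Write $\Delta\defeq L-L^{S}$, so $\Delta$ is supported off $S\supset S_{d,\rho}$ and on its support satisfies $|\Delta_{ij}|\leq CN^{\alpha} e^{-\tilde\gamma d(i,j)}$ with $d(i,j)\geq \rho$. Since $L$ is lower triangular with respect to an ordering compatible with the levels, for each $j\in\J^{(k)}$ only indices $i\in\J^{(l)}$ with $l\geq k$ contribute, and the standard packing estimate for the hierarchical pseudometric gives
\begin{equation*}
\sum_{i\in\J^{(l)}:\,d(i,j)\geq \rho}\exp(-2\tilde\gamma\, d(i,j)) \lesssim (h^{k-l}\rho)^{d}\, e^{-2\tilde\gamma\rho}.
\end{equation*}
Summing over $l\in\{k,\dots,q\}$ and then over $j\in \I$ contributes at most a further polynomial factor in $N$, so $\norm{\Delta}_{\FRO}\leq \poly(N)\, e^{-\tilde\gamma\rho}$. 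Using the identity $\KM-L^{S}L^{S,\top}=L\Delta^{\top}+\Delta L^{S,\top}$, the submultiplicativity $\norm{AB}_{\FRO}\leq \norm{A}\,\norm{B}_{\FRO}$, and the polynomial bound $\norm{L}=\norm{\KM}^{1/2}\leq N^{1/2}\max_{i,j}|\KM_{ij}|^{1/2}$, I obtain $\norm{\KM-L^{S}L^{S,\top}}_{\FRO}\leq \poly(N)\, e^{-\tilde\gamma\rho}$, which drops below $\epsilon$ as soon as $\rho\geq \tilde{C}\log(N/\epsilon)$ for a $\tilde{C}$ chosen to absorb the polynomial prefactor into $1/\tilde\gamma$.

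For the self-consistency claim, note that $\KM-E=L^{S}L^{S,\top}$ by definition of $E$, and that $L^{S}$ is lower triangular with nonzero entries contained in $S$. I would then argue by induction on the elimination step that Algorithm~\ref{alg-ICholesky} applied to $\KM-E$ with pattern $S$ reproduces $L^{S}$ exactly: after the initial zeroing the input matrix coincides with $L^{S}L^{S,\top}$ on $S$; the first pivot and column update yield $L^{S}_{:,1}$ on $S$ and zero off $S$; the restricted Schur complement then equals, on $S$, the Schur complement of $L^{S}L^{S,\top}$, which in turn is the product of the truncated remainder of $L^{S}$ with its transpose; iterating identifies the algorithm's output with $L^{S}$ by uniqueness of Cholesky factorization on each principal submatrix.

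The main obstacle is the bookkeeping between the $R^{q}=N^{\alpha}$ prefactor coming from the level-by-level propagation in Theorem~\ref{thm-decayAbstractCholesky} and the additional polynomial factors in $N$ that appear when packing the pseudometric decay into a Frobenius sum over $\I\times\I\setminus S$: one must verify that the constant $\tilde{C}$ in the threshold $\rho\geq\tilde{C}\log(N/\epsilon)$ can indeed be chosen to depend only on the listed parameters (and not on $N$) after absorbing both polynomial contributions into the exponential $e^{-\tilde\gamma\rho}$.
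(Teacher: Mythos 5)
Your proposal follows essentially the same route as the paper's proof: Theorems~\ref{thm-spatloc} and~\ref{thm-specloc} verify Conditions~\ref{cond-spatloc} and~\ref{cond-specloc}, Theorem~\ref{thm-decayAbstractCholesky} then yields the entrywise bound with the level-dependent prefactor absorbed into $N^{\alpha}$ via $q \lesssim \log N$, and the truncation estimate and the exact-reproduction claim follow as you describe. The paper compresses these last steps into ``follows directly from the exponential decay,'' and your fillings-in --- the Frobenius bound via $\KM - L^{S}L^{S,\top} = L\Delta^{\top} + \Delta L^{S,\top}$ and the induction showing that Algorithm~\ref{alg-ICholesky} applied to $L^{S}L^{S,\top}$ reproduces $L^{S}$ --- are correct.
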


\begin{proof}
	\Cref{thm-spatloc,thm-specloc} imply that \cref{cond-spatloc,cond-specloc} are fulfilled with constants depending only on $d$, $s$, $\|\IK\|$, $\norm{ \IK^{-1} }$, $h$, and $\delta$.
	\Cref{thm-decayAbstractCholesky} concludes the exponential decay of $L$.
	The accuracy of the truncated factors follows directly from the exponential decay.
\end{proof}

\Cref{thm-decayCholeskyIntro} is a direct consequence of \cref{thm-decayCholesky}.

\begin{proof}[Proof of \cref{thm-decayCholeskyIntro}]
	As described in \cref{sssec-choleskyGamblets}, the maximin ordering can be represented as a hierarchical ordering satisfying
	the conditions of \cref{examp-subsamp}.
	The result follows from \cref{thm-decayCholesky} by observing that the sparsity pattern $S_\rho$ specified in \cref{ssec-simpleAlg} satisfies
	\begin{equation}
		S_{d, (\delta h)^{-1} \rho} \supset S_{\rho} \supset S_{d, \delta h \rho}\,.
	\end{equation}
	Scaling the weights of the measurement functions $\phi_i$ to $1$ increases the	error by a factor that is at most polynomial in $N$, which can be subsumed into the $\log(N)$-dependence of $\rho$ by increasing the constants in the decay estimates.
\end{proof}

While accurate (per \cref{thm-decayCholesky}), it is computationally inefficient to compute the full Cholesky factor first (with \cref{alg-Cholesky}) and then truncate it according to $S_{\rho}$.
Instead, we want to directly compute an approximation of $L$ from the incomplete factorization \cref{alg-ICholesky}, whose complexity is bounded by the following theorem:

\begin{theorem}
	\label{thm-complexityCholesky}
	In the setting of \cref{examp-subsamp,examp-average}, there exists a constant $C(d,\delta)$, such that, for $S \subset \{ (i,j) \mid d(i,j) \leq \rho \}$, the application of \cref{alg-ICholesky} has computational complexity 	$C(d,\delta) N q \rho^d$ in space and
	$C(d,\delta) N q^2 \rho^{2d}$ in time.
	In particular, $q\propto \log N /\ln \frac{1}{h^d}$ implies the upper bounds of $C(d,\delta, h) \rho^d N \log N$ on the space complexity, and of $C(d,\delta, h) \rho^{2d} N \log^2 N$ on the time complexity.
\end{theorem}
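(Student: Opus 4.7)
The bound is purely combinatorial and comes from a multi-scale packing argument in the hierarchical pseudometric $d$ of Theorem~\ref{thm-decayCholesky}. My plan is to estimate, in order, the row counts of the sparsity set $S$, the total size $|S|$ (which controls the storage), and the number of arithmetic updates executed by Algorithm~\ref{alg-ICholesky} (which controls the time). The only inputs beyond elementary counting are the homogeneity properties built into Examples~\ref{examp-subsamp} and \ref{examp-average}: the level-$l$ sites are pairwise $\delta h^l$-separated and form an $h^l$-cover of $\Omega$, which together give $\#\J^{(l)} \leq C(d,\delta,\Omega)\, h^{-ld}$ and, in particular, $h^{-qd}\asymp N$.

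I would first fix $i\in\J^{(k)}$ and count $n_i \defeq \#\{j\in\I : d(i,j)\leq\rho\}$. For $j\in\J^{(l)}$ with $l\geq k$, the condition $d(i,j)\leq\rho$ forces $\dist(\supp\phi_i,\supp\phi_j)\leq\rho h^k$, and standard ball-packing in $\Reals^d$ at scale $\delta h^l$ yields at most $C(d,\delta)(\rho h^k/h^l)^d$ such $j$. The analogous estimate for $l<k$ is even smaller. Summing the resulting geometric series in $l$ (ratio $h^{-d}>1$, dominated by $l=q$) gives $n_i \leq C(d,\delta,h)\,\rho^d h^{-(q-k)d}$. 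Summing $n_i$ over $i\in\J^{(k)}$ and using $\#\J^{(k)}\leq Ch^{-kd}$ cancels the two exponentials, yielding $\sum_{i\in\J^{(k)}} n_i \leq C\rho^d h^{-qd} \leq C\rho^d N$ per level. Summing over the $q$ levels gives $|S|\leq C(d,\delta,h)\,Nq\rho^d$; this is also the space bound, since Algorithm~\ref{alg-ICholesky} only stores entries indexed by $S$.

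For the time bound, I would count the arithmetic updates of the triple loop at outer step $i$, namely the triples $(i,j,k)$ with $j,k\succeq i$ and $(i,j),(i,k),(j,k)\in S$. For each admissible $j$, the inner $k$-loop ranges over indices with $(k,j)\in S$ and $k\succeq j$, so over at most $n_j$ values. Hence the work at column $i\in\J^{(k_0)}$ is bounded by
\[
W_i \;\leq\; \sum_{l\geq k_0}\ \sum_{j\in\J^{(l)},\,(i,j)\in S} n_j \;\leq\; \sum_{l\geq k_0} n_i^{(l)}\cdot\!\max_{j\in\J^{(l)}}n_j \;\leq\; \sum_{l\geq k_0} C\rho^{2d}\,h^{-(l-k_0)d}\,h^{-(q-l)d},
\]
where $n_i^{(l)}$ denotes the contribution to $n_i$ from level $l$. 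The $l$-dependent exponents telescope to $-(q-k_0)d$, leaving $W_i \leq Cq\rho^{2d}h^{-(q-k_0)d}$. Summing over $i\in\J^{(k_0)}$ with $\#\J^{(k_0)}\leq Ch^{-k_0 d}$, and then over $k_0$, yields $\sum_i W_i \leq C(d,\delta,h)\,Nq^2\rho^{2d}$, matching the claimed time bound.

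The main (and essentially the only) subtle step is the telescoping in the time estimate: the naive bound $W_i\leq n_i^2$ would collapse to an $O(N^2)$ estimate because $n_i$ can be as large as $\Theta(N)$ for a coarse-scale $i$. The sharper bound exploits the constraint that $k$ must lie jointly in the neighborhoods of both $i$ and $j$, which couples the three levels. The translation from $q$ to $\log N$ is then automatic: $N\asymp h^{-qd}$ forces $q\asymp \log N/\log(1/h^d)$, converting the estimates into $C\rho^d N\log N$ in space and $C\rho^{2d} N\log^2 N$ in time.
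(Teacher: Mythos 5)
Your argument is correct and yields the stated bounds, but it is organized quite differently from the paper's proof, and the comparison is instructive. The paper anchors every count at the \emph{later} index: it defines $m \defeq \max_{j,k}\#\{i\in\J^{(k)} \mid i\prec j,\ d(i,j)\leq\rho\}$ and observes that the $\delta h^k$-separation of level-$k$ points gives $m\leq C(d,\delta)\rho^d$ \emph{uniformly}, so every index has at most $qm$ predecessors in $S$ (space $\leq Nqm$), and for each innermost index the admissible pairs $(i,j)$ of predecessors number at most $(qm)^2$ (time $\leq N(qm)^2$). You instead anchor at the \emph{earlier} index and count successors, whose neighborhoods can be as large as $\Theta(N)$ at coarse scales; this is exactly why you need the level-by-level telescoping $h^{-(l-k_0)d}\cdot h^{-(q-l)d}=h^{-(q-k_0)d}$ in the time estimate. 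The step you single out as the only subtle one is therefore an artifact of the chosen direction: counting backwards, no telescoping is needed, because the row counts of $L$ (unlike the column counts) are uniformly $O(q\rho^d)$. One minor imprecision in your write-up: the blanket claim $n_i\leq C\rho^d h^{-(q-k)d}$ does not absorb the contribution of the levels $l<k$ (which is of order $q\rho^d$ and can dominate when $k$ is close to $q$); this is harmless, since that contribution sums to $O(Nq\rho^d)$ over all $i$ and to $O(Nq^2\rho^{2d})$ in the time count, but the inequality as stated is not uniformly valid. Your approach does have the small advantage of directly exhibiting the per-level balance $\#\J^{(k)}\cdot h^{-(q-k)d}\asymp h^{-qd}\asymp N$, which makes transparent why each level contributes equally to $|S|$.
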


\begin{proof}
	Defining $m \defeq \max_{j\in \I, 1\leq k \leq q} \# \{i \in \J^{(k)} \mid i \prec j \text{ and } d(i,j) \leq \rho \}	$, $|x_i-x_j|\geq \delta^{-1}h^l$ for $i,j\in \I^{(l)}$ implies that $m \leq C(d,\delta) \rho^d$.
	Therefore $\# \{i \in \I \mid i \prec j \text{ and } d(i,j) \leq \rho \} \leq q m N$ implies the bound on space complexity.
	
	Consider the structure of the nested for-loops of \cref{alg-ICholesky} and observe that, for every $k$ in the innermost loop, the number of distinct $(i,j)$ satisfying $i\prec j \prec k$, $(j,k) \in S$ and $(i,j) \in S$ is at most $(qm)^2$.
	This implies the upper bound $N(qm)^2$ on the time complexity.
\end{proof}

\Cref{thm-decayCholesky,thm-complexityCholesky} imply that the application of \cref{alg-ICholesky} to $\KM - E$ (the $\epsilon$-perturbation of $\KM$ described in \cref{thm-decayCholesky}) returns an $\epsilon$-accurate Cholesky factorization of $\KM$ in computational complexity $\BigO (N \log^2(N) \log^{2d}(N/ \epsilon))$.
In practice we do not have access to $E$, so we need to rely on the stability of
\cref{alg-ICholesky} to deduce that $\KM$ and $\KM - E$ (used as inputs) would yield similar outputs, for sufficiently small $E$.
Even though such a stability property of {\ICH} would also be required by prior works on incomplete LU-factorization such as \cite{gines1998lu}, we did not find
this type of result in the literature.
We also found it surprisingly difficult to prove (and were unable to do so) for the maximin ordering and sparsity pattern, although we always observed stability of \cref{alg-ICholesky} in practice, for reasonable values of $\rho$.
We can however prove stability of \cref{alg-ICholesky} when using a slight modification of the ordering and sparsity pattern that compromises neither
the computational complexity nor the accuracy of the factorization.
The modified ordering and sparsity pattern, being inspired by the concepts of red-black orderings \cite{iwashita2003block} and supernodal factorizations
\cite{rothberg1994efficient,liu1993finding} also allows one to take advantage of
parallelism and dense linear algebra operations and could therefore be used to improve the practical performance of the algorithm.
For $r>0$, $1 \leq k \leq q$ and $i \in \J^{(k)}$, write
\begin{equation}
	B^{(k)}_{r} \left( i \right) \defeq \{ j \in \J^{(k)} \mid d( i, j ) \leq r \}.
\end{equation}

\begin{construction}[Supernodal multicolor ordering and sparsity pattern]
	\label{const-superMulti}
	Let $\KM \in \Reals^{\I \times \I}$ with $\I \defeq \bigcup_{1 \leq k \leq q} \J^{(k)}$ and let $d( \quark, \quark )$ be a hierarchical pseudometric.
	For $\rho \geq 1$, define the \emph{supernodal multicolor ordering} $\prec_{\rho}$ and \emph{sparsity pattern} $S_\rho$ as follows.
	For each $k\in \{1,\ldots,q\}$, select a subset $\tJ^{(k)} \subset \J^{(k)}$ of indices such that
	\begin{align}
		&\forall \ti,\tj \in \tJ^{(k)}, & \ti \neq \tj \implies
		B^{(k)}_{\rho/2}\left(\ti \right) \cap B^{(k)}_{\rho/2}\left( \tj \right)
		= \emptyset , \\
		&\forall i \in \J^{(k)}, & \exists \ti \in \tJ^{(k)}:
		i \in B_{\rho}^{(k)} \left( \ti \right) .
	\end{align}
	Assign every index in $\J^{(k)}$ to the element of $\tJ^{(k)}$ closest to it,
	using an arbitrary method to break ties.
	That is, writing $j\leadsto \tj$ for the assignment of $j$ to $\tj$,
	\begin{equation}
		\tj \in \argmin_{\tj' \in \tJ^{(k)}} d\left( j, \tj' \right)\,,
	\end{equation}
	for all	$j \in \J^{(k)}$ and $\tj \in \tJ^{(k)}$ such that $j\leadsto \tj$.
	Define $\tI \defeq \bigcup_{1 \leq k \leq q} \tJ^{(k)}$ and define the
	auxiliary sparsity pattern $\tS_{\rho} \subset \tI \times \tI$ by
	\begin{equation}
		\tS_{\rho} \defeq \left\{ \left( \ti, \tj \right) \in \tI \times \tJ \,\middle|\, \exists i \leadsto \ti, j \leadsto \tj: d(i,j) \leq \rho \right\}.
	\end{equation}
	Define the sparsity pattern $S_{\rho} \subset \I \times \I$ as
	\begin{equation}
		S_{\rho} \defeq \left\{ ( i, j ) \in \I \times \I \,\middle|\, \exists \ti, \tj \in \tI : i \leadsto \ti, j \leadsto \tj, \left(\ti, \tj \right) \in \tS_{\rho} \right\}.
	\end{equation}
	and call the elements of $\tJ^{(k)}$ \emph{supernodes}.
	Color each $\tj \in \tJ^{(k)}$ in one of $p^{(k)}$ colors such that no
	$\ti,\tj \in \tJ^{(k)}$ with $\left( \ti, \tj \right) \in \tS_{\rho}$ have the	same color.
	For $i\in \J^{(k)}$ write $\text{node}(i)$ for the $\ti \in \tJ^{(k)}$ such that $i\leadsto \ti$ and write $\text{color}(\ti)$ for the color of $\ti$.
	Define the supernodal multicolor ordering $\prec_{\rho}$ by reordering the elements of $\I$ such that
	\begin{enumerate}[label=(\arabic*)]
		\item $i\prec_{\rho} j$ for $i\in \J^{(k)}$, $j\in \J^{(l)}$ and $k<l$;
		\item within each level $\J^{(k)}$, we order the elements of supernodes colored in the same color consecutively, i.e.\ given $i, j\in \J^{(k)}$ such that $\text{color}(\text{node}(i))\not=\text{color}(\text{node}(j))$,
		$i\prec_{\rho} j \implies i'\prec_{\rho} j'$ for $\text{color}(\text{node}(i'))=\text{color}(\text{node}(i))$,
		and $\text{color}(\text{node}(j'))=\text{color}(\text{node}(j))$;
		and
		\item the elements of each supernode appear consecutively, i.e.\ given $i, j\in \J^{(k)}$ such that $\text{node}(i)\not=\text{node}(j)$,
		$i\prec_{\rho} j \implies i'\prec_{\rho} j'$ for $\text{node}(i')=\text{node}(i)$,
		and $\text{node}(j')=\text{node}(j)$.
	\end{enumerate}
\end{construction}

Starting from a hierarchical ordering and sparsity pattern, the modified ordering
and sparsity pattern can be obtained efficiently:

\begin{lemma}
	\label{lem-constSuperMulti}
	In the setting of \cref{examp-subsamp,examp-average}, given $\{ ( i, j ) \mid d( i, j ) \leq \rho \}$, there exist constants $C$ and $p_{\max}$ depending only on the dimension $d$ and the cost of computing $d( \quark, \quark )$ such that the ordering and sparsity pattern presented in \cref{const-superMulti} can be constructed with $p^{(k)} \leq p_{\max}$, for each $1 \leq k \leq q$, in computational complexity $C q \rho^d N$.
\end{lemma}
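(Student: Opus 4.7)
The plan is to bound the cost of each of the five operations defining Construction~\ref{const-superMulti}---selecting the packings $\tJ^{(k)}$, assigning nodes to supernodes, assembling $\tS_\rho$ and $S_\rho$, colouring the supernodes, and performing the final reorder---by a constant multiple of the size of the input set $\{(i,j)\mid d(i,j)\leq \rho\}$, which itself has $\BigO(q\rho^d N)$ entries since each $j\in \J^{(l)}$ has at most $\BigO(\rho^d)$ neighbours at any one of the $q$ scales. The uniform bound $p^{(k)}\leq p_{\max}$ will follow from a purely geometric packing estimate within a single level.

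For the first operation I would build $\tJ^{(k)}$ by a single greedy sweep: maintain a boolean ``marked'' flag on each index, scan $\J^{(k)}$ in arbitrary order, insert each unmarked index into $\tJ^{(k)}$, and mark it together with all of its $\BigO(\rho^d)$ neighbours within $d$-radius $\rho$ (retrievable from the input restricted to level $k$). The selected points are then pairwise at $d$-distance strictly greater than $\rho$, which in Examples~\ref{examp-subsamp}--\ref{examp-average} ensures that the $B^{(k)}_{\rho/2}$ are disjoint, while every index is automatically covered by some $B^{(k)}_\rho(\ti)$. The assignment $j\leadsto \tj$ is then computed by scanning the $\BigO(\rho^d)$ level-$k$ neighbours of each $j$ and retaining the closest member of $\tJ^{(k)}$; the covering property guarantees that at least one exists. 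Finally $\tS_\rho$ and $S_\rho$ are assembled by a single linear pass over the input: for each $(i,j)$ with $d(i,j)\leq \rho$ insert $(\text{node}(i),\text{node}(j))$ into $\tS_\rho$ and $(i,j)$ into $S_\rho$, followed by deduplication via bucket sort.

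The main technical step is the uniform colouring bound. Fix $\ti\in \tJ^{(k)}$ and suppose $(\ti,\tj)\in \tS_\rho$; by definition there exist $i\leadsto \ti$ and $j\leadsto \tj$ with $d(i,j)\leq \rho$, and the rule that each index is assigned to the closest member of $\tJ^{(k)}$, combined with the $\rho$-covering property, yields $d(\ti,i)\leq \rho$ and $d(\tj,j)\leq \rho$. Since the restriction of $d$ to $\J^{(k)}\times \J^{(k)}$ is a pseudometric by Definition~\ref{def-hierarchical-pseudometric}, the triangle inequality gives $d(\ti,\tj)\leq 3\rho$. On the other hand the supernodes are $\rho/2$-packed in $d$, and on level $k$ the pseudometric $d$ is, up to the fixed rescaling by $h^{-k}$, the Euclidean distance between the $x_i$, so a standard volume-packing argument in $\Reals^d$ bounds the number of supernodes $\tj$ with $d(\ti,\tj)\leq 3\rho$ by a constant $C(d,\delta)$. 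Hence the supernode conflict graph at scale $k$ has maximum degree at most $C(d,\delta)$, and a greedy colouring uses at most $p_{\max}\defeq C(d,\delta)+1$ colours at every scale; executing this greedy colouring by a linear scan over $\tJ^{(k)}$ and the $\tS_\rho$-neighbourhood of each supernode runs in time $\BigO(\rho^d\#\tJ^{(k)})$. I expect this packing-and-colouring argument to be the principal obstacle, because the hierarchical pseudometric supplies triangle-inequality control only within a single level, so the reduction to a $\Reals^d$ packing estimate has to be made at each scale separately.

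The remaining reorder is a bucket sort keyed lexicographically on (scale $k$, colour of $\text{node}(i)$, identity of $\text{node}(i)$, index $i$), costing $\BigO(N)$. Summing the per-scale costs and noting that the input itself has $\BigO(q\rho^d N)$ entries yields the total complexity $Cq\rho^d N$, with $C$ depending only on $d$, $\delta$, and the unit cost of evaluating $d(\quark,\quark)$, and with $p_{\max}$ independent of $k$, $N$, and $\rho$, completing the plan.
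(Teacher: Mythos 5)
Your proof follows essentially the same route as the paper's: select the supernodes $\tJ^{(k)}$ by a greedy maximal packing, and colour the same-level conflict graph greedily after bounding its maximum degree by a sphere-packing argument --- adjacency $(\ti,\tj)\in\tS_\rho$ forces $d(\ti,\tj)\le 3\rho$ via the within-level triangle inequality, while distinct supernodes are separated by more than $\rho/2$ (in your variant by more than $\rho$), so the degree, and hence $p_{\max}$, is a constant depending only on $d$. This is exactly the content of the paper's (terser) proof, and your per-pass cost accounting is a correct elaboration of it; your greedy (marking the full $\rho$-neighbourhood) in fact verifies the disjointness of the $B^{(k)}_{\rho/2}$ balls more cleanly than the paper's $\rho/2$-removal phrasing.

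One step, however, does not produce what the lemma asks for. Construction~\ref{const-superMulti} defines $S_\rho$ as the supernode-expanded pattern $\{(i,j)\mid \exists\,\ti,\tj:\ i\leadsto\ti,\ j\leadsto\tj,\ (\ti,\tj)\in\tS_\rho\}$, which strictly contains $\{(i,j)\mid d(i,j)\le\rho\}$; the extra entries are the point of the supernodal pattern (they make whole supernode blocks dense, which is what the stability argument of Appendix~\ref{apsec-superMulti} relies on). Your single pass that inserts only the input pairs with $d(i,j)\le\rho$ therefore builds a proper subset of $S_\rho$. The repair is routine and stays within budget: after computing $\tS_\rho$ and the member list of each supernode, enumerate $S_\rho$ as the union over $(\ti,\tj)\in\tS_\rho$ of the products of the two member lists. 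Since every member of a supernode lies within $d$-distance $\rho$ of it, a chain of Euclidean triangle inequalities shows that any $(i,j)\in S_\rho$ with $i$ the finer of the two indices satisfies $\dist(x_i,x_j)\le 5\rho\,h^{l}$ for $j\in\J^{(l)}$, so each index has $\BigO(\rho^d)$ partners per coarser-or-equal level; hence $\# S_\rho = \BigO(q\rho^d N)$ and the enumeration costs $\BigO(q\rho^d N)$. Relatedly, your claim that each $j$ has $\BigO(\rho^d)$ neighbours ``at any one of the $q$ scales'' is literally false for scales finer than that of $j$ (the threshold is $\rho h^{\min(k,l)}$, so a coarse index can have many fine-scale neighbours); the stated total $\BigO(q\rho^d N)$ for the input, and for all of your linear passes, is nevertheless correct once each pair is charged to its finer element, which is how the accounting should be phrased.
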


\begin{proof}
	The aggregation into supernodes can be done via a greedy algorithm by keeping track of all nodes that are not already within distance $\rho/2$ of a supernode and removing them one-at-a-time.
	We can then go through $\rho$-neighbourhoods and remove points within distance $\rho/2$ from our list of candidates for future supernodes.
	To create the coloring, we use the greedy graph coloring of \cite{husfeldt2015graph} on the undirected graph $G$ with vertices $\tJ^{(k)}$ and edges $\bigl\{ ( \ti, \tj ) \in \tS_{\rho} \,\big|\, \ti,\tj \in \tJ^{(k)} \bigr\}$.
	Defining $\deg(G)$ as the maximum number of edges connected to any vertex of $G$, the computational complexity of greedy graph coloring is bounded above by $\deg(G) \#\left( \J^{(k)} \right)$ and the number of colors used by $\deg(G) + 1$.
	A sphere-packing argument shows that $\deg(G)$ is at most a constant depending only on the dimension $d$, which yields the result.
\end{proof}

\begin{theorem}
	\label{thm-accuracyICholesky}
	In the setting of \cref{examp-subsamp,examp-average}, there exists a
	constant $C$ depending only on $d,s, \norm{ \IK }, \norm{ \IK^{-1} }$, $h$, and $\delta$ such that, given the ordering $\prec_{\rho}$ and sparsity pattern $S_{\rho}$ defined as in
	\cref{const-superMulti} with $\rho \geq C \log(N / \epsilon)$, the incomplete Cholesky factor $L$ obtained from \cref{alg-ICholesky} has accuracy
	\begin{equation}
		\norm{ LL^T - \KM }_{\FRO} \leq \epsilon.
	\end{equation}
	Furthermore, \cref{alg-ICholesky} has complexity of at most
	$C N \rho^{2d} \log^{2} N$ in time and at most $C N \rho^{d} \log N$ in space.
\end{theorem}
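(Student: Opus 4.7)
The plan is to split the claim into three pieces already present or almost present in the paper: (a) the complexity bounds come from the generic analysis of Algorithm~\ref{alg-ICholesky} in Theorem~\ref{thm-complexityCholesky}; (b) the truncated exact Cholesky factor $L^{S_{\rho}}$ is an $\epsilon$-approximation of $\KM$ via Theorem~\ref{thm-decayCholesky}; (c) the output $\tilde{L}$ of Algorithm~\ref{alg-ICholesky} with ordering $\prec_{\rho}$ and pattern $S_{\rho}$ is itself close to $L^{S_{\rho}}$. The delicate part is (c), and the role of the supernodal multicolor ordering is exactly to make (c) provable.

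For (a) and (b): Lemma~\ref{lem-constSuperMulti} bounds the number of colors per level by a dimension-dependent constant and shows $S_{\rho}$ is still contained in a pseudometric ball of radius $c\rho$; Theorem~\ref{thm-complexityCholesky} then yields the $C\rho^{2d}N\log^{2}N$ time and $C\rho^{d}N\log N$ space bounds. For (b) I would check the reverse containment $\{(i,j):d(i,j)\leq \rho\}\subset S_{\rho}$, which is immediate from Construction~\ref{const-superMulti}: if $d(i,j)\leq \rho$ and $i\leadsto\ti$, $j\leadsto\tj$, then taking the representatives $i'=i$ and $j'=j$ witnesses $(\ti,\tj)\in\tS_{\rho}$ and hence $(i,j)\in S_{\rho}$. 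Plugging this containment into Theorem~\ref{thm-decayCholesky} yields $\bignorm{LL^{\top}-L^{S_{\rho}}(L^{S_{\rho}})^{\top}}_{\FRO}\leq \epsilon/2$ for $\rho\geq C\log(N/\epsilon)$, with the polynomial prefactor $N^{\alpha}$ absorbed into the logarithm.

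The main obstacle is (c). The supernodal multicolor ordering makes this step tractable because, within a single color class of a single level, distinct supernodes are pairwise separated by pseudometric distance strictly greater than $\rho$, and hence no entry of $S_{\rho}$ connects two such supernodes. When Algorithm~\ref{alg-ICholesky} processes the columns belonging to one color class, the Schur-complement updates triggered by eliminating one supernode therefore never touch any entry of $S_{\rho}$ that the elimination of another supernode in the same class will subsequently read. This decouples the elimination within each color class into a disjoint union of dense Cholesky factorizations on the individual supernode blocks; since every entry internal to a supernode belongs to $S_{\rho}$, these dense factorizations are performed exactly by Algorithm~\ref{alg-ICholesky}. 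Across color classes and across levels the interactions that Algorithm~\ref{alg-ICholesky} performs are precisely those recorded by $S_{\rho}$ by construction, and the only updates that are discarded are those at pseudometric distance greater than $\rho$.

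Bounding the effect of these discarded updates is the last step. By Theorem~\ref{thm-decayCholesky} every off-pattern entry of the exact factor $L$ satisfies $\absval{L_{ij}}\leq CN^{\alpha}\exp(-\gamma\rho)$, so the accumulated perturbation $E\defeq\tilde{L}\tilde{L}^{\top}-\KM$ has Frobenius norm bounded by summing the squared off-pattern contributions over the at most $N$ elimination steps; choosing $C$ large enough in $\rho\geq C\log(N/\epsilon)$ absorbs the $N^{2+\alpha}$ overhead and gives $\norm{E}_{\FRO}\leq \epsilon/2$. A final triangle inequality with the bound from (b) completes the proof. I expect the trickiest technical point to be the bookkeeping that turns the per-step discarded updates into a single well-defined global perturbation of $\KM$; here the multicolor property is essential because it prevents error amplification through cascading interactions among supernodes of the same level.
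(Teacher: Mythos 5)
Your decomposition into (a) complexity, (b) accuracy of the truncated exact factor, and (c) closeness of the algorithm's output to that truncated factor matches the paper's strategy, and parts (a) and (b) are handled essentially as in the paper. The gap is in (c), which you correctly identify as the delicate step but do not actually prove. You propose to bound the accumulated perturbation "by summing the squared off-pattern contributions over the at most $N$ elimination steps." This linear-accumulation argument is exactly what fails for incomplete Cholesky: discarding an update at one elimination step changes the subsequent Schur complement, hence changes the subsequent pivots, hence changes \emph{all} subsequent updates, so the discarded entries do not combine additively into a perturbation of $\KM$. What is needed — and what the paper supplies in Lemmas~\ref{lem-pertSchur} and \ref{lem-stabICholesky} — is a quantitative stability estimate: one block step of (restricted) Schur complementation amplifies a Frobenius perturbation of size $\epsilon \leq \lambda_{\min}/2$ by at most a factor $\tfrac{3}{2} + 2\lambda_{\max}/\lambda_{\min} + 8\lambda_{\max}^2/\lambda_{\min}^2$, and iterating this over all block steps gives an overall amplification of order $q^2 (C\kappa)^{2qp}$. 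This is $\poly(N)$ \emph{only because} the supernodal multicolor structure collapses the factorization into $qp = \BigO(\log N)$ block elimination steps with $p$ bounded independently of $N$ (Lemma~\ref{lem-constSuperMulti}); with $N$ scalar steps the same recursion would give $(C\kappa)^{N}$. Your intuition that the multicolor property "prevents error amplification through cascading interactions" gestures at this, but stating it is not a proof, and your actual quantitative claim (linear accumulation over $N$ steps) is the wrong mechanism.

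There is also a structural difference worth noting. The paper does not compare the algorithm's output to the truncated exact factor entry by entry. Instead it uses the final clause of Theorem~\ref{thm-decayCholesky}: the truncated factor $L^{S}$ is the \emph{exact} output of Algorithm~\ref{alg-ICholesky} applied to the perturbed input $\KM - E$ with $\norm{E}_{\FRO}\leq\epsilon$. The question then becomes purely one of input-stability of the algorithm — do $\KM$ and $\KM - E$ produce nearby outputs? — which is answered by the perturbation lemmas above, with the $\poly(N)$ amplification absorbed into the constant in $\rho \geq C\log(N/\epsilon)$ and with the lower bound $\lambda_{\min}(\KM)\geq 1/\poly(N)$ from Theorem~\ref{thm-specloc} guaranteeing the smallness hypothesis of the stability lemma is met. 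To repair your argument you would need to prove such a block-wise perturbation bound; the paper explicitly remarks that it could not prove the analogous stability for the plain maximin ordering, which is precisely why Construction~\ref{const-superMulti} exists.
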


\begin{proof}
	The triangle inequality implies that $S_{\rho} \subset \{ (i,j) \mid d(i,j) \leq 2 \rho \}$ and hence the bound on the complexity of \cref{alg-ICholesky} follows from \cref{thm-complexityCholesky}.
	The approximation property of the incomplete factors follows from the last part of \cref{thm-decayCholesky} and a stability result for the incomplete Cholesky factorization with the supernodal multicolor ordering and sparsity pattern detailed in \cref{apsec-superMulti}.
\end{proof}

This allows us to prove the main theorem presented in the introduction.

\begin{proof}[Proof of \cref{thm-decayApproxIChol}]
	\cref{thm-decayApproxIChol} follows from \cref{thm-accuracyICholesky} since rescaling the weights of the measurements to $1$ increases bounds on errors by at most a multiplicative polynomial factor in $N$.
	By increasing the constant, this factor can be subsumed in the $N$-dependence of $\rho$.
\end{proof}

We have now established the results on exponential decay of the Cholesky factors of $\KM$ and the accuracy of \cref{alg-ICholesky}.
Before proceeding to the next section, we will quickly establish a result on low-rank approximation of the Cholesky factors.

\begin{theorem}[Approximate PCA]
	\label{thm-PCA}
	In the setting of \cref{thm-decayCholeskyIntro}, take $\rho = \infty$ and let $L^{(k)}$ be the matrix formed by the leading $k$ columns of the Cholesky factors of $\KM$ in the maximin ordering.
	Let $l[i_k]$ be as in \eqref{eq-dist-point-to-boundary}.
	Then there exists a constant $C$ depending only on $\|\IK\|$, $\norm{ \IK^{-1} }$, $d$, and $s$ such that
	\begin{equation}
		\bignorm{ \KM - L^{(k)} L^{(k),\top} } \leq C l_{i_{k+1}}^{2s - d}
	\end{equation}
\end{theorem}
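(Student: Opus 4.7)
The plan is to identify $\KM - L^{(k)} L^{(k),\top}$ with a Schur complement of $\KM$ in the maximin ordering and to bound its operator norm via the variational characterization \eqref{eqkwhdhkjhd} combined with a polynomial-reproduction argument at the single scale $l_{i_{k+1}}$, mirroring the strategy of Theorem~\ref{thm-specloc} but with unit-weighted Dirac deltas.

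First, Lemma~\ref{lem-blockChol2Scale} applied to the partition $\{i_1,\ldots,i_k\} \sqcup \{i_{k+1},\ldots,i_N\}$ gives
\[
\KM - L^{(k)} L^{(k),\top} = \begin{pmatrix} 0 & 0 \\ 0 & \KM_{22} - \KM_{21} \KM_{11}^{-1} \KM_{12} \end{pmatrix},
\]
so the error's operator norm equals that of the Schur complement. By \eqref{eqkwhdhkjhd}, for each $\x \in \Reals^{N-k}$,
\[
\x^\top \bigl(\KM_{22} - \KM_{21} \KM_{11}^{-1} \KM_{12}\bigr) \x = \min_{\varphi \in \spn\{\delta_{x_{i_m}} : m \leq k\}} \|\phi - \varphi\|_{\ast}^2, \qquad \phi \defeq \sum_{j>k} \x_j \delta_{x_{i_j}},
\]
so it suffices to exhibit, for every $\x$, a $\varphi$ achieving $\|\phi-\varphi\|_{\ast}^2 \leq C\, l_{i_{k+1}}^{2s-d} |\x|^2$.

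Since the maximin rule guarantees that $\{x_{i_m}\}_{m\leq k}$ is an $l_{i_{k+1}}$-covering of $\Omega$, rerunning the Vandermonde construction of Lemma~\ref{lem-geomLemma} at the single scale $l_{i_{k+1}}$ yields, for each $j>k$, weights $w_{i_j,i_m}$ supported on an $s^d$-point subset of $B(x_{i_j}, \rho_{d,s} l_{i_{k+1}}) \cap \{x_{i_m}\}_{m\leq k}$, satisfying $\sum_m |w_{i_j,i_m}| \leq C(d,s,\delta)$ and exactly reproducing all polynomials of degree $\leq s-1$. With unit-weighted Dirac deltas the Sobolev-embedding prefactor in the Bramble--Hilbert estimate (Lemma~\ref{lem-brambleHilbert}, following Lemma~\ref{lem-brambleHilbertCaps} but without the $h^{ld/2}$-scaling) becomes $l_{i_{k+1}}^{-d/2}$, so
\[
\bigl|\bigl(\delta_{x_{i_j}} - \textstyle\sum_m w_{i_j,i_m} \delta_{x_{i_m}}\bigr)(v)\bigr| \leq C \, l_{i_{k+1}}^{s-d/2} \, \|\rD^s v\|_{L^2(B(x_{i_j}, \rho l_{i_{k+1}}))}
\]
for every $v \in H^s_0(\Omega)$. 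Setting $\varphi \defeq \sum_{j>k} \x_j \sum_m w_{i_j,i_m} \delta_{x_{i_m}}$ and substituting into the Fenchel identity \eqref{eqn-duality} exactly as in the proof of Lemma~\ref{lem-condForW}, Cauchy--Schwarz together with a packing bound on $\{B(x_{i_j}, \rho l_{i_{k+1}})\}_{j>k}$ then produces $\|\phi-\varphi\|_{\ast}^2 \leq \|\IK^{-1}\|\, C\, l_{i_{k+1}}^{2s-d}|\x|^2$; the $l_{i_{k+1}}^{-d}$ factor comes from the packing density and the $l_{i_{k+1}}^{2s}$ from the squared Bramble--Hilbert estimate. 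Taking the supremum over unit $\x$ finishes the proof.

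The main obstacle is this packing/summation step, since the points $\{x_{i_j}\}_{j>k}$ live at arbitrarily fine scales $l_{i_j} \leq l_{i_{k+1}}$ and the uniform $\delta l_{i_{k+1}}$-pairwise separation used in the naive packing count only holds at the coarsest residual scale. My plan to resolve this is to decompose the residual indices into geometric levels $\{j>k : h^{K+1}l_{i_{k+1}} \leq l_{i_j} < h^{K}l_{i_{k+1}}\}$ (as in Figure~\ref{fig-ordering}) and to run a multiscale telescoping: at level $K$, approximate $\delta_{x_{i_j}}$ locally by a polynomial-reproducing combination of slightly coarser residual deltas (feasible because those coarser points are $h^{K-1}l_{i_{k+1}}$-dense near $x_{i_j}$), then iterate level-by-level until landing in $\spn\{\delta_{x_{i_m}}\}_{m\leq k}$. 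Summing the telescoped Bramble--Hilbert contributions yields a geometric series in $h^{(2s-d)K}$ which converges because $s>d/2$ and whose total is controlled by the coarsest term $l_{i_{k+1}}^{2s-d}$; the resulting constant depends only on $\|\IK\|,\|\IK^{-1}\|,d,s,h$, and $\delta$.
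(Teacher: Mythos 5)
Your opening moves --- identifying $\KM - L^{(k)}L^{(k),\top}$ with the Schur complement via Lemma~\ref{lem-blockChol2Scale} and passing to the variational characterization \eqref{eqkwhdhkjhd} --- coincide with the paper's, and you have correctly diagnosed why the naive single-scale construction fails: the residual points $\{x_{i_j}\}_{j>k}$ are only separated at their own (arbitrarily fine) scales, so the overlap count for the balls $B(x_{i_j},\rho l_{i_{k+1}})$ is of order $(l_{i_{k+1}}/\min_{j>k} l[i_j])^d$, not a constant. The gap is that your telescoping does not close this. Each quasi-interpolation step from level $K$ to level $K-1$ replaces the coefficient vector $\x_K$ by $w^{(K),\top}\x_K$, and the Vandermonde construction of Lemma~\ref{lem-geomLemma} only controls row sums of $w^{(K)}$; the induced $\ell^2\to\ell^2$ norm is a constant $C_w$ that is in general $>1$. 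After composing $\sim q\sim\log N$ levels the coefficients are amplified by $C_w^{q}=\poly(N)$, and your geometric series in $h^{(2s-d)K}$ accounts only for the Bramble--Hilbert factors, not for this amplification: the residual created when pushing level-$K$ mass through level $m$ is of order $(h^m l_{i_{k+1}})^{2s-d}C_w^{2(K-m)}\absval{\x_K}^2$, and the sum over $m$ is dominated by $C_w^{2K}$ rather than by the coarsest term. A sanity check shows the obstruction is not merely technical: for $d=1$, $s=1$, $k=1$ and equispaced points, applying the Schur complement to the normalized all-ones vector gives roughly $N\operatorname{Var}[\int_\Omega\xi\,\rd x\mid\xi(x_{i_1})]\gtrsim N$, so no argument can produce a bound free of a factor counting the residual points per $l_{i_{k+1}}$-ball; the estimate requires either the $\norm{\KM}$ normalization of Theorem~\ref{thm-rank-k-approximation} or the level-dependent weights $\phi_i=h^{ld/2}\boldsymbol{\delta}(\quark-x_i)$ of Example~\ref{examp-subsamp}.

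The paper avoids all of this by not redoing any approximation theory. It embeds the maximin ordering into the implicit dyadic hierarchy of Figure~\ref{fig-ordering}, uses Schur-complement monotonicity (the variational property once more) to compare conditioning on $\{i_1,\dots,i_k\}$ with conditioning on a complete level $\I^{(p)}$ with $2^{-p}$ comparable to $l[i_{k+1}]/l[1]$, and then invokes Theorem~\ref{thm-specloc} --- that is, inequality \eqref{eqn-speclocup}, already established through Lemmas~\ref{lem-condForW}--\ref{lem-upperBound} --- in which each $\J^{(l)}\times\J^{(l)}$ block of the Schur complement only ever involves same-scale, $\delta h^l$-separated points, so the packing constant is uniform and the level-dependent scaling of the measurement functions absorbs the fine-scale point counts. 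If you want a complete argument, reduce to that result rather than re-deriving a multiscale estimate by composing quasi-interpolants; as written, the convergence of your telescoped series is exactly the unproven point.
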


\begin{proof}
	Write $\I = \I_{1} \cup \I_{2}$ with $I_1 \defeq \left\{i_{1}, \dots , i_{k}\right\}$ and
	$\I_{2} \defeq \I \setminus \I_{1}$.
	By \cref{lem-blockChol2Scale}, the approximation error made by keeping only the first $k$ columns of the Cholesky factorization is equal to the Schur complement $\KM_{2,2} - \KM_{2,1} \KM_{1,1}^{-1} \KM_{1,2}$.
	Consider the implicit hierarchy of the maximin ordering as in \cref{fig-ordering} with $h=1/2$ and let $p\in \{1,\ldots,q\}$ be such that $2^{-p}\leq l[k]/l[1]\leq 2^{-p+1}$.
	Write $\I = \I_{a} \cup \I_{b}$ with $I_a \defeq \I^{(p)}$ and $\I_{b} \defeq \I \setminus \I^{(p)}$.
	The variational property \eqref{eqkwhdhkjhd} implies that $\KM_{2,2} - \KM_{2,1} \KM_{1,1}^{-1} \KM_{1,2} \leq \KM_{b,b} - \KM_{b,a} \KM_{a,a}^{-1} \KM_{a,b}$.
	\cref{thm-specloc} (with $h=1/2$ obtained from the implicit hierarchy of \cref{fig-ordering}) implies that $\KM_{b,b} - \KM_{b,a} \KM_{a,a}^{-1} \KM_{a,b} \leq C (\frac{1}{2})^{2s (p-1)-d}$ (the extra multiplicative $(\frac{1}{2})^{-d}$ term arises because the measurement functions are scaled by $h^{kd/2}$ in \cref{examp-subsamp} with $h = \tfrac{1}{2}$).
	We conclude the proof using $2^{-p-1}\leq l[k+1]/l[1]\leq 2^{-p+1}$.
\end{proof}

\section{Extensions and byproducts}

\subsection{The cases \texorpdfstring{$s \leq d/2 \text{ or }s \notin \Naturals$}{s is small or not an integer}}

\cref{thm-decayCholeskyIntro} requires that $s>d/2$ to ensure that the elements of $H^s(\Omega)$ are continuous (by the Sobolev embedding theorem) and that pointwise evaluations of the Green's function are well defined.
The accuracy estimate of \cref{thm-decayCholeskyIntro} can be extended to $s\leq d/2$ by replacing pointwise evaluations of the Green's function by local averages and using variants of the Haar pre-wavelets of \cref{examp-average} instead of variants of the subsampled Diracs of \cref{examp-subsamp} to decompose $\Theta$ as in \eqref{eqn-gambletFact}.
Numerical experiments also suggest that the exponential decay of Cholesky factors still holds for $s\leq d/2$ if the local averages of \cref{examp-average} are sub-sampled as in \cref{examp-subsamp}, whereas the low-rank approximation becomes sub-optimal.
As illustrated in \cref{tab-maternFrac3d}, for Mat{\'e}rn kernels we observe no difference (in accuracy vs.\ complexity) between integer and non-integer values of $s$.

\subsection{Sparse factorization of \texorpdfstring{$\IKM = \KM^{-1}$}.}
\label{sssec-spFacIKM}

Let $L L^{\top} = \KM$ be the Cholesky factorization of the covariance matrix $\KM$.
Writing $\rP$ for the order-reversing permutation,
\begin{equation}
	\rP \KM^{-1} \rP = \rP L^{-\top} L^{-1} \rP = \bigl( \rP L^{-\top} \rP \bigr) \bigl( \rP L^{-1} \rP \bigr)\,.
\end{equation}
Since $\rP L^{-\top} \rP$ is lower triangular, it is the Cholesky factor of $\KM^{-1}$ in the reverse elimination ordering.
Furthermore, since $L^{-\top} = \IKM L$ and both $\IKM$ and $L$ are exponentially decaying, the Cholesky factors of $\IKM$ are also exponentially decaying if the Gaussian elimination is performed using the reverse of \cref{ssec-simpleAlg}'s ordering.
In fact, the following, stronger, theorem holds:

\begin{theorem}
	\label{thmfytfy5}
	In the setting of \cref{thm-decayCholeskyIntro}, let
	\begin{align}
		\mathring{S}_{\rho} &\defeq \left\{ ( i, j ) \in \I \times \I \,\middle|\, \dist \left( \supp( \phi_{i} ), \supp( \phi_{j} ) \right) \leq \rho \min (l[i],l[j] ) \right\},
	\end{align}
	let $L$ be the Cholesky factor of $\IKM$ in the reverse ordering, and define
	\begin{align}
		L^{\mathring{S}_\rho}_{ij} & \defeq
		\begin{cases}
			L_{ij}, & \text{ for } (i,j) \in \mathring{S}_{\rho} , \\
			0, & \text{ else.}
		\end{cases}
	\end{align}
	Then there exists a constant $C$ depending only on
	$d$, $\Omega$, $s$, $\|\IK\|$, $\|\IK^{-1}\|$ and $\delta$ such that for $\rho \geq C \log(N/\epsilon)$, we have $\bigl\|P\IKM P - L^{\mathring{S}_\rho} L^{\mathring{S}_\rho, \top} \bigr\|_{\FRO} \leq \epsilon$.
\end{theorem}

Using this result and the fact that $\# \mathring{S}_\rho$ has $\BigO (\rho^d +1)$ nonzero entries per column, one can prove that using \cref{alg-ICholesky} with a supernodal ordering as described in \cref{const-superMulti} yields an $\epsilon$-approximate Cholesky factorization of $\IKM$ in computational complexity $\BigO (N \log ( N/\epsilon )^{2d} )$ in time and $\BigO (N \log ( N/\epsilon )^{d} )$ in space.
The matrix $\IKM$ is \emph{essentially} a discretized elliptic partial differential operator, and analogous results can be obtained in the setting where $\IKM$ is obtained as a discretization of $\IK$ with regular finite elements and and $\KM$ is the inverse of that discretized operator.
Numerical experiments suggest that exponential decay properties also hold for discretized second-order elliptic equations in two or three dimensions (where $s = 1 \leq d/2$) when using subsampling as in \cref{examp-subsamp};
see \cite[Section 3.1]{schroeder1978fast} for a special case of this result on regular meshes.
Thus, by computing the incomplete Cholesky factorization, we obtain a direct solver for general elliptic PDEs with complexity $\BigO ( N \log ( N/\epsilon )^{2d} )$ in time and $\BigO (N \log ( N/\epsilon )^{d} )$ in space.
To the best of our knowledge, this is the best asymptotic complexity reported for such a solver in the literature (for elliptic PDEs with rough coefficients and rigorous a priori estimates of complexity vs.\ accuracy).
It is not surprising that we obtain a fast solver for elliptic PDEs because our work is based on the fast solvers introduced in \cite{owhadi2015multigrid,owhadi2017universal}, which in turn can be shown to be a block-wise version of the Cholesky factorization in nonstandard form introduced by \cite{gines1998lu}, where the inverses of diagonal blocks are computed using iterative methods.
By instead applying the Cholesky factorization in nonstandard form, the logarithmic factor in the complexity of the gamblet transform can be improved.
However, the error estimates of \cite{owhadi2015multigrid} and \cite{owhadi2017universal} improve significantly upon those in \cite{gines1998lu} by establishing that exponential accuracy can be obtained with a finite number of vanishing moments even for rough coefficients.
The present work further extends the results on Cholesky factorization to the setting of multiresolution schemes based on subsampling (without any vanishing moments).
For such multiresolution basis the nonstandard form just reduces to computing an ordinary incomplete Cholesky factorization with the smaller sparsity pattern $\mathring{S}_\rho$, thus greatly simplifying the implementation.
We note that by using direct inversion methods similar to \cite{lin2011algorithm} it would be possible in principle to directly compute $\epsilon$-approximations of the Cholesky factors of $\KM^{-1}$ from $\BigO ( N \log(N/\epsilon)^d )$ entries of $\KM$ at computational cost of $\BigO ( N \log(N/\epsilon)^{2d} )$, but we defer a more detailed investigation to future work.

\section{Comparison to related work}
\subsection{\texorpdfstring{$\mathcal{H}$}{H}-matrix approximations from sparse Cholesky factorization}

The $\mathcal{H}$-matrix data structure \cite{hackbusch1999sparse} uses low-rank
approximations for blocks $\KM_{\bI \bJ}$ ($\bI, \bJ \subset \I$) fulfilling the admissibility condition
\begin{equation}
	\min \bigl( \diam \{ x_i \}_{i \in \bI}, \diam \{ x_i \}_{i \in \bJ} \bigr) \leq \eta \dist \bigl( \{ x_i \}_{i \in \bI}, \{ x_i \}_{i \in \bJ} \bigr).
\end{equation}
The approximation property of the incomplete Cholesky factorization in maximin ordering (\cref{thm-decayCholeskyIntro}) directly implies bounds on the spectral decay of admissible blocks in the $\H$-matrix framework, as can be seen from the representation
\begin{equation}
	\KM = L L^{\top} \iff \KM = \sum_{i = 1}^N L_{:i} \otimes L_{:i}
\end{equation}
of the Cholesky factorization of $\KM$.
If $L$ is sparse according to the sparsity pattern obtained in \cref{ssec-simpleAlg} then $L_{:i} \otimes L_{:i}$ can contribute to the rank of the sub-matrix $\KM_{\bI\bJ}$ only if
{\small
\begin{equation}
	\label{eqkjhwkdodhpdj}
	2 \rho l[i] \geq \dist \left( \{ x_j \}_{j \in \bI} , \{ x_j \}_{j \in \bJ} \right) \text{ and } \max\left( \dist\left(x_i, \{ x_j \}_{j \in \bI} \right), \dist\left(x_i, \{ x_j \}_{j \in \bJ} \right) \right) \leq \rho l[i].
\end{equation}}
The number of $i \in \I$ satisfying \eqref{eqkjhwkdodhpdj} is at most $C(\eta, d) \rho^d \log N$, which recovers (up to constants) the same rank bounds as obtained in \cite{bebendorf2003existence} for second-order elliptic PDEs with rough coefficients.
However the converse is not true and most hierarchical matrix representations can not be written in terms of a sparse Cholesky factorization of $\KM$.
For example, adding a diagonal matrix to $\KM$ does not affect the ranks of admissible blocks, but it diminishes the screening effect and thus the approximation property of the incomplete Cholesky factorization as obtained in \cref{ssec-simpleAlg} (see \cref{ssec-nugget}).

\subsection{Comparison to Cholesky factorization in wavelet bases}

\cite{gines1998lu} compute sparse Cholesky factorizations of (discretized) differential/integral operators represented in a wavelet basis.
Using a \emph{fine-to-coarse} elimination ordering, they establish that the resulting Cholesky factors decay polynomially with an exponent matching the number of vanishing moments of the underlying wavelet basis.

For differential operators, this coincides algorithmically with the Cholesky factorization described in \cref{sssec-spFacIKM} and the gamblet transform of \cite{owhadi2015multigrid} and \cite{owhadi2017universal}, whose estimates guarantee exponential decay.
In particular \cite{gines1998lu} numerically observe a uniform bound on $\cond(\IKMC^{(k)})$ which they relate to the approximate sparsity of their proposed Cholesky factorization.

For integral operators, \cite{gines1998lu} use a \emph{fine-to-coarse} ordering
and we use a \emph{coarse-to-fine} ordering.
While their results rely on the approximate sparsity of the integral operator represented in the wavelet basis, our approximation remains accurate for multiresolution bases (e.g.\ the maximin ordering in \cref{ssec-simpleAlg}) in which $\KM$ is dense, which avoids the $\BigO(N^2)$ complexity of a basis transform (or the implementation of adaptive quadrature rules to mitigate this cost).

\subsection{Vanishing moments}

Let $\mathcal{P}^{s-1}(\tau)$ denote the set of polynomials of order at most $s-1$ that are supported on $\tau \subset \Omega$.
\cite{owhadi2015multigrid} and \cite{owhadi2017universal} show that \eqref{eqn-speclocup} and \eqref{eqn-specloclow} hold when $\IK$ is an elliptic partial differential operator of order $s$ (as described in \cref{sssec-classElliptic}) and the measurements are local polynomials of order up to $s-1$ (i.e.\ $\phi_{i,\alpha}=1_{\tau_i} p_\alpha$ with $p_\alpha \in \mathcal{P}^{s-1}(\tau_i)$).
Using these $\phi_{i,\alpha}$ as measurements is equivalent to using wavelets $\phi_i$ satisfying the vanishing moment condition
\begin{equation}
	\dualprod{ \phi_i }{ p } = 0 \quad \text{for all $i \in \I$, $p \in \mathcal{P}^{s-1}$.}
\end{equation}
The requirement for vanishing moments has three important consequences.
First, it requires that the order of the operator be known a priori, so that a suitable number of vanishing moments can be ensured.
Second, ensuring a suitable number of vanishing moments greatly increases the complexity of the implementation.
Third, in order to provide vanishing moments, the measurements $\phi_i$, $i \in \J^{(k)}$, have to be obtained from weighted averages over domains of size of order $h^k$.
Therefore, even computing the first entry of the matrix $\KM$ in the multiresolution basis will have complexity $\BigO(N^{2})$, since it requires taking an average over almost all of $\I \times \I$.
One of the main analytical result of this paper is to show that these vanishing moment conditions and local averages are not necessary for higher order operators (which, in particular, enables the generalization of the gamblet transform to hierarchies of measurements defined as in \cref{examp-subsamp,examp-average}).

\subsection{Comparison to Multiresolution Approximation (M-RA)}

In spatial statistics, the method most closely related to ours is the M-RA of \cite{katzfuss2016multi} where a Gaussian process is approximated by a sum, at different scales, of predictive processes described in \cite{banerjee2008gaussian}.
Following the intuition of the screening effect, these processes are assumed to be block-independent with respect to a domain decomposition at the respective scale, allowing for near-linear computational complexity.
Although the specific multiresolution scheme and its accuracy are a function of the specific choice of basis functions and of the \emph{knots} to be conditioned upon at each scale, no systematic strategy and no theoretical error bounds are provided for best accuracy.
We suspect that no scheme relying on block-sparsity assumptions can also guarantee exponential accuracy in near-linear computational complexity, though we note that the \emph{taper-M-RA} introduced by \cite{katzfuss2017multi}, independently of and after the first version of the present article, does not impose conditional block-independence and could therefore be made exponentially accurate.
While our present work and that of \cite{katzfuss2016multi} are both motivated by a hierarchical exploitation of the screening effect, we identify a concrete and simple algorithm that has a guaranteed exponential accuracy for a wide range of kernel matrices.

\section{Conclusions}
\label{sec:conclusions}
We have shown that the dense covariance matrices obtained from a wide range of covariance functions associated to smooth Gaussian processes have almost sparse Cholesky factors.
Using this property, these matrices can be inverted in near-linear computational complexity just by applying zero fill-in incomplete Cholesky factorization with an a priori ordering and sparsity pattern.
Sparse Cholesky factorization of sparse matrices is by now a classical field, but we are not aware of prior work on the sparse factorization of dense matrices, other than for the purpose of preconditioning.
While our algorithm is subject to the curse of high dimensionality like other hierarchy-based methods, it is able to exploit low dimensionality in the data without any user intervention.
Our results are motivated by the probabilistic interpretation of Cholesky factorization and proved rigorously by using and generalizing recent results on operator-adapted wavelets.
By reversing the elimination order, we also obtain a fast direct solver for elliptic PDEs whose rigorous a priori accuracy-vs.-complexity estimates advance the current state of the art for general elliptic PDEs.

\section*{Acknowledgments}
FS and HO gratefully acknowledge support by the Air Force Office of Scientific Research and the DARPA EQUiPS Program (award number FA9550-16-1-0054, Computational Information Games) and the Air Force Office of Scientific Research  (award number FA9550-18-1-0271, Games for Computation and Learning).
TJS has been supported by the Freie Universit{\"a}t Berlin within the Excellence Initiative of the German Research Foundation.
This collaboration has been facilitated by the Statistical and Applied Mathematical Sciences Institute through the National Science Foundation award number DMS-1127914.
Any opinions, findings, and conclusions or recommendations expressed in this material are those of the authors and do not necessarily reflect the views of the above-named institutes and agencies.
We would like to thank C.~Oates and P.~Schr{\"o}der for helpful discussions, and C.~Scovel for many helpful comments and suggestions.

\bibliographystyle{siamplain}
\bibliography{references}
\appendix

\section{Correctness and computational complexity of the maximum-minimum distance ordering}
\label{apsec-compSortSparse}

Recall the variables used in \cref{alg-sortSparse}:
the integer array $P$ contains the minimax ordering;
the real array $l[i]$ contains the distances of each point to the points that are already included in the minimax ordering;
and the arrays of integer arrays $c$ and $p$ will contain the entries of the sparsity pattern in the sense that
\begin{equation}
	( j \in c[i] \text{ and } i \in p[i] ) \iff \disttt(i,j) \leq \rho l[i].
\end{equation}
We begin by showing correctness of the algorithm.

\begin{theorem}
	\label{thm-corrSortSparse}
	The ordering and sparsity pattern produced by \cref{alg-sortSparse} coincide with those described in \cref{sec:introduction}.
	Furthermore, whenever the while-loop in \cref{line-whileSortSparse} is entered,
	\begin{enumerate}[label=(\arabic*)]
		\item for all $i \in P$, $l[i]$ is as defined in \cref{sec:introduction};
		\item the array $c[P[1]]$ contains all $1 \leq j \leq N$ and for all other $i$ in $P$, $c[i]$ contains exactly those $1\leq j \leq N$ that satisfy $\disttt( i, j ) \leq \rho l[i]$;
		and
		\item for all $1 \leq j \leq N$, $p[j]$ consists of $P[1]$ and all those $i \in P$ that satisfy $\disttt( i, j ) \leq \rho l[i]$.
	\end{enumerate}
\end{theorem}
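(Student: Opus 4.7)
My plan is to establish the three invariants (1)--(3) simultaneously by induction on the iterations of the \textbf{while}-loop in Line~\ref{line-whileSortSparse}, augmented with a fourth heap invariant: for every $j\notin P$, the stored value $H[j]$ is an upper bound on $\dist(x_j,\{x_i\colon i\in P\}\cup\partial\Omega)$, with equality whenever $j$ realises the current maximum of $H$. Because the mutable maximal binary heap returns its largest element on $\mathtt{pop}$, this last invariant guarantees that the value $l$ extracted at the top of each iteration equals the true maximin distance of $i$, so $l[i]$ is stored correctly in~(1) and the index pushed into $P$ coincides with the next element of the maximin ordering of Section~\ref{sssec-chooseOrder}.

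For the base case I would check that after the explicit initialisation block for $P[1]$, all four invariants hold: $l[P[1]]=\disttt_{\partial\Omega}(P[1])$ is correct by definition since $P[1]$ has no predecessors; $c[P[1]]$ is populated with every $j\in\{1,\dots,N\}$ and $P[1]$ is pushed into every $p[j]$, which is invariants~(2) and~(3); and $\mathtt{decrease!}$ is called with each $\disttt(P[1],j)$, so the heap invariant holds.

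The main step is the inductive verification that the four invariants are preserved when a newly popped index $i$ is processed. The core geometric claim driving the inductive step is the \emph{parent selection lemma}: the feasibility set in the argmin of Line~\ref{line-kArgminSortSparse} is non-empty, and for the chosen $k$ and every $j$ with $\disttt(x_i,x_j)\leq\rho\, l[i]$, one has $j\in c[k]$ and $\disttt(j,k)\leq\disttt(i,k)+\rho\, l[i]$. Setting $L\defeq\rho\, l[i]/(\rho-1)$, a covering argument based on the maximin ordering shows that the set $\{\tilde k\in P\colon l[\tilde k]\geq L\}$ is an $L$-net of the yet-to-be-processed points, so it contains some $\tilde k$ with $\disttt(x_i,x_{\tilde k})<L$; by~(3) this $\tilde k$ lies in $p[i]$, and the identity $L+\rho\, l[i]=\rho L$ yields the admissibility condition $\disttt(i,\tilde k)+\rho\, l[i]\leq\rho\, l[\tilde k]$. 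For the chosen argmin $k$, the triangle inequality $\disttt(j,k)\leq\disttt(j,i)+\disttt(i,k)\leq\rho\, l[i]+\disttt(i,k)\leq\rho\, l[k]$ together with invariant~(2) gives $j\in c[k]$, while the same bound shows that $j$ is visited by the inner \textbf{for}-loop in Line~\ref{line-forSortSparse}.

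Granted the parent selection lemma, what remains is bookkeeping. Every $j$ visited by the inner loop has its heap value decreased to $\disttt(i,j)$, which preserves the heap invariant; those $j$ with $\disttt(i,j)\leq\rho\, l[i]$ are added to $c[i]$ and $i$ to $p[j]$, restoring~(2) and~(3) for the new index; and the final sorting step does not touch any invariant. The aggregation loop that terminates the algorithm reads off the sparsity pattern $S_\rho$ from the $c[i]$ arrays, giving precisely the pattern described in Section~\ref{sssec-chooseOrder}. I expect the main obstacle to be the parent selection lemma, and in particular the covering step: it requires identifying the correct threshold $L$ and rigorously connecting the $L$-net property of early predecessors in the maximin ordering to the admissibility condition used in the algorithm. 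The rest of the argument is routine once this geometric fact is in place.
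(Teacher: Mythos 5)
Your core induction is the paper's argument: by the constraint in Line~\ref{line-kArgminSortSparse} the chosen parent $k$ satisfies $\disttt(i,k)+\rho l[i]\leq \rho l[k]$, so the triangle inequality together with the inductive description of $c[k]$ shows that every $j$ with $\disttt(i,j)\leq\rho l[i]$ is visited by the loop in Line~\ref{line-forSortSparse}, while every skipped $j$ has $\disttt(i,j)>\rho l[i]\geq l[i]\geq$ its current maximin distance (because $i$ was popped as the maximum), so the skipped \texttt{decrease!} calls and if-tests are no-ops. That part is correct and coincides with the paper's proof. One small remark: state the heap invariant as exact equality of $H[j]$ with $\dist(x_j,\{x_m\}_{m\in P}\cup\partial\Omega)$ for every unprocessed $j$, rather than ``upper bound with equality at the maximum''; the weaker form is not obviously inductive (an overestimate at a currently non-maximal $j$ could later be popped as the maximum), whereas exact equality is preserved precisely because every skipped decrease would not have changed the value.

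The genuine gap is in your parent selection lemma, i.e.\ the non-emptiness of the feasible set of the argmin. The maximin property only guarantees that every unprocessed point lies within $l[i_{m+1}]$ of $\{x_{i_1},\dots,x_{i_m}\}\cup\partial\Omega$, not of the processed points alone. Hence for $\partial\Omega\neq\emptyset$ the set $\{\tilde k\in P: l[\tilde k]\geq L\}$ with $L\defeq\rho l[i]/(\rho-1)$ need not be an $L$-net of the remaining points: a point whose maximin distance is realized by the boundary can be far from every processed point of scale at least $L$, and for early indices there may be no processed point with $l\geq L$ at all (e.g.\ in a long thin domain, where all $l$-values are bounded by the domain's width, the feasible set is genuinely empty for $\rho=2$). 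Your identity $L+\rho l[i]=\rho L$ is exactly the clean argument for $\Omega=\Reals^d$ --- it is Theorem~\ref{thmjhgfhgfty} with $L=2l[i]$ and $\rho\geq2$ --- but the paper deliberately does not prove non-emptiness inside this correctness theorem; it defers it to Theorem~\ref{thm-compSortSparseOmega}, where the interior cone condition of the Lipschitz boundary is used to produce, for all but a bounded number of initial indices, a predecessor $k$ with $l[k]\geq 2l[i]$ and $\disttt(i,k)\leq Cl[i]$, at the price of requiring $\rho>C(d,\Omega,\delta)$. So either restrict your covering step to the whole-space case or import that cone-condition argument; as written, the lemma is false for general bounded domains.
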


\begin{proof}
	It is easy to see that if the for-loop in \cref{line-forSortSparse} were running over all $1 \leq j \leq N$, then the algorithm would yield the correct result.
	We claim that the restriction of the running variable to $\{ j \in c[k] \mid \disttt(j,k) \leq \disttt(i,k) + \rho l[i] \}$ does not change the result of the algorithm.
	The proof will proceed by induction.
	Let us assume that the \cref{alg-sortSparse} has been correct up to a given time that \cref{line-forSortSparse} is visited.
	Then, by choice of $k$ and the triangle inequality, any $j$ that is omitted by the for-loop must satisfy $\disttt(i,j) > \rho l[i]$.
	Since $i$ was chosen to have maximal minimal distance among the points remaining in $H$, and $\rho > 1$, this means that adding $i$ to the maximin ordering can not decrease the maximal minimal distance of $j$.
	Thus, skipping the $\mathtt{decrease!}$ operation does not change the choice of $P$ and $l$.
	Similarly, $\disttt(i,j) > \rho l[i]$ implies that the if-statement inside of the for-loop is false, meaning that skipping $j$ does not change the update of $c$ or $p$, from which the result follows.
\end{proof}

Having established \cref{thm-corrSortSparse}, we will now use $\prec$, $l[i]$, and $i_k$ to refer to the maximin ordering, the length-scale of the point with index $i$, and the $k$\textsuperscript{th} index in the maximin ordering.
We will now bound the complexity of \cref{alg-sortSparse} in the setting of \cref{thm-decayApproxIChol}.

\begin{theorem}
	\label{thm-compSortSparseOmega}
	In the setting of \cref{thm-decayApproxIChol}, there exists a constant depending $C$ depending only on $d$, $\Omega$, and $\delta$, such that, for $\rho > C$, \cref{alg-sortSparse} has computational complexity
	$C \rho^d N \log N$ in space and $C N \bigl( \rho^d \log^{2} N + C_{\disttt_{\partial \Omega}} \bigr)$ in time, where $C_{\disttt_{\partial \Omega}}$ is the computational complexity of invoking the function $\disttt_{\partial \Omega}$.
\end{theorem}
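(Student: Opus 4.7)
The plan is to exploit the homogeneity of Example~\ref{examp-subsamp}, which yields $q = O(\log N)$ scales in the maximin ordering with minimum pairwise spacing $\delta h^m$ at scale $m$. A standard packing argument then shows that any ball of radius $r > 0$ contains at most $C(d,\delta) (r/h^m)^d$ points at scale $m$. I would first use this to bound the total size of the neighbor lists $c[i]$ and $p[j]$ (which by Theorem~\ref{thm-corrSortSparse} encode the sparsity pattern $S_\rho$) by $|S_\rho| \leq C \rho^d N \log N$, summing $C\rho^d$ points per scale over the $O(\log N)$ relevant scales. Since the heap and the auxiliary arrays $P$ and $l$ contribute only $O(N)$, this yields the stated $C \rho^d N \log N$ space bound.

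Next, I would collect the routine contributions to the time complexity. The $N$ initial calls to $\disttt_{\partial \Omega}$ contribute exactly $N \cdot C_{\disttt_{\partial \Omega}}$. The initial $\mathtt{heapSort}!$ is $O(N \log N)$, and each subsequent $\mathtt{pop}$ or $\mathtt{decrease!}$ costs $O(\log N)$; since the number of such calls is bounded by the total number of insertions into $c$-arrays, $|S_\rho| + O(N) = O(\rho^d N \log N)$, the heap work totals $O(\rho^d N \log^2 N)$. The $\mathtt{sort!}$ step in Line~\ref{line-sort!SortSparse} contributes $\sum_i |c[i]| \log |c[i]| \leq \log(\rho^d N) \cdot |S_\rho| = O(\rho^d N \log^2 N)$.

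The hard part will be bounding the cost of the parent search on Line~\ref{line-kArgminSortSparse} and of the inner for-loop on Line~\ref{line-forSortSparse}, since a priori $|c[k]|$ can be as large as $N$ for the earliest indices. The key observation is that $c[k]$ is maintained in sorted order by $\disttt(\cdot, k)$, so the for-loop terminates as soon as the running distance exceeds the cutoff $\disttt(i,k) + \rho l[i]$. Every $j$ actually visited then lies in the ball of radius $\disttt(i,k) + \rho l[i] \leq \rho l[k]$ around $k$ at a scale between those of $k$ and $i$, and the per-scale packing bound together with the $O(\log N)$ relevant scales yields at most $C \rho^d \log N$ visited candidates per while-iteration; the same reasoning bounds $|p[i]| \leq C \rho^d \log N$, which controls Line~\ref{line-kArgminSortSparse}. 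One technical loose end is that the $\argmin$ in Line~\ref{line-kArgminSortSparse} must be nonempty, which holds for $\rho \geq C(d, \Omega, \delta)$ because homogeneity guarantees a coarser-scale neighbor within the required radius. Multiplying $O(\rho^d \log N)$ work per while-iteration by $N$ iterations and by the $O(\log N)$ heap update factor yields $O(\rho^d N \log^2 N)$, which combined with the boundary term produces the claimed $C N (\rho^d \log^2 N + C_{\disttt_{\partial \Omega}})$ time bound.
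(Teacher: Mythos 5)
Your overall bookkeeping (the separate $N\cdot C_{\disttt_{\partial \Omega}}$ term, the $\BigO(\log N)$ heap cost per operation, the packing bounds for the space estimate and for $|p[i]|$, and the need for $\rho$ larger than a constant so that the $\argmin$ in Line~\ref{line-kArgminSortSparse} is nonempty) matches the paper's proof, but the central counting step is wrong. You claim that every $j$ visited in the for-loop of Line~\ref{line-forSortSparse} lies ``at a scale between those of $k$ and $i$'', so that per-scale packing gives at most $C\rho^{d}\log N$ visited candidates per while-iteration. This is false: $c[k]$ was populated when $k$ was popped and contains \emph{all} then-unprocessed indices within distance $\rho\, l[k]$ of $x_k$, i.e.\ points at every scale finer than $k$'s, down to the finest. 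The cutoff $\disttt(i,k)+\rho\, l[i]$ only confines the visited candidates to a ball of radius $\BigO((C+\rho)\, l[i])$ around $x_i$; since the global minimum spacing is of order $N^{-1/d}$, such a ball can contain on the order of $\rho^{d}\, l[i]^{d} N$ candidates. Concretely, when $i$ is the second point of the maximin ordering, $k$ is the first point, $c[k]$ contains all $N$ indices and the cutoff radius is comparable to $\diam(\Omega)$, so the loop visits essentially all $N$ points --- far more than $\rho^{d}\log N$.

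What actually holds, and what the paper proves, is a per-index bound combined with an amortized (harmonic-sum) argument rather than a uniform per-iteration bound: using the interior cone condition of the Lipschitz domain $\Omega$ (your ``loose end'') one gets a parent $k$ with $\disttt(i,k)\leq C\, l[i]$, hence the visits for index $i$ are contained in $\{\, j : \disttt(i,j)\leq 2(C+\rho)\, l[i] \,\}$; for the $m$\textsuperscript{th} maximin point this set has at most $C\rho^{d} N/m$ elements by sphere packing (since $l[i_m]\leq C m^{-1/d}$ while all $N$ points are $\gtrsim \delta N^{-1/d}$ separated). Summing $\sum_{m\leq N} N/m = \BigO(N\log N)$ gives $\BigO(\rho^{d} N\log N)$ total for-loop visits, and multiplying by the $\BigO(\log N)$ cost of each $\mathtt{decrease!}$ yields the stated $C N\bigl(\rho^{d}\log^{2} N + C_{\disttt_{\partial \Omega}}\bigr)$ time bound (and the space bound, since each visit adds $\BigO(1)$ storage). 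Without this amortized accounting your uniform bound of $C\rho^{d}\log N$ visits per iteration is simply untrue, so the proof as written has a genuine gap at precisely the step you identified as the hard part.
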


\begin{proof}
	As a first step, we will upper-bound the number of iterations of the for-loop in \cref{line-forSortSparse} throughout the algorithm.
	To simplify the notation, $C$ will denote a positive constant that depends on $d$, $\Omega$ and $\delta$ that may change throughout the proof.
	We claim that there exists $1 \leq k_{\min} \leq N$ depending only on $d$, $\Omega$, and $\delta$, such that, for all $i \succ i_{k_{\min}}$, by the time it appears in the while-loop at \cref{line-whileSortSparse}, there exists an index $k \prec i$ such that $l[k] \geq 2 l[j]$ and $\disttt(i,k) \leq C l = C l[i]$.
	Indeed, since $\Omega$ has Lipschitz boundary, it satisfies an interior cone condition \cite{adams2003sobolev} in the sense that there exist $\theta \in (0,2\pi]$ and $r > 0$ such that every point $x \in \Omega$ is the tip of a spherical cone within $\Omega$ with opening angle $\theta$ and radius $r$.
	This spherical cone contains a ball with radius $r_{\gamma}$, which depends only on $\theta$ and $r$.
	Let $\gamma_i$ be such a cone with tip $x_i$.
	By a scaling argument, the spherical cone $\gamma_i \cap B_{\tilde{r}}(x_i)$ then contains a ball of radius $r_{\gamma} ( \tilde{r} / r)$, for all $\tilde{r} < r$.
	For any $i \in \I$ and any ball $B \subset \Omega$ with radius at least $4 l[i] / \delta$, there exists a $k \prec i$ such that $l[k] \geq 2 l[i]$ and $x_k \in B$.
	Thus, for $l[i] \leq \delta r_{\gamma}/4$, there exists a $k \prec i$ with $x_k \subset \gamma_{i} \cap B_{2 l[i]}(x_i)$.
	By a sphere-packing argument, we can find a $k_{\min}$ such that, for all $i \succ i_{k_{\min}}$, $l[i] \leq \delta r_{\gamma}/4$, which yields the claim.
	Because of the above, for $\rho > C$, there exists a point satisfying the constraint in \cref{line-kArgminSortSparse} with $\disttt(k,i) \leq 2 Cl[i]$.
	Thus, the number of times the for-loop in \cref{line-forSortSparse} is visited for a given index $i$ is bounded above by $C_i \defeq \# \{ j \in \I \mid \disttt(i,j) \leq 2 (C + \rho ) l[i] \}$.
	By a sphere-packing argument, $C_{i_m} \leq C ( N/m ) \rho^d$, for a constant $C$ depending only on $d$, $\Omega$, and $\delta$.
	Summing the above over $1 \leq m \leq N$ yields the upper bound $C \rho^d N \log N$.
	The most costly step in the for-loop in \cref{line-forSortSparse} is the $\texttt{decrease!}$ operation requiring the restoration of the heap property, which has computational complexity $\BigO(\log N)$.
	Thus, the overall computational complexity is at most $C N \bigl( \rho^d \log^{2} N + C_{\disttt_{\partial \Omega}} \bigr)$.
	The bound on the space complexity follows, since each iteration of the for-loop	consumes $\BigO(1)$ memory.
\end{proof}

\begin{proof}[Proof of \cref{thm-compSortSparse}]
	\Cref{thm-compSortSparse} follows from \cref{thm-corrSortSparse,thm-compSortSparseOmega}.
\end{proof}

\Cref{alg-sortSparse} uses only pairwise distances between points, and thus automatically adapts to low-dimensional structure in the $\{ x_i \}_{i \in \I}$.
Indeed, for $\Omega = \Reals^d$, the computational complexity of \cref{alg-sortSparse} depends only on the intrinsic dimension of the dataset.

\begin{condition}[Intrinsic dimension]
	\label{cond-intdim}
	There exist constants $C_{\intd}, \intd > 0$, independent of $N$, such that, for all $r,R > 0$ and $x \in \Reals^{d}$,
	{\small
	\begin{equation*}
		\max \left\{ |A| \,\middle|\, \vphantom{\big|} A \subset \I, i,j \in A \implies \dist ( x_{i}, x ), \dist ( x_{j}, x ) \leq R , \dist (x_{i}, x_{j} ) \geq r \right\} \leq C_{\intd} \left( \frac{R}{r} \right)^{\intd}.
	\end{equation*}}
	We say that the point set $\{ x_{i} \}_{i \in \I}$ has \emph{intrinsic dimension} $\intd$.
\end{condition}

\begin{condition}[Polynomial Scaling]
	\label{cond-polyScale}
	There exists a polynomial $\boldsymbol{p}$ for which
	\begin{equation*}
		\frac{\max_{i \neq j \in \I} \dist( x_{i}, x_{j} )}{\min_{i \neq j \in \I} \dist( x_{i}, x_{j} )} \leq \boldsymbol{p}( N ) .
	\end{equation*}
\end{condition}

\begin{theorem}
	\label{thmjhgfhgfty}
	Let $\Omega = \Reals^d$ and $\rho \geq 2$.
	Then the computational complexity of \cref{alg-sortSparse} is at most $C \rho^{\intd} N \log N$ in space and $C N \bigl( \log(N) \rho^{\intd} (\log N + C_{\disttt} ) + C_{\disttt_{\partial \Omega}} \bigr)$ in time, for a constant $C = C \bigl( C_{\intd}, \intd, \boldsymbol{p} \bigr)$ depending only on the constants in \cref{cond-intdim,cond-polyScale}.
\end{theorem}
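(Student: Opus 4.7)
The plan is to follow the skeleton of Theorem~\ref{thm-compSortSparseOmega}, substituting a purely combinatorial parent-existence argument for the interior-cone argument that was used for the Lipschitz boundary, and replacing ambient-dimension sphere packing with the intrinsic-dimension bound of Condition~\ref{cond-intdim} combined with the scale counting afforded by Condition~\ref{cond-polyScale}.

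First, I would establish that, for $\rho \geq 2$ and each $i = i_m$ with $m \geq 2$, the argmin in Line~\ref{line-kArgminSortSparse} returns a parent $k$ with $\dist(x_k, x_{i_m}) \leq 2\, l[i_m]$. Setting $m^{\ast} \defeq \max\{m' : l[i_{m'}] \geq 2\, l[i_m]\}$, the defining maximin property gives $\dist(x_{i_m}, \{x_{i_1}, \dots, x_{i_{m^{\ast}}}\}) \leq l[i_{m^{\ast}+1}] < 2\, l[i_m]$, so some $k \in \{i_1,\dots,i_{m^{\ast}}\}$ satisfies $l[k] \geq 2\, l[i_m]$ together with $\dist(x_k, x_{i_m}) < 2\, l[i_m] \leq l[k]$. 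Such a $k$ lies in $p[i_m]$ by Theorem~\ref{thm-corrSortSparse} and satisfies the argmin constraint $\dist(i,k) + \rho\, l[i] \leq \rho\, l[k]$ since $(2+\rho)\, l[i_m] \leq 2\rho\, l[i_m]$ when $\rho \geq 2$; the argmin must therefore return a $k$ whose distance to $x_{i_m}$ is no larger than that of this witness, i.e.\ at most $2\, l[i_m]$.

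Second, by the triangle inequality each $j$ iterated in Line~\ref{line-forSortSparse} at step $i_m$ satisfies $\dist(x_j, x_{i_m}) \leq (4+\rho)\, l[i_m]$, so the total number of for-loop iterations is bounded by $T \leq \sum_{j \in \I} |M_j|$ with $M_j \defeq \{m : \dist(x_j, x_{i_m}) \leq (4+\rho)\, l[i_m]\}$. For fixed $j$, I would decompose $M_j$ along dyadic scales $l[i_m] \in [2^{-a-1}, 2^{-a})$. Within a single scale, any two indices $i_{m_1} \prec i_{m_2}$ satisfy $\dist(x_{i_{m_1}}, x_{i_{m_2}}) \geq l[i_{m_2}] \geq 2^{-a-1}$ and lie in the ball of radius $(4+\rho)2^{-a}$ around $x_j$; Condition~\ref{cond-intdim} then bounds the per-scale count by $C_{\intd}\,(2(4+\rho))^{\intd} \leq C\,\rho^{\intd}$, while Condition~\ref{cond-polyScale} bounds the number of nonempty scales by $O(\log N)$. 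This gives $|M_j| \leq C\,\rho^{\intd} \log N$ and $T \leq C\,\rho^{\intd} N \log N$.

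To assemble the running time, each for-loop iteration performs $O(1)$ calls to $\disttt$ plus one heap update of cost $O(\log N)$, giving total time $C N \log(N)\, \rho^{\intd} (\log N + C_{\disttt})$; the initial loop that sets $l[i] \leftarrow \disttt_{\partial \Omega}(i)$ adds $N\, C_{\disttt_{\partial \Omega}}$. For space, the identity $\sum_i |c[i]| = \sum_j |p[j]|$ lets me apply the same dyadic argument to $|p[j]| = |\{i \prec j : \dist(x_i, x_j) \leq \rho\, l[i]\}|$, using that for such $i$ one has both $l[i] \geq \dist(x_i, x_j)/\rho$ and $l[i] \geq l[j]$ to restrict to $O(\log N)$ scales; this yields $|p[j]| \leq C\,\rho^{\intd} \log N$ and a total space bound of $C\,\rho^{\intd} N \log N$. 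The main obstacle, relative to the Lipschitz case, is the first step: without a boundary the interior-cone device is unavailable, and the existence of a nearby earlier-indexed point has to be extracted from the combinatorics of the $l[\cdot]$ sequence alone; once that is in hand, the remainder is a clean substitution of intrinsic-dimension packing for the ambient version.
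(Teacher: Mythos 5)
Your proposal is correct and follows essentially the route the paper takes: its own proof simply declares the argument ``analogous to Theorem~\ref{thm-compSortSparseOmega}'' with the interior-cone/$k_{\min}$ claim replaced by the fact that every $i$ has an earlier $k$ with $l[k]\geq 2\,l[i]$ and $\disttt(k,i)\leq 2\,l[i]$, and your $m^{\ast}$-argument is a correct derivation of exactly that fact (which the paper asserts without proof), including the check that such a $k$ lies in $p[i]$ and is feasible for the argmin when $\rho\geq 2$. The only place you genuinely reorganize things is the packing count --- summing over targets $j$ with a dyadic decomposition in $l[i_m]$, using Condition~\ref{cond-intdim} per scale and Condition~\ref{cond-polyScale} to bound the number of scales --- rather than the per-source density bound $C_{i_m}\lesssim (N/m)\rho^{d}$ of Theorem~\ref{thm-compSortSparseOmega}; this substitution is not merely cosmetic but necessary, since the latter bound relies on the homogeneity parameter $\delta$, which is not assumed here, so your version is the right way to make the ``analogous'' step rigorous.
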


\begin{proof}
	The proof is analogous to that of \cref{thm-compSortSparseOmega}.
	The main difference is that the claim on the existence of $k_{\min}$ is replaced by the fact --- which follows directly from the definition of the maximin ordering --- that, for all $i$, there exists a $k \prec i$ such that $l[k] \geq 2 l[i]$ and $\disttt(k,i) \leq 2 l[i]$.
	In particular, any $\rho \geq 2$ leads to near-linear computational complexity.
\end{proof}

\section{Proofs of \cref{sec:analysis}}
\label{apsec-abstractDecay}

\begin{proof}[Proof of \cref{lem-BlockCholesky}]
	The main idea is to recursively apply \cref{lem-blockChol2Scale}.
	First, applying \cref{lem-blockChol2Scale} with first block $\KM_{1:q-1,1:q-1}$ and second block $\KM_{q,q}$ yields
	{\tiny \begin{align}
		\KM^{(q)} =&
		\begin{pmatrix}
			&                           &                        &  0  \\
			&  \Id    &                & \vdots            \\
			&                           &                        & 0  \\
			-\IKMC^{(q),-1} \IKM^{(q)}_{q,1} & \cdots  & -\IKMC^{(q),-1} \IKM^{(q)}_{q,q-1}                     & \Id
		\end{pmatrix}\\
		&\quad \quad \quad \quad \quad \quad \quad \quad \quad \quad \quad \quad \quad \quad \quad \quad \begin{pmatrix}
			&             &                &  0     \\
			&\KM^{(q-1)}  &                & \vdots \\
			&             &                & 0      \\
			0 & \cdots      & 0              & \IKMC^{(q),-1}
		\end{pmatrix}
		\begin{pmatrix}
			&         &           &-\IKM^{(q)}_{1,q} \IKMC^{(q),-\top}      \\
			&  \Id    &           & \vdots                                  \\
			&         &           & -\IKM^{(q)}_{q-1,q} \IKMC^{(q),-\top}  \\
			0 & \cdots  & 0         & \Id
		\end{pmatrix}.
	\end{align} }
	We now repeat this operation recursively.
	After the $k$\textsuperscript{th} step, the central matrix has an upper-left block consisting of $\KM^{(q-k)}$.
	We then apply \cref{lem-blockChol2Scale} to this upper-left block, with the splitting given by $\KM_{1:q-k-1,1:q-k-1}$ and $\KM_{q-k,q-k}$.
	This reduces the central matrix more and more towards the block-diagonal matrix $D$, while splitting off a triangular factor to either side.
	Doing this up to the $(q-1)$\textsuperscript{th} step yields the following identity:
	{\tiny \begin{align}
		& \begin{pmatrix}
			\KM^{(q)}_{1 , 1}   & \cdots             & \KM^{(q)}_{1 , q-1}       & \KM^{(q)}_{1 , q}   \\
			\vdots            & \ddots            & \vdots                   & \vdots            \\
			\KM^{(q)}_{q-1,1}   & \cdots            & \KM^{(q)}_{q-1 , q-1}     & \KM^{(q)}_{2 , q}  \\
			\KM^{(q)}_{q , 1}   & \cdots             & \KM^{(q)}_{q , q-1}       & \KM^{(q)}_{q , q}
		\end{pmatrix}
		\\
		& \quad =
		\begin{pmatrix}
			&                           &                        &  0  \\
			&  \Id    &                & \vdots            \\
			&                           &                        & 0  \\
			-\IKMC^{(q),-1} \IKM^{(q)}_{q,1}           & \cdots                     & -\IKMC^{(q),-1} \IKM^{(q)}_{q,q-1}                     & \Id
		\end{pmatrix} \\
		&\quad \phantom{=} \phantom{cdots} \quad \quad
		\begin{pmatrix}
			&                           &                        &  0               & 0 \\
			&  \Id                      &                        & \vdots           & \vdots \\
			&                           &                        & 0                & \vdots \\
			-\IKMC^{(q-1),-1} \IKM^{(q-1)}_{q-1,1}           & \cdots                     & -\IKMC^{(q-1),-1} \IKM^{(q-1)}_{q-1,q-2}                     & \Id       & 0 \\
			0                   &           \cdots           &           \cdots        &        0     &       \Id         &
		\end{pmatrix}
		\cdots \\
		& \quad \phantom{=} \quad \cdots
		\begin{pmatrix}
			\Id                 &             0             &                          0             \\
			-\IKMC^{(2),-1} \IKM^{(2)}_{2,1}            &  \Id                                        & \vdots          \\
			0                  &                \cdots           &              \Id       &
		\end{pmatrix}\\
		&\quad \phantom{=} \phantom{cdots} \quad \quad
		\begin{pmatrix}
			\IKMC^{( 1 ),-1} & 0 & \cdots & \cdots & 0\\
			0 & \IKMC^{(2),-1} & \ddots & 0 &   \vdots\\
			\vdots & \ddots  & \ddots & \ddots& \vdots \\
			\vdots & 0 & \ddots & \IKMC^{(q-1),-1} & 0 \\
			0 & 0 & \cdots & 0 & \IKMC^{(q),-1}
			\end{pmatrix}
			\begin{pmatrix}
			\Id                 &            -\IKMC^{(2)}_{1,2} \IKMC^{(2),-T}_{2,2}         &                          0             \\
			0                  &  \Id                                        & \vdots          \\
			0                  &                \cdots           &              \Id       &
		\end{pmatrix} \cdots
		\\
		& \quad \phantom{=} \quad \cdots
		\begin{pmatrix}
			&                           &                        &  -\IKM^{(q-1)}_{1,q-1} \IKMC^{(q-1),-\top} & 0  \\
			&  \Id                      &                        & \vdots            & \vdots \\
			&                           &                        & -\IKM^{(q-1)}_{q-2,q-1} \IKMC^{(q-1),-\top}  & \vdots \\
			0           & \cdots                     & 0                      & \Id                  &  0 \\
			0           & \cdots                     & \cdots                  & 0                    & \Id
		\end{pmatrix}
		\begin{pmatrix}
			&                           &                        &  -\IKM^{(q)}_{1,q} \IKMC^{(q),-\top}  \\
			&  \Id                      &                        & \vdots            \\
			&                           &                        & -\IKM^{(q)}_{q-1,q} \IKMC^{(q),-\top}  \\
			0           & \cdots                     & 0                      & \Id
		\end{pmatrix}.
	\end{align} }
	We now combine the lower-triangular factors, obtaining
	{\tiny
	\begin{align}
		& \begin{pmatrix}
			\Id                 &            -\IKM^{(2)}_{1,2} \IKMC^{(2),-\top}         &                          0             \\
			0                  &  \Id                                        & \vdots          \\
			0                  &                \cdots           &              \Id       &
		\end{pmatrix}
		\cdots
		\begin{pmatrix}
			&                           &                        &  -\IKM^{(q)}_{1,q} \IKMC^{(q),-\top}  \\
			&  \Id                      &                        & \vdots            \\
			&                           &                        & -\IKM^{(q)}_{1,q} \IKMC^{(q),-\top}  \\
			0           & \cdots                     & 0                      & \Id
		\end{pmatrix} =
		\\
		& \left(
		\begin{pmatrix}
			&                           &                        &  \IKM^{(q)}_{1,q} \IKMC^{(q),-\top}  \\
			&  \Id                      &                        & \vdots            \\
			&                           &                        & \IKM^{(q)}_{q-1,q} \IKMC^{(q),-\top}  \\
			0           & \cdots                     & 0                      & \Id
		\end{pmatrix}
		\cdots
		\begin{pmatrix}
			\Id                 &            \IKM^{(2)}_{1,2} \IKMC^{(2),-\top}         &                          0             \\
			0                  &  \Id                                        & \vdots          \\
			0                  &                \cdots           &              \Id       &
		\end{pmatrix}
		\right)^{-1} \\
		&=
		\begin{pmatrix}
			\Id  & 0 & \cdots & \cdots & 0\\
			\IKMC^{( 2 ),-1} \IKM^{( 2 )}_{2,1} & \Id & \ddots & 0 &   \vdots\\
			\vdots & \IKMC^{(3),-1} \IKM^{(3)}_{3,2}   & \ddots & \ddots& \vdots \\
			\vdots & \vdots & \ddots & \Id & 0 \\
			\IKMC^{( q ),-1} \IKM^{(q)}_{q,1} & \IKMC^{( q ),-1} \IKM^{( q )}_{q,2} & \cdots &\IKMC^{( q ),-1} \IKM^{( q )}_{q,q-1}&\Id
		\end{pmatrix}^{-\top}.
	\end{align}}
	Here, we have used the formulae for the inverses and products of elementary lower-triangular matrices \cite[pp.150--151]{trefethen1997numerical},
	\begin{align}
		& \left( \Id + \left( 0 , \dots, 0, a_{k+1}, \dots , a_N \right)^{\top} \otimes \boldsymbol{e}_k \right)^{-1} = \Id - \left( 0 , \dots, 0, a_{k+1}, \dots , a_N \right)^{\top} \otimes \boldsymbol{e}_k , \\
		& \left( \Id + \left( 0 , \dots, 0, a_{k+1}, \dots , a_N \right)^{\top} \otimes \boldsymbol{e}_k \right)
		\left( \Id + \left( 0 , \dots, 0, b_{l+1}, \dots , b_N \right)^{\top} \otimes \boldsymbol{e}_l \right) \\
		& \quad = \Id + \left( 0 , \dots, 0, a_{k+1}, \dots , a_N \right)^{\top} \otimes \boldsymbol{e}_k + \left( 0 , \dots, 0, b_{l+1}, \dots , b_N \right)^{\top} \otimes \boldsymbol{e}_l ,
	\end{align}
	where $\boldsymbol{e}_k$ is the $k$\textsuperscript{th} standard Euclidean basis row vector, with $k<l$.
\end{proof}

\begin{proof}[Proof of \cref{lem-lowerBound}]
	\label{prv-lowerBound}
	We prove the result in the setting of \cref{examp-subsamp}, since the proof for \cref{examp-average} is similar.
	The inequality $\| \phi \|_{\ast}^2\geq \frac{1}{\| \IK \|} \| \phi \|_{H^{-s}(\Omega)}^2$ and \eqref{eqn-duality} imply that
	\begin{align}
		\left\| \phi \right\|_{\ast}^2
		& \geq \frac{1}{\| \IK \|} \sup_{v \in H_0^s( \Omega )} \sum_{ i \in \I^{(k)}} 2 \dualprod{ \x_i \phi_i }{ v } - \norm{ v }_{H_0^s( \Omega )}^2 \\
		& \geq \frac{1}{\| \IK \|} \sum_{i \in \I^{(k)}} \sup_{v \in H_0^s\left(B_{(\delta/2) h^k }( \xloc_i )\right)} 2 \dualprod{ \x_i \phi_i }{ v } - \norm{ v }_{H_0^s\left(B_{(\delta/2) h^k }( \xloc_i )\right)}^2 \\
		&= \frac{1}{\| \IK \|} \sum_{i \in \I^{(k)}} \left|\x_i\right|^2 \norm{ \phi_i }_{H^{-s}\left(B_{(\delta/2) h^k } ( \xloc_i )\right)}^2\\
		& \geq \frac{1}{\| \IK \|} \absval{ \x }^2 \inf_i \norm{ \phi_i }_{H^{-s}\left(B_{(\delta/2) h^k } ( \xloc_i )\right)}^2\,.
	\end{align}
	The identity $\norm{ \phi_i }_{H^{-s}\left(B_{(\delta/2) h^k }^2 ( \xloc_i )\right)}^2=h^{2sk} (\delta/2)^{2s-d} \|\boldsymbol{\delta}(\quark-0)\|_{H^{-s}\left(B_{1}(0)\right)}^2 $ concludes the proof with $C_{\Phi} \defeq \|\IK\| (\delta/2)^{d-2s} \left\| \boldsymbol{\delta}(\quark-0) \right\|_{H^{-s} \left(B_{1}(0)\right)}^{-1}$
	and $H \defeq h^s$.
\end{proof}

\begin{proof}[Proof of \cref{lem-condForW} in the case of \cref{examp-average}]
	Let $\zeta$ be a set of points such that $\left\{B_{\rho h^k}(z)\right\}_{z \in \zeta}$ covers $\Omega$, and such that $\sup_{x \in \Omega} \# \left\{ z \in \zeta: x \in B_{2 \rho h^k}(z) \right\} \leq C(d)$.
	For $i \in \J^{(l)}$ and $z \in \zeta$, we write $i \leadsto z$ if $z$ is the element of $\zeta$ closest to $i$ (using an arbitrary way to break ties). For $1 \leq k < l \leq q$, $\phi \defeq \sum_{i \in \J^{(l)}} \x_i \phi_i$, $\varphi \defeq \sum_{ i \in \J^{(l)}} \x_i \varphi_i$ and $\varphi_i \defeq \sum_{j \in \I^{(k)}} w_{ij} \phi_j^{(k)}$ we have
	\begin{equation}
		\|\phi - \varphi\|_{H^{-s}(\Omega)}^2 = \sup_{v \in H_0^s(\Omega)} \left( \sum_{z \in \zeta} \sum_{i \leadsto z} \int_{B_{2\rho h^k}(z)} 2 \x_i ( \phi_i - \varphi_i ) v(x) \dx \right) - \|v\|_{H_0^s(\Omega)}^2.
	\end{equation}
	The Bramble--Hilbert lemma \cite{dekel2004bramble_supplement} and the vanishing moment property \eqref{eqjkgfkhkjhgh} of $\phi_i - \varphi_i$ yield that
	\begin{align}
		& \sum_{i \leadsto z} \int_{B_{2 \rho h^k}(z)} 2 \x_i ( \phi_i - \varphi_i )v(x) \dx \\
		& \quad \leq 2 \left( 2 \rho h^k\right)^s \left \| \sum_{i \leadsto z} \x_i \left(\phi_i - \varphi_i\right) \right\|_{L^2(B_{2 \rho h^k}(z))} \|D^sv\|_{L^2(B_{2 \rho h^k}(z))}  \\
		& \quad \leq 2 C \left(2\rho h^k\right)^{2s} \left \| \sum_{i \leadsto z} \x_i \left(\phi_i - \varphi_i\right) \right\|_{L^2(B_{2 \rho h^k}(z))}^2 + \frac{\|D^sv\|^2_{L^2(B_{2 \rho h^k}(z))}}{2 C}.
	\end{align}
	Summing over all $z \in \zeta$ and choosing the constant $C$ appropriately yields
	\begin{equation}
		\|\phi - \varphi\|_{H^{-s}(\Omega)}^2 \leq C \rho^{2s} h^{2ks} \sum_{z \in \zeta} \left\| \sum_{i \leadsto z} \alpha_i \left( \phi_i - \varphi_i\right) \right\|_{L^2(B_{2 \rho h^k}(z))}^2 .
	\end{equation}
	Since the $\phi_i$ are $L^2$-orthogonal to each other and $\|\phi_i\|_{L^2}^2\leq C$,
	{\small
	\begin{align}
		\sum_{z \in \zeta} \left\| \sum_{i \leadsto z} \alpha_i \left( \phi_i - \varphi_i\right) \right\|_{L^2(B_{2 \rho h^k}(z))}^2
		&\leq 2
		\sum_{z \in \zeta} \left( \left[\sum_{i \leadsto z} \alpha_i^2 \|\phi_i\|_{L^2}^2\right] + \left\| \sum_{i \leadsto z} \alpha_i \varphi_i \right\|_{L^2(B_{2 \rho h^k}(z))}^2\right)\\
		&\leq C \left( |\x|^2 + \sum_{z \in \zeta} \left\| \sum_{i \leadsto z} \alpha_i \varphi_i \right\|_{L^2}^2\right) .
	\end{align}}
	Inserting the definition of the $\varphi_i$ yields
	\begin{align}
		\sum_{z \in \zeta} \left\| \sum_{i \leadsto z} \x_i \varphi_i \right\|_{L^2}^2
		= \sum_{z \in \zeta} \left\| \sum_{j \in \tilde{I}^{(k)}} \sum_{i \leadsto z} \alpha_i w_{ij} \phi_j^{(k)} \right\|_{L^2}^2
		&\leq \sum_{z \in \zeta} \sum_{j \in \tilde{I}^{(k)}} \left( \sum_{i \leadsto z} \alpha_i w_{ij}\right)^2 \left\| \phi_j^{(k)} \right\|_{L^2}^2 \\
		&\leq C h^{-kd} \sum_{z \in \zeta} \sum_{j \in \tilde{I}^{(k)}} \left( \sum_{i \leadsto z} |\alpha_i| |w_{ij}|\right)^2 .
	\end{align}
	We will now use the fact that on $\Reals^n$, we have the norm inequalities $n^{-1/2} |\quark|_1 \leq |\quark|_2 \leq |\quark|_1$.
	By a sphere-packing argument, for any $z \in \zeta$, we have $\left\{ i \in \J^{(l)} \,\middle|\, i \leadsto z \right\} \leq C(d) (\rho/\delta)^d h^{d(k-l)}$
	Thus, the number of summands in the innermost sum is at most $C(d) (\rho/\delta)^d h^{(k-l)d}$ and using the above norm inequalites, we obtain
	\begin{align}
		&h^{-kd} \sum_{z \in \zeta} \sum_{j \in \tilde{I}^{(k)}} \left( \sum_{i \leadsto z} |\alpha_i| |w_{ij}|\right)^2\\
		\leq& C (\rho/\delta)^d h^{-ld} \sum_{z \in \zeta} \sum_{i \leadsto z} \sum_{j \in \tilde{I}^{(k)}} \left( |\alpha_i| |w_{ij}|\right)^2
		\leq C (\rho/\delta)^d h^{-ld} \omega_{l,k}^2 |\x|^2.
	\end{align}
	Putting the above together yields the result.
\end{proof}

\begin{proof}[Proof of \cref{lem-decayInverse}]
	Define
	\begin{align*}
		R & \defeq \Id - \frac{2}{\|A\| + \|A^{-1}\|^{-1}} A, &
		r & \defeq \frac{1 - \frac{1}{\|A^{-1}\|\|A\|}}{1 + \frac{1}{\|A^{-1}\|\|A\|}} ,
	\end{align*}
	and observe that $\|R\| = r$.
	Since $A = \frac{\|A\| + \|A^{-1}\|^{-1}}{2} \left( \Id - R \right)$, it follows from a Neumann series argument that $ A^{-1} = \frac{2}{\|A\| + \|A^{-1}\|^{-1}} \sum_{k=0}^{\infty} R^{k}$.
	The positive definiteness of $A$ implies that
	\begin{equation*}
		| R_{i,j} | \leq \max\left\{1 , \frac{2 C}{\|A\| + \| A^{-1}\|^{-1}} \right\} \exp ( -\gamma d( i , j ) ) .
	\end{equation*}
	Let $C_{R} \defeq \max\left\{1 , \frac{2 C}{\|A\| + \| A^{-1}\|^{-1}} \right\}$.
	\cref{lem-decayProduct} implies that
	\begin{equation*}
		| R^{k}_{i,j} | \leq \left(c_{d}\left( \gamma/2 \right)\right)^{k-1} C_{R}^{k} \exp\left( -\frac{\gamma}{2} d( i , j ) \right) .
	\end{equation*}
	Combining the above estimates yields
	\begin{align}
		\frac{\|A\| + \|A^{-1}\|^{-1}}{2} \left| \left( A^{-1} \right)_{i,j} \right|
		& \leq \left( n + 1\right) \left( c_{d}\left( \gamma/2 \right) \right)^{n-1} C_{R}^{n} \exp\left( -\frac{\gamma}{2} d( i , j ) \right) + \frac{r^{n+1}}{1-r}\\
		& \leq \exp \left( \left(1 + \log\left( c_{d}\left( \gamma/2 \right) \right) + \log( C_{R} ) \right) n - \frac{\gamma}{2} d( i , j ) \right) \\
		&\phantom{=} \quad + \exp \left( -\log( 1-r ) + \log( r ) ( n+1 )\right) .
	\end{align}
	By choosing	
	\begin{equation}
		\label{eqkedhdkjd}
		\nu \defeq \frac{\frac{\gamma}{2}d( i , j ) - \log( 1-r )}{\left( 1 + \log\left( c_{d}\left( \gamma/2 \right) \right) +\log( C_{R} ) \right) - \log( r )} ,
	\end{equation}
	and $n + 1 \defeq \lceil \nu \rceil$, we obtain
	\begin{align}
		\label{eqkjhgjhggygubh}
		& \exp \left( \left(1 + \log\left( c_{d}\left( \gamma/2 \right) \right) + \log( C_{R} ) \right) n - \frac{\gamma}{2} d( i , j ) \right) + \exp \left( -\log( 1-r ) + \log( r ) ( n+1 )\right)\\
		& \quad \leq \exp \left( \left(1 + \log\left( c_{d}\left( \gamma/2 \right) \right) + \log( C_{R} ) \right) \nu - \frac{\gamma}{2} d( i , j ) \right) + \exp \left( -\log( 1-r ) + \log( r ) \nu \right)\\
		& \quad = 2\exp\left( \frac{ -\log( 1-r ) \left(1 + \log\left( c_{d}\left( \gamma/2 \right) \right) + \log( C_{R} ) \right) + \log( r ) \frac{\gamma}{2} d( i , j )} {\left(1 + \log\left( c_{d}\left( \gamma/2 \right) \right) + \log( C_{R} ) \right) - \log( r )} \right) .
	\end{align}
	This yields the upper bound
	{\small
	\begin{align}
		\left| \left( A^{-1} \right)_{i,j} \right|
		& \leq \frac{4 \cdot \exp\left( \frac{ -2 \log( 1-r ) \left(1 + \log\left( c_{d}\left( \gamma/2 \right) \right) + \log( C_{R} ) \right) +	\log( r ) \frac{\gamma}{2} d( i , j )} {\left(1 + \log\left( c_{d}\left( \gamma/2 \right) \right) + \log( C_{R} ) \right) - \log( r )} \right)}{\|A\| + \|A^{-1}\|^{-1}}  \\
		& = \frac{4}{\|A\| + \|A^{-1}\|^{-1}} \cdot \exp\left( \frac{\log( r ) }{\left(1 + \log\left( c_{d}\left( \gamma/2 \right) \right) + \log( C_{R} ) \right) - \log( r )}\frac{\gamma}{2} d( i , j )\right) \\
		\label{prf-decayInverse-term-to-optimise}
		& \phantom{=} \quad \cdot \exp \left( \frac{ -2\log( 1-r ) \left(1 + \log\left( c_{d}\left( \gamma/2 \right) \right) + \log( C_{R} ) \right) } {\left(1 + \log\left( c_{d}\left( \gamma/2 \right) \right) + \log( C_{R} ) \right) - \log( r )} \right) .
	\end{align}}
	Optimising the term on line \eqref{prf-decayInverse-term-to-optimise} over $\left( 1 + \log\left( c_{d}\left( \gamma/2 \right) \right)+ \log( C_{R} ) \right)$ yields
	\begin{align}
		\left| ( A^{-1} )_{i,j} \right| & \leq \frac{4}{\left(\|A\| + \|A^{-1}\|^{-1}\right)( 1-r )^{2}} \exp\left( \frac{\frac{\gamma}{2} d( i , j ) \log( r ) }{\left(1 + \log\left( c_{d}\left( \gamma/2 \right) \right) + \log( C_{R} ) \right) - \log( r )}\right) .
	\end{align}
\end{proof}

\begin{proof}[Proof of \cref{lem-decayCholesky}]
	\label{prv-decayCholesky}
	In this proof we will use the notation $k{:}l$ to denote the individual indices from $k$ to $l$, as opposed to matrix blocks.
	We will establish the result by showing that, for all $1\leq k\leq N$, the $k$\textsuperscript{th} column of $L$ (when considered as an element of
	$\Reals^{I\times I}$ by zero padding) satisfies the exponential decay stated in the lemma.
	Let $S^{( k )} \defeq B_{k:N , k:N} - B_{k:N , 1:k-1} ( B_{1:k-1, 1:k-1} )^{-1} B_{1:k-1, k:N}$.
	Then $L_{k:N,k} = S^{( k )}_{:,1} / \sqrt{S^{( k )}_{k,k}}$.
	\Cref{lem-blockChol2Scale} implies that $S^{( k )} = ( B_{ k:N , k:N } )^{-1}$, and hence \cref{lem-decayInverse} yields that
	{\small
	\begin{align}
		\left| \bigl( S^{(k)} \bigr)_{i,j} \right|
		\leq \frac{4}{\left(\|B\| + \|B^{-1}\|^{-1}\right)( 1-r )^{2}} \exp\left( \frac{\log( r ) \frac{\gamma}{2} d( i , j )}{1 + \log\left( c_{d}\left( \gamma/2 \right) \right) + \log( C_{R} ) - \log( r )}\right).
	\end{align}}
	Here we used the facts that the spectrum of $B_{k:n,k:n}$ is contained in $[ \lambda_{\min} (B), \lambda_{\max} (B) ]$ and that the right-hand side of the above estimate is increasing in $r$ and $C_{R}$.
	The estimate $S^{( k )}_{k,k} \geq \frac{1}{\|S^{( k ),-1} \|} \geq \frac{1}{\|B\|}$ completes the proof.
\end{proof}

\begin{proof}[Proof of \cref{lem-decayTriang}]
	\label{prv-decayTriang}
	For any matrix $T$ for which the Neumann series $\sum_{k=0}^\infty T^k$ converges in the operator norm, we have $\left( \Id - T \right)^{-1} = \sum_{k=0}^\infty T^{k}$.
	Therefore, $L^{-1} = \sum_{k=0}^\infty ( \Id - L )^k$ if the right-hand side series is convergent.
	Since $ \Id - L $ has the block-lower-triangular structure
	\begin{equation}
		( \Id - L ) =
		\begin{pmatrix}
			0         & 0       & 0         & 0      \\
			-L_{2,1}  &\ddots   & 0         & 0      \\
			\vdots    &\ddots   & \ddots    & \vdots \\
			-L_{q,1}  &\dots    &-L_{q,q-1} & 0      \\
		\end{pmatrix},
	\end{equation}
	it follows that $ \Id - L $ is $q$-nilpotent, i.e.\ $( \Id - L )^{q} = 0$ and the Neuman series terminates after the first $q$ summands.
	Using this we will now show that the exponential decay of $L$ is preserved under inversion.
	To this end, consider the $(k,l)$\textsuperscript{th} block of $( \Id - L )^p$ and observe that
	\begin{align}
		\label{eqkjgghjgh}
		\left| \left(\left( ( \Id - L )^p \right)_{k,l} \right)_{ij} \right|
		& = \left| (-1)^p \sum_{k = s_1 > s_2 > \dots > s_p > s_{p+1} = l} \left(\prod_{m=1}^{p} L_{s_m,s_{m+1}}\right)_{ij} \right| \\
		& \leq \sum_{k = s_1 > s_2 > \dots > s_p > s_{p+1} = l} \left( c_d\left(\gamma/2\right) C \right)^p \exp\left( - \frac{\gamma}{2} d(i,j) \right) \\
		& = \binom{k-l-1}{ p-1} \left( c_d\left(\gamma/2\right) C \right)^p
		\exp\left( - \frac{\gamma}{2} d(i,j) \right) ,
	\end{align}
	where the inequality follows from \cref{lem-decayProduct}.
	Summing \eqref{eqkjgghjgh} over $p$, we obtain, for $i \neq j$,
	\begin{align}
		\left| \left( L^{-1}_{k,l} \right)_{ij} \right|
		& \leq \sum_{p=1}^{k-l-1} \binom{k-l-1 }{ p-1} \left( c_d\left(\gamma/2\right) C \right)^p \exp\left( - \frac{\gamma}{2} d(i,j) \right)\\
		& \leq \left( 1+c_d\left(\gamma/2\right) C \right)^{k-l} \exp\left( - \frac{\gamma}{2} d(i,j) \right) \\
		& \leq 2^q \left( c_d\left(\gamma/2\right) C \right)^{q} \exp\left( - \frac{\gamma}{2} d(i,j) \right),
	\end{align}
	which concludes the proof of the lemma.
\end{proof}

With the above results on the propagation of exponential decay we can now conclude the proof of \cref{thm-decayAbstractCholesky}.

\begin{proof}[Proof of \cref{thm-decayAbstractCholesky}]
	\label{prv-decayAbstractCholesky}
	Applying \cref{lem-decayInverse}, \cref{cond-spatloc}, and the condition number bound in \cref{cond-specloc} yields the following estimate for $\IKMC^{(k),-1}$:
	\begin{equation}
		\left| \bigl( \IKMC^{(k),-1} \bigr)_{ij} \right| \leq \frac{4 \exp\left(\frac{\log(r)} {\left(1 + \log\left( c_d \left( \gamma/2\right)\right) + \log\left(C_R\right) -\log( r )\right)} \frac{\gamma}{2} d( i, j ) \right)}{\left( \bigl\| \IKMC^{(k)} \bigr\| + \bigl\| \IKMC^{(k),-1} \bigr\|^{-1} \right)\left(1-r\right)^2} ,
	\end{equation}
	with $C_R = \max\left\{1,\frac{2C_\gamma \bigl\| \IKMC^{(k),-1} \bigr\|}{1 + \kappa}\right\}$ and $r = \frac{1- \kappa^{-1}}{1 + \kappa^{-1}}$.
	\Cref{lem-blockChol2Scale} and \cref{cond-specloc} yield
	\begin{align}
		\lambda_{\max} \bigl( \IKMC^{(k)} \bigr) & \leq \lambda_{\max} \bigl( \IKM^{(k)} \bigr) \leq C_{\Phi} H^{-2k} , \\
		\lambda_{\min} \bigl( \IKMC^{(k)} \bigr) & \geq \frac{1}{C_{\Phi}} H^{-2(k-1)} .
	\end{align}
	Using these estimates, we obtain
	\begin{equation}
		\left| \bigl( \IKMC^{(k),-1} \bigr)_{ij} \right| \leq \frac{2 C_{\Phi} H^{2(k-1)}}{\left(1-r\right)^2} \exp\left( -\tilde{\gamma}d( i, j ) \right),
	\end{equation}
	where $\tilde{C}_R = \max\left\{1,\frac{2C_\gamma C_{\Phi}}{1 + \kappa}\right\}$, $r = \frac{1- \kappa^{-1}}{1 + \kappa^{-1}}$ and $\tilde{\gamma} \defeq \frac{-\log(r)} {\left(1 + \log\left( c_d \left( \gamma/2\right)\right) + \log\left(\tilde{C}_R\right) - \log( r )\right)} \frac{\gamma}{2}$.
	Applying \cref{lem-decayProduct} to the products  $B^{(i),-1}A^{(i)}_{ij}$ appearing in the definition of $\bar{L}^{-1}$ in Lemma \cref{lem-BlockCholesky}, we obtain
	\begin{equation}
		\left| \bigl( \bar{L}^{-1} \bigr)_{ij} \right| \leq \frac{2 C_{\Phi} C_{\gamma} \left(c_d\left(\tilde{\gamma}/2\right)\right)^2} {\left(1-r\right)^2} \exp\left( -\frac{\tilde{\gamma}}{2}d( i, j ) \right).
	\end{equation}
	\Cref{lem-decayTriang} now yields the following decay bound for $\bar{L}$:
	\begin{equation}
		| \bar{L}_{ij} | \leq \left(4 c_d\left(\tilde{\gamma}/4\right)\frac{C_{\Phi} C_{\gamma} \left(c_d\left(\tilde{\gamma}/2\right)\right)^2} {\left(1-r\right)^2} \right)^q \exp\left( -\frac{\tilde{\gamma}}{4}d( i, j ) \right).
	\end{equation}
	For a positive-definite matrix $M$, let $\chol\left( M\right)$ denote its lower-triangular Cholesky factor and set $L^{(k)} \defeq \chol\left( \IKMC^{(k),-1}\right)$.
	Following the same procedure as in the bound of the decay of $\IKMC^{(k)}$ yields the decay bound
	\begin{equation}
		\left| L^{(k)}_{ij} \right| \leq \frac{2 C_{\Phi} H^{\left(k-1\right)}}{\left(1-r\right)^2} \exp\left( -\tilde{\gamma}d( i, j ) \right).
	\end{equation}
	Applying \cref{lem-decayProduct} to the product $\bar{L} \chol(D) = \chol(\KM)$ yields the decay bound
	\begin{align}
		\bigl| ( \chol( \KM ) )_{ij} \bigr|
		& \leq \frac{2 C_{\Phi} c_d\left(\tilde{\gamma}/8\right)^2}{\left(1-r\right)^2} \left(4 c_d\left(\tilde{\gamma}/4\right)\frac{C_{\Phi} C_{\gamma} \left(c_d\left(\tilde{\gamma}/2\right)\right)^2} {\left(1-r\right)^2} \right)^q \exp\left( -\frac{\tilde{\gamma}}{8}d( i, j ) \right).
	\end{align}
\end{proof}

\section{Proof of accuracy of incomplete Cholesky factorization in the supernodal multicolor ordering}
\label{apsec-superMulti}

We will now bound the approximation error of the Cholesky factors obtained from \cref{alg-ICholesky}, using the supernodal multicolor ordering and sparsity pattern described in \cref{const-superMulti}.
For $\ti, \tj \in \tI$, let $\KM_{\ti, \tj}$ be the submatrix $(\KM_{ij})_{i \in \ti, j \in \tj}$ and let $\sqrt{M}$ be the (dense and lower-triangular) Cholesky factor of a matrix $M$.

First observe that \cref{alg-ICholesky} with supernodal multicolor ordering $\prec_{\rho}$ and sparsity pattern $S_{\rho}$ is equivalent to the block-incomplete Cholesky factorization described in \cref{alg-superICholesky} where the function $\texttt{Restrict!}(\KM, S_{\rho})$ sets all entries of $\KM$ outside of $S_{\rho}$ to zero.

\bigskip

\begin{algorithm}[H]
	\textbf{Input:} $\KM \in \Reals^{I \times I}$ symmetric \\
  	\textbf{Output:} $L\in \Reals^{I \times I}$ lower triangular \\
  \begin{algorithmic}
	\setcounter{ALC@unique}{0}
	\STATE $\texttt{Restrict!}(\KM, S_{\rho})$
	\FOR{$\ti \in \tI$}
		\STATE $L_{:,\ti} \leftarrow \KM_{:,\ti} / \sqrt{\KM_{\ti,\ti}}^{\top}$\;
		\FOR{$\tj \tsucc \ti$ : $(\ti, \tj) \in \tS$}
			\FOR{$ k \tsucc \tj$ : $( \tk, \ti ), ( \tk, \tj ) \in \tS$}
				\STATE $\KM_{\tk,\tj} \leftarrow \KM_{\tk,\tj} - \KM_{\tk,\ti} (\KM_{\ti,\ti})^{-1} \KM_{\tj, \ti}$
			\ENDFOR
		\ENDFOR
	\ENDFOR
  \RETURN $L$
  \end{algorithmic}
	\caption{Supernodal incomplete Cholesky factorization}
	\label{alg-superICholesky}
\end{algorithm}

\bigskip

We will now reformulate the above algorithm using the fact that the elimination of nodes of the same color, on the same level of the hierarchy, happens consecutively.
Let $p$ be the maximal number of colors used on any level of the hierarchy.
We can then write $\I = \bigcup_{1 \leq k \leq q, 1 \leq l \leq p} \J^{(k,l)}$, where $\J^{(k,l)}$ is the set of indices on level $k$ colored in the color $l$.
Let $\KM_{( k, l ),( m, n )}$ be the restriction of $\KM$ to $\J^{(k,l)} \times \J^{(m,n)}$ and write $(m,n) \prec (k,l) \iff m < k \text{ or } (m = k \text{ and } n < l )$.
We can then rewrite \cref{alg-superICholesky} as

\bigskip

\begin{algorithm}[H]
	\textbf{Input:} $\KM \in \Reals^{I \times I}$ symmetric \\
	\textbf{Output:} $L\in \Reals^{I \times I}$ lower triangular \\
  \begin{algorithmic}
	\setcounter{ALC@unique}{0}
	  \FOR{$1 \leq k \leq q$}
	  	\FOR{$1 \leq l \leq p$}
	  		\STATE $\texttt{Restrict!}( \KM, S_{\rho})$
	  		\STATE $L_{( :, : ),( k, l )}
	  		\leftarrow \KM_{( :, : ),( k, l )}
	  		/ \sqrt{\KM_{( k, l ),( k, l )}}^{\top} $
	  		\STATE $ \KM
	  		\leftarrow \KM - \KM_{( :, : ), ( k, l )}
	  		\left(\KM_{( k, l ), ( k, l )}\right)^{-1}
	  		\KM_{( k, l ), ( :, : )}$\;
	  	\ENDFOR
	  \ENDFOR
	  \RETURN $L$
  \end{algorithmic}
	\caption{Supernodal incomplete Cholesky factorization}
	\label{alg-superMultiICholesky}
\end{algorithm}

\bigskip

For $1 \leq k \leq q, 1 \leq l \leq p$ and a matrix $M \in \Reals^{\I \times \I}$
with
$M_{( :, : ), ( m, n )}, M_{( m, n ), ( :, : )} = 0$ for all $( m, n ) \prec ( k, l )$,
let $\cS\left[ M \right]$ be the matrix obtained by applying $\texttt{Restrict!}(M,S_{\rho})$ followed by the Schur complementation $M \leftarrow M - M_{( :, : ), ( k, l )} \bigl( M_{( k, l ), ( k, l )} \bigr)^{-1} M_{( k, l ), ( :, : )}$.
We now prove a stability estimate for the operator $\cS$.
Let $M_{k, ( m, n )}$ be the restriction of a matrix $M \in \Reals^{\I \times \I}$ to
$\J^{(k)} \times \J^{(m,n)}$.

\begin{lemma}
	\label{lem-pertSchur}
	For $1 \leq k^{\circ} \leq q$ and $1 \leq l^{\circ} \leq p$ let
	$\KM,E \in \Reals^{I \times I}$ be such that
	\begin{equation}
		\KM_{( :, : ), ( m, n )},
		\KM_{( m, n ), ( :, : )} = 0
		\text{ for all } ( m, n ) \prec ( k^\circ, l^\circ ),
	\end{equation}
	and (writing $\Theta_{k,l}$ for the $J^{(k)}\times J^{(l)}$ submatrix of $\Theta$ and $\lambda_{\max}$ for maximal singular values) define
	\begin{align}
		\lambda_{\min} & \defeq \lambda_{\min} ( \KM_{k^{\circ},k^{\circ}} ) , &
		\lambda_{\max} & \defeq \max_{k^{\circ} \leq k \leq q} \lambda_{\max} ( \KM_{k^{\circ},k} ) .
	\end{align}
	If
	\begin{equation}
		\max_{k^{\circ} \leq k,l \leq q} \| E_{k,l} \|_{\FRO} \leq \epsilon \leq
		\frac{\lambda_{\min}}{2} ,
	\end{equation}
	then the following perturbation estimate holds:
	\begin{equation}
		\max_{k^{\circ} \leq k,l \leq q} \bigl\| \left( \cS [ \KM ] - \cS[ \KM + E ] \bigr)_{k,l} \right\|_{\FRO} \leq \left( \frac{3}{2} + 2 \frac{\lambda_{\max}}{\lambda_{\min}} + 8 \frac{\lambda_{\max}^2}{\lambda_{\min}^2} \right) \epsilon .
	\end{equation}
\end{lemma}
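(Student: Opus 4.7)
My plan is to treat $\cS = \mathrm{Schur} \circ \texttt{Restrict!}$ as a composition: first observe that $\texttt{Restrict!}(\cdot, S_\rho)$ is a linear projection that zeros entries outside $S_\rho$ and is therefore blockwise contractive in the Frobenius norm, so that setting $M' \defeq \texttt{Restrict}(\KM, S_\rho)$ and $E' \defeq \texttt{Restrict}(E, S_\rho)$ yields $\texttt{Restrict}(\KM + E, S_\rho) = M' + E'$ and $\|E'_{k,l}\|_{\FRO} \leq \|E_{k,l}\|_{\FRO} \leq \epsilon$ for every $k, l \geq k^\circ$. Writing $A$, $B$, $C$ for the submatrices of $M'$ indexed respectively by $(k, (k^\circ, l^\circ))$, $((k^\circ, l^\circ), (k^\circ, l^\circ))$, $((k^\circ, l^\circ), l)$, and $\Delta_A$, $\Delta_B$, $\Delta_C$ for the corresponding submatrices of $E'$, the definition of $\cS$ gives
\begin{equation*}
\bigl(\cS[\KM + E] - \cS[\KM]\bigr)_{k,l} = E'_{k,l} - \bigl[(A+\Delta_A)(B+\Delta_B)^{-1}(C+\Delta_C) - AB^{-1}C\bigr].
\end{equation*}

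Next, I expand the bracketed Schur-difference using the resolvent identity. Since $\|\Delta_B\|_{\text{op}} \leq \|\Delta_B\|_{\FRO} \leq \epsilon \leq \lambda_{\min}/2$, a Neumann series argument yields the invertibility of $B + \Delta_B$ together with the bound $\|(B+\Delta_B)^{-1}\|_{\text{op}} \leq 2/\lambda_{\min}$. Applying $(B+\Delta_B)^{-1} = B^{-1} - (B+\Delta_B)^{-1}\Delta_B B^{-1}$ and collecting terms,
\begin{align*}
& (A+\Delta_A)(B+\Delta_B)^{-1}(C+\Delta_C) - AB^{-1}C \\
& \quad = AB^{-1}\Delta_C + \Delta_A B^{-1}C + \Delta_A B^{-1}\Delta_C - (A+\Delta_A)(B+\Delta_B)^{-1}\Delta_B B^{-1}(C+\Delta_C),
\end{align*}
so the perturbation of the Schur complement decomposes into the direct term $E'_{k,l}$, two linear contributions, one quadratic cross term, and one second-order resolvent term.

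I then bound each of the five contributions in Frobenius norm via $\|MN\|_{\FRO} \leq \|M\|_{\text{op}}\|N\|_{\FRO}$ and its transpose, using $\|A\|_{\text{op}}, \|C\|_{\text{op}} \leq \lambda_{\max}$, $\|B^{-1}\|_{\text{op}} \leq 1/\lambda_{\min}$, and $\|A+\Delta_A\|_{\text{op}}, \|C+\Delta_C\|_{\text{op}} \leq \lambda_{\max} + \epsilon \leq \tfrac{3}{2}\lambda_{\max}$ (the last inequality following from $\epsilon \leq \lambda_{\min}/2 \leq \lambda_{\max}/2$). The resulting five bounds are $\|E'_{k,l}\|_{\FRO} \leq \epsilon$, $\|AB^{-1}\Delta_C\|_{\FRO}, \|\Delta_A B^{-1}C\|_{\FRO} \leq \epsilon \lambda_{\max}/\lambda_{\min}$, $\|\Delta_A B^{-1}\Delta_C\|_{\FRO} \leq \epsilon^2/\lambda_{\min} \leq \epsilon/2$, and $(3/2)^2 \cdot 2\epsilon \lambda_{\max}^2/\lambda_{\min}^2 = (9/2)\epsilon\lambda_{\max}^2/\lambda_{\min}^2 \leq 8\epsilon\lambda_{\max}^2/\lambda_{\min}^2$ for the resolvent term. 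Summing and applying the triangle inequality produces the constant $(3/2 + 2\lambda_{\max}/\lambda_{\min} + 8\lambda_{\max}^2/\lambda_{\min}^2)$ stated in the lemma.

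The most delicate step is justifying the operator-norm bounds on the restricted submatrices $A$, $B$, $C$, since entrywise sparsity masking is not in general an operator-norm contraction and so $\|A\|_{\text{op}} \leq \lambda_{\max}$ is not automatic. This has to invoke the specific structure of Construction~\ref{const-superMulti}: distinct supernodes of the same color at level $k^\circ$ are separated by more than $\rho$ in the hierarchical pseudometric, so that the restriction of $\KM_{(k^\circ,l^\circ),(k^\circ,l^\circ)}$ to $S_\rho$ is block-diagonal with respect to the supernode partition, whence $\lambda_{\min}(B) \geq \lambda_{\min}(\KM_{k^\circ,k^\circ})$ via Cauchy interlacing applied to each supernodal block. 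A similar structural observation, combined with a bounded-overlap count on supernodes at level $k$, transfers the bound $\lambda_{\max}(\KM_{k^\circ,k})$ to $\|A\|_{\text{op}}$ and $\|C\|_{\text{op}}$ (up to the absolute constants absorbed in the coefficients $3/2$, $2$, $8$). Once these norm transfers are established, the resolvent expansion and term-by-term bookkeeping sketched above conclude the proof.
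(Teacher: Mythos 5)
Your core argument coincides with the paper's own proof of Lemma~\ref{lem-pertSchur}: the same resolvent identity $(B+\Delta_B)^{-1}=B^{-1}-(B+\Delta_B)^{-1}\Delta_B B^{-1}$ produces the same five-term decomposition (direct term, two linear terms, a quadratic cross term, and a second-order resolvent term), and the term-by-term bounds via $\|MN\|_{\FRO}\leq\|M\|\,\|N\|_{\FRO}$ together with $\epsilon\leq\lambda_{\min}/2$ land on the stated constant. (The paper bounds the perturbed off-diagonal blocks by $2\lambda_{\max}$ where you use $\tfrac{3}{2}\lambda_{\max}$; either way the quadratic coefficient stays below $8$.)

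Where you go beyond the paper is your final paragraph, and you are right that this is the delicate point: the paper's proof silently applies the spectral bounds $\lambda_{\min},\lambda_{\max}$, defined from blocks of $\KM$, to the blocks of $\texttt{Restrict!}(\KM,S_\rho)$, even though entrywise masking is not an operator-norm contraction. Your repair for the pivot block $B$ is correct and is a genuine improvement: by the coloring in Construction~\ref{const-superMulti}, indices assigned to distinct same-colored supernodes at level $k^\circ$ never lie in $S_\rho$, so the restricted pivot block is block-diagonal with supernodal principal submatrices of $\KM_{k^\circ,k^\circ}$, and interlacing gives $\lambda_{\min}(B)\geq\lambda_{\min}$. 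Your treatment of the off-diagonal blocks $A$ and $C$, however, does not close: a bounded-overlap count yields a dimension-dependent multiplicative factor on the operator norm, and such a factor cannot be ``absorbed'' into the explicit numerical coefficients $\tfrac{3}{2}$, $2$, $8$ of the claimed estimate --- it would change the constant of the lemma. The honest resolutions are either to read $\lambda_{\min},\lambda_{\max}$ as spectral bounds for the blocks of the restricted matrix (which is how the lemma is implicitly used in Lemma~\ref{lem-stabICholesky}, where only a $\poly(N)$ loss matters), or to add stability of the off-diagonal operator norms under restriction as an explicit hypothesis. So, taken literally as a proof of the lemma for arbitrary $\KM$ with the stated constant, your last step leaves a gap --- but it is precisely the gap that the paper's own proof passes over without comment, and everything up to that point is sound.
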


\begin{proof}
	Write $\tKM$, $\tE$ for the versions of $\KM$, $E$ set to zero outside of $S_{\rho}$.
	For $k^{\circ} \leq k,l \leq q$,
	{\small
	\begin{align}
		& \left(\cS[ \KM + E ] - \cS[ \KM ]\right)_{k,l} \\
		& \quad =
		\tKM_{k,l} + \tE_{k,l} - \bigl( \tKM + \tE \bigr)_{k,(k^{\circ},l^{\circ})} \bigl( \tKM + \tE \bigr)_{(k^{\circ}, l^{\circ}),(k^{\circ}, l^{\circ})}^{-1}
		\bigl( \tKM + \tE \bigr)_{(k^{\circ},l^{\circ}),l} \\
		& \quad \phantom{=} \quad - \tKM_{k,l} + \tKM_{k,(k^{\circ},l^{\circ})} \tKM_{(k^{\circ},l^{\circ}),(k^{\circ},l^{\circ})}^{-1} \tKM_{(k^{\circ},l^{\circ}), l} \\
		& \quad = \tE_{k, l} + \bigl( \tKM + \tE \bigr)_{k,(k^{\circ},l^{\circ})} \bigl( \tKM + \tE \bigr)_{(k^{\circ},l^{\circ}),(k^{\circ},l^{\circ})}^{-1} \tE_{(k^{\circ},l^{\circ}),(k^{\circ},l^{\circ})} \tKM_{(k^{\circ},l^{\circ}),(k^{\circ},l^{\circ})}^{-1} \bigl( \tKM + \tE \bigr)_{(k^{\circ},l^{\circ}), l} \\
		& \quad \phantom{=} \quad - \bigl( \tKM + \tE \bigr)_{k,(k^{\circ},l^{\circ})}\tKM_{(k^{\circ},l^{\circ}),(k^{\circ},l^{\circ})}^{-1} \bigl( \tKM + \tE \bigr)_{(k^{\circ},l^{\circ}),l} + \tKM_{k,(k^{\circ},l^{\circ})} \tKM_{(k^{\circ},l^{\circ}),(k^{\circ},l^{\circ})}^{-1} \tKM_{(k^{\circ},l^{\circ}), l} \\
		& \quad = \tE_{k, l}
		+ \bigl( \tKM + \tE \bigr)_{k,(k^{\circ},l^{\circ})} \bigl( \tKM + \tE \bigr)_{(k^{\circ},l^{\circ}),(k^{\circ},l^{\circ})}^{-1} \tE_{(k^{\circ},l^{\circ}),(k^{\circ},l^{\circ})} \tKM_{(k^{\circ},l^{\circ}),(k^{\circ},l^{\circ})}^{-1} \bigl( \tKM + \tE \bigr)_{(k^{\circ},l^{\circ}), l} \\
		& \quad \phantom{=} \quad - \tE_{k, (k^{\circ},l^{\circ})} \tKM_{(k^{\circ},l^{\circ}),(k^{\circ},l^{\circ})}^{-1} \tKM_{(k^{\circ},l^{\circ}),l}
		- \tKM_{k, (k^{\circ},l^{\circ})} \tKM_{(k^{\circ},l^{\circ}),(k^{\circ},l^{\circ})}^{-1} \tE_{(k^{\circ},l^{\circ}),l} \\
		& \quad \phantom{=} \quad - \tE_{k, (k^{\circ},l^{\circ})} \tKM_{(k^{\circ},l^{\circ}),(k^{\circ},l^{\circ})}^{-1} \tE_{(k^{\circ},l^{\circ}),l},
	\end{align}}
	where the second equality follows from the matrix identity
	\begin{equation}
		( A + B )^{-1} = A^{-1} - \left( A + B \right)^{-1} B A^{-1}.
	\end{equation}
	Now recall that, for all $A \in \Reals^{n \times m}, B \in \Reals^{m \times s}$, $\|M\| \leq \|M\|_{\FRO}$ and $\|A B\|_{\FRO} \leq \|A\| \|B\|_{\FRO}$.
	Therefore, $\| ( A + E )^{-1} \| \leq 2/\lambda_{\min}$ and $\| A + E \| \leq 2 \lambda_{\max}$.
	Combining these estimates and using the triangle inequality yields
	\begin{align}
		& \bigl\| ( \cS [ A + E ] - \cS[ A ] )_{k,l} \bigr\|_{\FRO}\\
		& \quad \leq \| E_{k,l} \|_{\FRO} + 8 \frac{\lambda_{\max}^2}{\lambda_{\min}^2} \| E_{k^{\circ},k^{\circ}} \|_{\FRO} + \frac{\lambda_{\max}}{\lambda_{\min}} ( \|E_{k,l}\|_{\FRO} + \|E_{l,k}\|_{\FRO} )\\
		& \quad + \lambda_{\min}^{-1} \| E_{k,k^{\circ}} \|_{\FRO} \| E_{k^{\circ},l} \|_{\FRO} \\
		& \quad \leq \left( 1 + 8 \frac{\lambda_{\max}^2}{\lambda_{\min}^2} + 2 \frac{\lambda_{\max}}{\lambda_{\min}} + \frac{\epsilon}{\lambda_{\min}}\right) \epsilon \\
		& \quad \leq \left( \frac{3}{2} + 2 \frac{\lambda_{\max}}{\lambda_{\min}} + 8 \frac{\lambda_{\max}^2}{\lambda_{\min}^2} \right) \epsilon .
	\end{align}
\end{proof}

Recursive application of the above lemma gives a stability result for the incomplete Cholesky factorization.

\begin{lemma}
	\label{lem-stabICholesky}
	For $\rho > 0$, let $\prec_{\rho}$ and $S_{\rho}$ be a supernodal ordering and sparsity pattern such that the maximal number of colors used on each level is at most $p$.
	Let $L^{S_\rho}$ be an invertible lower-triangular matrix with nonzero pattern $S_{\rho}$ and define $M \defeq L^{S_{\rho}} L^{S_{\rho},\top}$.
	Assume that $M$ satisfies \cref{cond-specloc} with constant $\kappa$.
	Then there exists a universal constant $C$ such that, for all $0 < \epsilon< \frac{\lambda_{\min}( M )}{2 q^2 ( C\kappa )^{2q p}}$ and all $E \in \Reals^{\I \times \I}$ with $\|E\|_{\FRO} \leq \epsilon$,
	\begin{equation}
		\bigl\| M - \tL^{S_{\rho}} \tL^{S_{\rho}, \top} \bigr\|_{\FRO} \leq q^2 ( C \kappa )^{2qp} \epsilon ,
	\end{equation}
	where $\tL^{(S_\rho)}$ is the Cholesky factor obtained by applying \cref{alg-superMultiICholesky} to $M + E$.
\end{lemma}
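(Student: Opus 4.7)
The plan is to decompose the algorithm into its $qp$ elementary Schur-complement operations and track how the perturbation $E$ propagates through them via Lemma~\ref{lem-pertSchur}. Let $\cS_{(k,l)}$ denote the Schur-complement step of Algorithm~\ref{alg-superMultiICholesky} associated with the color class $\J^{(k,l)}$; setting $M^{(0)} \defeq M$ and $\tilde{M}^{(0)} \defeq M + E$, define $M^{(j+1)} \defeq \cS_{(k_j,l_j)}[M^{(j)}]$ and $\tilde{M}^{(j+1)} \defeq \cS_{(k_j,l_j)}[\tilde{M}^{(j)}]$, where $\{(k_j,l_j)\}_{j=0}^{qp-1}$ enumerates the color classes in algorithmic order. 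The columns of $\tilde{L}^{S_\rho}$ indexed by $\J^{(k_j,l_j)}$ are read off from $\tilde{M}^{(j)}$ immediately before its Schur update, so bounding each $\tilde{M}^{(j)} - M^{(j)}$ simultaneously bounds the error in every column of $\tilde{L}^{S_\rho}$.

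The first step is to establish a baseline: applied with $E = 0$ to $M = L^{S_\rho}L^{S_\rho,\top}$, Algorithm~\ref{alg-superMultiICholesky} returns exactly $L^{S_\rho}$, so that $L^{S_\rho}L^{S_\rho,\top} = M$ is the unperturbed output. The crucial structural fact is that, by the defining property of the coloring in Construction~\ref{const-superMulti}, no two supernodes in the same color class lie within each other's $\tS_\rho$-neighborhoods, so the block-Schur step $\cS_{(k,l)}$ decouples across the supernodes of $\J^{(k,l)}$. An induction on $j$ then shows that each intermediate Schur complement $M^{(j)}$ has support inside $S_\rho$, making every $\texttt{Restrict!}(\cdot, S_\rho)$ call a no-op and identifying the incomplete factorisation with the exact block-Cholesky factorisation of $M$; uniqueness of the Cholesky factor of a positive-definite matrix then forces the result to equal $L^{S_\rho}$.

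With the baseline in place, set $F^{(j)} \defeq \tilde{M}^{(j)} - M^{(j)}$ and apply Lemma~\ref{lem-pertSchur} inductively. The hypotheses on $\lambda_{\min}$ and $\lambda_{\max}$ follow from Condition~\ref{cond-specloc} together with the variational characterisation \eqref{eqkwhdhkjhd} of Schur complements, since every $M^{(j)}$ is (up to permutation) a principal Schur complement of $M$ and therefore inherits condition-number bounds controlled by $\kappa$. Each application of $\cS$ then amplifies the Frobenius norm of the error by at most a multiplicative factor of the form $C'\kappa^2$, and the smallness hypothesis $\epsilon < \lambda_{\min}(M)/(2q^2(C\kappa)^{2qp})$ inductively guarantees that the condition $\|F^{(j)}\|_{\FRO} \leq \lambda_{\min}/2$ of Lemma~\ref{lem-pertSchur} holds at every step, so that $\max_j \|F^{(j)}\|_{\FRO} \leq (C\kappa)^{2qp}\epsilon$ after all $qp$ iterations.

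To finish, I would convert these Schur-complement bounds into the stated estimate on $\|M - \tilde{L}^{S_\rho}\tilde{L}^{S_\rho,\top}\|_{\FRO}$ via the telescoping identity
\begin{equation*}
\tilde{L}^{S_\rho}\tilde{L}^{S_\rho,\top} - L^{S_\rho}L^{S_\rho,\top} = \sum_{j=0}^{qp-1}\bigl(\tilde{L}^{S_\rho}_{:,(k_j,l_j)}\tilde{L}^{S_\rho,\top}_{(k_j,l_j),:} - L^{S_\rho}_{:,(k_j,l_j)}L^{S_\rho,\top}_{(k_j,l_j),:}\bigr).
\end{equation*}
Each summand is in turn controlled by $\|F^{(j)}\|_{\FRO}$ times factors involving $\lambda_{\max}^{1/2}$ and $\lambda_{\min}^{-1/2}$ at step $j$; summing the $qp$ contributions absorbs the remaining polynomial-in-$q$ prefactor. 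The main obstacle will be the baseline no-fill-in claim in the second paragraph, which hinges on a careful exploitation of the specific supernodal multicolor structure rather than any generic property of incomplete Cholesky, and essentially on showing that $S_\rho$ is closed under the block elimination in the chosen ordering.
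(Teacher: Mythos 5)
Your overall strategy --- running the exact and perturbed iterations in parallel and applying Lemma~\ref{lem-pertSchur} once per color class, for a total of at most $qp$ amplifications each of order $\kappa^2$, then reassembling the factorization error by telescoping over columns --- is exactly the paper's proof, which consists of the single sentence that the result follows from applying Lemma~\ref{lem-pertSchur} at each step of Algorithm~\ref{alg-superMultiICholesky}. Your elaboration of the error propagation, the role of the smallness hypothesis on $\epsilon$ in keeping the hypothesis $\|F^{(j)}\|_{\FRO}\leq \lambda_{\min}/2$ valid at every step, and the final conversion to a bound on $\bigl\|M-\tL^{S_\rho}\tL^{S_\rho,\top}\bigr\|_{\FRO}$ are reasonable fillings-in of what the paper leaves implicit.

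There is, however, one step in your baseline argument that fails as stated. You claim that each intermediate Schur complement $M^{(j)}$ has support inside $S_\rho$, so that every \texttt{Restrict!} call is a no-op; this is false already at $j=0$, since the product $L^{S_\rho}L^{S_\rho,\top}$ generally has nonzero entries outside $S_\rho$, and $S_\rho$ is not closed under fill-in (the hierarchical pseudometric only gives $d(i,j)\leq d(i,k)+d(k,j)$, which roughly doubles the interaction radius at each elimination). The closedness property you flag at the end as ``the main obstacle'' is therefore not the right mechanism and is not what one should try to prove. The correct argument is that, because $L^{S_\rho}$ is supported on $S_\rho$, the columns eliminated in the \emph{exact} block factorization of $M$ vanish off the pattern; hence every update that the incomplete algorithm skips is identically zero, and every entry that \texttt{Restrict!} discards is never read when forming subsequent columns. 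An induction then shows that the restricted iteration agrees with the exact one on $S_\rho$ and that the output is $L^{S_\rho}$ (this is the unproved claim already asserted at the end of Theorem~\ref{thm-decayCholesky}). The same subtlety affects your appeal to Condition~\ref{cond-specloc} for the intermediate matrices: because of the restriction steps they are not literally principal Schur complements of $M$, and one must argue that the spectral bounds needed in Lemma~\ref{lem-pertSchur} survive both the restriction and the accumulated perturbation.
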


\begin{proof}
	The result follows from applying \cref{lem-pertSchur} at each step of \cref{alg-superMultiICholesky}.
\end{proof}

We can prove \cref{thm-accuracyICholesky} by using the stability result obtained above.

\begin{proof}[Proof of \cref{thm-accuracyICholesky}]
	\Cref{thm-decayCholesky} implies that by choosing $\rho \geq \tilde{C} \log(N/\epsilon)$ there exists a lower-triangular matrix $\tL^{S_\rho}$ with sparsity pattern $S_{\rho}$ such that
	$\bigl\|\KM - \tL^{S_{\rho}} \tL^{S_{\rho}, \top} \bigr\|_{\FRO} \leq \epsilon$.
	\Cref{thm-specloc} implies that the \cref{examp-subsamp} and \cref{examp-average} satisfy $\lambda_{\min} \geq 1/ \poly(N)$.
	Therefore, choosing $\rho \geq \tilde{C} \log N$ ensures that $\epsilon < \frac{\lambda_{\min} ( \KM )}{2}$ and thus that $\tKM \defeq \tL^{S_{\rho}} \tL^{S_{\rho}, \top}$ satisfies
	\Cref{cond-specloc} with constant $2C_{\Phi}$, where $C_{\Phi}$ is the corresponding constant for $\KM$.
	By possibly changing the constant $\tilde{C}$ again, $\rho \geq \tilde{C} \log N$	also ensures that
	\[
		\epsilon \leq \frac{\lambda_{\min}( \KM )}{2 q^2 \bigl( C \kappa \bigl( \tilde{\KM} \bigr) \bigr)^{2 q p}} ,
	\]
	where $C$ is the constant of \cref{lem-stabICholesky}, since
	$q \approx \log N$ and, by \cref{lem-constSuperMulti}, $p$ is bounded independently of $N$.
	Thus, by \cref{lem-stabICholesky}, the Cholesky factor $L^{S_{\rho}}$ obtained from applying
	\cref{alg-superMultiICholesky} to $\KM = \tKM + \bigl( \KM - \tKM \bigr)$
	satisfies
	\begin{equation}
		\left\| \tKM - L^{S_{\rho}} L^{S_\rho,\top} \right\|_{\FRO} \leq q^2 \left( 4 C \kappa \right)^{2 q p} \epsilon \leq \poly(N) \epsilon,
	\end{equation}
	where $\kappa$ is the constant with which $\KM$ satisfies \cref{cond-specloc} and the polynomial depends only on $C$, $\kappa$, and $p$.
	Since, for the ordering $\prec_{\rho}$ and sparsity pattern $S_{\rho}$, the Cholesky factors obtained via \cref{alg-ICholesky,alg-superMultiICholesky} coincide, we obtain the result.
\end{proof}

\end{document}